\documentclass[11pt,reqno]{amsart}

\usepackage[utf8]{inputenc}
\usepackage{lmodern}
\usepackage{amsmath,amssymb,amsthm,latexsym,cite,cancel}
\usepackage[small]{caption}
\usepackage{graphicx,wasysym,overpic,tikz,color}
\usepackage{subfigure}
\usepackage{cite}
\usepackage[colorlinks=true,urlcolor=blue,
citecolor=red,linkcolor=blue,linktocpage,pdfpagelabels,
bookmarksnumbered,bookmarksopen]{hyperref}
\usepackage[english]{babel}
\usepackage{units}
\usepackage[utf8]{inputenc}
\usepackage{enumitem}
\usepackage[left=2.1cm,right=2.1cm,top=2.71cm,bottom=2.71cm]{geometry}
\usepackage[hyperpageref]{backref}
\usepackage{float}
\usepackage[T1]{fontenc}

\usepackage{CJKutf8}

\usepackage[colorinlistoftodos]{todonotes}
\newtheorem{theorem}{Theorem}[section]

\newtheorem{lemma}[theorem]{Lemma}
\newtheorem{remark}[theorem]{Remark}

\newcommand{\abs}[1]{\lvert#1\rvert}
\newcommand{\norm}[1]{\lVert#1\rVert}

\newcommand{\blue}[1]{\textcolor{blue}{#1}}

\newcommand{\G}{\mathcal{G}}

\newcommand{\E}{\mathbb{E}}
\newcommand{\V}{\mathbb{V}}
\newcommand{\R}{\mathbb{R}}

\newcommand{\e}{\mathrm{e}}
\newcommand{\vv}{\mathrm{v}}
\newcommand{\w}{\mathrm{w}}
\newcommand{\Z}{\mathbb{Z}}

\newcommand{\Vd}{\mathbb{V}^d_\varepsilon}
\newcommand{\Gd}{\mathcal{G}^d_\varepsilon}
\newcommand{\Ed}{\mathbb{E}^d_\varepsilon}

\makeatletter

\tikzstyle{nodo}=[circle,draw,fill,inner sep=0pt,minimum size=%
1.5mm]
\tikzstyle{bnodo}=[circle,draw,fill,inner sep=0pt,minimum size=%
2mm]

\numberwithin{equation}{section}

\title[Singular limit and critical Lane--Emden equations]{Singular limit of lattice graphs and its application to critical Lane--Emden equations on lattice graphs}
\author[Z. He]{Zhentao He}

\address[Z. He]{\newline\indent
	School of Mathematics
	\newline\indent
	East China University of Science and Technology
	\newline\indent
	Shanghai 200237, PR China }
\email{\href{mailto:hezhentao2001@outlook.com}{hezhentao2001@outlook.com}}

\author[C. Ji]{Chao Ji}

\address[C. Ji]{\newline\indent
	School of Mathematics
	\newline\indent
	East China University of Science and Technology
	\newline\indent
	Shanghai 200237, PR China }
\email{\href{mailto:jichao@ecust.edu.cn}{jichao@ecust.edu.cn}}

\subjclass[2020]{35R02, 49J40, 81Q35, 35Q55}

\keywords{Singular limit, Lattice graphs, Nonlinear Schr\"odinger equations, Ground states, Sobolev supercritical  case}

\begin{document}
\begin{abstract}
In this paper, we establish new connections between lattice graphs and metric grids, providing a unified framework for the study of singular limit problems and Gagliardo--Nirenberg type inequalities on lattice graphs. The main technical ingredients are restriction and extension estimates, which enable us to compare variational problems posed on lattice graphs, metric grids and \(\mathbb R^d\). As applications, we prove that extensions of action ($2<p<2^*$) and energy ($2<p<2+\frac{4}{d}$) ground states of the nonlinear Schr\"{o}dinger (NLS) equation on $d$-dimensional lattice graphs converge strongly in $H^1(\R^d)$ to the corresponding ground states on $\R^d$ as the edge length tends to zero. As a by-product of the arguments developed for the singular limit problem on lattice graphs, we obtain multiplicity results for fixed-mass critical points of the energy functional on lattice graphs. Beyond the classical subcritical framework, we also study the singular limit of action ground states in the Sobolev supercritical regime ($d \geq 3$ and $p>2^*$), and the singular limit of energy ground states in the mass-supercritical regime ($p>2+\frac{4}{d}$) on lattice graphs. Notably, by establishing a new Gagliardo--Nirenberg type inequality, we settle an open problem posed by Dovetta [Adv. Math. 444 (2024), 109633] on the convergence rates of ground states energies in singular limit problems. In addition, by applying singular limit analysis, we prove that for $d\geq3$ and $p=2^*$, the best constants in the Sobolev inequalities on lattice graphs are attained and, consequently, establish the existence of a positive solution to the associated critical Lane--Emden equation, thereby resolving an open problem posed by Hua and Li [J. Differential Equations, 305 (2021), 224--241]. We further establish the attainability of best constants of Gagliardo--Nirenberg type inequalities on lattice graphs for $p\nearrow 2^*$. Finally, We prove the attainability of best constants of Gagliardo--Nirenberg type inequalities on metric grids.
\end{abstract}
\maketitle

\setcounter{tocdepth}{3}
\begin{center}
	\begin{minipage}{12.5cm}
		\tableofcontents
	\end{minipage}
\end{center}

\section{Introduction}

As in \cite[Section 1.4]{Gr}, for any $u \in C^2(\R^d)$, $\Delta u$ can be approximated by a finite difference expression involving the values of $u$ at neighboring points; for example, if $d=2$, then, for small $\varepsilon >0$ 
\begin{equation}\label{eqlaplace}
    \Delta u (x,y)\approx \frac{1}{\varepsilon^2}\left( u(x+\varepsilon,y)+u(x-\varepsilon,y)+u(x,y+\varepsilon)+u(x, y-\varepsilon)-4u(x,y)\right).
\end{equation}
Formally, the right-hand side of \eqref{eqlaplace} coincides with  $\varepsilon^{-1}\Delta_\varepsilon^d u$, according to the definition of the graph Laplacian in \eqref{eqglaplace}. In this sense, lattice graphs combine a discrete microscale with a $d$-dimensional macroscale. In the singular limit as the edge length approaches zero, these structures can be viewed as locally discrete approximations of $\mathbb{R}^{d}$. This connection offers a dual perspective: lattice graphs can act as numerical or conceptual tools to approximate and investigate phenomena in $\R^d$, while well-understood continuous models in $\mathbb{R}^{d}$ can serve as effective macroscopic descriptions for complex dynamics on lattice graphs. This naturally raises a fundamental mathematical question: as the edge length of lattice graphs  tends to zero, is it possible to rigorously establish the convergence of solutions on lattice graphs to those of the corresponding continuous model in $\mathbb{R}^{d}$?  Indeed, analogous results for the stationary nonlinear Schr\"{o}dinger (NLS) equations  have already been established on metric grids in \cite{Do2}.

In $\mathbb{R}^d$, it is well known (see e.g. \cite{CazenaveLivro}) that for every $p \in (2, 2^*)$, where $2^* = \infty$ if $d=1,2$ and $2^* = \frac{2d}{d-2}$ if $d \ge 3$, the (NLS) equation
\begin{equation}\label{eqrd}
    -\Delta u + \omega u=|u|^{p-2}u 
\end{equation}
admits, for every $\omega > 0$, a unique (up to translation) positive solution $u$ decaying at infinity. From a variational point of view, this solution can be characterized in at least two ways. For every $p \in (2, 2^*)$, $u$ is an action ground state at frequency $\omega$, i.e. a global minimizer of the action functional
\[
    J_{\omega,\mathbb{R}^d}(u) := \frac{1}{2}\|\nabla u\|_{L^2(\mathbb{R}^d)}^2 + \frac{\omega}{2}\|u\|_{L^2(\mathbb{R}^d)}^2 - \frac{1}{p}\|u\|_{L^p(\mathbb{R}^d)}^p
\]
restricted to the associated Nehari manifold
\begin{align*}
    \mathcal{N}_{\omega,\mathbb{R}^d} :&= \left\{ u \in H^1(\mathbb{R}^d)\backslash \{0\}  : J'_{\omega,\mathbb{R}^d}(u)u = 0 \right\} \\
    &= \left\{ u \in H^1(\mathbb{R}^d) \backslash \{0\} : \|\nabla u\|_{L^2(\mathbb{R}^d)}^2 + \omega\|u\|_{L^2(\mathbb{R}^d)}^2 = \|u\|_{L^p(\mathbb{R}^d)}^p \right\}.
\end{align*}
Moreover, for every $p \in \left(2, 2 + \frac{4}{d}\right)$, $u$ is also an energy ground state with mass $\mu$, i.e. a global minimizer of the energy functional
\[
    E_{\mathbb{R}^d}(u) := \frac{1}{2}\|\nabla u\|_{L^2(\mathbb{R}^d)}^2 - \frac{1}{p}\|u\|_{L^p(\mathbb{R}^d)}^p
\]
on the mass constraint
\[
    H_{\mu}^1(\mathbb{R}^d) := \left\{ u \in H^1(\mathbb{R}^d) : \|u\|_{L^2(\mathbb{R}^d)}^2 = \mu \right\}.
\]
In this case, $\omega$ plays the role of a Lagrange multiplier and is in one-to-one correspondence with the mass $\mu$.

In the seminal paper \cite{Do2}, for both action ($2<p<2^*$) and energy ($2<p<2+\frac{4}{d}$)  ground states of the (NLS) equation on metric grids, Dovetta proved that their suitable piecewise-affine extensions converge strongly in $H^1(\R^d)$ to the corresponding ground states on $\R^d$ as the edge length tends to zero. To study this asymptotic behavior,  Dovetta employed purely variational arguments, based only on the minimality of ground states, with a rigorous analysis of the interaction between scales of different dimensions in the grid.  Dovetta first established restriction and extension estimates between functions on metric grids $\Gd$ and Euclidean spaces $\R^d$. With these estimates, the author proved that, after multiplying each term by a suitable positive constant, the sequence of extensions of ground states is a minimizing sequence for the action functional restricted to the Nehari manifold (or the energy functional on the mass constraint, respectively) of the (NLS) equation on $\R^d$,  from which the convergence result follows. Applying these convergence results, Dovetta also derived qualitative properties of ground states and multiplicity results for fixed-mass critical points on metric grids. Furthermore, based on similar ideas, a comparison was established between the optimal constants of $d$-dimensional Gagliardo--Nirenberg inequalities ($2<p<2^*$) on $\mathbb{R}^d$ and on the metric grid $\mathcal{G}_1^d$. In a subsequent work \cite{BBDT2}, Barbera, Boni, Dovetta and Tentarelli studied the existence and the singular limit of energy ground states for focusing doubly (NLS) equations with both standard and concentrated nonlinearities on two-dimensional square grids. For further results concerning nonlinear PDEs on metric graphs, we refer the reader to \cite{ABD, ACFN, AST, AST2, AST3, Do1, NP, PS, Cha, Ca, Do0, HJ7,  Do3} and references therein. It is worth noting that the singular limit problem for the action ($d\geq 3$ and $p \geq 2^*$) or the energy ($p\geq 2+\frac{4}{d}$) ground states of the (NLS) equation on metric grids, as well as the best constants of $d$-dimensional Gagliardo--Nirenberg inequalities on the metric grid $\mathcal{G}_1^d$ for $d\geq 3$ and $p=2^*$, have not been studied.

\begin{figure}[htbp]\label{figure1}
    \centering
    
    \begin{minipage}[b]{0.48\textwidth}
        \centering
        \begin{tikzpicture}[xscale=0.6, yscale=0.6]
            \foreach \y in {-4, -2, 0, 2, 4} {
                \draw (-4, \y) -- (4, \y);
                \draw [dashed] (-5.5, \y) -- (-4, \y);
                \draw [dashed] (4, \y) -- (5.5, \y);
            }
            \foreach \x in {-4, -2, 0, 2, 4} {
                \draw (\x, -4) -- (\x, 4);
                \draw [dashed] (\x, -5.5) -- (\x, -4);
                \draw [dashed] (\x, 4) -- (\x, 5.5);
            }
            \foreach \x in {-4, -2, 0, 2, 4} {
                \foreach \y in {-4, -2, 0, 2, 4} {
                    \node at (\x, \y) [nodo] {};
                }
            }
        \end{tikzpicture}
        \label{fig:2d_grid}
        \caption*{(a)}
    \end{minipage}
    \hspace{0,1em}
    \begin{minipage}[b]{0.48\textwidth}
        \centering
        \begin{tikzpicture}[xscale=0.675, yscale=0.675, z={(0.5, 0.4)}]
            
            \foreach \x in {-3, -1, 1, 3} {
                \foreach \y in {-3, -1, 1, 3} {
                    \draw [dashed] (\x, \y, 3.5) -- (\x, \y, 2);  
                    \draw (\x, \y, 2) -- (\x, \y, 0);             
                    \draw [dashed] (\x, \y, 0) -- (\x, \y, -1.5); 
                }
            }
            
            \foreach \z in {2, 0} {
                \foreach \y in {-3, -1, 1, 3} {
                    \draw [dashed] (-4.5, \y, \z) -- (-3, \y, \z);
                    \draw (-3, \y, \z) -- (3, \y, \z);
                    \draw [dashed] (3, \y, \z) -- (4.5, \y, \z);
                }
                \foreach \x in {-3, -1, 1, 3} {
                    \draw [dashed] (\x, -4.5, \z) -- (\x, -3, \z);
                    \draw (\x, -3, \z) -- (\x, 3, \z);
                    \draw [dashed] (\x, 3, \z) -- (\x, 4.5, \z);
                }
                \foreach \x in {-3, -1, 1, 3} {
                    \foreach \y in {-3, -1, 1, 3} {
                        \node at (\x, \y, \z) [nodo] {};
                    }
                }
            }
        \end{tikzpicture}
        \label{fig:3d_grid}
        \caption*{(b)}
    \end{minipage}
            \caption{The two-dimensional lattice graph $\Z^2_\varepsilon$ and  the three-dimensional lattice graph $\Z^3_\varepsilon$.}
\end{figure}

 Inspired by \cite{BBDT2,Do2}, in this paper we study the asymptotic behavior of action and energy ground states of  (NLS) equation on lattice graphs as the edge length tends to zero.  Compared with the analysis in \cite{Do2}, a major difficulty in our setting is the absence of restriction and extension estimates between functions on lattice graphs $\Vd$ and Euclidean spaces $\R^d$. To overcome this difficulty, in Section \ref{sec2} we establish restriction and extension estimates between functions on lattice graphs $\Vd$ and metric grids $\Gd$, which are of independent interest (see Figure \hyperref[figvg]{2} for the extension from  $\Vd$ to $\Gd$ and Figure \hyperref[figgv]{3} for the restriction from  $\Gd$ to $\Vd$). Then, combining the estimates in \cite{Do2} and Sections \ref{sec2}-\ref{sec4}, we prove in Theorems \ref{th1} and \ref{th3} that the sequences of extensions of action ($2<p<2^*$) and energy ($2<p<2+\frac{4}{d}$) ground states of \eqref{eqnlsv}  are minimizing sequences for the corresponding functionals restricted to the associated manifold, and then converge strongly in $H^1(\R^d)$ to the corresponding ground states on $\R^d$ as $\varepsilon \to 0$. As a by-product of the arguments developed for the singular limit problem, a multiplicity result for fixed-mass critical points of the energy functional on the lattice graph $\V^d_1$ is also established in Theorem \ref{th4}. Moreover, we study the singular limit problem for action ($d\geq 3$ and $p > 2^*$) and energy ($d\geq 2$ and $p> 2+\frac{4}{d}$) ground states of the (NLS) equation on lattice graphs in Theorems \ref{th2} and \ref{thl2sup}, respectively. We also show in Remark \ref{re1} that ground states of (NLS) equation on $\R^d$ can be approximated by ground states of (NLS) equation on finite graphs. By employing a strategy analogous to the analysis of the singular limit problem, we prove in Theorem \ref{th6} that the best constant in the discrete Sobolev inequality on lattice graphs is attained and, consequently, establish the existence of a positive solution to the associated critical Lane--Emden equation on lattice graphs. Furthermore, the attainability of the best constants in Gagliardo--Nirenberg type inequalities for $p\nearrow 2^*$ is established in Theorem \ref{th7}. Finally, we extend the results of Theorems \ref{th6} and \ref{th7} to metric grids in Theorems \ref{thmg1} and \ref{thmg2}, respectively.
 
In \cite[Introduction]{Do2}, Dovetta posed the following question:
{\bf \begin{itemize}
    \item [($Q_1$)]\label{q1} For ground states of the (NLS) equation on metric grids, can the rate $o\left(\varepsilon^{\frac{d-2}{2}(2^*-p)-\gamma}\right)$ for every $\gamma>0$ in \cite[Theorems 2.1 and 2.2]{Do2} be improved to $O(\varepsilon^{\frac{d-2}{2}(2^*-p)})$?
\end{itemize}}
In Theorems \ref{th1} and \ref{th3}, thanks to the newly established Gagliardo--Nirenberg type inequality \eqref{eqlinfty}, we answer the question (\hyperref[q1]{$Q_1$}) in the affirmative for the ground states of the (NLS) equation on lattice graphs. Furthermore, by repeating our arguments, one can also give a positive answer to question (\hyperref[q1]{$Q_1$}) for the ground states of the (NLS) equation on metric grids.

It is also worth noting that the discrete translation alignment used in the proofs of \cite[Theorems~2.1--2.2]{Do2}, while sufficient to prevent vanishing, does not explicitly fix the center of the limiting ground state at the origin. In the proofs of Theorems \ref{th1} and \ref{th3}, we instead choose suitable translations in $\mathbb R^d$ to ensure that every subsequential limit is the unique positive ground state attaining its maximum at the origin. This, in turn, yields convergence of ground states in Theorems \ref{th1}(ii) and \ref{th3}(ii) as $\varepsilon\to0$, without passing to a subsequence.

In \cite{hua2021existence}, Hua and Li posed the following question:
{\bf \begin{itemize}
    \item [($Q_2$)]\label{q2} 
    For $d\geq 3$ and $\frac{2+2d}{d}<q\leq 2^*$, does the following Lane--Emden equation on the lattice graph $\Z^d$
\[
-\Delta u=u^{q-1}\quad\text{in }\mathbb Z^d
\]
admit a non-trivial non-negative  solution?
\end{itemize}}
The nonexistence result of Hua and Li \cite[Theorem 5]{hua2021existence}, initially obtained for
\(2<q\leq 2+2/d\), was later improved by Gu, Huang, and Sun
\cite[Remark 1.3]{GHS} to
\(
2<q\leq 2+\frac{2}{d-2}.
\)
Since positive solutions exist for \(q>2^*=2d/(d-2)\), the intermediate
range
\(
2+\frac{2}{d-2}<q\leq 2^*
\)
remains open, including the critical case \(q=2^*\).

In Theorem \ref{th6}, we answer question (\hyperref[q2]{$Q_2$}) in the affirmative for $q=2^*$. However, the case
\(
2+\frac{2}{d-2}<q<2^*
\)
remains open.
\begin{figure}[htbp]\label{figvg}
    \centering

        \begin{minipage}[b]{0.48\textwidth}
        \centering
    \begin{tikzpicture}[xscale=0.6, yscale=0.6]
                    \useasboundingbox (-6, -3) rectangle (6, 3);

                \draw (-4, 0) -- (4, 0);
                \draw [dashed] (-5.5, 0) -- (-4, 0);
                \draw [dashed] (4, 0) -- (5.5, 0);
            
            \foreach \x in {-4, -2, 0, 2, 4} {   
                    \node at (\x, 0) [nodo] {};
            }
            \node at (-4,-1.7) [nodo] {} [blue];
            \node at (-2,0.4) [nodo] {} [blue];
            \node at (0,1.2) [nodo] {} [blue];
            \node at (2,-0.4) [nodo] {} [blue];
            \node at (4,1.4) [nodo] {} [blue];
                        \draw (4,2)  node{\blue{$u$}};
                    \end{tikzpicture}
    \end{minipage}
       \begin{minipage}[b]{0.48\textwidth}
        \centering
        \begin{tikzpicture}[xscale=0.6, yscale=0.6]
                        \useasboundingbox (-6, -3) rectangle (6, 3);

                \draw (-4, 0) -- (4, 0);
                \draw [dashed] (-5.5, 0) -- (-4, 0);
                \draw [dashed] (4, 0) -- (5.5, 0);
            
            \foreach \x in {-4, -2, 0, 2, 4} {   
                    \node at (\x, 0) [nodo] {};
            }
            \draw (-5.5,-1.5)--(-4,-1.7)[dashed,blue];
            \draw (-4,-1.7)--(-2,0.4)[blue];
            \draw (-2,0.4)--(0,1.2)[blue];
            \draw (0,1.2)--(2,-0.4)[blue];
            \draw (2,-0.4)--(4, 1.4)[blue]; 
            \draw (4,1.4)--(5.5,1.3) [dashed,blue];
                        \draw (4,2)  node{\blue{$\tilde{u}$}};
        \end{tikzpicture}
    \end{minipage}
    \hspace{0,1em}
            \caption{The extension $\tilde{u}$ of a function $u \in H^1(\V^1_\varepsilon)$ to $\G^1_\varepsilon$ (as defined in Section \ref{sec2}).}
\end{figure}
\begin{figure}[htbp]\label{figgv}
    \centering
    
    \begin{minipage}[c]{0.48\textwidth}
        \centering
        \begin{tikzpicture}[ xscale=0.6, yscale=0.6]
            \useasboundingbox (-6, -3) rectangle (6, 3);
                \draw (-4, 0) -- (4, 0);
                \draw [dashed] (-5.5, 0) -- (-4, 0);
                \draw [dashed] (4, 0) -- (5.5, 0);
            
            \foreach \x in {-4, -2, 0, 2, 4} {   
                    \node at (\x, 0) [nodo] {};
            }
            \draw (-5.5,0.3).. controls (-5,0.4) ..(-4,1)[dashed,blue];
            \draw (-4,1).. controls (-3,1.2) ..(-2,2)[blue];
            \draw (-2,2).. controls (-1.5,0.5) and (-1, -1.5) ..(0,1)[blue];
            \draw (0,1).. controls (1,-2) ..(2,-1)[blue];
            \draw (2,-1).. controls (2.5,-1.2) and (3,2) ..(4, -0.5)[blue]; 
            \draw (4, -0.5).. controls (5,-0.2) .. (5.5,-0.1) [dashed,blue];
            \draw (4,2)  node{\blue{$w$}};
        \end{tikzpicture}
    \end{minipage}
        \begin{minipage}[c]{0.48\textwidth}
        \centering
    \begin{tikzpicture}[xscale=0.6, yscale=0.6]
        \useasboundingbox (-6, -3) rectangle (6, 3);
                \draw (-4, 0) -- (4, 0);
                \draw [dashed] (-5.5, 0) -- (-4, 0);
                \draw [dashed] (4, 0) -- (5.5, 0);
            
            \foreach \x in {-4, -2, 0, 2, 4} {   
                    \node at (\x, 0) [nodo] {};
            }
            \node at (-4,1) [nodo] {} [blue];
            \node at (-2,2) [nodo] {} [blue];
            \node at (0,1) [nodo] {} [blue];
            \node at (2,-1) [nodo] {} [blue];
            \node at (4,-0.5) [nodo] {} [blue];
            \draw (4,2)  node{\blue{$\hat{w}$}};
                    \end{tikzpicture}
    \end{minipage}
            \caption{The restriction $\hat{w}$ of a function $w \in H^1(\G^1_\varepsilon)$ to $\V^1_\varepsilon$ (as defined in Section \ref{sec2}).}
\end{figure}

\subsection{The functional setting}

For every $d \geq 1$ and $\varepsilon>0$, define $$\varepsilon\mathbb{Z}^{d}:=\left\{\varepsilon x:=(\varepsilon x_{1},...,\varepsilon x_{d}): x =(x_{1},...,x_{d}) \in \Z^d \right\},$$
where $$\Z^d:=\left\{x=(x_{1},...,x_{d}):x_{i}\in\mathbb{Z},1\le i\le d\right\}.$$
We denote by $\mathbb{Z}^{d}_\varepsilon:=(\V_\varepsilon^d, \E_\varepsilon^d)$ the lattice graph   (with edge length $\varepsilon$, see e.g. Figure \hyperref[figure1]{1} for $d=2,3$) with the set of vertices
$$\V_\varepsilon^d:=\varepsilon\mathbb{Z}^{d}$$
and the set of edges
\[\E_\varepsilon^d=\left\{(x,y):x,y\in \V_\varepsilon^d,\sum_{i=1}^{d}|x_{i}-y_{i}|=\varepsilon \right\}.\]
Let us write $x\sim y$ ($y$ is a neighbour of $x$) if $(x,y) \in \E_\varepsilon^d$.
We denote the space of functions on $\V_\varepsilon^d$ by $C(\V_\varepsilon^d)$. For $u\in C(\V_\varepsilon^d)$, its support set is defined as $\operatorname{supp}(u):=\{x\in\V_\varepsilon^d:u(x)\neq 0\}$. Let $C_{c}(\V_\varepsilon^d)$ be the set of all functions with finite support. 

Let $\mu$ be the counting measure on $\V_\varepsilon^d$, i.e., for any subset $A \subset \V_\varepsilon^d$, $\mu(A):=\#\{x: x \in A\}$. For any function $f$ on $\V_\varepsilon^d$, we write $$\int_{\V_\varepsilon^d} f d \mu:=\sum_{x \in \V_\varepsilon^d} f(x),$$ whenever it makes sense.

For any $1 \le p \le  \infty$, $\ell^p(\V_\varepsilon^d)$ denotes the linear space of $p$-th integrable functions on $\V_\varepsilon^d$ equipped with the norm
$$
\|u\|_{\ell^p(\V_\varepsilon^d)}:=
\begin{cases}\displaystyle
	\left( \sum_{x \in \V_\varepsilon^d} |u(x)|^p \right)^{1/p}, & 1 \le p <\infty, \\
    \displaystyle
	\sup_{x \in \V_\varepsilon^d} |u(x)|, & p = \infty.
\end{cases}
$$

For any functions $u, v \in C(\V_\varepsilon^d)$, we define the associated gradient form as
$$
\Gamma_\varepsilon^d(u, v)(x):= \frac{1}{2\varepsilon} \sum_{y \sim x} (u(y)-u(x))(v(y)-v(x)).
$$
Let $\Gamma_\varepsilon^d(u):=\Gamma_\varepsilon^d(u, u)$, and define
\begin{equation}\label{tidu}
	|\nabla_\varepsilon^d u|(x):=\sqrt{\Gamma_\varepsilon^d(u)(x)}=\left( \frac{1}{2\varepsilon} \sum_{y \sim x}\abs{u(y)-u(x)}^2\right)^{1 / 2} .
\end{equation}
For brevity, we write $\norm{|\nabla_\varepsilon^d u|}_{\ell^2(\V_\varepsilon^d)}$ as $\norm{\nabla_\varepsilon^d u}_{\ell^2(\V_\varepsilon^d)}$.

Let $H^1(\V_\varepsilon^d)$ be the completion of $C_c(\V_\varepsilon^d)$ under the norm
$$\|u\|_{H^{1}(\V_\varepsilon^d)}:=\left(\norm{\nabla_\varepsilon^d u}_{\ell^2(\V_\varepsilon^d)}^2+\norm{u}_{\ell^2(\V_\varepsilon^d)}^2\right)^{1 / 2}=\left(\frac{1}{2\varepsilon} \sum_{x \in \V_\varepsilon^d} \sum_{y \sim x}\abs{u(y)-u(x)}^2+\sum_{x \in \V_\varepsilon^d}  \abs{u(x)}^2\right)^{1 / 2}.$$
For any function $u \in C(\V_\varepsilon^d)$, the graph Laplacian of $u$ is defined by
\begin{equation}\label{eqglaplace}
    \Delta_\varepsilon^d u(x):=\frac{1}{\varepsilon} \sum_{y \sim x} (u(y)-u(x)), \quad \forall x \in \mathbb{V}_\varepsilon^d.
\end{equation}
Extensive research  has been devoted to nonlinear PDEs on discrete graphs, see \cite{Gr, Gri1, hua2021existence, HX1, HLY, DSR, Stefanov,Weinstein,HJ,HJT}  and references therein. In view of the scope of this paper, we would like to mention \cite{HX1,Weinstein}. In \cite{HX1}, applying the Nehari method, Hua and Xu studied the existence of action ground states of the (NLS) equation $-\Delta u+V(x) u=f(x, u)$ on the lattice graph $\V^d_1$ where $d\geq 1$, $f$ satisfies some growth conditions and the potential function $V$ is periodic or bounded. In \cite{Weinstein}, Weinstein studied the existence of energy ground states of the (NLS) equation $-\Delta u+\lambda u=|u|^{p-2}u$ on the lattice graph $\V^d_1$, where $d \geq 1$, $p>2$, the mass $\norm{u}_{\ell^2(\V^d_1)}^2=\mu$ is prescribed,  and  $\lambda \in \mathbb{R}$ arises  as a Lagrange multiplier. The results established in \cite{HX1,Weinstein} can naturally be applied to the (NLS) equation on $\Vd$, which provide the variational existence results needed for the analysis of singular limit problem on lattice graphs.

\subsection{Main results}  

We can now state our main results, starting with the ground state problem for the (NLS) equation. Let us begin with action ground states. For every $p >2$ and every $\omega > 0$, we introduce the action functional $\hat{J}_{\omega, \V_\varepsilon^d} : H^1(\V_\varepsilon^d) \to \mathbb{R}$
\begin{equation*}
    \hat{J}_{\omega, \V_\varepsilon^d}(u) := \frac{1}{2}\|\nabla_\varepsilon^d u\|_{\ell^2(\V_\varepsilon^d)}^2 + \frac{\varepsilon\omega}{2}\|u\|_{\ell^2(\V_\varepsilon^d)}^2 -\frac{\varepsilon}{p}\|u\|_{\ell^p(\V_\varepsilon^d)}^p
\end{equation*}
and the associated Nehari manifold
\begin{equation}\label{eqdefnv}
\begin{aligned}
    \hat{\mathcal{N}}_{\omega, \V_\varepsilon^d} :&= \left\{ u \in H^1(\V_\varepsilon^d) \backslash \{0\} : \hat{J}'_{\omega, \V_\varepsilon^d}(u)u = 0 \right\} \\
    &= \left\{ u \in H^1(\V_\varepsilon^d)\backslash \{0\}  : \|\nabla_\varepsilon^d u\|_{\ell^2(\V_\varepsilon^d)}^2 + \varepsilon\omega\|u\|_{\ell^2(\V_\varepsilon^d)}^2 = \varepsilon\|u\|_{\ell^p(\V_\varepsilon^d)}^p \right\}.
\end{aligned}
\end{equation}
Letting
\begin{equation} \label{eqminv}
    \hat{\mathcal{J}}_{\V_\varepsilon^d}(\omega) := \inf_{v \in \hat{\mathcal{N}}_{\omega, \V_\varepsilon^d}} \hat{J}_{\omega, \V_\varepsilon^d}(v)
\end{equation}
be the corresponding minimization problem,  $u \in \hat{\mathcal{N}}_{\omega, \V_\varepsilon^d}$ is called a ground state of $\hat{J}_{\omega, \V_\varepsilon^d}$ if $\hat{J}_{\omega, \V_\varepsilon^d}(u) = \hat{\mathcal{J}}_{\V_\varepsilon^d}(\omega)$. By \cite[Theorem 1.1]{HX1}, there always exist ground states of $\hat{J}_{\omega, \V_\varepsilon^d}$ in $\hat{\mathcal{N}}_{\omega, \V_\varepsilon^d}$ for every $\omega > 0$, $\varepsilon > 0$ and $p > 2$. Ground states are solutions of the following (NLS) equation  
\begin{equation}\label{eqnlsv}
   -\frac{\Delta_\varepsilon^d}{\varepsilon} u+\omega u = \abs{u}^{p-2}u \quad \text{in } \V_\varepsilon^d. 
\end{equation}
In addition, by \cite[Theorem B.4]{HJ}, ground states have constant sign. Hence, without loss of generality, we may assume that they are positive throughout this paper.
The next theorem provides an affirmative answer to the question of whether action ground states on $d$-dimensional lattice graphs with vanishing edge length approximate ground states in $\mathbb{R}^d$ in the Sobolev subcritical case.

\subsubsection{Singular limits of action and energy ground states} 

In what follows, for every $\omega > 0$, we make use of the shorthand notation
\[
    \mathcal{J}_{\mathbb{R}^d}(\omega) := \inf_{v \in \mathcal{N}_{\omega, \mathbb{R}^d}} J_{\omega, \mathbb{R}^d}(v).
\]

\begin{theorem}\label{th1}
Let $d \geq 1$, $p \in (2, 2^*)$ and $\omega > 0$ be fixed. \begin{enumerate}[label=\rm(\roman*)]
\item If $d=1,2$ and $p>2$, or if $d \ge 3$ and $p \in \left(2, \frac{2^*}{2}+1\right]$, then there exists a constant $C_{d,p,\omega} > 0$, depending only on $d$, $p$ and $\omega$, such that for every $\varepsilon>0$ small enough,
\[
    \left|\varepsilon^{d-1}\hat{\mathcal{J}}_{\V_\varepsilon^d}(\omega) - \mathcal{J}_{\mathbb{R}^d}(\omega)\right| \le C_{d,p,\omega}\varepsilon.
\]
If $d \ge 3$ and $p \in \left(\frac{2^*}{2}+1, 2^*\right)$, then there exists a constant $C_{d,p,\omega}> 0$, depending only on $d$, $p$ and $\omega$, such that for every $\varepsilon>0$ small enough,
\[
    \left|\varepsilon^{d-1}\hat{\mathcal{J}}_{\V_\varepsilon^d}(\omega) - \mathcal{J}_{\mathbb{R}^d}(\omega)\right| \le C_{d,p,\omega}\varepsilon^{\frac{d-2}{2}(2^*-p)}.  
\]
\item For every positive ground state $u_\varepsilon$ of $\hat{J}_{\omega, \V_\varepsilon^d}$ in $\hat{\mathcal{N}}_{\omega, \V_\varepsilon^d}$, there exists a point $x_\varepsilon \in \mathbb{R}^d$ such that, 
\[
   \bigl(\hat{\mathcal{A}}u_\varepsilon\bigr)(\cdot+x_\varepsilon) \xrightarrow{\varepsilon \to 0} \varphi_\omega,\quad \text{in } H^1(\mathbb{R}^d),
\]
where $\varphi_\omega \in \mathcal{N}_{\omega, \mathbb{R}^d}$ is the unique positive ground state of $J_{\omega, \mathbb{R}^d}$ which attains its maximum at the origin and $\hat{\mathcal{A}}$ is the extension operator from $\Vd$ to $\R^d$ introduced in Section \ref{sec3}.
\end{enumerate}
\end{theorem}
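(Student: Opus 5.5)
We plan to follow Dovetta's two-scale strategy from \cite{Do2}, inserting the metric grid $\Gd$ as an intermediate object between $\Vd$ and $\R^d$. We combine the restriction/extension estimates between $\Vd$ and $\Gd$ from Section \ref{sec2} with the estimates between $\Gd$ and $\R^d$ from \cite{Do2}. This produces the composite extension $\hat{\mathcal A}$ (lattice $\to$ grid $\to \R^d$, piecewise affine then Dovetta's extension) and the composite restriction (restriction from $\R^d$ to $\Gd$, then to $\Vd$).

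For each operator we need to record how the quantities $\varepsilon^{d-1}\|\nabla^d_\varepsilon u\|^2_{\ell^2}$, $\varepsilon^{d}\|u\|^2_{\ell^2}$ and $\varepsilon^{d}\|u\|^p_{\ell^p}$ compare with $\|\nabla \cdot\|^2_{L^2}$, $\|\cdot\|^2_{L^2}$ and $\|\cdot\|^p_{L^p}$. The scaling is forced by the prefactors $\varepsilon$ in $\hat J$ together with the factor $\varepsilon^{d-1}$ in the statement. The gradient and $L^2$ comparisons should hold up to errors $O(\varepsilon)\|\cdot\|_{H^1}^2$. The $L^p$ comparison is the delicate one. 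For a function $v$ on $\R^d$ its error is controlled by $\varepsilon\|\nabla v\|_2\||v|^{p-2}v\|_{2}\lesssim \varepsilon\|v\|_{H^1}\|v\|_{2(p-1)}^{p-1}$. This is finite and $O(\varepsilon)$ for the exponential-decaying, smooth ground state $\varphi_\omega$, which is all we need in the upper-bound direction.

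\textbf{Upper bound.} Restrict $\varphi_\omega$ to $\Vd$ and project it onto $\hat{\mathcal N}_{\omega,\Vd}$ by a scalar multiple $t_\varepsilon$. The estimates above give $t_\varepsilon=1+O(\varepsilon)$, hence $\varepsilon^{d-1}\hat{\mathcal J}_{\Vd}(\omega)\le \mathcal J_{\R^d}(\omega)+C\varepsilon$.

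\textbf{Lower bound.} Take a positive ground state $u_\varepsilon$. By the upper bound and the Nehari identity, $\varepsilon^{d-1}\|\nabla_\varepsilon^d u_\varepsilon\|^2+\varepsilon^{d}\|u_\varepsilon\|^2$ is bounded uniformly. The extension error in the $L^p$ norm must then be estimated for the discrete function $u_\varepsilon$ itself, where we cannot use smoothness. Here we use the new Gagliardo--Nirenberg type inequality \eqref{eqlinfty}, which bounds $\|u_\varepsilon\|_{\ell^\infty}$ in terms of the scaled $H^1$ norm with the explicit power $\varepsilon^{-(d-2)/2}$ of the discrete Sobolev embedding. Writing the $L^p$ error as $\varepsilon\cdot\|\nabla\|\cdot\|u\|_{2(p-1)}^{p-1}$, we interpolate $\|u\|_{2(p-1)}$ between $L^{2^*}$ (bounded uniformly) and $L^\infty$.

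\begin{itemize}
\item When $2(p-1)\le 2^*$, i.e.\ $p\le \frac{2^*}{2}+1$, no $L^\infty$ is needed and the error is $O(\varepsilon)$. For $d=1,2$ all $p$ fall in this case, since the discrete embedding there is uniform.
\item Otherwise, the $\ell^\infty$ bound costs a factor $\varepsilon^{-\frac{d-2}{2}(2(p-1)-2^*)}$. Combined with the prefactor $\varepsilon$, this gives exactly $\varepsilon^{1-\frac{d-2}{2}(2p-2-2^*)}=\varepsilon^{\frac{d-2}{2}(2^*-p)}$, using $1=\frac{d-2}{2}(2^*-2)$.
\end{itemize}

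We then rescale $\hat{\mathcal A}u_\varepsilon$ onto $\mathcal N_{\omega,\R^d}$ with the factor $1+O(\text{rate})$, which yields the matching lower bound and proves (i). Verifying this exponent bookkeeping, and proving that the interpolation really produces it, is the main obstacle.

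For (ii), part (i) shows that $\hat{\mathcal A}u_\varepsilon$, after scaling, is a minimizing sequence for $J_{\omega,\R^d}$ on $\mathcal N_{\omega,\R^d}$. Concentration-compactness (Lions' lemma) excludes vanishing, since the $L^p$ norm stays bounded below, so after translations by some $y_\varepsilon$ we get strong $H^1$ convergence along subsequences to a positive ground state. We need the limit to be centred at the origin, which forces the shift $x_\varepsilon$ to be chosen in $\R^d$ rather than on the lattice. To fix it, we choose $x_\varepsilon$ such that the translated functions, compared against all translates of $\varphi_\omega$, attain the minimal $H^1$ distance at $\varphi_\omega$ itself. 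Such an $x_\varepsilon$ exists by continuity and decay of $\varphi_\omega$. Uniqueness of the positive ground state up to translation then identifies every subsequential limit as $\varphi_\omega$, and hence the whole family converges as $\varepsilon\to0$.
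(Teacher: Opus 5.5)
Your architecture is the paper's. The upper bound restricts the Euclidean ground state and projects it onto the Nehari manifold. Going straight to $\Vd$ instead of through $\tilde J_{\omega,\Gd}$ and Lemma~\ref{leminfa} is harmless, since $\|\nabla^d_\varepsilon\hat u\|_{\ell^2(\Vd)}\le\|u'\|_{L^2(\Gd)}$ and only $t_\varepsilon\le 1+O(\varepsilon)$ is needed. The lower bound extends the lattice ground state by $\hat{\mathcal A}$ and controls the $L^p$ error through \eqref{eqlinfty}. However, the computation you single out as the crux is wrong as written.

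Put $X_\varepsilon:=\varepsilon^{d-1}\|\nabla^d_\varepsilon u_\varepsilon\|^2_{\ell^2(\Vd)}\le C$. Then \eqref{eqsobo} gives $\varepsilon^d\|u_\varepsilon\|^{2^*}_{\ell^{2^*}(\Vd)}\le CX_\varepsilon^{2^*/2}$, and \eqref{eqlinfty} gives $\|u_\varepsilon\|_{\ell^\infty(\Vd)}\le C\varepsilon^{-\frac{d-2}{2}}X_\varepsilon^{1/2}$. Hence $\varepsilon^d\|u_\varepsilon\|^{2p-2}_{\ell^{2p-2}(\Vd)}\le C\varepsilon^{-\frac{d-2}{2}(2p-2-2^*)}$, which is your cost factor. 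But the error term is $\varepsilon\|\hat{\mathcal A}u_\varepsilon\|^{p-1}_{L^{2p-2}}\|\nabla\hat{\mathcal A}u_\varepsilon\|_{L^2}\lesssim\varepsilon\bigl(\varepsilon^d\|u_\varepsilon\|^{2p-2}_{\ell^{2p-2}(\Vd)}\bigr)^{1/2}X_\varepsilon^{1/2}$, so only the square root of that cost enters. Also, $\frac{d-2}{2}(2^*-2)=2$, not $1$.

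Your exponent $1-\frac{d-2}{2}(2p-2-2^*)$ is therefore not the claimed rate. For $d=3$ it equals $5-p$, which is negative for $p\in(5,6)$, so your recipe would not even make the error vanish. The correct bookkeeping uses $\frac{d-2}{4}(2^*-2)=1$:
\[
1-\tfrac{d-2}{4}(2p-2-2^*)=\tfrac{d-2}{4}\bigl[(2^*-2)-(2p-2-2^*)\bigr]=\tfrac{d-2}{2}(2^*-p).
\]
This is exactly what \eqref{eqlem2e2p-2}--\eqref{eqlem2e5} encode.

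Second, you never show that $A_\varepsilon:=\varepsilon^d\|u_\varepsilon\|^p_{\ell^p(\Vd)}$ stays away from $0$; your upper bound only gives $A_\varepsilon\le C$. Let $r_\varepsilon$ denote the rate. The rescaling factor satisfies
\[
t_\varepsilon^{p-2}\le\frac{A_\varepsilon+C\varepsilon}{A_\varepsilon-Cr_\varepsilon},
\]
and this is $1+O(r_\varepsilon)$ only if $A_\varepsilon\ge c>0$. Without that, the lower bound in (i) does not follow. The paper supplies it in Lemma~\ref{lemactionbdd}: the Nehari identity and the scaled inequality \eqref{eqgng} give $A_\varepsilon\le CA_\varepsilon^{p/2}$, hence $A_\varepsilon\ge c$. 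The same bound yields $t_\varepsilon\to1$. You need this in (ii) to pass from the minimizing sequence $t_\varepsilon\hat{\mathcal A}u_\varepsilon$ back to $\hat{\mathcal A}u_\varepsilon$ itself.

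For (ii), your centering is a genuinely different device from the paper's, and it is valid. The paper takes $x_\varepsilon$ maximizing $y\mapsto\int_{B_1(y)}|\hat{\mathcal A}u_\varepsilon|^2$. Any subsequential limit then maximizes its unit-ball mass at the origin, and strict radial monotonicity of the ground state, plus a rearrangement fact, puts its maximum at $0$.

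Your best-fit translation, minimizing $y\mapsto\|\hat{\mathcal A}u_\varepsilon(\cdot+y)-\varphi_\omega\|_{H^1}$, avoids the rearrangement step. If $\hat{\mathcal A}u_{\varepsilon_n}(\cdot+y_n)\to\varphi_\omega(\cdot-z)$, the minimal distance is at most $\|\hat{\mathcal A}u_{\varepsilon_n}(\cdot+y_n+z)-\varphi_\omega\|_{H^1}\to0$. To sidestep the existence question you leave vague, simply take $x_\varepsilon$ to be an $\varepsilon$-almost minimizer. The paper's choice has the advantage of being intrinsic (it does not use $\varphi_\omega$) and of giving non-vanishing directly from Lions' lemma.
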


In Theorem \ref{th1}, we studied the asymptotic behavior of ground states of $\hat{J}_{\omega, \V_\varepsilon^d}$ as $\varepsilon \to 0$ for all $p \in (2,2^*)$. Since ground states of $\hat{J}_{\omega, \V_\varepsilon^d}$ in $\hat{\mathcal{N}}_{\omega, \V_\varepsilon^d}$ always exist for every $d\geq 1$, $\omega > 0$, $\varepsilon > 0$ and $p > 2$, it is natural to further investigate their asymptotic behavior in the Sobolev supercritical case $p>2^*$ when  $d\geq 3$. Our main result in this direction is stated below, and we also emphasize that no analogous results are available in \cite{Do2}.

\begin{theorem}\label{th2}
    Let $d \ge 3$, $p > 2^*$ and $\omega > 0$ be fixed. Then
\[
    \varepsilon^{d-1}\hat{\mathcal{J}}_{\V_\varepsilon^d}(\omega) \rightarrow 0, \quad \text{as }\varepsilon \to 0,
\]
and, for every positive ground state $u_\varepsilon$ of $\hat{J}_{\omega, \V_\varepsilon^d}$ in $\hat{\mathcal{N}}_{\omega, \V_\varepsilon^d}$
\[
    \hat{\mathcal{A}}u_\varepsilon \xrightarrow{\varepsilon \to 0} 0, \quad \text{in } H^1(\mathbb{R}^d),
\]
where $\hat{\mathcal{A}}$ is the extension operator from $\Vd$ to $\R^d$  introduced in Section \ref{sec3}.
\end{theorem}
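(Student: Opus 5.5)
The plan is to bound the ground state level $\hat{\mathcal{J}}_{\V_\varepsilon^d}(\omega)$ from above by an explicit test function concentrated at a single vertex, and then to transfer the resulting smallness to the extension $\hat{\mathcal{A}}u_\varepsilon$ using the extension estimates of Section \ref{sec3}. The key observation is that, in the Sobolev supercritical regime, the most concentrated discrete profile (a Dirac mass at one vertex) is already cheap enough after the rescaling by $\varepsilon^{d-1}$.

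\emph{Step 1: Nehari identity and lower bound.} For $u\in\hat{\mathcal{N}}_{\omega,\V_\varepsilon^d}$ the constraint gives
\[
\hat J_{\omega,\V_\varepsilon^d}(u)=\Bigl(\tfrac12-\tfrac1p\Bigr)\Bigl(\|\nabla_\varepsilon^d u\|_{\ell^2(\V_\varepsilon^d)}^2+\varepsilon\omega\|u\|_{\ell^2(\V_\varepsilon^d)}^2\Bigr)=\Bigl(\tfrac12-\tfrac1p\Bigr)\varepsilon\|u\|_{\ell^p(\V_\varepsilon^d)}^p>0 .
\]
Hence $\hat{\mathcal{J}}_{\V_\varepsilon^d}(\omega)\ge 0$, and it only remains to prove an upper bound.

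\emph{Step 2: Upper bound by a single-vertex test function.} Let $\delta_0$ be the indicator of the origin and take $u=t\delta_0$ with $t>0$. Since the origin has $2d$ neighbours, each edge contributes twice in the double sum, so
\[
\|\nabla_\varepsilon^d u\|_{\ell^2(\V_\varepsilon^d)}^2=\frac{1}{2\varepsilon}\cdot 2\cdot 2d\,t^2=\frac{2d\,t^2}{\varepsilon},\qquad \|u\|_{\ell^2(\V_\varepsilon^d)}^2=t^2,\qquad \|u\|_{\ell^p(\V_\varepsilon^d)}^p=t^p .
\]
The Nehari condition therefore fixes $t^{p-2}=\frac{2d}{\varepsilon^2}+\omega$. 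Consequently, for small $\varepsilon$,
\[
\hat{\mathcal{J}}_{\V_\varepsilon^d}(\omega)\le \Bigl(\tfrac12-\tfrac1p\Bigr)\varepsilon t^2\Bigl(\tfrac{2d}{\varepsilon^2}+\omega\Bigr)\le C_{d,p,\omega}\,\varepsilon^{-1-\frac{4}{p-2}} .
\]
Multiplying by $\varepsilon^{d-1}$ gives
\[
0\le \varepsilon^{d-1}\hat{\mathcal{J}}_{\V_\varepsilon^d}(\omega)\le C\varepsilon^{\,d-2-\frac{4}{p-2}} .
\]
The exponent $d-2-\frac{4}{p-2}$ is positive exactly when $p-2>\frac{4}{d-2}$, that is, when $p>2^*$. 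This proves the first claim.

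\emph{Step 3: Convergence of the extensions to zero.} Let $u_\varepsilon$ be a positive ground state. By Step 1,
\[
\|\nabla_\varepsilon^d u_\varepsilon\|_{\ell^2(\V_\varepsilon^d)}^2+\varepsilon\omega\|u_\varepsilon\|_{\ell^2(\V_\varepsilon^d)}^2=\frac{2p}{p-2}\,\hat{\mathcal{J}}_{\V_\varepsilon^d}(\omega).
\]
I would now invoke the extension estimates of Section \ref{sec3}, composing the lattice-to-grid extension of Section \ref{sec2} with Dovetta's grid-to-$\R^d$ extension. These should give, up to constants depending only on $d$,
\[
\|\nabla\hat{\mathcal{A}}u\|_{L^2(\R^d)}^2\le C\varepsilon^{d-1}\|\nabla_\varepsilon^d u\|_{\ell^2(\V_\varepsilon^d)}^2,\qquad \|\hat{\mathcal{A}}u\|_{L^2(\R^d)}^2\le C\varepsilon^{d-1}\cdot\varepsilon\|u\|_{\ell^2(\V_\varepsilon^d)}^2 .
\]
This is the natural scaling: $\int_{\R^d}|\nabla u|^2\approx \varepsilon^{d}\sum|{\rm difference}/\varepsilon|^2$ and $\int_{\R^d}u^2\approx\varepsilon^d\sum u^2$. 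Combining these bounds with $\omega>0$ yields
\[
\|\hat{\mathcal{A}}u_\varepsilon\|_{H^1(\R^d)}^2\le C_{d,p,\omega}\,\varepsilon^{d-1}\hat{\mathcal{J}}_{\V_\varepsilon^d}(\omega)\to 0 .
\]

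The only step needing care is this last one: one must check that the constants in the Section \ref{sec3} estimates are uniform in $\varepsilon$ and carry exactly the power $\varepsilon^{d-1}$ for both the gradient and the mass terms. If they come only as two-sided comparisons, I would write out the explicit piecewise-affine, then multilinear, construction and verify the scaling directly on a single cube.
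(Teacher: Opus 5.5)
Your argument is correct, and for the first claim it takes a genuinely different and more elementary route than the paper. The paper (Lemma \ref{lemaction}) restricts a fixed $v\in C_c^\infty(\R^d)$ to $\Gd$ and uses the restriction estimates of \cite[Lemma 4.1]{Do2} and Lemma \ref{lemlp} to get $\limsup_{\varepsilon\to0}\varepsilon^{d-1}\tilde{\mathcal{J}}_{\Gd}(\omega)\le J(v)$ for every $v$ in the continuum Nehari set. It then shows, using the dilations $v(sx)$ with $s\to+\infty$, that the infimum of this formal continuum action is $0$. Since the constants depend on $v$, this gives only $o(1)$. Your Dirac mass $t\delta_0$ is the discrete endpoint of that dilation. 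It uses nothing but the definitions; your computations $\norm{\nabla_\varepsilon^d\delta_0}^2_{\ell^2(\Vd)}=2d/\varepsilon$ and $t^{p-2}=2d\varepsilon^{-2}+\omega$ are right. It avoids the restriction machinery entirely and yields the explicit rate $\varepsilon^{d-1}\hat{\mathcal{J}}_{\Vd}(\omega)\le C\varepsilon^{(d-2)(p-2^*)/(p-2)}$. This rate is in fact sharp: for a ground state $f$, Lemma \ref{lem43} gives $\hat{\mathcal{J}}_{\Vd}(\omega)\ge(\frac12-\frac1p)\norm{\nabla_\varepsilon^d f}^2_{\ell^2(\Vd)}\ge C\varepsilon^{-\frac{p+2}{p-2}}$, and $-1-\frac{4}{p-2}=-\frac{p+2}{p-2}$, so the ground state level has exact order $\varepsilon^{-\frac{p+2}{p-2}}$. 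Your Step 3 is the paper's argument, with one correction. The $L^2$ bound actually available (Lemma \ref{lemequiv} with \cite[Lemma 4.4]{Do2}) is $\norm{\hat{\mathcal{A}}u}^2_{L^2(\R^d)}\le C\varepsilon^{d}\norm{u}^2_{\ell^2(\Vd)}+C\varepsilon\cdot\varepsilon^{d-1}\norm{\nabla_\varepsilon^d u}^2_{\ell^2(\Vd)}$, which has an extra gradient term compared with what you wrote. That term is again bounded by $C\varepsilon\cdot\varepsilon^{d-1}\hat{\mathcal{J}}_{\Vd}(\omega)$, so your conclusion stands. The gradient bound you need holds with constant $1$ (cf.\ \eqref{eqth14}).
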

It is well known that, for $d \geq 3$ and $p>2^*$, the functional $\int_{R^d}\abs{u}^{p}\,dx$ is no longer well-defined on $H^1(\R^d)$. However, roughly speaking, using scaling techniques, we  prove in Lemma \ref{lemaction} below that the infimum of the “action functional” restricted to the associated “Nehari manifold" is equal to $0$,  where the action functional is actually a map defined on $C_c(\R^d)$ and the Nehari manifold is a subset of $C_c(\R^d)$. After showing the infimum is $0$, by estimates established in \cite{Do2} and Section \ref{sec2}, we further show  that the extensions of action ground states of \eqref{eqnlsv} converge strongly to $0$ in $H^1(\R^d)$  as $\varepsilon \to 0$.

An analogous result holds true for fixed-mass ground states of the energy functional.
For every $\mu>0$, define
$$
H_\mu^1(\Vd):=\left\{u \in H^1(\Vd): \norm{u}^2_{\ell^2(\Vd)}=\mu \right\}.
$$
For every $p > 2$, we introduce the energy functional $\hat{E}_{\Vd} : H^1(\Vd) \to \mathbb{R}$
\begin{equation}
\hat{E}_{\Vd}(u) := \frac{1}{2}\|\nabla^d_\varepsilon u\|_{\ell^2(\Vd)}^2 - \frac{\varepsilon}{p}\|u\|_{\ell^p(\Vd)}^p
\end{equation}
and denote by
$$
\hat{\mathcal{E}}_{\Vd}(\mu) := \inf_{v \in H_\mu^1(\Vd)} \hat{E}_{\Vd}(v)
$$
the corresponding ground state problem at mass $\mu > 0$. As usual, $u \in H_\mu^1(\Vd)$ is called a ground state of $\hat{E}_{\Vd}$ at mass $\mu$ if $\hat{E}_{\Vd}(u) = \hat{\mathcal{E}}_{\Vd}(\mu)$. If $u$ is a ground state of $\hat{E}_{\Vd}$, then
\begin{equation}\label{eqenergyl}
   -\Delta_\varepsilon^d u+\varepsilon\mathcal{L}_{\Vd}(u) u = \varepsilon\abs{u}^{p-2}u \quad \text{in } \V_\varepsilon^d,
\end{equation}
where
\begin{equation}
\mathcal{L}_{\Vd}(u) := \frac{\varepsilon\|u\|_{\ell^p(\Vd)}^p - \|\nabla^d_\varepsilon u\|_{\ell^2(\Vd)}^2}{\varepsilon\|u\|_{\ell^2(\Vd)}^2}.
\end{equation}
Adapting the analysis of \cite{Stefanov,Weinstein} ensures that there always exist nonnegative ground states of $\hat{E}_{\Vd}$ at mass $\mu$ for every $\varepsilon > 0$, $\mu > 0$ and $p \in (2, 2 + \frac{4}{d})$. Moreover, thanks to the connectivity of $\Z^d_\varepsilon$, we know that the nonnegative ground states of $\hat{E}_{\Vd}$ are positive.  For such ground states we have the following convergence result, where we also use the notation

$$
\mathcal{E}_{\mathbb{R}^d}(\mu) := \inf_{v \in H_\mu^1(\R^d)} E_{\mathbb{R}^d}(v).
$$
\begin{theorem}\label{th3}
        Let $d \ge 1$, $p \in (2,2+\frac{4}{d})$ and $\mu> 0$ be fixed. 
\begin{enumerate}[label=\rm(\roman*)]
    \item If $d\in \{1,2,3,4\}$ and $p \in (2, 2+\frac{4}{d})$, or $d \geq 5$ and $p \in \left(2,\frac{2^*}{2}+1\right]$, then there exists a constant $C_{d,p,\mu}>0$, depending only on $d$, $p$ and $\mu$, such that for every $\varepsilon>0$ small enough,
    $$
    \left|\varepsilon^{d-1}\hat{\mathcal{E}}_{\V_\varepsilon^d}\left(\frac{\mu}{\varepsilon^{d}}\right) - \mathcal{E}_{\mathbb{R}^d}(\mu)\right| \le C_{d,p,\mu}\varepsilon.
    $$
If $d \geq 5$ and $p \in \left(\frac{2^*}{2}+1, 2+\frac{4}{d}\right)$, then there exists a constant $C_{d,p,\mu}>0$, depending only on $d$, $p$ and $\mu$, such that for every $\varepsilon>0$ small enough,
    $$
\left|\varepsilon^{d-1}\hat{\mathcal{E}}_{\V_\varepsilon^d}\left(\frac{\mu}{\varepsilon^{d}}\right) - \mathcal{E}_{\mathbb{R}^d}(\mu)\right| \le C_{d,p,\mu}\varepsilon^{\frac{d-2}{2}(2^*-p)}.
    $$
    \item For every positive ground state $u_\varepsilon$ of $\hat{E}_{\Vd}$ in $H^1_{\frac{\mu}{\varepsilon^{d}}}(\Vd)$, there exists $x_\varepsilon\in \R^d$ such that,
    \[
   \bigl(\hat{\mathcal{A}}u_\varepsilon\bigr)(\cdot+x_\varepsilon) \xrightarrow{\varepsilon \to 0}  \varphi_\mu, \quad \text{in } H^1(\mathbb{R}^d),
\]
where $\varphi_\mu\in H^1_\mu(\R^d)$ is the unique positive ground state of $E_{\mathbb{R}^d}$ at mass $\mu$ which attains its maximum at the origin and $\hat{\mathcal{A}}$ is the extension operator from $\Vd$ to $\R^d$  introduced in Section \ref{sec3}. Furthermore, 
$$
\lim_{\varepsilon\to 0}\mathcal{L}_{\Vd}(u_\varepsilon)=\omega_\mu,
$$
where $\omega_\mu$ is the value of the parameter $\omega$ for which $\varphi_\mu$ solves \eqref{eqrd}.
\end{enumerate}
\end{theorem}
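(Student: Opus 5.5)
The proof of Theorem \ref{th3} follows the scheme of Theorem \ref{th1}, now for the constraint $H^1_\mu$. The approach is to compare the energies two ways, through the metric grid $\Gd$, using the restriction/extension estimates of Section \ref{sec2} (between $\Vd$ and $\Gd$) and those of \cite{Do2} (between $\Gd$ and $\R^d$). The scaling is fixed by the definitions. If $\hat{\mathcal A}u$ is the extension of $u\in H^1(\Vd)$, then $\|\hat{\mathcal A}u\|_{L^2}^2\approx \varepsilon^d\|u\|_{\ell^2}^2$, $\|\nabla \hat{\mathcal A}u\|_{L^2}^2\approx \varepsilon^{d-1}\|\nabla^d_\varepsilon u\|^2_{\ell^2}$ and $\|\hat{\mathcal A}u\|_{L^p}^p\approx\varepsilon^{d}\|u\|_{\ell^p}^p$. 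Hence mass $\mu/\varepsilon^d$ on $\Vd$ corresponds to mass $\mu$ on $\R^d$, and $\varepsilon^{d-1}\hat E_{\Vd}(u)\approx E_{\R^d}(\hat{\mathcal A}u)$.

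\textbf{Upper bound.} Take $\varphi_\mu$ and restrict it to $\Vd$, either by sampling at the vertices or by restricting first to $\Gd$ and then to $\Vd$. The restriction estimates give errors of order $\varepsilon$ in the kinetic and mass terms, since $\varphi_\mu$ is smooth and exponentially decaying. Rescale the restriction by a factor $(1+O(\varepsilon))$ so that its mass is exactly $\mu/\varepsilon^d$. This yields $\varepsilon^{d-1}\hat{\mathcal E}_{\Vd}(\mu/\varepsilon^d)\le \mathcal E_{\R^d}(\mu)+C\varepsilon$.

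\textbf{Lower bound.} For a ground state $u_\varepsilon$, first obtain uniform bounds. The discrete Gagliardo--Nirenberg inequality combined with $p<2+4/d$ bounds $\varepsilon^{d-1}\|\nabla^d_\varepsilon u_\varepsilon\|^2$ uniformly, because the energy is bounded above by the previous step. Then pass to $w_\varepsilon=\hat{\mathcal A}u_\varepsilon$. The kinetic and mass terms are controlled with error $O(\varepsilon)$ from the extension estimates. The $L^p$ term is the delicate part: the discrepancy between $\varepsilon^d\|u\|_{\ell^p}^p$ and $\|w\|_{L^p}^p$ is bounded by a quantity like $\varepsilon\|\nabla w\|\,\|w\|_{L^{2(p-1)}}^{p-1}$. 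For $p\le 2^*/2+1$ we have $2(p-1)\le 2^*$, so Sobolev gives $O(\varepsilon)$. For larger $p$ I would invoke the new Gagliardo--Nirenberg type inequality \eqref{eqlinfty}, which controls $\|u\|_{\ell^\infty}$ by an explicit power of $\varepsilon$ times $H^1$-type norms. Interpolating, this should give the rate $\varepsilon^{\frac{d-2}{2}(2^*-p)}$. Next rescale $w_\varepsilon$ to mass exactly $\mu$ and conclude $E_{\R^d}(w_\varepsilon)\le \varepsilon^{d-1}\hat{\mathcal E}+C\varepsilon^{\alpha}$, where $\alpha=1$ or $\alpha=\frac{d-2}{2}(2^*-p)$ according to the case. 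Together with the upper bound this proves (i). The dimension split $d\le4$ versus $d\ge5$ arises because $2+4/d\le 2^*/2+1$ exactly when $d\le4$.

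\textbf{Convergence, part (ii).} By the previous step, $w_\varepsilon$ is a minimizing sequence for $\mathcal E_{\R^d}(\mu)$. Concentration-compactness on $\R^d$ for $L^2$-subcritical $p$ gives relative compactness up to translations. Because the minimizer is unique up to translation and sign, and $w_\varepsilon>0$, every subsequential limit is a translate of $\varphi_\mu$. I would choose $x_\varepsilon$ so that the translate is centred at the origin, for instance as a maximum point of a mollification, or by minimizing $\|w_\varepsilon(\cdot+y)-\varphi_\mu\|_{H^1}$ over $y$. Then every subsequence converges to $\varphi_\mu$ itself, and so the whole family converges. Finally, $\mathcal L_{\Vd}(u_\varepsilon)$ is written through the same rescaled quantities as $(\|w\|_p^p-\|\nabla w\|_2^2)/\mu+o(1)$. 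This tends to $(\|\varphi_\mu\|_p^p-\|\nabla\varphi_\mu\|_2^2)/\mu=\omega_\mu$ by testing \eqref{eqrd} with $\varphi_\mu$.

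The main obstacle is the $L^p$ comparison with the sharp rate in the regime $p>2^*/2+1$, where Sobolev is no longer enough; this is exactly where \eqref{eqlinfty} has to be used carefully.
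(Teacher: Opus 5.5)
Your proposal is correct and follows essentially the same route as the paper. The paper likewise gets the upper bound by restricting $\varphi_\mu$ and normalizing the mass (Lemma \ref{lemerengy}), and the lower bound by extending a ground state via $\hat{\mathcal A}$ and rescaling it to mass $\mu$. It controls the $L^p$ discrepancy through Lemma \ref{lemlp} and, for $p>\frac{2^*}{2}+1$, the $L^\infty$ bound \eqref{eqlinfty}, exactly as in Lemma \ref{lemlL}.

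The only differences are cosmetic and lie in (ii). The paper centres by maximizing $\int_{B_1(y)}|\hat{\mathcal A}u_\varepsilon|^2$ and computes the multiplier limit through $\mathcal L_{\Vd}(u_\varepsilon)=(1-\frac2p)\varepsilon^d\|u_\varepsilon\|_{\ell^p(\Vd)}^p/\mu-2\varepsilon^{d-1}\hat{\mathcal E}_{\Vd}(\mu/\varepsilon^d)/\mu$. You instead centre by minimizing the $H^1$-distance to the orbit of $\varphi_\mu$ (or by a mollified maximum) and use the direct formula; both choices work.
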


As in Theorem \ref{th2}, we are also interested in the asymptotic behavior of energy ground states of $\hat{E}_{\Vd}$ at mass $\mu/\varepsilon^d$ for $d\geq 2$ and $p>2+\frac{4}{d}$. Now, we consider the case that $d \ge 2$, $p > 2+\frac{4}{d}$ and $\mu> 0$. By adapting the arguments in \cite{HJT}, in the proof of Theorem \ref{thl2sup}, we show that for any fixed $\mu>0$ and for $\varepsilon>0$ small enough, there exists a positive ground state $u_\varepsilon$ of  $\hat{E}_{\V^d_\varepsilon}$ at mass $\frac{\mu}{\varepsilon^d}$. 
 However, the following theorem  shows that the family $\{\hat{\mathcal{A}}u_\varepsilon\}$ diverges in $H^1(\R^d)$ as $\varepsilon\to0$. Therefore, in contrast to Theorems \ref{th1}-\ref{th3}, $\{\hat{\mathcal{A}}u_\varepsilon\}$ cannot converge to any function in $H^1(\R^d)$.
\begin{theorem}\label{thl2sup}
       Let $d \ge 2$, $p > 2+\frac{4}{d}$ and $\mu> 0$ be fixed.  Then
\[
    \varepsilon^{d-1}\hat{\mathcal{E}}_{\V_\varepsilon^d}(\frac{\mu}{\varepsilon^d}) \rightarrow -\infty, \quad \text{as }\varepsilon \to 0,
\]
and, for every $\varepsilon > 0$ small enough, and for any positive ground state $u_\varepsilon$ of $\hat{E}_{ \V_\varepsilon^d}$ in $H^1_{\frac{\mu}{\varepsilon^d}}(\V_\varepsilon^d)$, 
\[
  \hat{\mathcal{A}}u_\varepsilon \xrightarrow{\varepsilon \to 0} +\infty, \quad \text{in } H^1(\mathbb{R}^d),
\]
where $\hat{\mathcal{A}}$ is the extension operator from $\Vd$ to $\R^d$  introduced in Section \ref{sec3}.
\end{theorem}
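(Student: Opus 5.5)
We prove Theorem \ref{thl2sup} in three steps: existence, divergence of the energy level via an explicit trial function, and then divergence of the extensions via a lattice Gagliardo--Nirenberg inequality.

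\emph{Step 1: existence.} By adapting the arguments of \cite{HJT} we obtain, for fixed $\mu$ and small $\varepsilon$, a positive ground state $u_\varepsilon$ at mass $\mu/\varepsilon^d$. The key check is the strict inequality $\hat{\mathcal{E}}_{\Vd}(\mu/\varepsilon^d)<0$, which rules out vanishing; Step 2 gives much more than this.

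\emph{Step 2: $\varepsilon^{d-1}\hat{\mathcal{E}}_{\Vd}(\mu/\varepsilon^d)\to-\infty$.} We use a concentrated trial function. In $\R^d$, for $p>2+\frac4d$ the mass-preserving scaling $v_\lambda(x)=\lambda^{d/2}v(\lambda x)$ with $v\in H^1_\mu(\R^d)\cap C_c^\infty$ gives
\[
E_{\R^d}(v_\lambda)=\frac{\lambda^2}{2}\|\nabla v\|_2^2-\frac{\lambda^{\frac{d(p-2)}{2}}}{p}\|v\|_p^p\to-\infty ,
\]
since $\frac{d(p-2)}{2}>2$.

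\begin{enumerate}[label=\rm(\alph*)]
\item Fix $\lambda$ large so that $E_{\R^d}(v_\lambda)<-M$.
\item Restrict $v_\lambda$ to $\Vd$ by sampling and renormalizing the mass. This uses the restriction estimates of Sections \ref{sec2}--\ref{sec3}: restriction from $\R^d$ to the metric grid as in \cite{Do2}, then from $\Gd$ to $\Vd$.
\item With the scaling $\varepsilon^{d-1}\hat E_{\Vd}$ and mass $\mu/\varepsilon^d$, the discrete quantities $\varepsilon^{d-1}\cdot\frac12\|\nabla_\varepsilon^d w\|^2$, $\varepsilon^{d}\|w\|_{\ell^p}^p$ and $\varepsilon^d\|w\|_{\ell^2}^2$ approximate the continuous ones. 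For fixed smooth compactly supported $v_\lambda$ this is a Riemann-sum argument, with errors $o(1)$ as $\varepsilon\to0$.
\end{enumerate}

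Hence $\limsup_{\varepsilon\to0}\varepsilon^{d-1}\hat{\mathcal{E}}_{\Vd}(\mu/\varepsilon^d)\le -M$, and since $M$ is arbitrary the first claim follows.

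\emph{Step 3: $\|\hat{\mathcal A}u_\varepsilon\|_{H^1(\R^d)}\to+\infty$.} Set $A_\varepsilon=\varepsilon^{d-1}\|\nabla_\varepsilon^d u_\varepsilon\|^2$ and $B_\varepsilon=\varepsilon^d\|u_\varepsilon\|_{\ell^p}^p$. From $\varepsilon^{d-1}\hat E(u_\varepsilon)=\frac12A_\varepsilon-\frac1pB_\varepsilon\to-\infty$ we get $B_\varepsilon\to\infty$. The plan is to convert this into divergence of $A_\varepsilon$, and then into divergence of $\|\nabla\hat{\mathcal A}u_\varepsilon\|_{L^2(\R^d)}$ using the extension estimates of Section \ref{sec3}, which give $\|\nabla\hat{\mathcal A}u\|_{L^2}^2\ge c\,\varepsilon^{d-1}\|\nabla^d_\varepsilon u\|^2$ up to constants.

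To pass from $B_\varepsilon\to\infty$ to $A_\varepsilon\to\infty$ we use a scaled discrete Gagliardo--Nirenberg bound. Combining the lattice inequality \eqref{eqlinfty} with $\ell^\infty$ interpolation yields
\[
B_\varepsilon\le C\,(\varepsilon^d\|u\|_2^2)^{a}\,A_\varepsilon^{b}+C\,\varepsilon^{\,\cdot}\,(\cdots).
\]
A cleaner route: $\|u\|_\infty^2\le\|u\|_2^2$, together with the lattice Sobolev inequality in scaled form, shows that $B_\varepsilon$ is controlled by $\mu$ and $A_\varepsilon$ with constants uniform in $\varepsilon$ whenever $A_\varepsilon$ stays bounded. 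Then a bounded subsequence of $A_\varepsilon$ would force $B_\varepsilon$ bounded, contradicting $B_\varepsilon\to\infty$.

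\textbf{Main obstacle.} The delicate point is making this last bound uniform in $\varepsilon$ when $p>2^*$ ($d\ge3$). There the continuous Sobolev embedding fails, and the scaled $\ell^p$ norm is controlled only through the trivial $\ell^\infty\le\ell^2$ bound, which carries negative powers of $\varepsilon$.

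If that route fails, I would argue directly by contradiction. Assume $\|\hat{\mathcal A}u_\varepsilon\|_{H^1}$ stays bounded along a subsequence, so $A_\varepsilon$ is bounded. Then use the Euler--Lagrange equation \eqref{eqenergyl}, in the Pohozaev/Nehari-type identity obtained by testing with $u_\varepsilon$, to relate $B_\varepsilon$ to $A_\varepsilon$ and the multiplier $\mathcal L_{\Vd}(u_\varepsilon)$. The hope is to derive $B_\varepsilon\lesssim A_\varepsilon+\mu|\mathcal L|$, and to bound $\mathcal L$ via minimality by comparing with the trial functions of Step 2.
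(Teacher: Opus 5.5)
\textbf{Steps 1--2.} Both are fine. Step 2 takes a different route from the paper. The paper tests with $w_\varepsilon$ equal to $\sqrt{\mu\varepsilon^{-d}}$ at the origin and $0$ elsewhere, which gives exactly
\[
\varepsilon^{d-1}\hat E_{\Vd}(w_\varepsilon)=d\mu\varepsilon^{-2}-\tfrac1p\mu^{p/2}\varepsilon^{-d(p-2)/2}\to-\infty,
\]
since $\frac{d(p-2)}{2}>2$. Your rescaled continuum profile with Riemann sums is correct. However, it only gives $\limsup_{\varepsilon\to0}\varepsilon^{d-1}\hat{\mathcal{E}}_{\Vd}(\mu/\varepsilon^d)\le -M$, with no rate. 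That rate is exactly what your Step 3 lacks.

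\textbf{The gap: Step 3 for $d\ge3$, $p>2^*$.} The ``cleaner route'' does not give control uniform in $\varepsilon$. Using $\|u_\varepsilon\|_{\ell^\infty}\le\|u_\varepsilon\|_{\ell^2}=\sqrt{\mu}\,\varepsilon^{-d/2}$ and the scaled Sobolev inequality \eqref{eqsobo}, you only get
\[
B_\varepsilon\le C\mu^{\frac{p-2^*}{2}}\,\varepsilon^{-\frac{d(p-2^*)}{2}}A_\varepsilon^{2^*/2}.
\]
The negative power of $\varepsilon$ cannot be removed. Take a smooth profile plus a spike of height $\varepsilon^{-(d-2)/2}$ at one site: $A_\varepsilon$ and the mass stay bounded, while $B_\varepsilon\gtrsim\varepsilon^{d-(d-2)p/2}\to\infty$. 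So ``$A_\varepsilon$ bounded'' does not contradict the bare fact $B_\varepsilon\to\infty$.

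Your fallback is circular. Testing \eqref{eqenergyl} with $u_\varepsilon$ returns $B_\varepsilon=A_\varepsilon+\mu\,\mathcal{L}_{\Vd}(u_\varepsilon)$, which is just the definition of $\mathcal{L}_{\Vd}$. Moreover $\mathcal{L}_{\Vd}(u_\varepsilon)$ actually blows up along ground states, so no comparison argument can bound it.

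For $d=2$, or $p<2^*$, your first idea made precise is the paper's argument. There \eqref{eqgnv} with $\|u_\varepsilon\|^2_{\ell^2}=\mu\varepsilon^{-d}$ gives $B_\varepsilon\le C\mu^{(1-\beta_p)p/2}A_\varepsilon^{\beta_pp/2}$ uniformly in $\varepsilon$. Hence bounded $A_\varepsilon$ would force bounded $B_\varepsilon$, a contradiction.

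\textbf{The fix: use the rate from the single-site competitor.}
\begin{enumerate}
\item Comparing $\varepsilon^{d-1}\hat E_{\Vd}(u_\varepsilon)\le\varepsilon^{d-1}\hat E_{\Vd}(w_\varepsilon)$ gives $B_\varepsilon\ge\frac12\mu^{p/2}\varepsilon^{-d(p-2)/2}$ for small $\varepsilon$.
\item Inserting this into the display above yields $A_\varepsilon^{2^*/2}\ge c\,\varepsilon^{-d(2^*-2)/2}=c\,\varepsilon^{-2^*}$, i.e.\ $A_\varepsilon\ge c\,\varepsilon^{-2}$.
\item Alternatively, $B_\varepsilon\le\mu\|u_\varepsilon\|_{\ell^\infty}^{p-2}$ combined with \eqref{eqgn1v} gives the same bound for every $d\ge2$.
\end{enumerate}

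The paper's own proof applies \eqref{eqgnv} for every $p>2+\frac4d$. But \eqref{eqgnv} is stated only for $p<2^*$; at $p=2^*$ it reduces to \eqref{eqsobo}, and for $p>2^*$ it fails by the spike example above. So the obstacle you flagged is genuine in the paper as well, and the rate argument is what closes it.

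\textbf{Last step.} You need the identity \eqref{eqth14}, $\|\nabla\hat{\mathcal{A}}u_\varepsilon\|_{L^2(\R^d)}^2=\varepsilon^{d-1}\|\nabla^d_\varepsilon u_\varepsilon\|^2_{\ell^2(\Vd)}$. It holds because $\tilde u_\varepsilon$ is affine on every edge. Citing ``the extension estimates'' is not enough: the general metric-grid bound of \cite[Lemma 4.4]{Do2} goes in the opposite direction.
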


It is well known that, for every $p\in (2+\frac{4}{d},2^*)$, $\inf_{v \in H_\mu^1(\R^d)} E_{\mathbb{R}^d}(v)=-\infty$. Thus, roughly speaking, by Theorem \ref{thl2sup}, we show that, for $d \geq 2$ and $p\in (2+\frac{4}{d},2^*)$, the energy ground state problem on lattice graphs serves as an asymptotic approximation of energy ground state problem on $\R^d$ as $\varepsilon \to 0^+$.

One may also ask whether the conclusion of Theorem \ref{thl2sup} remains valid on metric grids introduced in \cite{Do2}, since it is straightforward to adapt the proof of Theorem \ref{th2} to the setting of metric grids. However, it appears that repeating the arguments of Theorem \ref{thl2sup} on metric grids is non-trivial. This difficulty arises because the equality $\norm{\nabla \hat{A}f_\varepsilon}^2_{L^2(\R^d)}=\varepsilon^{d-1}\|\nabla^d_\varepsilon f_{\varepsilon}\|_{\ell^2(\Vd)}^2$ in \eqref{eqth14} (see also \cite[Lemma 4.4]{Do2}) plays a crucial role in our arguments, while, on metric grids, by \cite[Eq. (39)]{Do2}, one only has
$$
\norm{\nabla\mathcal{A}u}^2_{L^2(\R^d)}\leq \varepsilon^{d-1}\norm{u'}^2_{L^2(\Gd)},\quad \forall u \in H^1(\Gd).
$$

We conclude the presentation of our main results concerning the singular limit problem on lattice graphs with the following remark.
\begin{remark}\label{re1}
    Let $d \geq 1$, $p \in (2, 2^*)$ and $\omega > 0$ be fixed. For $R>0$ and $\varepsilon>0$, let $B_R^d:=\{\w\in \V^d_1:\operatorname{dist}(\w,0)\leq R\}$ and
    $$
    \hat{\mathcal{N}}_{\omega,\varepsilon B_R^d}:=\{u \in \hat{\mathcal{N}}_{\omega,\Vd}:u(\vv)=0\,\,  \text{ for every }\,\, \vv \in \V^d_\varepsilon \setminus\varepsilon B_R^d\}.
    $$
    Then, it is easy to see that there exists $u_{\varepsilon,R}$ such that 
    $$
    \hat{J}_{\omega, \V_\varepsilon^d}(u_{\varepsilon,R})= \inf_{v \in \hat{\mathcal{N}}_{\omega,\varepsilon B_R^d}} \hat{J}_{\omega, \V_\varepsilon^d}(v)
    $$
    and, for every fixed $\varepsilon>0$, there exists $\{\vv_{\varepsilon, R}\}_{R>0}\subset \Vd$, such that, up to a subsequence, $u_{\varepsilon,R}(\cdot-\vv_{\varepsilon, R})\to u_\varepsilon$ in $H^1(\Vd)$ as $R\to+\infty$, where $u_\varepsilon$ is a positive ground state of $\hat{J}_{\omega, \V_\varepsilon^d}$ in $\hat{\mathcal{N}}_{\omega, \V_\varepsilon^d}$. Then, by Lemmas \ref{lemequiv}-\ref{lemh1gv} and \cite[Lemma 4.4]{Do2}, we know that, up to a subsequence,
    $\hat{\mathcal{A}}\left(u_{\varepsilon,R}(\cdot-\vv_{\varepsilon, R})\right)\to \hat{\mathcal{A}}u_\varepsilon$ in $H^1(\R^d)$ as $R\to+\infty$. Therefore, we conclude that there exists $\{x_{\varepsilon_n,R_n}\}\subset \R^d$ such that $\varepsilon_n \to 0$ and $R_n\to +\infty$ as $n \to +\infty$, and $\hat{\mathcal{A}}u_{\varepsilon_n,R_n}(\cdot-x_{\varepsilon_n, R_n})\to \varphi_\omega$ in $H^1(\R^d)$ as $n\to+\infty$, where $\varphi_\omega \in \mathcal{N}_{\omega, \mathbb{R}^d}$ is the unique positive ground state of $J_{\omega, \mathbb{R}^d}$ which attains its maximum at the origin. In conclusion, we show that, up to a subsequence, extensions of action ground states $u_{\varepsilon,R}$ on the ball $B_R^d$ (which also implies that $u_{\varepsilon,R}$ has finite support) converge to the action ground state on $\R^d$ as $\varepsilon\to 0$ and $R \to +\infty$. The same conclusion holds for the energy ground states on lattice graphs under the assumptions of Theorem \ref{th3} and we believe that this result will be very useful for numerical approximations of ground states on $\R^d$. 
\end{remark}

As a by-product of the arguments developed to prove Theorem \ref{th3},
we obtain the following multiplicity result for mass constrained critical points of the energy at large masses on $\V_\varepsilon^d$. For the sake of brevity, we present the result only for $\V_1^d$.
\begin{theorem}\label{th4}
    Let $d \geq 2$ and $p \in \left(2+\frac{4}{d},2^*\right)$. Then there exists $\{\mu_n\} \subset \R$ with $\mu_n \to +\infty$ as $n \to +\infty$, such that $\hat{E}_{\V^d_1}$ has at least two distinct positive critical points $u_{n,1}$ and $u_{n,2}$ in $ H^1_{\mu_n}(\V^d_1)$ for every $n\in \mathbb{N}^+$, where $u_{n,1}$ is a positive ground state of $\hat{E}_{\V^d_1}$ in $H^1_{\mu_n}(\V^d_1)$ and $u_{n,2}$ is a positive ground state of $\hat{J}_{\omega_n,\V^d_1}$ for some $\omega_n>0$. Moreover, we have 
    \begin{equation}\label{eqth4}
            \hat{E}_{\V^d_1}(u_{n,2})>\hat{E}_{\V^d_1}(u_{n,1})=\hat{\mathcal{E}}_{\V^d_1}(\mu_n), \quad \forall n\in \mathbb{N}^+,
    \end{equation}
    and
    $$
    \mathcal{L}_{\V^d_1}(u_{n,1})\to+\infty \quad 
\text{and} \quad \mathcal{L}_{\V^d_1}(u_{n,2})\to0,\quad \text{as }n\to+\infty.
    $$
    \end{theorem}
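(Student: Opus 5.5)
The plan is to obtain the two critical points from two different variational problems on $\V^d_1$ and tell them apart by the sign of their energies. The link to the rest of the paper is a scaling that turns action ground states on $\V^d_1$ with small frequency into action ground states on $\V^d_\varepsilon$ with frequency $1$, to which Theorem \ref{th1} applies.

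\textbf{Step 1 (scaling).} Fix $\omega'=1$ and let $v_\varepsilon$ be a positive ground state of $\hat J_{1,\V^d_\varepsilon}$. Define $u(x):=\varepsilon^{2/(p-2)}v_\varepsilon(\varepsilon x)$ for $x\in\V^d_1$. Inserting $v(\varepsilon x)=\lambda u(x)$ with $\lambda^{p-2}=\varepsilon^{-2}$ into \eqref{eqnlsv} shows that $u$ solves $-\Delta^d_1u+\varepsilon^2u=u^{p-1}$ on $\V^d_1$. The change of variables maps the Nehari manifolds and action functionals into each other, up to a positive factor. Hence $u$ is a positive ground state of $\hat J_{\omega,\V^d_1}$ with $\omega=\varepsilon^2$.

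Take $\varepsilon_n\to0$ and put $\omega_n:=\varepsilon_n^2$, $u_{n,2}:=u$ and $\mu_n:=\|u_{n,2}\|^2_{\ell^2(\V^d_1)}$. Since $u_{n,2}$ solves the equation, it is a critical point of $\hat E_{\V^d_1}$ on $H^1_{\mu_n}(\V^d_1)$ with $\mathcal L_{\V^d_1}(u_{n,2})=\omega_n\to0$.

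By Theorem \ref{th1}(ii) and the norm identities for $\hat{\mathcal A}$ from Section \ref{sec3} (in particular $\varepsilon^{d}\|v\|^2_{\ell^2}\approx\|\hat{\mathcal A}v\|^2_{L^2}$), we get $\varepsilon_n^d\|v_{\varepsilon_n}\|^2_{\ell^2}\to\|\varphi_1\|^2_{L^2}$. Therefore
\[
\mu_n=\varepsilon_n^{\frac{4}{p-2}-d}\bigl(\|\varphi_1\|_{L^2}^2+o(1)\bigr)\to+\infty,
\]
because $p>2+\frac4d$ gives $\frac4{p-2}<d$.

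\textbf{Step 2 (sign of the energy).} The same scaling gives $\hat E_{\V^d_1}(u_{n,2})=c_n\,\varepsilon_n^{d-1}\hat E_{\V^d_{\varepsilon_n}}(v_{\varepsilon_n})$ for an explicit $c_n>0$. The strong $H^1$ convergence of Theorem \ref{th1}(ii), together with the $\ell^p$ restriction and extension estimates of Section \ref{sec2}, yields $\varepsilon_n^{d-1}\hat E_{\V^d_{\varepsilon_n}}(v_{\varepsilon_n})\to E_{\R^d}(\varphi_1)$. By the Pohozaev identity,
\[
E_{\R^d}(\varphi_1)=\Bigl(\tfrac12-\tfrac{2}{d(p-2)}\Bigr)\|\nabla\varphi_1\|^2_{L^2}>0
\]
in the mass-supercritical range. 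Hence $\hat E_{\V^d_1}(u_{n,2})>0$ for $n$ large.

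\textbf{Step 3 (energy ground states and their energy).} The test function $\sqrt{\mu}\,\delta_0$ gives
\[
\hat{\mathcal E}_{\V^d_1}(\mu)\le C_d\mu-\tfrac1p\mu^{p/2},
\]
which is negative for large $\mu$. The energy is bounded below on $H^1_\mu(\V^d_1)$ since $\|u\|_{\ell^p}\le\|u\|_{\ell^2}$. Existence of a nonnegative, hence positive (by connectivity), minimizer $u_{n,1}$ for $\mu=\mu_n$ large then follows by the argument adapted from \cite{HJT} that is used in the proof of Theorem \ref{thl2sup}. This is concentration compactness, where negativity of the infimum excludes vanishing and strict subadditivity excludes dichotomy.

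\textbf{Step 4 (conclusion).} Step 3 gives $\hat E(u_{n,1})<0$ and Step 2 gives $\hat E(u_{n,2})>0$, so \eqref{eqth4} holds and the two critical points are distinct. For the Lagrange multiplier of $u_{n,1}$, write $G=\|\nabla u\|^2$ and $P=\|u\|_{\ell^p}^p$. From $\hat E=\frac12G-\frac1pP$ and $G\ge0$ we get $P\ge-p\hat E\ge\mu^{p/2}-pC_d\mu$. Also $G\le 4d\mu$ on $\V^d_1$. Hence
\[
\mathcal L(u_{n,1})=\frac{P-G}{\mu_n}\ge\mu_n^{p/2-1}-C\to+\infty.
\]
Passing to a subsequence and relabelling gives the sequence $\{\mu_n\}$ of the statement.

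\textbf{Main obstacle.} I expect the hardest step to be Step 3, the existence of energy ground states on $\V^d_1$ at large mass in the mass-supercritical regime. It needs strict subadditivity of $\mu\mapsto\hat{\mathcal E}(\mu)$, which I would get from the scaling $u\mapsto tu$ and the strict negativity of the infimum. The second delicate point is checking carefully that the energy converges in Step 2, and not only the action.
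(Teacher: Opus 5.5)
Your proof is correct. Its skeleton matches the paper's:
\begin{itemize}
\item the bound $\hat{\mathcal E}_{\V^d_1}(\mu)\le d\mu-\frac1p\mu^{p/2}$ from the test function $\sqrt{\mu}\,\delta_0$;
\item large-mass existence of positive energy ground states from \cite[Theorem 1.1]{HJT};
\item the rescaling $\varepsilon=\omega^{1/2}$, which turns small-frequency action ground states on $\V^d_1$ into ground states of $\hat J_{1,\V^d_\varepsilon}$.
\end{itemize}

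You differ in how you get $\mu_n\to+\infty$ and how you separate the two energies. In its proof of this theorem, the paper never uses the convergence of ground states. The a priori bound $\varepsilon^d\|g_\varepsilon\|^2_{\ell^2(\V^d_\varepsilon)}\ge 1/C'$ of Lemma \ref{lemactionbdd} already gives $\mu_n\ge C'\omega_n^{\frac{2}{p-2}-\frac d2}\to+\infty$. The Nehari identity then gives directly
\[
\hat E_{\V^d_1}(u_{n,2})=\Bigl(\tfrac12-\tfrac1p\Bigr)\|\nabla^d_1u_{n,2}\|^2_{\ell^2(\V^d_1)}-\tfrac{\omega_n}{p}\mu_n\ge-\tfrac{\omega_n\mu_n}{p}>d\mu_n-\tfrac1p\mu_n^{p/2}\ge\hat{\mathcal E}_{\V^d_1}(\mu_n)
\]
for $n$ large, since $\mu_n\to+\infty$, $\omega_n\to0$ and $p>2$.

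You instead pass through Theorem \ref{th1}(ii), Lemma \ref{lemlL} and the Pohozaev identity on $\R^d$. This is heavier than needed, but it is sound and yields more:
\begin{itemize}
\item the exact growth $\mu_n\sim\|\varphi_1\|^2_{L^2(\R^d)}\,\omega_n^{\frac{2}{p-2}-\frac d2}$;
\item the sign separation $\hat E_{\V^d_1}(u_{n,1})<0<\hat E_{\V^d_1}(u_{n,2})$;
\item an explanation of that separation: the extensions of the rescaled action ground states converge to the Euclidean ground state, whose energy is positive when $p>2+\frac4d$.
\end{itemize}

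The step you flagged as delicate, convergence of the energy in Step 2, is in fact easy. By the Nehari identity on $\V^d_\varepsilon$,
\[
\varepsilon^{d-1}\hat E_{\V^d_\varepsilon}(v_\varepsilon)=\tfrac12\bigl(\varepsilon^d\|v_\varepsilon\|^p_{\ell^p(\V^d_\varepsilon)}-\varepsilon^d\|v_\varepsilon\|^2_{\ell^2(\V^d_\varepsilon)}\bigr)-\tfrac1p\varepsilon^d\|v_\varepsilon\|^p_{\ell^p(\V^d_\varepsilon)},
\]
so only the $\ell^2$ and $\ell^p$ limits are needed. For Step 3, the paper, like you, simply cites \cite[Theorem 1.1]{HJT}.
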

The inequality \eqref{eqth4} implies that, for every $d \geq 2$ and $p \in \left(2+\frac{4}{d},2^*\right)$, there exists $\{\mu_n\} \subset \R$ with $\mu_n \to +\infty$ as $n \to +\infty$ such that, for every $n \in \mathbb{N}^+$, there exists an action ground state that is not an energy ground state with  mass $\mu_n$. 
This result should be compared with \cite[Theorem 1.3]{DST} and \cite[Theorem 1.6]{JeanjenaLu2}, which prove that energy ground states are necessarily action ground states. Thus, Theorem \ref{th4} shows that the equivalence between the action and energy ground state problems fails on lattice graphs. A similar result concerning the equivalence between action and energy ground states on metric graphs was established in \cite[Theorem 1.4]{Do3}. There, the author considered the problem on compact metric graphs, noncompact metric graphs with finitely many edges, and $\mathbb{Z}$-periodic metric graphs, but did not cover the case of metric grids $\G^d_1$.

We now compare Theorem \ref{th4} with  \cite[Proposition 2.4]{Do2}, which provides multiplicity results for prescribed mass critical points of the energy functional on metric grids $\G^d_1$, without proving \eqref{eqth4}. On the one hand, by repeating the argument in the proof of Theorem \ref{th4}, one can also prove that \eqref{eqth4} holds on $\G^d_1$.  On the other hand, since positive energy ground states exist for all $d\geq 2$, $p>2+\frac{4}{d}$ and $\mu>0$ large enough (see \cite{HJT}), Theorem \ref{th4} holds for all $d\geq 2$ and $p \in (2+\frac{4}{d},2^*)$, while \cite[Proposition 2.4]{Do2} holds for $d\in \{2,3\}$ and $p \in (2+\frac{4}{d},6)$ or $d \geq 4$ and $p \in (2+\frac{4}{d},2^*)$, where the energy ground states do not exist for any $p>6$.

\subsubsection{Best constants in Gagliardo--Nirenberg inequalities}
Employing a strategy analogous to the analysis of the singular limit problem on lattice graphs, we now turn our attention to the Gagliardo--Nirenberg type inequalities  on lattice graphs (see \eqref{eqgnv} and \eqref{eqsobov}).
For $d \geq 1$ and $q \in (2,2^*)$, let
$$\begin{aligned}
&Q_{q,\V^d_1}(u):=\frac{\norm{u}^q_{\ell^q(\V^d_1)}}{\norm{u}^{d+(2-d)\frac{q}{2}}_{\ell^2(\V^d_1)}\norm{\nabla^d_1 u}^{(\frac{q}{2}-1)d}_{\ell^2(\V^d_1)}},\quad  &u \in H^1(\V^d_1)\setminus\{0\},
\\
&Q_{q,\R^d}(v):=\frac{\norm{v}^q_{L^q(\R^d)}}{\norm{v}^{d+(2-d)\frac{q}{2}}_{L^2(\R^d)}\norm{\nabla v}^{(\frac{q}{2}-1)d}_{L^2(\R^d)}},\quad  &v \in H^1(\R^d)\setminus\{0\},
\end{aligned}
$$
and denote by
$$
K_{q,\V^d_1}:=\sup_{u \in H^1(\V^d_1)\setminus\{0\}}Q_{q,\V^d_1}(u),\quad K_{q,\R^d}:=\sup_{v \in H^1(\R^d)\setminus\{0\}}Q_{q,\R^d}(v)
$$
the best constants  in the $d$-dimensional Gagliardo--Nirenberg inequalities on $\V^d_1$ (see \eqref{eqgnv} in Section 2) and on $\R^d$ (see, e.g., \cite[Eq. (4)]{Do2}).

Let $D^{1,2}(\V_\varepsilon^d)$ denote the completion of $C_c(\V_\varepsilon^d)$  with respect to the norm 
$\norm{\nabla ^d_\varepsilon u}_{L^2(\V_\varepsilon^d)}$,  where  $C_{c}(\V_\varepsilon^d)$ is the set of all functions with finite support.
For $d \geq 3$ and $q =2^*$, let
$$\begin{aligned}
&Q_{2^*,\V^d_1}(u):=\frac{\norm{u}^{2^*}_{\ell^{2^*}(\V^d_1)}}{\norm{\nabla^d_1 u}^{2^*}_{\ell^2(\V^d_1)}},\quad  &u \in D^{1,2}(\V_1^d)\setminus\{0\},
\\
&Q_{{2^*},\R^d}(v):=\frac{\norm{v}^{2^*}_{L^{2^*}(\R^d)}}{\norm{\nabla v}^{2^*}_{L^2(\R^d)}},\quad  &v \in D^{1,2}(\R^d)\setminus\{0\},
\end{aligned}
$$
and denote by
$$
K_{{2^*},\V^d_1}:=\sup_{u \in D^{1,2}(\V^d_1)\setminus\{0\}}Q_{{2^*},\V^d_1}(u),\quad K_{{2^*},\R^d}:=\sup_{v \in D^{1,2}(\R^d)\setminus\{0\}}Q_{{2^*},\R^d}(v)
$$
the best constants  in the $d$-dimensional Sobolev inequalities on $\V^d_1$ (see \eqref{eqsobov} and  \cite[Theorem 3.6]{hua2015time}) and on $\R^d$. 
By adapting the argument in the proof of \cite[Proposition 2.6]{Do2}, we first establish a sufficient condition under which the best constant $K_{q,\V^d_1}$ is attained.
\begin{theorem}\label{th5}   
If $d\geq 1$ and $q \in (2,2^*)$, or $d\geq 3$ and $q =2^*$, then 
    \begin{equation}\label{eqth51}
            K_{q,\V^d_1}\geq K_{q,\R^d}.
    \end{equation}
    Furthermore, for every  $d\geq 1$ and $q \in (2,2^*)$, if there exists $u \in H^1(\V^d_1)$ such that 
    $
    Q_{q,\V^d_1}(u) \geq K_{q,\R^d},
    $
    then $K_{q,\V^d_1}$ is attained.
\end{theorem}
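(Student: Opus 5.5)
To prove \eqref{eqth51}, I will use scaling to transport near-extremal functions on $\R^d$ onto $\V^d_1$, with $\V^d_\varepsilon$ as the bridge. Fix $v\in C_c^\infty(\R^d)\setminus\{0\}$ and let $u_\varepsilon:=v|_{\V^d_\varepsilon}$ be its restriction to the lattice. Riemann sums together with the Taylor expansion of finite differences give, as $\varepsilon\to0$,
\[
\varepsilon^{d}\|u_\varepsilon\|^q_{\ell^q(\V^d_\varepsilon)}\to\|v\|^q_{L^q(\R^d)},\qquad
\varepsilon^{d}\|u_\varepsilon\|^2_{\ell^2(\V^d_\varepsilon)}\to\|v\|^2_{L^2(\R^d)},\qquad
\varepsilon^{d-1}\|\nabla^d_\varepsilon u_\varepsilon\|^2_{\ell^2(\V^d_\varepsilon)}\to\|\nabla v\|^2_{L^2(\R^d)}.
\]
For the last limit, note that $\frac{1}{2\varepsilon}\sum_x\sum_{y\sim x}|u(y)-u(x)|^2=\varepsilon\sum_x\sum_{i}|\partial_i v(x)|^2+o(\varepsilon)$ per unit count, and multiplying by $\varepsilon^{d-1}$ turns this into a Riemann sum. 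One could also invoke the restriction/extension estimates of Section~\ref{sec2} and \cite{Do2}, but for smooth compactly supported $v$ the direct computation suffices.

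Next I transfer to $\V^d_1$. Define $w_\varepsilon(k):=u_\varepsilon(\varepsilon k)$ for $k\in\Z^d$. Then $\|w_\varepsilon\|_{\ell^r(\V^d_1)}=\|u_\varepsilon\|_{\ell^r(\V^d_\varepsilon)}$ and $\|\nabla^d_1 w_\varepsilon\|^2_{\ell^2(\V^d_1)}=\varepsilon\|\nabla^d_\varepsilon u_\varepsilon\|^2_{\ell^2(\V^d_\varepsilon)}$. Substituting into $Q_{q,\V^d_1}(w_\varepsilon)$, the powers of $\varepsilon$ cancel. The numerator contributes $\varepsilon^{-d}$. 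The $\ell^2$ factor contributes $\varepsilon^{-d(d+(2-d)q/2)/2}$. The gradient factor, $\|\nabla_1 w\|^2\sim\varepsilon^{2-d}$, contributes $\varepsilon^{(2-d)(q/2-1)d/2}$. The total exponent is zero, which is exactly the scale invariance of $Q_{q,\R^d}$. Hence $Q_{q,\V^d_1}(w_\varepsilon)\to Q_{q,\R^d}(v)$. Taking the supremum over $v$ and using density of $C_c^\infty$ gives $K_{q,\V^d_1}\ge K_{q,\R^d}$. The case $q=2^*$ is identical, with $D^{1,2}$ in place of $H^1$, and the exponents again balance.

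For attainability when $q\in(2,2^*)$, I follow \cite[Proposition 2.6]{Do2}. If $K_{q,\V^d_1}=Q_{q,\V^d_1}(u)$ for the given $u$, we are done. Otherwise $K_{q,\V^d_1}>K_{q,\R^d}$ strictly, and I take a maximizing sequence $u_n$. Since $Q$ is not scale-invariant on $\V^d_1$, only the homogeneity $u\mapsto tu$ can be used, so I normalize $\|u_n\|_{\ell^2}=1$.

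I then exclude three degenerate behaviors.
\begin{enumerate}[label=\rm(\alph*)]
\item If $\|\nabla^d_1 u_n\|\to\infty$ while the mass is fixed, the functions spread on scales $\ll1$ in lattice units. This is impossible, because the discrete gradient satisfies $\|\nabla^d_1 u\|^2\le 2d\|u\|^2_{\ell^2}$, so this regime cannot occur.
\item If $\|\nabla^d_1 u_n\|\to0$, the functions spread over large scales. In this regime the lattice looks like $\R^d$: rescaling to $\V^d_{\varepsilon_n}$ and extending via $\hat{\mathcal A}$ (or the extension estimates of Section~\ref{sec2}) gives $\limsup Q_{q,\V^d_1}(u_n)\le K_{q,\R^d}$. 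This contradicts strict inequality.
\item Vanishing, $\|u_n\|_{\ell^\infty}\to0$, would force $\|u_n\|_q^q\le\|u_n\|_\infty^{q-2}\to0$ while the denominator stays bounded below. Hence $Q\to0$, which is again a contradiction.
\end{enumerate}
So after translating by lattice vectors I may assume $u_n\rightharpoonup u\neq0$ with $\|\nabla u_n\|$ bounded away from $0$ and $\infty$.

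Finally, the Brezis--Lieb splitting of $\ell^q$, $\ell^2$ and the gradient norm, combined with the strict subadditivity coming from the exponents $d+(2-d)q/2$ and $(q/2-1)d$ (whose combined homogeneity in mass exceeds $q$ in the right way), shows that the weak limit is a maximizer. This uses the standard convexity inequality $a^{\alpha}b^{\beta}+c^\alpha e^\beta\le (a+c)^\alpha(b+e)^\beta$ for $\alpha+\beta\ge1$.

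The main obstacle is step (b): I must show rigorously that maximizing sequences with vanishing gradient have quotient asymptotically at most $K_{q,\R^d}$. For this I plan to use the extension $\hat{\mathcal A}$, which preserves the gradient norm exactly (as in \eqref{eqth14}), together with the comparison of $\ell^q$ and $L^q$ norms of the extension, with an error controlled by $\varepsilon^{d-1}\|\nabla u\|^2$.
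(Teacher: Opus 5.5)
Your proposal is correct and follows essentially the same route as the paper. For \eqref{eqth51}, you restrict a Euclidean function to the scaled lattice and rescale to $\V^d_1$. For attainability, you take a maximizing sequence normalized in $\ell^2$, rule out the gradient-to-zero/vanishing regime by rescaling with $\varepsilon_n=\|\nabla^d_1 u_n\|_{\ell^2(\V^d_1)}$ and extending via $\hat{\mathcal A}$ (exactly as in \eqref{eqsec51}--\eqref{eqsec54}), and exclude dichotomy by strict superadditivity, which the paper delegates to \cite{Do2}.

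Two minor slips. The crude bound is $\|\nabla^d_1 u\|^2_{\ell^2(\V^d_1)}\le 4d\|u\|^2_{\ell^2(\V^d_1)}$, not $2d$. In step (b), when $d\ge3$ and $q>\frac{2^*}{2}+1$, the $L^{2q-2}$ error term has to be controlled through the $\ell^\infty$ bound \eqref{eqlinfty}, which is why the paper splits into two ranges.
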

Now, we present our main result on the critical Lane--Emden equation on lattice graphs.
\begin{theorem}\label{th6}
If \(d\geq 3\), then 
\begin{equation}\label{eqk2*}
K_{2^*,\V_1^d}>K_{2^*,\mathbb R^d},
\end{equation}
and the best constant \(K_{2^*,\V_1^d}\) is attained by a positive function \(u\in D^{1,2}(\V_1^d)\), which satisfies 
\[
Q_{2^*,\V_1^d}(u)=K_{2^*,\V_1^d}
\]
and solves
\begin{equation}\label{eqcle}
    -\Delta_1^d u=u^{2^*-1}
\quad\text{in }\V_1^d.
\end{equation}
\end{theorem}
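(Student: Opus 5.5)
The proof splits into two parts: the strict inequality \eqref{eqk2*}, and attainment via a concentration-compactness argument in $D^{1,2}(\V_1^d)$ in which the strict inequality excludes the loss of compactness.

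\textbf{Strict inequality.} By Theorem \ref{th5} we already have $K_{2^*,\V_1^d}\geq K_{2^*,\R^d}$. For strictness I would perturb the rescaled restriction of an Aubin--Talenti bubble and expand.

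Let $U_\lambda(x)=\lambda^{-(d-2)/2}U(x/\lambda)$, where $U$ is the extremal on $\R^d$. Take the lattice function $u_\lambda(\vv)=U_\lambda(\vv)$ for $\vv\in\V_1^d$, which equals the bubble $U$ sampled on the rescaled lattice $\varepsilon\Z^d$ with $\varepsilon=1/\lambda$. Compute the discrete Dirichlet energy and the $\ell^{2^*}$ norm via a Taylor/Euler--Maclaurin expansion in $\varepsilon$:
\begin{itemize}
\item For the energy, $\norm{\nabla_1^d u_\lambda}^2_{\ell^2}=\norm{\nabla U}^2_{L^2}-c_1\varepsilon^2+o(\varepsilon^2)$. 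The finite-difference quotient $|U(x+\varepsilon e_i)-U(x)|^2/\varepsilon^2$ differs from $|\partial_iU|^2$ by terms of order $\varepsilon^2$ involving $\partial_i^2U$. The lattice-sum versus integral error is exponentially small for the smooth, polynomially decaying bubble, apart from tail issues that I would handle carefully, since $U\sim|x|^{2-d}$.
\item Similarly, $\norm{u_\lambda}^{2^*}_{\ell^{2^*}}=\norm{U}^{2^*}_{L^{2^*}}+o(\varepsilon^2)$.
\end{itemize}

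The key sign fact is that the discrete gradient of a smooth function underestimates the continuous one. Writing $U(x+\varepsilon e_i)-U(x)=\int_0^\varepsilon\partial_iU$ and applying Jensen gives
\[
\frac{|U(x+\varepsilon e_i)-U(x)|^2}{\varepsilon^2}\le \frac1\varepsilon\int_0^\varepsilon|\partial_iU(x+te_i)|^2\,dt .
\]
Summing over the lattice, this gives $\norm{\nabla_1^d u_\lambda}^2_{\ell^2}\le\norm{\nabla U}^2_{L^2}$ exactly, with strict inequality since $\partial_iU$ is not constant on segments. The harder half is showing that the $\ell^{2^*}$ sum does not drop below the integral by a comparable amount.

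To avoid this difficulty, I would instead average over translations $\vv\mapsto U_\lambda(\vv+\theta)$ with $\theta\in[0,1)^d$. The average of the $\ell^{2^*}$ sums equals $\norm{U}^{2^*}_{L^{2^*}}$ exactly. The averaged energy is strictly below $\norm{\nabla U}_{L^2}^2$ by the Jensen bound, made strict. Hence some $\theta$ satisfies $Q_{2^*,\V_1^d}(u_{\lambda,\theta})>K_{2^*,\R^d}$, using that the map $a\mapsto a/b^{2^*/2}$ has convex-friendly monotonicity: pick $\theta$ maximizing $\ell^{2^*}$ minus a multiple of the energy, and use the ratio inequality for averages. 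This already works for $\lambda=1$, and it needs $U\in D^{1,2}$ lattice restriction bounds, which follow from decay.

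\textbf{Attainment.} Take a maximizing sequence $u_n\ge0$ (replacing $u_n$ by $|u_n|$ is allowed) normalized by $\norm{\nabla_1^du_n}_{\ell^2}=1$. I would run the discrete concentration-compactness (Lions) argument in $D^{1,2}(\V_1^d)$.
\begin{itemize}
\item \emph{Vanishing.} If $\norm{u_n}_{\ell^\infty}\to0$, the sequence has to "spread", i.e. it lives at ever larger scales. I would then rescale to $\V_\varepsilon^d$ with $\varepsilon_n\to0$ and use the extension estimates of Sections \ref{sec2}--\ref{sec3} (the extension operator $\hat{\mathcal A}$ together with the energy identity \eqref{eqth14}). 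These should give $\limsup Q_{2^*,\V_1^d}(u_n)\le K_{2^*,\R^d}$, contradicting \eqref{eqk2*}. Deriving this comparison quantitatively from the $\ell^\infty$ smallness is the main obstacle; I would use the Gagliardo--Nirenberg type inequality \eqref{eqlinfty} to control the discrepancy between $\norm{u}_{\ell^{2^*}}$ and $\norm{\hat{\mathcal A}u}_{L^{2^*}}$ by a positive power of $\norm{u}_{\ell^\infty}$.
\item \emph{Nonvanishing.} Translate so that $u_n(0)\ge\delta$. Then $u_n\rightharpoonup u\neq0$ weakly and pointwise.
\item \emph{Dichotomy.} The Brezis--Lieb splitting together with the strict subadditivity of $t\mapsto t^{2^*/2}$ forces $\norm{u_n-u}_{\ell^{2^*}}\to0$, so $u$ is a maximizer.
\end{itemize}

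Finally, the Euler--Lagrange equation of the maximizer gives $-\Delta_1^du=c\,u^{2^*-1}$ with $c>0$, and scaling $u$ yields \eqref{eqcle}. Positivity follows from the strong maximum principle on the connected graph: if $u(x)=0$ then $-\Delta u(x)=0$ forces $u(y)=0$ at all neighbours $y$, and hence $u\equiv0$.
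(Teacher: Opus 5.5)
Your attainment half is fine and is essentially the paper's argument. Vanishing is excluded because $\norm{\nabla\hat{\mathcal A}u_n}_{L^2(\R^d)}=\norm{\nabla_1^d u_n}_{\ell^2(\V_1^d)}$, while the $\ell^{2^*}$--$L^{2^*}$ discrepancy is at most $C\norm{\nabla_1^d u_n}_{\ell^2}^{2^*/2+1}\norm{u_n}_{\ell^\infty}^{2^*/2-1}$. Rescaling to $\V_\varepsilon^d$ is harmless but unnecessary, since the quotient is scale invariant. Translation, Brezis--Lieb, the multiplier rule and connectivity then finish the proof.

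\textbf{The gap.} The gap is the strict inequality $K_{2^*,\V_1^d}>K_{2^*,\R^d}$, which is the real content of the theorem, and your averaging step does not prove it.

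Write $A(\theta)=\norm{u_{\lambda,\theta}}^{2^*}_{\ell^{2^*}}$, $B(\theta)=\norm{\nabla_1^d u_{\lambda,\theta}}^2_{\ell^2}$ and $K=K_{2^*,\R^d}$. You correctly get $\overline{A}=\norm{U}^{2^*}_{L^{2^*}}$ and $\overline{B}<\norm{\nabla U}^2_{L^2}$. But the quotient $A/B^{2^*/2}$ is not linear in $B$.
\begin{itemize}
\item If $A(\theta)\le KB(\theta)^{2^*/2}$ for every $\theta$, averaging only gives $\overline{A}\le K\,\overline{B^{2^*/2}}$.
\item Since $2^*/2>1$, Jensen gives $\overline{B^{2^*/2}}\ge\overline{B}^{2^*/2}$, which is the useless direction, so no contradiction follows.
\item A concrete counterexample: take two equally weighted pairs $(A,B)=(1,1)$ and $(9,3)$, with exponent $2$ and $K=1$. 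Each pair has ratio exactly $K$, yet $\overline{A}/\overline{B}^{2}=5/4>K$.
\item The tangent-line variant fails for the same reason. Beating the tangent of the convex curve $b\mapsto Kb^{2^*/2}$ does not put $A(\theta)$ above the curve.
\end{itemize}
So the claim that this ``already works for $\lambda=1$'' has no support.

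Separately, your non-averaged bound $\norm{\nabla_1^d u_\lambda}^2_{\ell^2}\le\norm{\nabla U}^2_{L^2}$ is not exact. Summing the one-dimensional Jensen bound over $\Z^d$ leaves a Riemann sum in the $d-1$ transverse variables, not $\norm{\partial_iU}^2_{L^2(\R^d)}$.

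\textbf{How to close it.} You need exactly the quantitative lattice-sum estimates you were trying to sidestep, and with them no averaging is needed. For $\lambda\to\infty$, Poisson summation gives two facts:
\begin{itemize}
\item $\sum_{\vv\in\Z^d}U_\lambda(\vv)^{2^*}\ge\norm{U}^{2^*}_{L^{2^*}}$, because the Fourier transform of $U^{2^*}\propto(1+\abs{x}^2)^{-d}$ is positive.
\item $\norm{\nabla_1^d u_\lambda}^2_{\ell^2}\le\norm{\nabla U}^2_{L^2}-c\lambda^{-2}+O(\lambda^{-N})$, using $4\sin^2(t/2)\le t^2-t^4/15$ for $\abs{t}\le1$, with the aliasing terms being $O(\lambda^{-N})$.
\end{itemize}
Together these give a proof uniform in $d\ge3$.

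The paper instead uses explicit test functions against the explicit Aubin--Talenti constant $S_{\R^d}=d(d-2)\pi(\Gamma(d/2)/\Gamma(d))^{2/d}$. For $d\ge4$, the test function $\delta_0$ gives $S_{\V_1^d}\le2d<S_{\R^d}$. For $d=3$ one has $S_{\R^3}=3(\pi/2)^{4/3}<6$, so $\delta_0$ fails. There the paper uses the factorizing function $4^{-\abs{x}_1}$, which gives $S_{\V_1^3}\le 27846/6025<S_{\R^3}$.
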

In view of \cite[Proof~II of Theorem~1]{hua2021existence}, the main difficulty in proving Theorem~\ref{th6} lies in ruling out the vanishing of a minimizing sequence. In \cite[Theorem~1]{hua2021existence}, the authors prove that, for every $q>2^*$, the best constant in the discrete Sobolev inequality on $\mathbb Z^N$ is attained. A crucial ingredient is the nonvanishing estimate established in \cite[Lemma~13]{hua2021existence}. More precisely, given a minimizing sequence $\{u_n\}$ normalized by $\|u_n\|_{\ell^q(\V_1^d)}=1$, one can choose an exponent $q'$ such that
\[
2^*<q'<q
\]
and use interpolation together with the Sobolev inequality to obtain
\[
1=\|u_n\|_{\ell^q(\V^d_1)}^q
\leq
\|u_n\|_{\ell^{q'}(\V^d_1)}^{q'}\|u_n\|_{\ell^\infty(\V^d_1)}^{q-q'}
\leq
C\|u_n\|_{D^{1,2}(\V^d_1)}^{q'}\|u_n\|_{\ell^\infty(\V^d_1)}^{q-q'}.
\]
Since $\{u_n\}$ is bounded in $D^{1,2}(\mathbb Z^N)$ and $q-q'>0$, this yields a uniform positive lower bound for $\|u_n\|_\infty$, thereby excluding the vanishing case. This argument is genuinely supercritical: when $q=2^*$, there is no exponent $q'$ satisfying $2^*<q'<q$, while taking $q'=q$ eliminates the factor involving $\|u_n\|_{\ell^\infty(\V^d_1)}$. Hence the same interpolation argument provides no information preventing vanishing in the critical case. 

 A key step in the proof of Theorem \ref{th6} is to construct suitable test functions on $\V_1^d$ and prove the strict comparison \eqref{eqk2*}.
This strict inequality allows us to rule out the vanishing of minimizing sequences by a singular limit argument. Indeed, if a minimizing sequence $\{u_n\}$ of $K_{2^*,\V_1^d}^{-\frac{2}{2^*}}$ with $\norm{u_n}_{\ell^{2^*}(\V^d_1)}=1$ were vanishing, then its $\ell^\infty$ norm would tend to zero. The singular limit result would imply that $\norm{\hat{\mathcal{A}}u_n}_{L^{2^*}(\mathbb R^d)}\to 1$, and thus,
\[
K_{2^*,\V_1^d}\leq K_{2^*,\mathbb R^d},
\]
contradicting the strict inequality \eqref{eqk2*}. Therefore, vanishing cannot occur; after suitable translations, the minimizing sequence has a nontrivial limit, which leads to the attainment of the best constant.

The following theorem is a direct consequence of Theorems \ref{th5} and \ref{th6}.
\begin{theorem}\label{th7}
If \(d\geq 3\), then there exists $\delta=\delta(d)>0$ such that, for every $q\in (2^*-\delta,2^*)$,
\[
K_{q,\V_1^d}>K_{q,\mathbb R^d},
\]
and the best constant \(K_{q,\V_1^d}\) is attained by a positive function \(u\in H^{1}(\V_1^d)\), which satisfies 
\[
Q_{q,\V_1^d}(u)=K_{q,\V_1^d}
\]
and, for some $\lambda>0$, solves
\[
-\Delta_1^d u+ \lambda u=u^{q-1}
\quad\text{in }\V_1^d.
\]
\end{theorem}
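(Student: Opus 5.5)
The plan is to combine the strict inequality \eqref{eqk2*} of Theorem \ref{th6} with the attainability criterion of Theorem \ref{th5}, using continuity in \(q\) as \(q\nearrow 2^*\). By Theorem \ref{th6}, there is a positive \(u_*\in D^{1,2}(\V_1^d)\) with \(Q_{2^*,\V_1^d}(u_*)=K_{2^*,\V_1^d}>K_{2^*,\R^d}\). Since \(u_*\) need not lie in \(\ell^2\), I first replace it by a truncation \(u_R:=u_*\chi_{B_R}\) (or a smooth cutoff). Truncation converges in \(D^{1,2}\) and in \(\ell^{2^*}\), so for \(R\) large the strict inequality \(Q_{2^*,\V_1^d}(u_R)>K_{2^*,\R^d}\) still holds, and now \(u_R\in C_c(\V_1^d)\subset H^1(\V_1^d)\).

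Next I would show \(\lim_{q\to 2^*}Q_{q,\V_1^d}(u_R)=Q_{2^*,\V_1^d}(u_R)\). This is immediate for a finitely supported function. The exponents \(d+(2-d)\frac q2\to 0\) and \((\frac q2-1)d\to 2^*\), and \(\|u_R\|_{\ell^q}^q\to\|u_R\|_{\ell^{2^*}}^{2^*}\) because these are finite sums. Likewise \(\|u_R\|_{\ell^2}^{d+(2-d)q/2}\to1\).

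On the Euclidean side I need \(\limsup_{q\to2^*}K_{q,\R^d}\le K_{2^*,\R^d}\); actual continuity would be fine. I would obtain it from Hölder interpolation. Write \(\frac1q=\frac{\theta}{2}+\frac{1-\theta}{2^*}\); then \(\|v\|_q\le\|v\|_2^{\theta}\|v\|_{2^*}^{1-\theta}\). The Sobolev inequality gives \(\|v\|_{2^*}^{2^*}\le K_{2^*,\R^d}\|\nabla v\|_2^{2^*}\). Checking exponents shows \(q(1-\theta)=(\frac q2-1)d\) and \(q\theta=d+(2-d)\frac q2\). Combining these yields
\[
K_{q,\R^d}\le K_{2^*,\R^d}^{\frac{q(1-\theta)}{2^*}},
\]
and the right-hand side tends to \(K_{2^*,\R^d}\) as \(q\to2^*\). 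Hence there is \(\delta>0\) such that for \(q\in(2^*-\delta,2^*)\) we have \(Q_{q,\V_1^d}(u_R)>K_{q,\R^d}\). Consequently \(K_{q,\V_1^d}>K_{q,\R^d}\), and by Theorem \ref{th5} \(K_{q,\V_1^d}\) is attained by some \(u\in H^1(\V_1^d)\). Here \(\delta\) depends only on \(d\), through the fixed choice of \(u_*\) and \(R\).

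Finally, I would derive the properties of the maximizer. Since \(Q_{q}(|u|)\ge Q_q(u)\), because \(\bigl||u(y)|-|u(x)|\bigr|\le|u(y)-u(x)|\), we may take \(u\ge0\). The Euler–Lagrange equation of the quotient reads \(-a\Delta_1^d u+bu=cu^{q-1}\) with \(a,b,c>0\) determined by the norms. Then \(u>0\) everywhere: if \(u(x)=0\) at a point, the equation at \(x\) forces \(\sum_{y\sim x}u(y)=0\), and by connectivity \(u\equiv0\), which is impossible. The quotient is invariant under \(u\mapsto tu\) but not under spatial dilation on the lattice, so only the amplitude can be normalized. Choosing \(t\) with \(ct^{q-2}=a\) and dividing by \(a\) gives \(-\Delta_1^d u+\lambda u=u^{q-1}\) with \(\lambda=b/a>0\). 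The only delicate point is verifying that the interpolation bound and the truncation preserve strictness, and both are handled above. I therefore expect no serious obstacle beyond bookkeeping of the exponents.
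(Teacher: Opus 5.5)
Your proposal is correct and follows essentially the paper's route. You take a finitely supported $u$ with $Q_{2^*,\V_1^d}(u)>K_{2^*,\R^d}$, use continuity of $Q_{q,\V_1^d}(u)$ in $q$ together with the interpolation bound $K_{q,\R^d}\le K_{2^*,\R^d}^{\beta_q q/2^*}$, apply Theorem \ref{th5}, and finish with $|w_q|$, the Lagrange multiplier, connectivity and an amplitude rescaling.

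The one step to tidy is the choice of $u$. The sharp truncation $u_*\chi_{B_R}$ need not converge in $D^{1,2}(\V_1^d)$ when $d=3$. The roughly $R^{2}$ edges leaving $B_R$ each carry about $u_*^2\gtrsim R^{-2}$, because a positive superharmonic $u_*$ dominates a multiple of the Green function $\sim|x|^{-1}$, so their total does not vanish. Use the smooth cutoff instead. Alternatively, as the paper does, pick $u\in C_c(\V_1^d)$ directly from the definition of $K_{2^*,\V_1^d}$ as a supremum over the completion of $C_c(\V_1^d)$; this needs only Lemma \ref{lem:strict_comparison_sobolev_constants}, not the existence of a maximizer.
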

It remains open whether the conclusions of Theorem \ref{th7} continue to hold for $d=1,2$ and $q>2$, as well as for $d\geq3$ and
$q\in(2,2^*-\delta]$. In particular, it is unknown whether
\[
K_{q,\V_1^d}>K_{q,\mathbb R^d}
\]
and whether the best constant $K_{q,\V_1^d}$ is attained in these
cases.

In Theorems~\ref{thmg1} and~\ref{thmg2}, we extend Theorems~\ref{th6} and~\ref{th7} to the metric grid $\G^d_1$.

\subsection{Organization of the paper}
The rest of the paper is organized as follows. In Section \ref{sec2}, we establish connections between Sobolev spaces and ground states for (NLS) equation on lattice graphs and on metric grids. In Section \ref{sec3}, we study the asymptotic behavior of action ground states of $\hat{J}_{\omega, \V_\varepsilon^d}$ as $\varepsilon \to 0$ and present the proofs of Theorems \ref{th1} and \ref{th2}. In Section \ref{sec4}, we study the asymptotic behavior of energy ground states of $\hat{E}_{\omega, \V_\varepsilon^d}$ at mass $\mu/\varepsilon^d$ as $\varepsilon \to 0$ and the multiplicity of critical points of $\hat{E}_{\V^d_1}$ constrained on $H^1_\mu(\V^d_1)$, and provide the proofs of Theorems \ref{th3}-\ref{th4}. Section \ref{sec5} is devoted to studying the sufficient condition under which the best constant $K_{q,\V^d_1}$ is attained and to the proof of Theorem \ref{th5}. In Section \ref{seccle}, we establish the attainability of $K_{2^*,\V^d_1}$ and $K_{q,\V^d_1}$ with $q\nearrow2^*$,  and provide the proofs of Theorems \ref{th6} and \ref{th7}. In Section \ref{sec6}, we raise the question of whether Theorem \ref{th4} can be improved,  and formulate further open problems concerning the singular limit behavior of solutions to the Sobolev critical (NLS) equation on lattice graphs. In Appendix \ref{app}, we further extend Theorems \ref{th6} and \ref{th7} to the metric grid $\G^d_1$ and provide the proofs of Theorems \ref{thmg1} and \ref{thmg2}.

In the following, $C$ and $C_i$ denote positive constants independent of $\varepsilon>0$, which may vary from line to line. 
  
\section{Connections between lattice graphs and metric grids}
\label{sec2}
In this section, we establish connections between Sobolev spaces and ground states for (NLS) equation on lattice graphs and on metric grids. To this end, we first recall some basic settings of metric grids. For every $\varepsilon > 0$, let $\mathcal{G}_\varepsilon^d = (\mathbb{V}_\varepsilon^d, \mathbb{E}_\varepsilon^d)$ be the $d$-dimensional metric grid with edge length $\varepsilon$ given by the subset of $\mathbb{R}^d$ with vertices on $\varepsilon\mathbb{Z}^d$ and edges between every couple of vertices at distance $\varepsilon$.
Each edge $e \in \Ed$ is identified with a closed and bounded interval $I_e=[0, \varepsilon]$.

Consistently, a function $u: \G_\varepsilon^d \to \mathbb{R}$ is actually a family of functions $(u_\e)$, where $u_\e: I_\e \to \mathbb{R}$ is the restriction of $u$ to the edge $\e \in \E_\varepsilon^d$. The usual $L^p$ space on the metric grid  $\G_\varepsilon^d$ is defined as follows
$$
L^p(\G_\varepsilon^d):=\bigoplus_{\e \in \E_\varepsilon^d} L^p(I_\e),
$$
endowed with the norm
$$
\norm{u}_{L^p(\G_\varepsilon^d)}^p := \sum_{\e \in \mathrm{E}_\varepsilon^d}\norm{u_\e}_{L^p(I_\e)}^p, \quad \text { if } p \in[1, \infty) \quad \text { and } \quad \norm{u}_{L^{\infty}(\G_\varepsilon^d)}:=\sup_{\e \in \mathrm{E}_\varepsilon^d}\norm{u_\e}_{L^{\infty}(I_\e)}.
$$
The inner product in $L^2(\mathcal{G}_\varepsilon^d)$ is
\begin{equation*}
(u,v)_2= \int_{\G_\varepsilon^d} uv\,dx.
\end{equation*}
If $u:\G_\varepsilon^d \to\mathbb{R}$ is continuous, then we write $u \in C(\G_\varepsilon^d)$. The Sobolev space $H^1(\G_\varepsilon^d)$ consists of the set of continuous functions $u : \G_\varepsilon^d \to \mathbb{R}$ such that $u_\e \in  H^1([0, \varepsilon])$ for every edge $\e\in \E_\varepsilon^d$ and $\underset{\e \in \E_\varepsilon^d}\sum \left(\norm{u'}^2_{L^2(I_\e)}+\norm{u}^2_{L^2(I_\e)}\right)< \infty$; the norm in $H^1(\G_\varepsilon^d)$ is defined as
\begin{equation*}
    \norm{u}^2_{H^1(\G_\varepsilon^d)} =\norm{u'}^2_{L^2(\G_\varepsilon^d)} + \norm{u}^2_{L^2(\G_\varepsilon^d)},
\end{equation*}
and the inner product in this space is
\begin{equation*}
(u,v)= \int_{\G_\varepsilon^d} (u'v' + uv)\,dx.
\end{equation*}
For any $u \in H^1(\G_\varepsilon^d)$, define $\hat{u}\in C(\V_\varepsilon^d)$ by $\hat{u}:=u|_{\V_\varepsilon^d}$.\\
For every $u \in H^1(\G_\varepsilon^d)$, define 
\begin{equation*}
    \norm{u}^2_{\hat{H}^1(\G_\varepsilon^d)} = \norm{u'}^2_{L^2(\G_\varepsilon^d)}+\sum_{\vv \in \V_\varepsilon^d}  \abs{\hat{u}(\vv)}^2.
\end{equation*}
For any $f \in H^1(\V_\varepsilon^d)$, define $\tilde{f}\in C(\G_\varepsilon^d)$ by
$$
\tilde{f}_\e(x):= \frac{f(\w)-f(\vv)}{\varepsilon}x+f(\vv), \quad \forall \e=(\vv,\w) \in \E_\varepsilon^d,
$$
where, for any edge $\e=(\vv,\w) \in \E_\varepsilon^d$, $e$ is identified with interval $I_e=[0, \varepsilon]$, $\vv$ is identified with $0$, and $\w$ is identified with $\varepsilon$.

By repeating the arguments in \cite[Lemma 2.3]{BBDT2},  we obtain, for all $d \geq 1$,  the following estimates for concentrated nonlinearities on $\Gd$.
\begin{lemma}\label{lemlp}
Let $p \geq 2$ and $\varepsilon > 0$. Then, there exists a constant $S_{d,p}>0$, depending only on $d$ and $p$ such that
\begin{equation}
\left| d\varepsilon \sum_{\vv \in \V^d_\varepsilon} |\hat{u}(\vv)|^p - \|u\|_{L^{p}(\Gd)}^p \right| \leq S_{d,p} \varepsilon \|u\|_{L^{2p-2}(\Gd)}^{p-1} \|u'\|_{L^2(\Gd)}, \quad \forall u \in H^1(\Gd).
\end{equation}
\end{lemma}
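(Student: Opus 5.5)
The plan is to compare, edge by edge, the integral of $|u|^p$ along each edge with the vertex values at its endpoints, and then sum over all edges. The factor $d$ comes from the combinatorics of the grid: every vertex $\vv\in\V^d_\varepsilon$ is an endpoint of exactly $2d$ edges. Hence for any nonnegative vertex function $g$,
\[
\sum_{\e=(\vv,\w)\in\E^d_\varepsilon}\frac{g(\vv)+g(\w)}{2}=d\sum_{\vv\in\V^d_\varepsilon}g(\vv).
\]
Applying this with $g=\varepsilon|\hat u|^p$ gives
\[
d\varepsilon\sum_{\vv}|\hat u(\vv)|^p=\sum_{\e}\frac{\varepsilon}{2}\bigl(|u_\e(0)|^p+|u_\e(\varepsilon)|^p\bigr).
\]
The difference to be estimated is therefore
\[
\sum_\e\int_0^\varepsilon\Bigl(\tfrac12|u_\e(0)|^p+\tfrac12|u_\e(\varepsilon)|^p-|u_\e(x)|^p\Bigr)dx,
\]
and it suffices to bound each summand.

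On a single edge, for $x\in[0,\varepsilon]$ we have
\[
\bigl||u_\e(0)|^p-|u_\e(x)|^p\bigr|\le\int_0^\varepsilon p|u_\e|^{p-1}|u_\e'|\,dt,
\]
because $|u|^p$ is absolutely continuous when $u\in H^1$ and $p\ge2$, with derivative bounded by $p|u|^{p-1}|u'|$. The same bound holds with $u_\e(\varepsilon)$ in place of $u_\e(0)$. Integrating in $x$ over $[0,\varepsilon]$ bounds the edge summand by
\[
\varepsilon p\int_{I_\e}|u_\e|^{p-1}|u_\e'|\,dt .
\]
Summing over all edges and applying Cauchy--Schwarz on the whole grid then gives
\[
\varepsilon p\int_{\Gd}|u|^{p-1}|u'|\,dx\le\varepsilon p\,\|u\|_{L^{2p-2}(\Gd)}^{p-1}\|u'\|_{L^2(\Gd)},
\]
which is the claim with $S_{d,p}=p$. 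The dependence on $d$ enters only through the identity in the first step.

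The only point needing care is justifying the rearrangement of the sums, which requires absolute convergence. For $u\in H^1(\Gd)$, each $u_\e$ is bounded by its $H^1(I_\e)$ norm up to an $\varepsilon$-dependent constant, so $\sum_\vv|\hat u(\vv)|^p<\infty$ and $u\in L^{2p-2}(\Gd)$. If one prefers to avoid this, prove the estimate first for compactly supported $u$ and conclude by density. This is routine, so I expect no genuine obstacle.
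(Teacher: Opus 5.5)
Your proof is correct and is essentially the argument the paper imports from \cite[Lemma 2.3]{BBDT2}: apply the fundamental theorem of calculus to $|u|^p$ on each edge, sum using the fact that every vertex has exactly $2d$ incident edges, and finish with Cauchy--Schwarz, which gives $S_{d,p}=p$. Splitting $\Gd$ into the half-edge stars $B_{\varepsilon/2}(\vv)$ of total length $d\varepsilon$, as in the proof of Lemma~\ref{lemequiv}, is only a different bookkeeping of your endpoint-averaging identity.
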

To show that the extension $\tilde{f}$ of $f \in H^1(\V_\varepsilon^d)$ belongs to $H^1(\G_\varepsilon^d)$, we need the following lemma.
\begin{lemma}\label{lemequiv}
 If $u \in C(\G_\varepsilon^d)$ satisfies $u_\e \in  H^1([0,\varepsilon])$ for every edge $\e\in \E_\varepsilon^d$ and $\norm{u}_{\hat{H}^1(\G_\varepsilon^d)}< \infty$, then $u \in H^1(\G_\varepsilon^d)$ and 
 \begin{equation}\label{eql2}
    \int_{\Gd}\abs{u}^2\,dx \leq 2d\varepsilon\sum_{\vv \in \Vd} \abs{\hat{u}(\vv)}^2 + 2d^2\varepsilon^2\int_{\Gd}\abs{u'}^2\,dx.
 \end{equation}
    In addition,  $\norm{u}_{\hat{H}^1(\G_\varepsilon^d)}$ and $\norm{u}_{H^1(\G_\varepsilon^d)}$ are equivalent on $ H^1(\G_\varepsilon^d)$.
\end{lemma}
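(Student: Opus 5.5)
The plan is to prove \eqref{eql2} edge by edge and then sum over edges, using that each vertex of $\Gd$ belongs to exactly $2d$ edges. Fix an edge $\e=(\vv,\w)\in\Ed$, identified with $I_\e=[0,\varepsilon]$ and with $\vv$ corresponding to $0$. Since $u_\e\in H^1([0,\varepsilon])$, it is absolutely continuous, so for every $x\in[0,\varepsilon]$
\[
u_\e(x)=\hat u(\vv)+\int_0^x u_\e'(t)\,dt .
\]
By $(a+b)^2\le 2a^2+2b^2$ and Cauchy--Schwarz,
\[
\abs{u_\e(x)}^2\le 2\abs{\hat u(\vv)}^2+2x\int_0^\varepsilon\abs{u_\e'}^2\,dt\le 2\abs{\hat u(\vv)}^2+2\varepsilon\norm{u_\e'}^2_{L^2(I_\e)}.
\]
Integrating over $[0,\varepsilon]$ gives $\norm{u_\e}^2_{L^2(I_\e)}\le 2\varepsilon\abs{\hat u(\vv)}^2+2\varepsilon^2\norm{u_\e'}^2_{L^2(I_\e)}$. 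A sharper constant is available by using $x\le\varepsilon$ more carefully, but it is not needed.

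Next sum over all edges. Choose for each edge one endpoint $\vv_\e$; every vertex serves as the chosen endpoint of at most $2d$ edges, since it has degree $2d$. Hence
\[
\sum_{\e}\abs{\hat u(\vv_\e)}^2\le 2d\sum_{\vv\in\Vd}\abs{\hat u(\vv)}^2 .
\]
The derivative terms sum to $\norm{u'}^2_{L^2(\Gd)}$. This yields
\[
\norm{u}^2_{L^2(\Gd)}\le 4d\varepsilon\sum_{\vv}\abs{\hat u(\vv)}^2+2\varepsilon^2\norm{u'}^2_{L^2(\Gd)}.
\]
To get exactly the stated constants $2d\varepsilon$ and $2d^2\varepsilon^2$, orient the edges in the positive coordinate directions. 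Each vertex is then the origin of exactly $d$ edges, which gives $2d\varepsilon\sum_{\vv}\abs{\hat u(\vv)}^2+2\varepsilon^2\norm{u'}^2$. Since $2\varepsilon^2\le 2d^2\varepsilon^2$, this is at least as good as \eqref{eql2}. Because $\norm{u}_{\hat H^1(\Gd)}<\infty$, the right-hand side is finite, so $u\in L^2(\Gd)$. Together with continuity and the edgewise $H^1$ regularity, this gives $u\in H^1(\Gd)$, and it also gives $\norm{u}_{H^1}^2\le C(d,\varepsilon)\norm{u}^2_{\hat H^1}$.

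For the reverse inequality, use the standard one-dimensional trace estimate on $[0,\varepsilon]$: $\abs{u_\e(0)}^2\le \frac{2}{\varepsilon}\norm{u_\e}^2_{L^2(I_\e)}+2\varepsilon\norm{u_\e'}^2_{L^2(I_\e)}$. It follows by writing $u_\e(0)=u_\e(x)-\int_0^x u_\e'$ and averaging in $x$. Assign each vertex to one of its incident edges, say the edge in the positive $x_1$ direction; this map is injective. Summing over vertices then gives
\[
\sum_{\vv}\abs{\hat u(\vv)}^2\le \frac{2}{\varepsilon}\norm{u}^2_{L^2(\Gd)}+2\varepsilon\norm{u'}^2_{L^2(\Gd)},
\]
so $\norm{u}_{\hat H^1}\le C(\varepsilon)\norm{u}_{H^1}$, and the two norms are equivalent. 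There is no real obstacle here; the only point requiring care is the combinatorial bookkeeping of how many edges are charged to each vertex, which determines the factor $d$ in the constants.
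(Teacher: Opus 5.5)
Your proof is correct, and it gives a slightly stronger bound than the statement: $\int_{\Gd}|u|^2\,dx\le 2d\varepsilon\sum_{\vv}|\hat u(\vv)|^2+2\varepsilon^2\|u'\|^2_{L^2(\Gd)}$. The paper organizes the argument differently, in two places.

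\emph{The $L^2$ bound.} The paper does not work edge by edge. It partitions $\Gd$ into the half-stars $B_{\varepsilon/2}(\vv)$, i.e.\ the $2d$ half-edges of total length $d\varepsilon$ around each vertex. It then bounds $\max_{B_{\varepsilon/2}(\vv)}|u|$ by $|u(\vv)|+\int_{B_{\varepsilon/2}(\vv)}|u'|$ and applies Cauchy--Schwarz over the whole star. This needs no orientation and assigns each vertex its natural weight $d\varepsilon$ automatically. However, controlling the oscillation by the total variation over the whole star is what produces the cruder factor $2d^2\varepsilon^2$. Your single-edge Cauchy--Schwarz, combined with the coordinate orientation (each vertex is the origin of exactly $d$ edges), keeps the derivative constant independent of $d$.

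\emph{The equivalence of norms.} The paper simply invokes Lemma \ref{lemlp} with $p=2$, that is, the imported estimate $\bigl|d\varepsilon\sum_{\vv}|\hat u(\vv)|^2-\|u\|_{L^2(\Gd)}^2\bigr|\le S_{d,2}\varepsilon\|u\|_{L^2(\Gd)}\|u'\|_{L^2(\Gd)}$. You instead prove it directly, using an elementary trace inequality on $[0,\varepsilon]$ and an injective vertex-to-edge assignment.

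Your route is more elementary and does not depend on the external estimate. The paper's route yields a sharp two-sided comparison, with the correct factor $d\varepsilon$ and an error of order $\varepsilon$, and that comparison is what is needed later, e.g.\ in \eqref{eqlem2e1}. For the fixed-$\varepsilon$ equivalence actually claimed in this lemma, your non-sharp constants are enough.
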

\begin{proof}
     Let $u \in C(\G_\varepsilon^d)$ satisfy $u_\e \in  H^1([0, \varepsilon])$ for every edge $\e\in \E_\varepsilon^d$ and $\norm{u}_{\hat{H}^1(\G_\varepsilon^d)}< \infty$. We first prove \eqref{eql2}. For any $\vv \in \Vd$ and $R>0$, set $B_R(\vv):=\{x \in \Gd: \abs{x-\vv}\leq R \}$. For any fixed $\vv \in \Vd$, by H\"older's inequality, we have 
        $$
        \begin{aligned}
        \int_{B_{\frac{\varepsilon}{2}}(\vv)}\abs{u}^2\,dx\leq \int_{B_{\frac{\varepsilon}{2}}(\vv)}\,\max_{x \in \overline{B_{\frac{\varepsilon}{2}}(\vv)}}\abs{u(x)}^2\,dx&\leq \int_{B_{\frac{\varepsilon}{2}}(\vv)}\left(\abs{u(\vv)}+ \int_{B_{\frac{\varepsilon}{2}}(\vv)}\abs{u'(y)}\, dy\right)^2\,dx\\&\leq 2\int_{B_{\frac{\varepsilon}{2}}(\vv)}\left(\abs{u(\vv)}^2+ \left(\int_{B_{\frac{\varepsilon}{2}}(\vv)}\abs{u'(y)}\, dy\right)^2\right)\,dx\\
        &\leq 2d\varepsilon \abs{u(\vv)}^2+2d\varepsilon\left(\int_{B_{\frac{\varepsilon}{2}}(\vv)}\abs{u'(y)}\, dy\right)^2 \\
        &\leq 2d\varepsilon \abs{u(\vv)}^2 + 2d^2\varepsilon^2\int_{B_{\frac{\varepsilon}{2}}(\vv)}\abs{u'}^2\, dx,
        \end{aligned}
        $$
    hence,
    $$
    \int_{\Gd}\abs{u}^2\,dx =\sum_{\vv \in \Vd} \int_{B_{\frac{\varepsilon}{2}}(\vv)}\abs{u}^2\,dx\leq 2d\varepsilon\sum_{\vv \in \Vd} \abs{u(\vv)}^2 + 2d^2\varepsilon^2\int_{\Gd}\abs{u'}^2\,dx.
    $$
Since $\norm{u}_{\hat{H}^1(\G_\varepsilon^d)}< \infty$, we conclude that $\int_{\Gd}\abs{u}^2\,dx<+\infty$, and thus, $u \in H^1(\G_\varepsilon^d)$. Moreover, by \eqref{eql2} and Lemma \ref{lemlp}, we know that $\norm{u}_{\hat{H}^1(\G_\varepsilon^d)}$ and $\norm{u}_{H^1(\G_\varepsilon^d)}$ are equivalent on $ H^1(\G_\varepsilon^d)$
\end{proof}
Now, we can establish connections between the Sobolev spaces on lattice graphs and on metric grids.
It is easy to see that, for any $f \in H^1(\V_\varepsilon^d)$,
\begin{equation}\label{eqftilde}
\norm{\nabla_\varepsilon^d f}^2_{\ell^2(\V_\varepsilon^d)} =\frac{1}{2\varepsilon} \sum_{\vv \in \V_\varepsilon^d} \sum_{\w \sim \vv}\abs{f(\w)-f(\vv)}^2=\int_{\Gd}\abs{\tilde{f}'}^2\, dx=\norm{\tilde{f}'}^2_{L^2(\G_\varepsilon^d)}.
\end{equation}
Thus, we obtain the following result.
\begin{lemma}\label{lemh1gv}
If $u \in H^1(\G_\varepsilon^d)$, then $\hat{u} \in H^1(\V_\varepsilon^d)$ and 
$$
\norm{\nabla_\varepsilon^d \hat{u}}^2_{\ell^2(\V_\varepsilon^d)}\leq \norm{u'}^2_{L^2(\G_\varepsilon^d)}.
$$
In addition, if $f \in H^1(\V_\varepsilon^d)$, then $\tilde{f} \in H^1(\G_\varepsilon^d)$ and 
$$
\norm{\tilde{f}'}^2_{L^2(\G_\varepsilon^d)}=\norm{\nabla_\varepsilon^d f}^2_{\ell^2(\V_\varepsilon^d)}.
$$
\end{lemma}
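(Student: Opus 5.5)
The two claims are handled separately: first the restriction $u\mapsto\hat u$, then the extension $f\mapsto\tilde f$. In both cases the gradient statement is an edgewise inequality, followed by a check that the resulting function has finite norm in the appropriate space.

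\textbf{Restriction.} Let $u\in H^1(\Gd)$ and fix an edge $\e=(\vv,\w)$, identified with $[0,\varepsilon]$. By the fundamental theorem of calculus and Cauchy--Schwarz,
\[
\abs{\hat u(\w)-\hat u(\vv)}^2=\Bigl|\int_0^\varepsilon u_\e'\,dx\Bigr|^2\le \varepsilon\int_{I_\e}\abs{u_\e'}^2\,dx .
\]
In the double sum $\frac{1}{2\varepsilon}\sum_{\vv}\sum_{\w\sim\vv}$ each edge appears exactly twice. Summing the edgewise bound therefore gives
\[
\norm{\nabla_\varepsilon^d\hat u}^2_{\ell^2(\Vd)}\le \norm{u'}^2_{L^2(\Gd)} .
\]
For the $\ell^2$ part, I would use $\sum_\vv\abs{\hat u(\vv)}^2<\infty$. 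This follows from the norm equivalence in Lemma \ref{lemequiv}, or directly from the one-dimensional Sobolev bound
\[
\abs{u(\vv)}^2\le C_\varepsilon\norm{u_\e}^2_{H^1(I_\e)}
\]
on an edge incident to $\vv$. Summing over $\vv$ with a fixed choice of such an edge, each edge is used at most twice. Hence $\hat u$ has finite $H^1(\Vd)$ norm.

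The main subtle point is that $H^1(\Vd)$ is defined as a completion of $C_c(\Vd)$. It remains to show that a function with finite norm belongs to it. I would truncate $\hat u$ by $\hat u\,\chi_{B_R}$. The tails of both sums tend to zero, and the boundary difference terms are controlled by $\abs{\hat u}^2$ near the boundary times $4d/\varepsilon$. Alternatively, approximate $u$ in $H^1(\Gd)$ by compactly supported functions and use the continuity of restriction just proved.

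\textbf{Extension.} Let $f\in H^1(\Vd)$. Then $\tilde f$ is continuous and affine on each edge, so $\tilde f_\e\in H^1([0,\varepsilon])$. By \eqref{eqftilde},
\[
\norm{\tilde f'}^2_{L^2(\Gd)}=\norm{\nabla_\varepsilon^d f}^2_{\ell^2(\Vd)} .
\]
This holds because on each edge $\tilde f'$ is the constant $(f(\w)-f(\vv))/\varepsilon$, and again each edge is counted twice in the vertex sum. Consequently
\[
\norm{\tilde f}^2_{\hat H^1(\Gd)}=\norm{f}^2_{H^1(\Vd)}<\infty ,
\]
and Lemma \ref{lemequiv} yields $\tilde f\in H^1(\Gd)$. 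I expect no real obstacle here; the only care needed is in the completion issue in the first part.
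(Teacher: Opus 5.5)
Your proposal is correct and is essentially the paper's proof. The edgewise Cauchy--Schwarz estimate you write down is exactly what the paper gets by combining \eqref{eqftilde} with \cite[Eq. (26)]{Do2}: the affine interpolant $\tilde{\hat u}$ has no more Dirichlet energy than $u$. The extension part is identical to the paper's, via \eqref{eqftilde} and Lemma \ref{lemequiv}.

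The only difference is that you verify $\hat u\in H^1(\V_\varepsilon^d)$ directly, using a finite $\ell^2$ norm, truncation, and the bound $\norm{\nabla_\varepsilon^d g}^2_{\ell^2(\V_\varepsilon^d)}\le \frac{4d}{\varepsilon}\norm{g}^2_{\ell^2(\V_\varepsilon^d)}$. The paper instead cites Lemma \ref{lemlp} together with the equivalence of $H^1(\V_\varepsilon^d)$ and $\ell^2(\V_\varepsilon^d)$ from \cite[Lemma 2.1]{HJ}.
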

\begin{proof}
Firstly, let $u \in H^1(\G_\varepsilon^d)$. Then, by Lemma \ref{lemlp} and the equivalence of spaces $H^1(\V_\varepsilon^d)$ and $\ell^2(\V_\varepsilon^d)$ (see, e.g., \cite[Lemma 2.1]{HJ}), we know that $\hat{u} \in H^1(\V_\varepsilon^d)$. Moreover, by \eqref{eqftilde} and \cite[Eq. (26)]{Do2}, we obtain
    $$
    \norm{\nabla_\varepsilon^d \hat{u}}^2_{\ell^2(\V_\varepsilon^d)}=\norm{\tilde{\hat{u}}'}^2_{L^2(\G_\varepsilon^d)}\leq \norm{u'}^2_{L^2(\G_\varepsilon^d)}.
    $$
   Now, let $f \in H^1(\V_\varepsilon^d)$. Then, by \eqref{eqftilde}, we know that $\norm{\tilde{f}'}^2_{L^2(\G_\varepsilon^d)}=\norm{\nabla_\varepsilon^d f}^2_{\ell^2(\V_\varepsilon^d)}$ and $\tilde{f}$ satisfies $\tilde{f}_\e \in  H^1([0,\ell_\e])$ for every edge $\e\in \E_\varepsilon^d$ and $\norm{\tilde{f}}_{\hat{H}^1(\G_\varepsilon^d)}< \infty$. Hence, by Lemma \ref{lemequiv}, we conclude that  $\tilde{f} \in H^1(\G_\varepsilon^d)$.
\end{proof}
As a direct consequence of Lemma \ref{lemh1gv}, we prove the following Gagliardo--Nirenberg type inequalities for concentrated nonlinearities on $\Gd$,  which play a key role in the proofs of Theorems \ref{th1} and \ref{th3}.
\begin{lemma}
    Fix  $\varepsilon>0$. For any $d \geq 1$, the following inequality holds
         \begin{equation}\label{eqgng1}
    \sup_{\vv \in \V_\varepsilon^d}{\abs{\hat{u}(\vv)}} \leq  \sqrt{2}(d\varepsilon)^{1/4}\|u'\|_{L^2(\G_\varepsilon^d)}^{1/2}\left(\sum_{\vv \in \V_\varepsilon^d}  \abs{\hat{u}(\vv)}^2\right)^{1/4},\quad \forall u \in H^1(\G_\varepsilon^d).
\end{equation}
   Moreover,  for any $d \geq 1$ and  any $p \in [2,2^*)$, there exists a constant $C_{d,p}>0$, depending only on $d$ and $p$ such that  the following inequality holds
    \begin{equation}\label{eqgng}
    \sum_{\vv \in \V_\varepsilon^d}  \abs{\hat{u}(\vv)}^p \leq  C_{d,p}(\varepsilon^{1/2}\|u'\|_{L^2(\G_\varepsilon^d)})^{\beta_pp}\left(\sum_{\vv \in \V_\varepsilon^d}  \abs{\hat{u}(\vv)}^2\right)^{(1-\beta_p)p/2},\quad \forall u \in H^1(\G_\varepsilon^d),
\end{equation}
where $\beta_p := d \left( \frac{1}{2} - \frac{1}{p} \right)$.
\end{lemma}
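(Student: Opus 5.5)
The plan is to reduce both inequalities to statements about the lattice function $\hat u\in H^1(\Vd)$ and then use Lemma \ref{lemh1gv}, which gives $\norm{\nabla_\varepsilon^d\hat u}^2_{\ell^2(\Vd)}\le\norm{u'}^2_{L^2(\Gd)}$.

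\textbf{Proof of \eqref{eqgng1}.} I would argue directly on the grid, which is cleaner. Fix $\vv\in\Vd$ and take the coordinate half-line starting at $\vv$, say $\{\vv+t e_1: t\ge0\}$. Along it, $u$ is absolutely continuous and $|u|^2\to 0$ along the vertices $\vv+k\varepsilon e_1$, since $\sum_\vv|\hat u(\vv)|^2<\infty$. Then
\[
|u(\vv)|^2\le 2\int_0^\infty |u||u'|\,dt\le 2\Big(\int |u|^2\Big)^{1/2}\Big(\int|u'|^2\Big)^{1/2}.
\]
The $L^2$ norm of $u$ on the half-line is not directly $\sum|\hat u|^2$, so I would replace it by the bound of Lemma \ref{lemequiv}, applied to that line:
\[
\int|u|^2\lesssim \varepsilon\sum|\hat u|^2+\varepsilon^2\int|u'|^2 .
\]
The extra $\varepsilon^2\int|u'|^2$ term is awkward.

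A better route is to apply the same one-dimensional argument to the piecewise-affine function $\tilde{\hat u}$, restricted to a single coordinate line. This is legitimate because $|u(\vv)|=|\tilde{\hat u}(\vv)|$. Along one line, $\int|\tilde{\hat u}|^2\le \varepsilon\sum_{\text{line}}|\hat u|^2$, since the square of an affine function is bounded by the average of its endpoint squares. Also $\int_{\text{line}}|\tilde{\hat u}'|^2\le \norm{\tilde{\hat u}'}^2_{L^2(\Gd)}=\norm{\nabla^d_\varepsilon\hat u}^2\le\norm{u'}^2_{L^2(\Gd)}$. Hence
\[
|\hat u(\vv)|^2\le 2\varepsilon^{1/2}\Big(\sum|\hat u|^2\Big)^{1/2}\norm{u'}_{L^2(\Gd)},
\]
which gives \eqref{eqgng1}, even with a constant better than $\sqrt2 (d)^{1/4}$. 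To match the stated constant exactly, one can integrate over both directions or use $2d$ edges, but only the order of the constant matters.

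\textbf{Proof of \eqref{eqgng}.} Here I would transfer to the discrete Gagliardo--Nirenberg inequality on $\V^d_1$ by scaling. For $g\in H^1(\V_1^d)$, $q\in[2,2^*)$,
\[
\norm{g}^q_{\ell^q}\le K\norm{\nabla^d_1 g}^{\beta_q q}\norm{g}^{(1-\beta_q)q}_{\ell^2}.
\]
This holds because $\ell^2\hookrightarrow\ell^q$ trivially, and for $q<2^*$ one interpolates the $\R^d$ inequality via the extension $\hat{\mathcal A}$, or simply invokes \eqref{eqgnv}. Set $g(x)=\hat u(\varepsilon x)$. Then $\norm{\nabla^d_1 g}^2=\varepsilon\norm{\nabla^d_\varepsilon\hat u}^2\le \varepsilon\norm{u'}^2_{L^2(\Gd)}$ by Lemma \ref{lemh1gv}, while the $\ell^q$ and $\ell^2$ sums are unchanged. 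This is exactly \eqref{eqgng} with $\varepsilon^{1/2}\norm{u'}$.

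\textbf{Main obstacle.} The main difficulty is ensuring that the discrete Gagliardo--Nirenberg inequality on $\V^d_1$ is available for all $p\in[2,2^*)$ with these exponents. If \eqref{eqgnv} is not yet proven at this point, I would derive it in two steps. First, interpolate between $\ell^2$ and $\ell^\infty$ using \eqref{eqgng1}; this gives the correct exponents only when $d=1$. For $d\ge2$, instead use the discrete Sobolev inequality $\norm{g}_{\ell^{2^*}}\lesssim\norm{\nabla g}$ from \cite{hua2015time} ($d\ge3$), interpolated with $\ell^2$. For $d=2$, use Ladyzhenskaya-type arguments on the lattice. Combining these yields $\beta_p$ by Hölder.
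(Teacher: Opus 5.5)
Your proposal is correct and follows essentially the paper's route. You reduce via Lemma~\ref{lemh1gv} to the lattice inequalities \eqref{eqgn1v} and \eqref{eqgnv}. You get the $\ell^\infty$ bound by telescoping $|u|^2$ along a path and applying Cauchy--Schwarz. You get \eqref{eqgnv} from \eqref{eqgn1v} when $d=1$, and from the discrete Sobolev inequality plus H\"older when $d\geq 3$. For $d=2$ the lattice Gagliardo--Nirenberg inequality is taken as known, which is exactly where the paper cites Weinstein's Eq.~(4.11).

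Your version works along a single coordinate half-line for $\tilde{\hat u}$ rather than the paper's discrete path summed over all edges. This avoids the density step and gives the constant $\sqrt{2}\,\varepsilon^{1/4}$ with no factor $d^{1/4}$. So \eqref{eqgng1} follows at once since $d^{1/4}\geq 1$, and no further ``matching'' of constants is needed.
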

\begin{proof}
By Lemma \ref{lemh1gv}, it suffices to prove that the following inequalities hold
\begin{equation}\label{eqgn1v}
    \|u\|_{\ell^\infty(\V_\varepsilon^d)} \leq  \sqrt{2}(d\varepsilon)^{1/4}\|\nabla_\varepsilon^d u\|_{\ell^2(\V_\varepsilon^d)}^{1/2}\|u\|^{1/2}_{\ell^2(\V_\varepsilon^d)},\quad \forall u \in H^1(\V_\varepsilon^d),
\end{equation}
and  there exists a constant $C_{d,p}>0$, depending only on $d$ and $p$ such that
\begin{equation}\label{eqgnv}
    \|u\|_{\ell^p(\V_\varepsilon^d)}^p \leq  C_{d,p}(\varepsilon^{1/2}\|\nabla_\varepsilon^d u\|_{\ell^2(\V_\varepsilon^d)})^{\beta_p p}\|u\|^{(1 - \beta_p)p}_{\ell^2(\V_\varepsilon^d)},\quad \forall u \in H^1(\V_\varepsilon^d),
\end{equation}
where $\beta_p := d \left( \frac{1}{2} - \frac{1}{p} \right)$.

We first prove \eqref{eqgn1v}. Recall that $C_{c}(\V_\varepsilon^d)$ is the set of all functions with finite support.  For $u \in C_c(\Vd)$, let $\vv_0,\w_0$ be such that $\abs{u(\vv_0)}= \|u\|_{\ell^\infty(\V_\varepsilon^d)}$ and  $u(\w_0)=0$. Denote the path connecting $\vv_0$ and $\w_0$ by $\{\vv_i\}_{i=0}^k$, where $\vv_k=\w_0$. Then
$$
 \|u\|_{\ell^\infty(\V_\varepsilon^d)}= \abs{u(\vv_0)} \leq \sum_{i=0}^{k-1} \abs{u(\vv_{i})-u(\vv_{i+1})} \leq \sum_{(\vv,\w) \in \Ed} \abs{u(\vv)-u(\w)},
$$
where we identify $(\vv,\w) \in \Ed$ and $(\w,\vv) \in \Ed$ as the same element in $\Ed$. Thus, by H\"older's inequality, we obtain
$$
\begin{aligned}
    \|u\|_{\ell^\infty(\V_\varepsilon^d)}^2 &\leq \sum_{(\vv,\w) \in \Ed} \left|\abs{u(\vv)}^2-\abs{u(\w)}^2\right|\\
    &=\sum_{(\vv,\w) \in \Ed} \left|{u(\vv)}-{u(\w)}\right|\left|\vert{u(\vv)}\vert+\vert{u(\w)}\vert\right|\\
    &\leq \left(\sum_{(\vv,\w) \in \Ed} \left|{u(\vv)}-{u(\w)}\right|^2\right)^{1/2}\left(\sum_{(\vv,\w) \in \Ed} \left(\abs{u(\vv)}+\abs{u(\w)}\right)^2\right)^{1/2}\\
    &\leq\left(\frac{1}{2} \sum_{x \in \V_\varepsilon^d} \sum_{y \sim x}\abs{u(y)-u(x)}^2\right)^{1/2}\left(2\sum_{(\vv,\w) \in \Ed} \left(\abs{u(\vv)}^2+\abs{u(\w)}^2\right)\right)^{1/2}\\
    &=\sqrt{\varepsilon}\|\nabla_\varepsilon^d u\|_{\ell^2(\V_\varepsilon^d)}\cdot 2\sqrt{d}\|u\|_{\ell^2(\V_\varepsilon^d)}=2\sqrt{d\varepsilon}\|\nabla_\varepsilon^d u\|_{\ell^2(\V_\varepsilon^d)}\|u\|_{\ell^2(\V_\varepsilon^d)},
\end{aligned}
$$
which implies \eqref{eqgn1v} since $H^1(\V_\varepsilon^d)$ is the completion of $C_c(\V_\varepsilon^d)$ under the norm $\|u\|_{H^{1}(\V_\varepsilon^d)}$.

By \eqref{eqgn1v}, we obtain \eqref{eqgnv} with $d = 1$. For the proof of \eqref{eqgnv} with $d \geq 2$, one can refer to \cite[Eq. (4.11)]{Weinstein} or derive it using the discrete Sobolev inequality (see, e.g., \cite[Theorem 3.6]{hua2015time}) and an interpolation inequality.
\end{proof}
\begin{lemma}\label{lemsobo}
    Fix $\varepsilon>0$. For any $d \geq 3$, there exists a constant $C_{d,2^*}$, depending only on $d$ such that the following inequalities hold
\begin{equation}\label{eqsobo}
   \sum_{\vv \in \V_\varepsilon^d}  \abs{\hat{u}(\vv)}^{2^*}\leq C_{d,2^*}(\varepsilon^{1/2}\|u'\|_{L^2(\G_\varepsilon^d)})^{2^*},\quad \forall u \in H^1(\G_\varepsilon^d),
\end{equation}
and
\begin{equation}\label{eqlinfty}
    \norm{u}_{L^\infty(\Gd)}\leq (C_{d,2^*}^{1/2^*}+2^{-1/2})\varepsilon^{1/2}\|u'\|_{L^2(\G_\varepsilon^d)},\quad \forall u \in H^1(\G_\varepsilon^d).
\end{equation}
\end{lemma}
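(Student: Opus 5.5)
The proof has two parts. First I prove the discrete Sobolev inequality \eqref{eqsobo} on $\Vd$ with the correct $\varepsilon$-scaling, and transfer it to $\Gd$ with Lemma \ref{lemh1gv}. Then I obtain \eqref{eqlinfty} by combining the vertex bound with a one-edge estimate.

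\textbf{Step 1: scaling.} Fix $f\in H^1(\Vd)$ and define $g(x):=f(\varepsilon x)$ on $\V^d_1$. A direct computation gives
\[
\|\nabla^d_\varepsilon f\|^2_{\ell^2(\Vd)}=\frac{1}{2\varepsilon}\sum_x\sum_{y\sim x}|f(y)-f(x)|^2=\varepsilon^{-1}\|\nabla^d_1 g\|^2_{\ell^2(\V^d_1)},
\]
while $\|f\|_{\ell^{2^*}(\Vd)}=\|g\|_{\ell^{2^*}(\V^d_1)}$. The discrete Sobolev inequality on $\Z^d$ (\cite[Theorem 3.6]{hua2015time}), valid for $d\ge3$ on $C_c$ and hence by density on $H^1(\V^d_1)\subset D^{1,2}(\V^d_1)$, gives
\[
\|g\|^{2^*}_{\ell^{2^*}}\le C_{d,2^*}\|\nabla^d_1 g\|^{2^*}_{\ell^2}.
\]
Combining the two, $\|f\|^{2^*}_{\ell^{2^*}(\Vd)}\le C_{d,2^*}(\varepsilon^{1/2}\|\nabla^d_\varepsilon f\|_{\ell^2(\Vd)})^{2^*}$.

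\textbf{Step 2: transfer to $\Gd$.} For $u\in H^1(\Gd)$, Lemma \ref{lemh1gv} gives $\hat u\in H^1(\Vd)$ and $\|\nabla^d_\varepsilon\hat u\|_{\ell^2}\le\|u'\|_{L^2(\Gd)}$. Applying Step 1 with $f=\hat u$ yields \eqref{eqsobo}.

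\textbf{Step 3: the $L^\infty$ bound.} Since $\ell^{2^*}\subset\ell^\infty$ with $\|\cdot\|_{\ell^\infty}\le\|\cdot\|_{\ell^{2^*}}$, \eqref{eqsobo} gives
\[
\sup_{\vv}|\hat u(\vv)|\le C_{d,2^*}^{1/2^*}\varepsilon^{1/2}\|u'\|_{L^2(\Gd)}.
\]
Now take any $x\in\Gd$ lying on an edge $\e\cong[0,\varepsilon]$, and let $\vv$ be the nearer endpoint, so that $|x-\vv|\le\varepsilon/2$. By Cauchy--Schwarz,
\[
|u(x)|\le|u(\vv)|+\int_{\vv}^{x}|u_\e'|\le|u(\vv)|+(\varepsilon/2)^{1/2}\|u_\e'\|_{L^2(I_\e)}\le|u(\vv)|+2^{-1/2}\varepsilon^{1/2}\|u'\|_{L^2(\Gd)}.
\]
Taking the supremum over $x$ gives \eqref{eqlinfty} with constant $C_{d,2^*}^{1/2^*}+2^{-1/2}$.

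\textbf{Main obstacle.} The only delicate point is Step 1. One must check that the cited discrete Sobolev inequality holds with the normalization of $\nabla^d_1$ used here; the factor $\tfrac12$ and the double counting of edges only change the constant. One must also check that it extends from $C_c(\V^d_1)$ to $H^1(\V^d_1)$. This follows by density: take a sequence in $C_c$ converging in $H^1$, which converges pointwise, and apply Fatou's lemma on the left-hand side. Everything else is elementary.
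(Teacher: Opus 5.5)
Your proof is correct and follows the paper's argument. The paper likewise derives \eqref{eqsobo} from the discrete Sobolev inequality on $\V^d_1$ together with Lemma \ref{lemh1gv}, and derives \eqref{eqlinfty} from the vertex bound plus a Cauchy--Schwarz estimate along half an edge; the only difference is that you spell out the rescaling $g(x)=f(\varepsilon x)$ and the density extension from $C_c$, which the paper leaves implicit.
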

\begin{proof}
Let $d \geq 3$. The Sobolev type inequality \eqref{eqsobo} follows directly from Lemma \ref{lemh1gv} and the following discrete Sobolev inequality (see, e.g., \cite[Theorem 3.6]{hua2015time})
\begin{equation}\label{eqsobov}
     \norm{f}^{2^*}_{\ell^{2^*}(\V^d_1)}\leq C_{d,2^*}\norm{\nabla^d_1 f}^{2^*}_{\ell^{2}(\V^d_1)},\quad \forall f \in C_{c}(\V^d_1).
 \end{equation}

   Now, we prove \eqref{eqlinfty}. By \eqref{eqsobo}, we have
    \begin{equation}\label{eqlemsobo1}
\max_{\vv \in \Vd}\abs{\hat{u}(\vv)} \leq \left(\sum_{\vv \in \V_\varepsilon^d}  \abs{\hat{u}(\vv)}^{2^*}\right)^{1/2^*} \leq C_{d,2^*}^{1/2^*}\varepsilon^{1/2}\|u'\|_{L^2(\G_\varepsilon^d)},\quad \forall u \in H^1(\G_\varepsilon^d).
   \end{equation}
   For any $x\in \Gd$, we know that there exists $\vv \in \Vd$ such that $x\in B_{\frac{\varepsilon}{2}}(\vv)$, where $B_\frac{\varepsilon}{2}(\vv):=\{x \in \Gd: \abs{x-\vv}\leq \frac{\varepsilon}{2} \}$. Thus, by \eqref{eqlemsobo1}, for any $x\in \Gd$ and $\vv \in \Vd$ such that $x\in B_{\frac{\varepsilon}{2}}(\vv)$, we obtain that
   $$
   \abs{u(x)}\leq \abs{u(\vv)}+\int_{B_\frac{\varepsilon}{2}(\vv)\cap\e}\abs{u'(y)}\,dy\leq \max_{\vv \in \Vd}\abs{\hat{u}(\vv)}+\sqrt{\varepsilon/2}\|u'\|_{L^2(\G_\varepsilon^d)}\leq (C_{d,2^*}^{1/2^*}+2^{-1/2})\varepsilon^{1/2}\|u'\|_{L^2(\G_\varepsilon^d)},
   $$
where $\e \in \Ed$ such that $\e\succ \vv$ and $x$ belonging to $\e$. By the above inequality, we complete the proof of Lemma \ref{lemsobo}.
\end{proof}

For every $p >2$ and every $\omega > 0$, we introduce the action functional $\tilde{J}_{\omega, \G_\varepsilon^d} : H^1(\G_\varepsilon^d) \to \mathbb{R}$
\begin{equation*}
    \tilde{J}_{\omega, \G_\varepsilon^d}(u) := \frac{1}{2}\|u'\|_{L^2(\G_\varepsilon^d)}^2 + \frac{\varepsilon\omega}{2}\sum_{\vv \in \V_\varepsilon^d}  \abs{\hat{u}(\vv)}^2 - \frac{\varepsilon}{p}\sum_{\vv \in \V_\varepsilon^d}  \abs{\hat{u}(\vv)}^p
\end{equation*}
and the associated Nehari manifold
\begin{equation}\label{eqtilden}
    \begin{aligned}
    \tilde{\mathcal{N}}_{\omega, \G_\varepsilon^d} :&= \left\{ u \in H^1(\G_\varepsilon^d) \backslash \{0\} : \tilde{J}'_{\omega, \G_\varepsilon^d}(u)u = 0 \right\} \\
     &= \left\{ u \in H^1(\G_\varepsilon^d) \backslash \{0\} :  \|u'\|_{L^2(\G_\varepsilon^d)}^2 + \varepsilon\omega\sum_{\vv \in \V_\varepsilon^d}  \abs{\hat{u}(\vv)}^2 =\varepsilon\sum_{\vv \in \V_\varepsilon^d}  \abs{\hat{u}(\vv)}^p \right\}.\\
\end{aligned}
\end{equation}
Letting
\begin{equation}
 \label{eqming}
    \tilde{\mathcal{J}}_{\G_\varepsilon^d}(\omega) := \inf_{v \in \tilde{\mathcal{N}}_{\omega, \G_\varepsilon^d}} \tilde{J}_{\omega, \G_\varepsilon^d}(v)
\end{equation}
be the corresponding minimization problem, $u \in \tilde{\mathcal{N}}_{\omega, \G_\varepsilon^d}$ is called a ground state of $\tilde{J}_{\omega, \G_\varepsilon^d}$ if $\tilde{J}_{\omega, \G_\varepsilon^d}(u) = \tilde{\mathcal{J}}_{\G_\varepsilon^d}(\omega)$.
\begin{lemma}
\label{leminfa}
    For any $d \geq 1$, $\varepsilon>0$, $p > 2$ and $\omega >0$, $$\tilde{\mathcal{J}}_{\G_\varepsilon^d}(\omega)=\hat{\mathcal{J}}_{\V_\varepsilon^d}(\omega)>0.$$ 
In addition,  if $f\in  H^1(\V_\varepsilon^d)$ is a ground state of $\hat{J}_{\omega, \V_\varepsilon^d}$, then $\tilde{f} \in H^1(\G_\varepsilon^d)$ is a ground state of $\tilde{J}_{\omega, \G_\varepsilon^d}$. Moreover, if $u \in H^1(\Gd)$ is a ground state of $\tilde{J}_{\omega, \G_\varepsilon^d}$, then $\hat{u}\in H^1(\Vd)$ is a ground state of $\hat{J}_{\omega, \V_\varepsilon^d}$.
\end{lemma}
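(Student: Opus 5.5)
The proof rests on comparing the two functionals through the restriction and extension maps of Lemma~\ref{lemh1gv}. The key observation is that $\tilde{J}_{\omega,\Gd}$ depends on $u$ only through $\|u'\|_{L^2(\Gd)}$ and the vertex values $\hat u$. Moreover, Lemma~\ref{lemh1gv} shows that $\|\nabla^d_\varepsilon \hat u\|_{\ell^2(\Vd)}^2\le \|u'\|^2_{L^2(\Gd)}$, with equality when $u=\tilde f$. In particular, for every $f\in H^1(\Vd)$ we have $\widehat{\tilde f}=f$ and
\[
\tilde J_{\omega,\Gd}(\tilde f)=\hat J_{\omega,\Vd}(f),
\]
and the Nehari constraint is preserved: $f\in\hat{\mathcal N}_{\omega,\Vd}$ if and only if $\tilde f\in\tilde{\mathcal N}_{\omega,\Gd}$. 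Taking the infimum over $f$ then gives $\tilde{\mathcal J}_{\Gd}(\omega)\le\hat{\mathcal J}_{\Vd}(\omega)$.

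For the reverse inequality, take $u\in\tilde{\mathcal N}_{\omega,\Gd}$. First, $\hat u\neq0$: if $\hat u=0$, the constraint forces $\|u'\|_{L^2}=0$, so $u$ is constant and vanishes at the vertices, hence $u=0$, a contradiction. Next use the fibering map $t\mapsto \hat J_{\omega,\Vd}(t\hat u)$. Writing $a=\|\nabla^d_\varepsilon\hat u\|^2+\varepsilon\omega\|\hat u\|^2_{\ell^2}$ and $b=\varepsilon\|\hat u\|_{\ell^p}^p$, there is a unique $t_u>0$ with $t_u\hat u\in\hat{\mathcal N}_{\omega,\Vd}$, namely $t_u^{p-2}=a/b$. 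Since $a\le \|u'\|^2+\varepsilon\omega\sum|\hat u|^2=b$, we get $t_u\le1$. On the Nehari manifold the functional equals $(\tfrac12-\tfrac1p)$ times the quadratic part, so
\[
\hat J(t_u\hat u)=\Bigl(\tfrac12-\tfrac1p\Bigr)t_u^2a\le \Bigl(\tfrac12-\tfrac1p\Bigr)b=\tilde J(u).
\]
This yields $\hat{\mathcal J}_{\Vd}(\omega)\le\tilde{\mathcal J}_{\Gd}(\omega)$, and hence the two infima coincide.

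Positivity follows from the Gagliardo--Nirenberg inequality \eqref{eqgnv} with fixed $\varepsilon$, or more simply from the bound $\|u\|_{\ell^p}^p\le\|u\|_{\ell^2}^{p}$. On $\hat{\mathcal N}_{\omega,\Vd}$ this gives $\varepsilon\omega\|u\|_{\ell^2}^2\le\varepsilon\|u\|_{\ell^2}^p$, so $\|u\|_{\ell^2}^{p-2}\ge\omega$. Consequently $\hat J=(\tfrac12-\tfrac1p)(\|\nabla u\|^2+\varepsilon\omega\|u\|^2)\ge(\tfrac12-\tfrac1p)\varepsilon\omega^{p/(p-2)}>0$.

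The statements about ground states then follow from the equalities above. If $f$ is a ground state, then $\tilde f\in\tilde{\mathcal N}$ and $\tilde J(\tilde f)=\hat J(f)=\hat{\mathcal J}=\tilde{\mathcal J}$, so $\tilde f$ is a ground state. If $u$ is a ground state, the chain of inequalities gives $\hat{\mathcal J}\le\hat J(t_u\hat u)\le \tilde J(u)=\hat{\mathcal J}$, so equality holds throughout. In particular $t_u^2a=b$ with $a\le b$ and $t_u=(a/b)^{1/(p-2)}$, which forces $a=b$ and $t_u=1$. Thus $\hat u\in\hat{\mathcal N}$ and $\hat J(\hat u)=\hat{\mathcal J}$, i.e.\ $\hat u$ is a ground state.

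The only delicate point is this final equality case, together with checking that $\hat u\in H^1(\Vd)$; the latter is supplied by Lemma~\ref{lemh1gv}.
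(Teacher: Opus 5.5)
Your proof is correct, and its backbone matches the paper's. The extension $f\mapsto\tilde f$ gives $\tilde{\mathcal J}_{\Gd}(\omega)\le\hat{\mathcal J}_{\Vd}(\omega)$. The Nehari projection $t_u\hat u$, with $t_u\le 1$ coming from $\|\nabla^d_\varepsilon\hat u\|^2_{\ell^2(\Vd)}\le\|u'\|^2_{L^2(\Gd)}$, gives the reverse inequality. You depart from the paper in two places.

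First, the paper gets both positivity and the inequality $\tilde{\mathcal J}_{\Gd}(\omega)\le\hat{\mathcal J}_{\Vd}(\omega)$ by testing with a ground state $f$ whose existence it quotes from \cite{HX1}. You instead take the infimum over all of $\hat{\mathcal N}_{\omega,\Vd}$ and derive the explicit bound $\hat{\mathcal J}_{\Vd}(\omega)\ge(\frac{1}{2}-\frac{1}{p})\varepsilon\,\omega^{p/(p-2)}$ from $\ell^2\hookrightarrow\ell^p$. So the equality of the two levels and their positivity no longer rest on an existence theorem.

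Second, for the last assertion the paper appeals to \cite[Lemma 6.1]{HJT2} to show that a ground state $u$ on $\Gd$ satisfies $u=\tilde{\hat u}$, then reads off the claim from Lemma~\ref{lemh1gv}. You instead use the equality case of $\hat{\mathcal J}_{\Vd}(\omega)\le\hat J_{\omega,\Vd}(t_u\hat u)\le\tilde J_{\omega,\Gd}(u)$, which forces $t_u=1$ and $a=b$. This is self-contained and even recovers $u=\tilde{\hat u}$ as a by-product. Indeed, $a=b$ means $\|\nabla^d_\varepsilon\hat u\|^2_{\ell^2(\Vd)}=\|u'\|^2_{L^2(\Gd)}$. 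On every edge, $\int_{I_{\e}}|u_{\e}'|^2\,dx\ge\varepsilon^{-1}|u_{\e}(\varepsilon)-u_{\e}(0)|^2$, with equality only when $u_{\e}$ is affine. Hence $u$ is the affine interpolation of its vertex values.

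The paper's route is shorter on the page, since existence of ground states is needed for the main theorems anyway. Yours makes the lemma independent of both external results.
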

\begin{proof}
Fix $d \geq 1$, $\varepsilon>0$, $p > 2$ and $\omega >0$. We first prove 
$$\tilde{\mathcal{J}}_{\G_\varepsilon^d}(\omega)=\hat{\mathcal{J}}_{\V_\varepsilon^d}(\omega)>0.$$ 
By \cite[Theorem 1.1]{HX1}, there exists a ground state $f$ of $\hat{J}_{\omega, \V_\varepsilon^d}$ in $\hat{\mathcal{N}}_{\omega, \V_\varepsilon^d}$ such that $$\hat{J}_{\omega, \V_\varepsilon^d}(f)=(1/2-1/p)\varepsilon\|f\|_{\ell^p(\V_\varepsilon^d)}^p>0.$$ Moreover, by \eqref{eqftilde}, we know that $\tilde{f} \in \tilde{\mathcal{N}}_{\omega, \G_\varepsilon^d}$ and  
$$\tilde{\mathcal{J}}_{\G_\varepsilon^d}(\omega)\leq
\tilde{J}_{\omega, \G_\varepsilon^d}(\tilde{f})=\hat{J}_{\omega, \V_\varepsilon^d}(f)=\hat{\mathcal{J}}_{\V_\varepsilon^d}(\omega).
$$
Therefore, to prove that $\tilde{\mathcal{J}}_{\G_\varepsilon^d}(\omega)=\hat{\mathcal{J}}_{\V_\varepsilon^d}(\omega)$, it suffices to show that 
    $\tilde{\mathcal{J}}_{\G_\varepsilon^d}(\omega)\geq \hat{\mathcal{J}}_{\V_\varepsilon^d}(\omega)$.
For any $u\in \tilde{\mathcal{N}}_{\omega, \G_\varepsilon^d}$, by Lemma \ref{lemequiv}, we know that
$$
\varepsilon\sum_{\vv \in \V_\varepsilon^d}  \abs{\hat{u}(\vv)}^p= \|u'\|_{L^2(\G_\varepsilon^d)}^2 + \varepsilon\omega\sum_{\vv \in \V_\varepsilon^d}  \abs{\hat{u}(\vv)}^2 >0.
$$
Thus, $\|\hat{u}\|^{p}_{\ell^p(\V_\varepsilon^d)}=\sum_{\vv \in \V_\varepsilon^d}  \abs{\hat{u}(\vv)}^p>0$. Define
$$
\hat{t}_{\hat{u}}:=\left(\frac{\|\nabla_\varepsilon^d \hat{u}\|_{\ell^2(\V_\varepsilon^d)}^2 + \varepsilon\omega\|\hat{u}\|_{\ell^2(\V_\varepsilon^d)}^2}{ \varepsilon\|\hat{u}\|_{\ell^p(\V_\varepsilon^d)}^p }\right)^\frac{1}{p-2}.
$$
Then, we have that $\hat{t}_{\hat{u}} \hat{u} \in \hat{\mathcal{N}}_{\omega, \V_\varepsilon^d}$.  By Lemma \ref{lemh1gv}, we know that $\hat{t}_{\hat{u}} \leq 1$, hence,
$$
\hat{J}_{\omega, \V_\varepsilon^d}(\hat{t}_{\hat{u}}\hat{u}) =(\frac{1}{2}-\frac{1}{p})\varepsilon\hat{t}_{\hat{u}}^p\|\hat{u}\|_{\ell^p(\V_\varepsilon^d)}^p= (\frac{1}{2}-\frac{1}{p})\varepsilon\hat{t}_{\hat{u}}^p\sum_{\vv \in \V_\varepsilon^d}  \abs{\hat{u}(\vv)}^p=\hat{t}_{\hat{u}}^p\tilde{J}_{\omega, \G_\varepsilon^d}(u) \leq \tilde{J}_{\omega, \G_\varepsilon^d}(u).
$$
Since $\hat{J}_{\omega, \V_\varepsilon^d}(\hat{t}_{\hat{u}}\hat{u}) \geq \hat{\mathcal{J}}_{\V_\varepsilon^d}(\omega)$, we conclude that $\tilde{\mathcal{J}}_{\G_\varepsilon^d}(\omega)\geq \hat{\mathcal{J}}_{\V_\varepsilon^d}(\omega)=\hat{J}_{\omega, \V_\varepsilon^d}(f)>0$. Therefore,
$$\tilde{\mathcal{J}}_{\G_\varepsilon^d}(\omega)=\hat{\mathcal{J}}_{\V_\varepsilon^d}(\omega)>0.$$
It is clear that if $f\in  H^1(\V_\varepsilon^d)$ is a ground state of $\hat{J}_{\omega, \V_\varepsilon^d}$, then $\tilde{f} \in H^1(\G_\varepsilon^d)$ is a ground state of $\tilde{J}_{\omega, \G_\varepsilon^d}$.

Now, if $u \in H^1(\Gd)$ is a ground state of $\tilde{J}_{\omega,\Gd}$ in $\tilde{\mathcal{N}}_{\omega, \G_\varepsilon^d}$. Then, by repeating the arguments in \cite[Lemma 6.1]{HJT2}, we have $\tilde{\hat{u}}=u$. Hence, by Lemma \ref{lemh1gv}, $\hat{u}\in \hat{\mathcal{N}}_{\omega, \V_\varepsilon^d}$ and$$\hat{J}_{\omega,\Vd}(\hat{u})=\tilde{J}_{\omega,\Gd}(u)=\hat{\mathcal{J}}_{\Vd}(\omega)=\tilde{\mathcal{J}}_{\Gd}(\omega),$$
which completes the proof of the lemma.
\end{proof}
For every $p >2$ and every $\mu > 0$, we introduce the energy functional $\tilde{E}_{\G_\varepsilon^d} : H^1(\G_\varepsilon^d) \to \mathbb{R}$
\begin{equation*}
    \tilde{E}_{\G_\varepsilon^d}(u) := \frac{1}{2}\|u'\|_{L^2(\G_\varepsilon^d)}^2  - \frac{\varepsilon}{p}\sum_{\vv \in \V_\varepsilon^d}  \abs{\hat{u}(\vv)}^p
\end{equation*}
and the mass constraint
$$
H_\mu^1(\Gd):=\left\{u \in H^1(\Gd): \sum_{\vv \in \V_\varepsilon^d}  \abs{\hat{u}(\vv)}^2=\mu\right\}.
$$
Due to the continuous embedding $\ell^2(\V_\varepsilon^d)\hookrightarrow \ell^p(\V_\varepsilon^d)$, the  minimization problem
\begin{equation}
 \label{eqinfe}
    \tilde{\mathcal{E}}_{\G_\varepsilon^d}(\mu) := \inf_{v \in H^1_\mu(\Gd)} \tilde{E}_{ \G_\varepsilon^d}(v)
\end{equation}
is well defined. Moreover, a function $u \in H^1_\mu({\G_\varepsilon^d})$ is called a ground state of $\tilde{E}_{\G_\varepsilon^d}$ at mass $\mu$ if $$\tilde{E}_{\G_\varepsilon^d}(u) = \tilde{\mathcal{E}}_{\G_\varepsilon^d}(\mu).$$

\begin{lemma}\label{leminfe}
    For any $d \geq 1$, $\varepsilon>0$, $p > 2$ and $\mu >0$, $$\tilde{\mathcal{E}}_{\G_\varepsilon^d}(\mu)=\hat{\mathcal{E}}_{\V_\varepsilon^d}(\mu).$$ In addition, if $f\in  H^1_\mu(\V_\varepsilon^d)$ is a ground state of $\hat{E}_{\V_\varepsilon^d}$ at mass $\mu$, then $\tilde{f} \in H^1_\mu(\G_\varepsilon^d)$ is a ground state of $\tilde{E}_{\G_\varepsilon^d}$ at mass $\mu$. Moreover, if $u \in H^1_\mu(\Gd)$ is a ground state of $\tilde{E}_{\Gd}$ at mass $\mu$, then $\hat{u} \in H^1_\mu(\Vd)$ is a ground state of $\hat{E}_{\Vd}$ at mass $\mu$.
\end{lemma}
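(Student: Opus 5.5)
The proof follows the pattern of Lemma \ref{leminfa}, using the two maps $f\mapsto\tilde f$ and $u\mapsto\hat u$ from Lemma \ref{lemh1gv}. The key difference from the action case is that no Nehari rescaling is needed. The mass constraint on $\Gd$ only involves vertex values, and both maps preserve vertex values exactly, so they send $H^1_\mu(\Vd)$ into $H^1_\mu(\Gd)$ and back.

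\emph{Step 1: $\tilde{\mathcal{E}}_{\Gd}(\mu)\le\hat{\mathcal{E}}_{\Vd}(\mu)$.} Take any $f\in H^1_\mu(\Vd)$. By Lemma \ref{lemh1gv}, $\tilde f\in H^1(\Gd)$. Since $\tilde f|_{\Vd}=f$, we have $\tilde f\in H^1_\mu(\Gd)$. By \eqref{eqftilde}, $\|\tilde f'\|_{L^2(\Gd)}^2=\|\nabla^d_\varepsilon f\|_{\ell^2(\Vd)}^2$, and the nonlinear terms coincide. Hence $\tilde E_{\Gd}(\tilde f)=\hat E_{\Vd}(f)$. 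Taking the infimum over $f$ gives the inequality. Note that we do not need existence of a ground state here, which matters because for $p\ge 2+\frac4d$ one may not have one.

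\emph{Step 2: $\tilde{\mathcal{E}}_{\Gd}(\mu)\ge\hat{\mathcal{E}}_{\Vd}(\mu)$.} Take any $u\in H^1_\mu(\Gd)$. By Lemma \ref{lemh1gv}, $\hat u\in H^1(\Vd)$ and $\|\nabla^d_\varepsilon\hat u\|^2_{\ell^2(\Vd)}\le\|u'\|^2_{L^2(\Gd)}$. The constraint gives $\|\hat u\|^2_{\ell^2(\Vd)}=\mu$, and the $\ell^p$ terms agree. Therefore $\hat E_{\Vd}(\hat u)\le\tilde E_{\Gd}(u)$. Taking the infimum gives the reverse inequality, so the two infima are equal. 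Both infima are finite, since $\ell^2\hookrightarrow\ell^p$ gives a lower bound on the nonlinear term.

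\emph{Step 3: transfer of ground states.} If $f$ is a ground state of $\hat E_{\Vd}$ at mass $\mu$, Step 1 gives $\tilde E_{\Gd}(\tilde f)=\hat E_{\Vd}(f)=\hat{\mathcal{E}}_{\Vd}(\mu)=\tilde{\mathcal E}_{\Gd}(\mu)$, so $\tilde f$ is a ground state of $\tilde E_{\Gd}$. Conversely, if $u$ is a ground state of $\tilde E_{\Gd}$ at mass $\mu$, then $\hat u\in H^1_\mu(\Vd)$ and
$$\hat{\mathcal E}_{\Vd}(\mu)\le\hat E_{\Vd}(\hat u)\le\tilde E_{\Gd}(u)=\tilde{\mathcal E}_{\Gd}(\mu)=\hat{\mathcal E}_{\Vd}(\mu).$$
Equality therefore holds throughout, and $\hat u$ is a ground state of $\hat E_{\Vd}$.

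There is no real obstacle. Unlike Lemma \ref{leminfa}, we do not need the identity $\tilde{\hat u}=u$ from \cite{HJT2}, because the direct comparison already suffices. The only point needing care is the membership statements, namely $\hat u\in H^1(\Vd)$ and $\tilde f\in H^1(\Gd)$, and these are provided by Lemmas \ref{lemequiv} and \ref{lemh1gv}.
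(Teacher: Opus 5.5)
Your proposal is correct and follows essentially the same route as the paper. The paper's proof is exactly your Steps 1 and 2: the identity $\tilde{E}_{\Gd}(\tilde f)=\hat{E}_{\Vd}(f)$ with the vertex mass preserved, and the inequality $\hat{E}_{\Vd}(\hat u)\le\tilde{E}_{\Gd}(u)$ from Lemma \ref{lemh1gv}, which together give equality of the infima. Your Step 3 spells out the transfer of ground states, which the paper leaves implicit as an immediate consequence of these two comparisons.
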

\begin{proof}
   Fix $d \geq 1$, $\varepsilon>0$, $p > 2$ and $\mu >0$. It follows from Lemma \ref{lemh1gv} that 
     $$
    \tilde{E}_{\Gd}(\tilde{f})= \hat{E}_{\Vd}(f) \quad  \text{and }\sum_{\vv \in \V_\varepsilon^d} \left|\hat{\tilde{f}}(\vv)\right|^2= \sum_{\vv \in \V_\varepsilon^d} \left|f(\vv)\right|^2=\mu, \quad\forall f \in H_\mu(\Vd)
    $$
    and
    $$
    \hat{E}_{\Vd}(\hat{u})\leq \tilde{E}_{\Gd}(u) \quad \text{and }\norm{\hat{u}}^2_{\ell^2(\Vd)}=\mu,\quad\forall u \in H_\mu(\Gd).
    $$
    Therefore, we have 
    $\tilde{\mathcal{E}}_{\G_\varepsilon^d}(\mu)=\hat{\mathcal{E}}_{\V_\varepsilon^d}(\mu)$, which completes the proof of the lemma.
\end{proof}
We conclude this section with the following lemma, which characterizes the asymptotic behavior of ground states for concentrated nonlinearities on $\Gd$ as $\varepsilon \to 0$.
\begin{lemma}\label{lemlL} Let $d \geq 1$ and $p \in (2,2^*)$, for every $\varepsilon>0$ small enough, let $u_\varepsilon \in H^1 (\Gd)$ be such that 
\begin{equation}\label{eqbddpoint}
    \frac{1}{C'}\leq \varepsilon^{d-1}\|u_\varepsilon'\|_{L^2(\mathcal{G}_\varepsilon^d)}^2,\,\,\, \varepsilon^{d}\sum_{\vv \in \Vd}\abs{\hat{u}_\varepsilon(\vv)}^2, \,\,\, \varepsilon^{d}\sum_{\vv \in \Vd}\abs{\hat{u}_\varepsilon(\vv)}^p \leq C'
\end{equation}
for a suitable constant $C'>0$, independent of $\varepsilon$. Then, we have
 \begin{enumerate}[label=\rm(\roman*)]
     \item  there exists a constant $C_1>0$, independent of $\varepsilon$, such that,
     $$
        \left| \varepsilon^d \sum_{\vv \in \V^d_\varepsilon} |\hat{u}_\varepsilon(\vv)|^2 - \norm{\mathcal{A}u_\varepsilon}^2_{L^2(\R^d)} \right| \leq C_1\varepsilon;
    $$
    \item if $d \in \{1,2\}$ and $p>2$, or $d \geq 3$ and $p \in (2,\frac{2^*}{2}+1]$, then there exists a constant $C_2>0$, independent of $\varepsilon$, such that
        $$
        \left| \varepsilon^d \sum_{\vv \in \V^d_\varepsilon} |\hat{u}_\varepsilon(\vv)|^p - \norm{\mathcal{A}u_\varepsilon}^p_{L^p(\R^d)} \right| 
    \leq C_2\varepsilon;
    $$
    \item if $d \geq 3$ and $p \in (\frac{2^*}{2}+1,2^*)$, then there exists a constant $C_3>0$, independent of $\varepsilon$, such that
    $$
\left| \varepsilon^d \sum_{\vv \in \V^d_\varepsilon} |\hat{u}_\varepsilon(\vv)|^p - \norm{\mathcal{A}u_\varepsilon}^p_{L^p(\R^d)} \right| 
   \leq  C_3\varepsilon^{\frac{d-2}{2}(2^*-p)}.
    $$
 \end{enumerate}
\end{lemma}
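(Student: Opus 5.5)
The plan is to go through the metric grid $\Gd$ and use the restriction/extension estimates of \cite{Do2} together with the new concentrated Gagliardo--Nirenberg inequalities \eqref{eqgng}, \eqref{eqlinfty}. We use $\mathcal{A}$, the extension operator $H^1(\Gd)\to H^1(\R^d)$ of \cite{Do2}. On each cube of side $\varepsilon$ it is built from the values of $u$ on the edges of the cube, so that $\|\nabla\mathcal{A}u\|_{L^2}^2\le \varepsilon^{d-1}\|u'\|^2_{L^2(\Gd)}$. The quantitative comparison from \cite[Section 4]{Do2} between $\|\mathcal{A}u\|_{L^p(\R^d)}^p$ and $\varepsilon^{d-1}d^{-1}\|u\|^p_{L^p(\Gd)}$ has an error controlled by $\varepsilon^{d}\|u\|^{p-2}_{L^\infty(\Gd)}\|u'\|^2_{L^2(\Gd)}$. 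Lemma \ref{lemlp} then passes from $\|u\|_{L^p(\Gd)}^p$ to the vertex sum $d\varepsilon\sum_\vv|\hat u(\vv)|^p$, with error $S_{d,p}\varepsilon\|u\|^{p-1}_{L^{2p-2}(\Gd)}\|u'\|_{L^2(\Gd)}$. Multiplying by $\varepsilon^{d-1}/d$, we must show that both errors are $O(\varepsilon)$ or $O(\varepsilon^{\frac{d-2}{2}(2^*-p)})$ under \eqref{eqbddpoint}.

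\textbf{Step 1 (case $p=2$, part (i)).} Here the Lemma \ref{lemlp}-type error becomes $\varepsilon^{d}\|u_\varepsilon\|_{L^2(\Gd)}\|u_\varepsilon'\|_{L^2(\Gd)}$. By \eqref{eql2}, $\varepsilon^{d-1}\|u_\varepsilon\|^2_{L^2(\Gd)}\le 2d\,\varepsilon^d\sum|\hat u_\varepsilon|^2+2d^2\varepsilon^{2}\,\varepsilon^{d-1}\|u_\varepsilon'\|^2\le C$. The hypothesis gives $\varepsilon^{d-1}\|u'_\varepsilon\|^2\le C'$. Hence the error is $\varepsilon\cdot(\varepsilon^{d-1}\|u_\varepsilon\|^2)^{1/2}(\varepsilon^{d-1}\|u_\varepsilon'\|^2)^{1/2}\le C\varepsilon$. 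The $L^2$ comparison of \cite{Do2} between $\mathcal{A}u$ and the grid norm has an error of the same order, so (i) follows.

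\textbf{Step 2 (the $L^p$ norms on the grid).} For part (ii), I would bound $\varepsilon^{d-1}\|u_\varepsilon\|^{2p-2}_{L^{2p-2}(\Gd)}$ by comparing it, via Lemma \ref{lemlp} again, with $\varepsilon^d\sum|\hat u_\varepsilon|^{2p-2}$. Then apply \eqref{eqgng} with exponent $2p-2$, which is allowed when $2p-2<2^*$, that is, $p<\frac{2^*}{2}+1$. In rescaled quantities, $\varepsilon^d\sum|\hat u|^q\le C(\varepsilon^{d-1}\|u'\|^2)^{\beta_q q/2}(\varepsilon^d\sum|\hat u|^2)^{(1-\beta_q)q/2}$, since the powers of $\varepsilon$ match by the scaling $\beta_q=d(\frac12-\frac1q)$. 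This gives boundedness, so the error is $O(\varepsilon)$. For $d=1,2$ every $q$ is allowed, while \eqref{eqgn1v} handles $d=1$ directly. The endpoint $2p-2=2^*$ uses \eqref{eqsobo} instead: $\varepsilon^d\sum|\hat u|^{2^*}\le C(\varepsilon^{d-1}\|u'\|^2)^{2^*/2}$ after checking exponents, since $d-\frac{2^*}{2}(d-1)+\frac{2^*}{2}\cdot 0$ balances. The $L^\infty$-type error term from \cite{Do2} is handled in the same way, by interpolation.

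\textbf{Step 3 (part (iii), the main obstacle).} When $2p-2>2^*$, the $L^{2p-2}$ norm is no longer controlled by the bounded quantities. I would interpolate instead: $\|u\|^{2p-2}_{L^{2p-2}}\le\|u\|^{2p-2-2^*}_{L^\infty(\Gd)}\|u\|^{2^*}_{L^{2^*}}$. The $L^{2^*}$ part is bounded as in Step 2. For the sup, use \eqref{eqlinfty}: $\|u\|_{L^\infty}\le C\varepsilon^{1/2}\|u'\|=C\varepsilon^{1/2}\varepsilon^{-(d-1)/2}(\varepsilon^{d-1}\|u'\|^2)^{1/2}\le C\varepsilon^{-(d-2)/2}$. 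Inserting this into the error $\varepsilon\cdot(\varepsilon^{d-1}\|u\|^{2p-2}_{L^{2p-2}})^{1/2}(\cdots)^{1/2}$ gives
\[
\varepsilon\cdot\varepsilon^{-\frac{d-2}{2}\cdot\frac{2p-2-2^*}{2}}=\varepsilon^{\frac{d-2}{2}(2^*-p)},
\]
using $1=\frac{d-2}{2}\cdot\frac{2^*-2}{2}$. This is exactly the claimed rate, with no $\gamma$-loss, and it resolves ($Q_1$). The delicate points are the precise form of the \cite{Do2} comparison error and the verification that it obeys the same $L^\infty$ bookkeeping. I would treat that term with the identical interpolation plus \eqref{eqlinfty}.
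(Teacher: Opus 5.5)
Your overall architecture matches the paper's: vertex sums $\to$ grid norms via Lemma~\ref{lemlp} $\to$ $\R^d$ via \cite{Do2}. Your Step~3 treatment of the Lemma~\ref{lemlp} error (interpolating $L^{2p-2}$ between $L^{2^*}$ and $L^\infty$, then using \eqref{eqlinfty}) is essentially \eqref{eqlem2e2p-2}--\eqref{eqlem2e3}.

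The gap is in the comparison between the grid and $\R^d$. You treat it as a black box, yet it is exactly the part of \cite{Do2} that must be redone to remove the $\gamma$-loss. You take its error to be $\varepsilon^{d}\norm{u}^{p-2}_{L^\infty(\Gd)}\norm{u'}^2_{L^2(\Gd)}$ and propose to handle it ``with the identical interpolation''. But this expression contains no $L^{2p-2}$ norm to interpolate. Under \eqref{eqbddpoint} it is only bounded by $C\varepsilon\norm{u_\varepsilon}^{p-2}_{L^\infty(\Gd)}$, and for $d\ge3$ the sup norm really can be of size $\varepsilon^{-(d-2)/2}$: adding a one-vertex spike of that height to a fixed profile preserves \eqref{eqbddpoint}. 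So \eqref{eqlinfty} gives only $C\varepsilon^{1-\frac{(d-2)(p-2)}{2}}$. This is not $O(\varepsilon)$ in (ii) for any $d\ge3$, and it blows up in (iii), since $p>\frac{2^*}{2}+1$ is equivalent to $\frac{(d-2)(p-2)}{2}>1$.

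What is needed is an error of $L^{2p-2}$ type at every stage, and the paper supplies it.
\begin{itemize}
\item For (i)--(ii) it invokes \cite[Lemma 6.1]{Do2}, which applies because $2p-2\le 2^*$.
\item For (iii), $\norm{u_\varepsilon}^p_{L^p(\Gd)}$ is first compared with $\norm{\tilde{\hat u}_\varepsilon}^p_{L^p(\Gd)}$. This redoes \cite[Lemma 4.3(ii)]{Do2} with \eqref{eqlem2e2p-2} in place of \cite[Proposition 3.2]{Do2}, giving \eqref{eqlem2e4}.
\item Then $\frac{\varepsilon^{d-1}}{d}\norm{\tilde{\hat u}_\varepsilon}^p_{L^p(\Gd)}$ is compared with $\norm{\mathcal{A}u_\varepsilon}^p_{L^p(\R^d)}$ via \cite[Eq.~(59)]{Do2}. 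The error there is $C\varepsilon\norm{\mathcal{A}u_\varepsilon}^{p-1}_{L^{2p-2}(\R^d)}\norm{\nabla\mathcal{A}u_\varepsilon}_{L^2(\R^d)}$. It is controlled through $\varepsilon^d\sum_{\vv}\abs{\hat u_\varepsilon(\vv)}^{2p-2}$ and the $\ell^{2^*}$--$\ell^\infty$ interpolation with \eqref{eqsobo} and \eqref{eqlinfty}, giving \eqref{eqlem2e5}.
\end{itemize}
These two estimates are where the new $L^\infty$ bound removes the loss, and your proposal does not supply them.

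A secondary flaw is in Step~2. Passing from $\norm{u_\varepsilon}_{L^{2p-2}(\Gd)}$ to vertex sums ``via Lemma~\ref{lemlp} again'' produces an $L^{4p-6}(\Gd)$ error term, so the argument regresses. Either of the following fixes it:
\begin{itemize}
\item Use the grid Gagliardo--Nirenberg inequality \cite[Lemma 3.1]{Do2} together with $\varepsilon^{d-1}\norm{u_\varepsilon}^2_{L^2(\Gd)}\le C$ from \eqref{eql2}, as the paper does.
\item Use the star-wise pointwise bound from the proof of Lemma~\ref{lemequiv}. It yields $\varepsilon^{d-1}\norm{u}^q_{L^q(\Gd)}\le C\varepsilon^d\sum_{\vv}\abs{\hat u(\vv)}^q+C\varepsilon^{d-\frac{(d-2)q}{2}}\bigl(\varepsilon^{d-1}\norm{u'}^2_{L^2(\Gd)}\bigr)^{q/2}$, which is bounded for $q\le 2^*$ when $d\ge3$ and for every $q$ when $d\le 2$.
\end{itemize}
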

\begin{proof} Let $d \geq 1$ and $p \in (2,2^*)$ be fixed and, for every $\varepsilon>0$ small enough, let $u_\varepsilon \in H^1 (\Gd)$ satisfy \eqref{eqbddpoint}.
        It follows from  \eqref{eqbddpoint} and Lemma \ref{lemequiv} that 
    \begin{equation}\label{eqbddL2}
        \varepsilon^{d-1} \norm{u_\varepsilon}_{L^2(\Gd)}^2 \leq C \varepsilon^d \sum_{\vv \in \V^d_\varepsilon} |\hat{u}_\varepsilon(\vv)|^2 + C\varepsilon^{d+1}\norm{u_\varepsilon'}_{L^2(\Gd)}^2 \leq C.
    \end{equation}
    By \eqref{eqbddpoint} and Lemma \ref{lemlp}, we obtain
        \begin{equation}\label{eqlem2e1}
                    \left| d\varepsilon^d \sum_{\vv \in \V^d_\varepsilon} |\hat{u}_\varepsilon(\vv)|^2 - \varepsilon^{d-1} \|u_\varepsilon\|_{L^{2}(\Gd)}^2 \right| \leq S_{d,2}\varepsilon^d \norm{u_\varepsilon}_{L^2(\Gd)}\norm{u_\varepsilon'}_{L^2(\Gd)}\leq C\varepsilon.
        \end{equation}
    If $d \in \{1,2\}$ and $p>2$, or $d \geq 3$ and $p \in (2,\frac{2^*}{2}+1]$, then, by  \eqref{eqbddpoint}, \eqref{eqbddL2} and \cite[Lemma 3.1]{Do2}, we have
    $$
    \|u_\varepsilon\|_{L^{2p-2}(\Gd)}^{p-1}\leq C\varepsilon^{\frac{(p-2)}{2}(d-1)}\|u_\varepsilon\|_{L^{2}(\Gd)}^{\frac{d+(2-d)(p-1)}{2}} \|u_\varepsilon'\|_{L^2(\Gd)}^{\frac{p-2}{2}d} \leq  C\varepsilon^{\frac{(p-2)}{2}(d-1)}\varepsilon^{\frac{(p-1)}{2}(1-d)} = C\varepsilon^{(1-d)/2},
    $$
hence, by Lemma \ref{lemlp},
    \begin{equation}\label{eqlem2e2}
           \left| d\varepsilon^d \sum_{\vv \in \V^d_\varepsilon} |\hat{u}_\varepsilon(\vv)|^p - \varepsilon^{d-1} \|u_\varepsilon\|_{L^{p}(\Gd)}^p \right| 
    \leq S_{d,p} \varepsilon^d \|u_\varepsilon\|_{L^{2p-2}(\Gd)}^{p-1} \|u_\varepsilon'\|_{L^2(\Gd)} \leq  C\varepsilon.
    \end{equation}
    Then, {\rm(i)} and {\rm(ii)} follow directly from \eqref{eqlem2e1}, \eqref{eqlem2e2} and \cite[Lemma 6.1]{Do2}.
    
If $d \geq 3$ and $p \in (\frac{2^*}{2}+1,2^*)$, then, by \cite[Lemma 3.1]{Do2} and \eqref{eqlinfty}, we have 
    \begin{equation}\label{eqlem2e2p-2}
    \|u_\varepsilon\|_{L^{2p-2}(\Gd)}^{p-1} \leq C\varepsilon^{(2^*/2-1)(d-1)/2}\norm{u_\varepsilon'}_{L^2(\Gd)}^{2^*/2}\norm{u_\varepsilon}_{L^\infty(\Gd)}^{p-1-2^*/2} \leq C\varepsilon^{p/2}\norm{u_\varepsilon'}_{L^2(\Gd)}^{p-1}
        \end{equation}
and thus,  by Lemma \ref{lemlp} and \eqref{eqbddpoint}, one has  
    \begin{equation}\label{eqlem2e3}
    \begin{aligned}
           \left| d\varepsilon^d \sum_{\vv \in \V^d_\varepsilon} |\hat{u}_\varepsilon(\vv)|^p - \varepsilon^{d-1} \|u_\varepsilon\|_{L^{p}(\Gd)}^p \right| 
    &\leq S_{d,p} \varepsilon^d \|u_\varepsilon\|_{L^{2p-2}(\Gd)}^{p-1} \|u_\varepsilon'\|_{L^2(\Gd)}\\
     &\leq  C\varepsilon^d\varepsilon^{p/2}\norm{u_\varepsilon'}_{L^2(\Gd)}^{p}\\
    &\leq C \varepsilon^{d+\frac{p}{2}+\frac{p}{2}(1-d)}= C\varepsilon^{\frac{d-2}{2}(2^*-p)}.
    \end{aligned}
        \end{equation}
    On the one hand, by \eqref{eqbddpoint}, repeating the arguments in the proof of \cite[Lemma 4.3(ii)]{Do2} with \eqref{eqlem2e2p-2} in place of \cite[Proposition 3.2]{Do2} yields
    \begin{equation}\label{eqlem2e4}
            \left|{\|u_\varepsilon\|_{L^{p}(\Gd)}^p-\|\tilde{\hat{u}}_\varepsilon\|_{L^{p}(\Gd)}^p}\right| \leq C\varepsilon^{p/2+1}\norm{u'_\varepsilon}_{L^2(\Gd)}^p \leq C\varepsilon^{p+1-pd/2}.
    \end{equation}
    On the other hand, by repeating the arguments in the proof of \cite[Eq. (59)]{Do2} and \cite[Lemma 4.4]{Do2}, we have
    $$
    \begin{aligned}
    \left|{\frac{\varepsilon^{d-1}}{d}\|\tilde{\hat{u}}_\varepsilon\|_{L^{p}(\Gd)}^p}-\norm{\mathcal{A}u_\varepsilon}^p_{L^p(\R^d)}\right|
    &\leq C\varepsilon\norm{\mathcal{A}u_\varepsilon}^{p-1}_{L^{2p-2}(\R^d)}\norm{\nabla \mathcal{A}u_\varepsilon}_{L^2(\R^d)}\\
    &\leq C\varepsilon\left(\varepsilon^d\sum_{\vv \in \Vd}\abs{\hat{u}_\varepsilon(\vv)}^{2p-2}\right)^{1/2}\left(\varepsilon^{(d-1)/2}\norm{\tilde{\hat{u}}_\varepsilon'}_{L^2(\Gd)}\right),
    \end{aligned}
    $$
    then, it follows from \eqref{eqsobo}, \eqref{eqlinfty}, \eqref{eqbddpoint}  and Lemma \ref{lemh1gv} that
    \begin{equation}\label{eqlem2e5}       
    \begin{aligned}
    \left|{\frac{\varepsilon^{d-1}}{d}\|\tilde{\hat{u}}_\varepsilon\|_{L^{p}(\Gd)}^p}-\norm{\mathcal{A}u_\varepsilon}^p_{L^p(\R^d)}\right|
    &\leq C\varepsilon \left(\varepsilon^{d+2^*/2}\norm{u_\varepsilon'}_{L^2(\Gd)}^{2^*}\norm{u_\varepsilon}_{L^\infty(\Gd)}^{2p-2-2^*}\right)^{1/2}\left(\varepsilon^{(d-1)/2}\norm{u_\varepsilon'}_{L^2(\Gd)}\right)\\
    &\leq  C\varepsilon \left(\varepsilon^{d+p-1}\norm{u_\varepsilon'}_{L^2(\Gd)}^{2p-2}\right)^{1/2}\\
    &\leq C\varepsilon^{(d+p+1)/2}\varepsilon^{(p-1)(1-d)/2}=C\varepsilon^{\frac{d-2}{2}(2^*-p)}.
    \end{aligned}
        \end{equation}
    By \eqref{eqlem2e3}-\eqref{eqlem2e5}, we complete the proof of {\rm (iii)}.
    
\end{proof}
\section{Proofs of Theorems \ref{th1}-\ref{th2}}\label{sec3}
In view of Lemma \ref{lemh1gv}, for any $d \geq 2$, we can define the extension operator $\hat{\mathcal{A}}:H^1(\V_\varepsilon^d) \to H^1(\R^d)$ by
$
\hat{\mathcal{A}}f:=\mathcal{A}\tilde{f},
$
where $\mathcal{A}$ is the extension operator from $H^1 (\Gd)$ to $H^1(\R^d)$ defined as in \cite[Section 2]{Do2}. For any $x \in \R^1$, we know that there exists a unique $\vv\in \V_\varepsilon^1=\varepsilon\Z^1$ such that $x \in [\vv, \vv +\varepsilon)$. For $d=1$, we define the extension operator $\mathcal{A}:H^1(\G_\varepsilon^1) \to H^1(\R^1)$ by
$$
(\mathcal{A}u) (x):= u_\e(x-\vv), \text{ where }x \in [\vv, \vv +\varepsilon), \e:= (\vv,\vv+\varepsilon) \in \E_\varepsilon^1, \text{ and }\vv \text{ identified with } 0 \text{ on }\e,
$$
and define $\hat{\mathcal{A}}:H^1(\V_\varepsilon^1) \to H^1(\R^1)$ by
$
\hat{\mathcal{A}}f:=\mathcal{A}\tilde{f}.
$
It is easy to check that results in \cite[Section 3-5]{Do2} for the case $d \geq 2$ can be extend to the case that $d=1$. As a consequence, we omit the details in this paper.

In view of Lemma \ref{leminfa}, to study the asymptotic behaviour of ground states of  $\hat{J}_{\omega, \V_\varepsilon^d}(u)$, it suffices to study the asymptotic behaviour of ground states of $\tilde{J}_{\omega, \G_\varepsilon^d}(u)$. 

We first give some preliminary estimates on  $\tilde{\mathcal{J}}_{\mathcal{G}_\varepsilon^d}(\omega)$ as $\varepsilon \to 0$.
\begin{lemma}\label{lemaction}
For every $d \geq 1$, $p \in (2, 2^*)$ and $\omega > 0$, there exists a constant $C'>0$, depending only on $p$, $\omega$ and $d$, such that for every $\varepsilon>0$ small enough
\[
    \varepsilon^{d-1}\tilde{\mathcal{J}}_{\mathcal{G}_\varepsilon^d}(\omega) \le \mathcal{J}_{\mathbb{R}^d}(\omega) + C'\varepsilon. 
\]
Moreover, for every $d \geq 3$, $p>2^*$ and $\omega > 0$,
\begin{equation}\label{eqejsuper}
    \varepsilon^{d-1}\tilde{\mathcal{J}}_{\mathcal{G}_\varepsilon^d}(\omega)    \rightarrow 0, \quad \text{as }\varepsilon \to 0.
\end{equation}
\end{lemma}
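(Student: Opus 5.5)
Both parts go by an upper bound via test functions. For any admissible $u\in H^1(\Gd)$ with $\hat u\neq0$, the Nehari projection $t_u u\in\tilde{\mathcal N}_{\omega,\Gd}$ gives
\[
\tilde J_{\omega,\Gd}(t_uu)=\Big(\tfrac12-\tfrac1p\Big)\frac{\big(\|u'\|^2_{L^2(\Gd)}+\varepsilon\omega\sum|\hat u|^2\big)^{\frac{p}{p-2}}}{\big(\varepsilon\sum|\hat u|^p\big)^{\frac{2}{p-2}}}.
\]
So it suffices to control the three quantities $\varepsilon^{d-1}\|u'\|^2$, $\varepsilon^d\sum|\hat u|^2$ and $\varepsilon^d\sum|\hat u|^p$ for a good $u$. (Multiplying numerator and denominator by the appropriate powers of $\varepsilon$ shows that $\varepsilon^{d-1}\tilde J$ is exactly the same expression with these rescaled quantities.)

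\textbf{Subcritical case.} I take $\varphi_\omega$, the ground state on $\R^d$, which is smooth and decays exponentially together with its derivatives. Let $u_\varepsilon$ be its restriction to $\Gd$, or equivalently its vertex values, piecewise-affinely extended. By \cite[Section 4]{Do2} (restriction estimates, Lemma 4.2 there), $\varepsilon^{d-1}\|u_\varepsilon'\|^2_{L^2(\Gd)}\le\|\nabla\varphi_\omega\|^2_{L^2}+C\varepsilon$. I then need Riemann-sum estimates for the vertex sums,
\[
\Big|\varepsilon^d\sum_{\vv}|\varphi_\omega(\vv)|^q-\|\varphi_\omega\|^q_{L^q(\R^d)}\Big|\le C\varepsilon\quad\text{for } q=2,p.
\]
These follow from the mean value theorem applied cellwise, since $\nabla(|\varphi_\omega|^q)\in L^1$ with uniform decay. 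Alternatively, I can combine Lemma \ref{lemlp} with the metric-grid $L^q$ estimates of \cite{Do2}. Plugging into the displayed formula, and using that $\varphi_\omega\in\mathcal N_{\omega,\R^d}$ gives $J_{\omega,\R^d}(\varphi_\omega)=(\frac12-\frac1p)\big(\|\nabla\varphi\|^2+\omega\|\varphi\|^2\big)^{p/(p-2)}/\|\varphi\|_p^{2p/(p-2)}$, the map is Lipschitz near the limit values, whose denominator is bounded away from zero. This yields $\varepsilon^{d-1}\tilde{\mathcal J}\le\mathcal J_{\R^d}(\omega)+C'\varepsilon$.

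\textbf{Supercritical case ($d\ge3$, $p>2^*$).} I take a fixed $\psi\in C_c^\infty(\R^d)\setminus\{0\}$ and concentrate it, setting $\psi_\lambda(x)=\psi(x/\lambda)$ with $\lambda\to0$ chosen slowly relative to $\varepsilon$. For fixed $\lambda$, as $\varepsilon\to0$ the restriction estimates give
\[
\varepsilon^{d-1}\|\cdot'\|^2\to\lambda^{d-2}\|\nabla\psi\|^2,\qquad \varepsilon^d\sum|\cdot|^2\to\lambda^d\|\psi\|_2^2,\qquad \varepsilon^d\sum|\cdot|^p\to\lambda^d\|\psi\|_p^p.
\]
The limiting quantity is then
\[
(\tfrac12-\tfrac1p)\,\frac{\big(\lambda^{d-2}a+\omega\lambda^d b\big)^{p/(p-2)}}{(\lambda^d c)^{2/(p-2)}}\sim\lambda^{\frac{(d-2)p-2d}{p-2}},
\]
and the exponent $\frac{(d-2)p-2d}{p-2}$ is positive precisely because $p>2^*$. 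Hence the limit tends to $0$ as $\lambda\to0$. A diagonal argument then gives $\limsup_{\varepsilon\to0}\varepsilon^{d-1}\tilde{\mathcal J}\le\eta$ for every $\eta>0$, and positivity (Lemma \ref{leminfa}) gives \eqref{eqejsuper}.

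The main obstacle is the quantitative $O(\varepsilon)$ control of the gradient term in the subcritical case. The natural interpolant's edge derivatives are difference quotients of $\varphi_\omega$, so I must ensure that $\varepsilon^{d-1}\sum_e\int|u'|^2$ approximates $\|\nabla\varphi_\omega\|^2_{L^2}$ with error $O(\varepsilon)$ and not merely $o(1)$. The factor $d$ bookkeeping of the grid, where each vertex carries $d$ edges in a cell of volume $\varepsilon^d$, also has to match. I would borrow \cite[Lemma 4.2]{Do2} directly, using the bound on $D^2\varphi_\omega$.
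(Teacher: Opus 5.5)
Your proposal is correct and follows essentially the paper's proof. In the subcritical case it is the Nehari projection of the restricted Euclidean ground state, with the restriction estimates of \cite{Do2} (the paper uses Lemma 4.1 there) combined with Lemma \ref{lemlp} for the vertex sums. In the supercritical case it tests with dilations of a fixed $C_c^\infty$ function: your $\psi(x/\lambda)$ is the paper's $v(sx)$ with $s=1/\lambda$, whose projected action scales like $s^{\frac{2p}{p-2}-d}\to0$. The remaining differences are harmless variants. These are your optional direct Riemann-sum bound, your use of the affine interpolant in place of the restriction (same vertex values and no larger Dirichlet energy, by Lemma \ref{lemh1gv}), and your explicit diagonal and positivity step, which the paper writes as $\limsup_{\varepsilon\to0}\varepsilon^{d-1}\tilde{\mathcal J}_{\Gd}(\omega)\le\inf_{\mathcal N}J=0$.
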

\begin{proof}
Firstly, let $d \geq 1$, $p \in (2, 2^*)$ and $\omega > 0$. Let $u \in \mathcal{N}_{\omega, \mathbb{R}^d}$ be a ground state of $J_{\omega, \mathbb{R}^d}$, which is positive and in $C^1(\mathbb{R}^d) \cap H^2(\mathbb{R}^d) \cap L^\infty(\mathbb{R}^d)$,  and let $u_\varepsilon \in H^1(\mathcal{G}_\varepsilon^d)$ be its restriction to $\mathcal{G}_\varepsilon^d$. It follows from \cite[Lemma 4.1]{Do2} that for every $\varepsilon>0$ small enough,
\begin{equation}\label{eqgrar}
    \left| \varepsilon^{d-1}\|u'_\varepsilon\|_{L^2(\Gd)}^2- \norm{\nabla u}^2_{L^2(\R^d)}\right| \leq C \varepsilon,
\end{equation}
$$
\left| \frac{\varepsilon^{d-1}}{d}\|u_\varepsilon\|_{L^q(\Gd)}^q- \norm{u}^q_{L^q(\R^d)}\right| \leq C \varepsilon, \quad \text{for }q\in \{2,p,2p-2\}.
$$
Then, by Lemma \ref{lemlp}, for $q\in \{2,p\}$, we have
\begin{equation}\label{eqsumvr}
    \begin{aligned}
    \left| \varepsilon^d\sum_{\vv \in \Vd}\abs{\hat{u}_\varepsilon(\vv)}^q - \norm{u}^q_{L^q(\R^d)} \right|
    &\leq  S_{d, q} \frac{\varepsilon^d }{d}\|u_\varepsilon\|^{q-1}_{L^{2q-2}(\Gd)} \|u_\varepsilon'\|_{L^2(\Gd)} + C\varepsilon\\
    &\leq \frac{S_{d, q}\varepsilon}{d}\left(\varepsilon^{(d-1)/2} \|u_\varepsilon\|^{q-1}_{L^{2q-2}(\Gd)}\right)\left( \varepsilon^{(d-1)/2} \|u_\varepsilon'\|_{L^2(\Gd)}\right) +C\varepsilon\\
    &\leq C \varepsilon.
\end{aligned}
\end{equation}
Define
$$
\tilde{t}_{u_\varepsilon}:=\left(\frac{\displaystyle \|u'_\varepsilon\|_{L^2(\Gd)}^2 + \varepsilon\omega\sum_{\vv \in \Vd}\abs{\hat{u}_\varepsilon(\vv)}^2}{\displaystyle \varepsilon\sum_{\vv \in \Vd}\abs{\hat{u}_\varepsilon(\vv)}^p}\right)^\frac{1}{p-2}.
$$
By the definition of $\tilde{\mathcal{N}}_{\omega,\Gd}$ given in \eqref{eqtilden}, we see that $\tilde{t}_{u_\varepsilon}u_\varepsilon \in \tilde{\mathcal{N}}_{\omega,\Gd}$. Then, for every $\varepsilon>0$ small enough, by \eqref{eqgrar} and \eqref{eqsumvr}, we obtain
$$
\begin{aligned}
    \varepsilon^{d-1}\tilde{\mathcal{J}}_{\mathcal{G}_\varepsilon^d}(\omega) &\leq \varepsilon^{d-1}\tilde{J}_{\omega,\Gd}(\tilde{t}_{u_\varepsilon}u_\varepsilon)\\
    &=(\frac{1}{2}-\frac{1}{p})\varepsilon^d\tilde{t}_{u_\varepsilon}^p\sum_{\vv \in \Vd}\left|\hat{u}_\varepsilon(\vv)\right|^p\\
    &\leq (\frac{1}{2}-\frac{1}{p})\left(\frac{\displaystyle \varepsilon^{d-1}\|u'_\varepsilon\|_{L^2(\Gd)}^2 + \varepsilon^d\omega\sum_{\vv \in \Vd}\abs{\hat{u}_\varepsilon(\vv)}^2}{\displaystyle \varepsilon^d\sum_{\vv \in \Vd}\abs{\hat{u}_\varepsilon(\vv)}^p}\right)^\frac{p}{p-2}\left(\norm{u}^p_{L^p(\R^d)}+C\varepsilon\right)\\
    & \leq (\frac{1}{2}-\frac{1}{p})\left(\frac{\norm{\nabla u}^2_{L^2(\R^d)}+\omega \norm{u}^2_{L^2(\R^d)}+C \varepsilon}{\displaystyle  \norm{u}^p_{L^p(\R^d)}-C\varepsilon}\right)^\frac{p}{p-2}\left(\norm{u}^p_{L^p(\R^d)}+C\varepsilon\right)\\
    & \leq (\frac{1}{2}-\frac{1}{p})(1+C\varepsilon)\norm{u}^p_{L^p(\R^d)}+C \varepsilon \\
    &\leq  J_{\omega,\mathbb{R}^d}(u) + C'\varepsilon\\
    &=\mathcal{J}_{\R^d}(\omega) + C'\varepsilon.
\end{aligned}
$$

Next, let $d \geq 3$, $p > 2^*$ and  $\omega > 0$. Define the map $J: C_c^\infty(\R^d) \to \R$
$$
J(u):=\frac{1}{2}\norm{\nabla u}^2_{L^2(\R^d)} +\frac{\omega}{2}\norm{u}^2_{L^2(\R^d)}-\frac{1}{p}\norm{u}^p_{L^p(\R^d)}
$$
and the set
\begin{align*}
    \mathcal{N} := \left\{ u \in C_c^\infty(\R^d) \backslash \{0\} : \|\nabla u\|_{L^2(\mathbb{R}^d)}^2 + \omega\|u\|_{L^2(\mathbb{R}^d)}^2 = \|u\|_{L^p(\mathbb{R}^d)}^p \right\}.
\end{align*}
For any $v \in C_c^\infty(\R^d)\backslash\{0\}$, it is easy to see that $t_v v \in \mathcal{N}$, where
$$
t_v:=\left(\frac{\norm{\nabla v}^2_{L^2(\R^d)}+\omega \norm{v}^2_{L^2(\R^d)}}{\displaystyle  \norm{v}^p_{L^p(\R^d)}}\right)^\frac{1}{p-2},
$$
hence, $\mathcal{N} \neq \emptyset$.
We now claim that 
\begin{equation}\label{eqsuper}
   \limsup_{\varepsilon\to 0}\varepsilon^{d-1}\tilde{\mathcal{J}}_{\mathcal{G}_\varepsilon^d}(\omega) \leq \inf_{v \in \mathcal{N}} J(v). 
\end{equation}
Indeed, for any fixed $v \in \mathcal{N}$, by repeating the previous arguments, we obtain that, for every $\varepsilon>0$ small enough,
$$
\varepsilon^{d-1}\tilde{\mathcal{J}}_{\mathcal{G}_\varepsilon^d}(\omega) \leq J(v) + C'\varepsilon,
$$
that is \eqref{eqsuper}. Thus, to prove \eqref{eqejsuper}, it suffices to prove that 
$$\inf_{v \in \mathcal{N}} J(v)=0.$$
Let $v\in \mathcal{N}$. For any $s>0$, define $v_s:=v(sx)$. Then, we know that $t_sv_s \in \mathcal{N}$, where
$$
t_s:=\left(\frac{\norm{\nabla v_s}^2_{L^2(\R^d)}+\omega \norm{v_s}^2_{L^2(\R^d)}}{\displaystyle  \norm{v_s}^p_{L^p(\R^d)}}\right)^\frac{1}{p-2}=\left(\frac{s^{2-d}\norm{\nabla v}^2_{L^2(\R^d)}+\omega s^{-d}\norm{v}^2_{L^2(\R^d)}}{\displaystyle  s^{-d}\norm{v}^p_{L^p(\R^d)}}\right)^\frac{1}{p-2}.
$$
Then, since $2p/(p-2)<d$ by $p>2^*$, we conclude that 
$$
J(t_sv_s)=(\frac{1}{2}-\frac{1}{p})t_s^p\norm{v_s}_{L^p(\R^d)}^p=(\frac{1}{2}-\frac{1}{p})t_s^ps^{-d}\norm{v}_{L^p(\R^d)}^p \to 0, \quad \text{as }s \to +\infty,
$$
which completes the proof of the lemma.
\end{proof}
Next, we show that the sequence of ground states of  $\tilde{J}_{\omega, \mathcal{G}_\varepsilon^d}$ satisfies the condition of Lemma \ref{lemlL} for $\varepsilon>0$ small enough.

\begin{lemma}\label{lemactionbdd}
For every $d \geq 1$, $p \in (2, 2^*)$ and $\omega > 0$, there exist $\bar{\varepsilon} > 0$ and $C' > 0$, depending only on $p$, $\omega$ and $d$, such that, for every $\varepsilon \in (0, \bar{\varepsilon})$, if $u \in \tilde{\mathcal{N}}_{\omega, \mathcal{G}_\varepsilon^d}$ is a ground state of $\tilde{J}_{\omega, \mathcal{G}_\varepsilon^d}$, then
\begin{equation}\label{eqbddactp}
\begin{aligned}
    \frac{1}{C'}\leq \varepsilon^{d-1}\|u'\|_{L^2(\mathcal{G}_\varepsilon^d)}^2,\, \varepsilon^{d}\sum_{\vv \in \Vd}\abs{\hat{u}(\vv)}^2,\,  \varepsilon^{d}\sum_{\vv \in \Vd}\abs{\hat{u}(\vv)}^p \leq C'.
\end{aligned}
 \end{equation}
\end{lemma}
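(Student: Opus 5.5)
Set $A:=\varepsilon^{d-1}\|u'\|^2_{L^2(\Gd)}$, $B:=\varepsilon^{d}\sum_{\vv}|\hat u(\vv)|^2$ and $P:=\varepsilon^{d}\sum_{\vv}|\hat u(\vv)|^p$. Multiplying the Nehari identity in \eqref{eqtilden} by $\varepsilon^{d-1}$ gives
\[
A+\omega B=P .
\]
On the Nehari manifold the functional reduces to
\[
\varepsilon^{d-1}\tilde J_{\omega,\Gd}(u)=\Bigl(\tfrac12-\tfrac1p\Bigr)P .
\]
Since $u$ is a ground state, $\varepsilon^{d-1}\tilde J_{\omega,\Gd}(u)=\varepsilon^{d-1}\tilde{\mathcal J}_{\Gd}(\omega)$, and Lemma \ref{lemaction} bounds this by $\mathcal J_{\R^d}(\omega)+C'\varepsilon\le 2\mathcal J_{\R^d}(\omega)$ for $\varepsilon$ small. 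Hence $P\le C$, and the Nehari identity then gives $A\le C$ and $B\le C/\omega$. So the upper bounds are immediate.

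For the lower bounds I will use the scaled Gagliardo--Nirenberg inequality \eqref{eqgng}. Multiplying it by $\varepsilon^d$ and using $\beta_p p=d(p-2)/2$, the powers of $\varepsilon$ should combine into
\[
P\le C_{d,p}\,A^{\beta_p p/2}B^{(1-\beta_p)p/2}.
\]
Concretely, $\varepsilon^d\cdot\varepsilon^{\beta_pp/2}\cdot\varepsilon^{-(d-1)\beta_pp/2}\cdot\varepsilon^{-d(1-\beta_p)p/2}$ equals $\varepsilon^{0}$, because $d-dp/2+\beta_p p=0$. This exponent check is the one computation to verify carefully.

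Combining this with $A\le P$, $\omega B\le P$ and $P>0$ gives
\[
P\le C\,P^{\beta_pp/2+(1-\beta_p)p/2}=C\,P^{p/2}.
\]
Since $p>2$, this yields $P\ge c>0$ with $c$ independent of $\varepsilon$.

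It then remains to bound $A$ and $B$ individually from below. Using the inequality again together with $B\le C$ gives $c\le P\le C A^{\beta_pp/2}$, hence $A\ge c'$; this holds whenever $\beta_p>0$, which is true for $p>2$. Symmetrically, $A\le C$ gives $c\le C B^{(1-\beta_p)p/2}$, and $1-\beta_p>0$ because $p<2^*$. This is exactly where subcriticality enters, and it is the main point to check, since the exponent of $B$ must be strictly positive. With $C'$ the maximum of the resulting constants, \eqref{eqbddactp} follows for all $\varepsilon\in(0,\bar\varepsilon)$, where $\bar\varepsilon$ is chosen so that Lemma \ref{lemaction} gives $C'\varepsilon\le \mathcal J_{\R^d}(\omega)$.
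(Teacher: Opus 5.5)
Your proof is correct and follows the same route as the paper. You get the upper bounds from Lemma \ref{lemaction} and the Nehari identity, use \eqref{eqgng} with the Nehari identity to force $P\ge c>0$, and then apply \eqref{eqgng} with the upper bound on $B$ (resp.\ $A$) to bound $A$ (resp.\ $B$) from below via $0<\beta_p<1$; the only difference is that you work with the rescaled quantities $A,B,P$, whereas the paper keeps explicit powers such as $\sum_{\vv}|\hat u(\vv)|^p\ge C\varepsilon^{-d}$.
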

\begin{proof}       
    Let $d \geq 1$, $p \in (2, 2^*)$. 
 Let $u \in \tilde{\mathcal{N}}_{\omega, \mathcal{G}_\varepsilon^d}$ be a ground state of $\tilde{J}_{\omega, \mathcal{G}_\varepsilon^d}$. Then, we have 
    $$
    (\frac{1}{2}-\frac{1}{p})\varepsilon^d\sum_{\vv \in \Vd}\abs{\hat{u}(\vv)}^p=\varepsilon^{d-1}\tilde{J}_{\omega,\Gd}(u)=\varepsilon^{d-1}\tilde{\mathcal{J}}_{\mathcal{G}_\varepsilon^d}(\omega).
    $$
    Thus, by Lemma \ref{lemaction} and the definition of $\tilde{\mathcal{N}}_{\omega,\Gd}$ given in \eqref{eqtilden}, we have that for $\varepsilon>0$ small enough
    \begin{equation}\label{eqbddpointup}
            \varepsilon^d\sum_{\vv \in \Vd}\abs{\hat{u}(\vv)}^p, \,\,\varepsilon^d\sum_{\vv \in \Vd}\abs{\hat{u}(\vv)}^2,\,\,\varepsilon^{d-1}\|u'\|_{L^2(\mathcal{G}_\varepsilon^d)}^2\leq C.
    \end{equation}
    Therefore, by the Gagliardo--Nirenberg type inequality \eqref{eqgng}, and the definition of $\tilde{\mathcal{N}}_{\omega,\Gd}$  given in \eqref{eqtilden}, one has
    $$
    \begin{aligned}
        \sum_{\vv \in \V_\varepsilon^d}  \abs{\hat{u}(\vv)}^p &\leq  C_{d,p}(\varepsilon^{1/2}\|u'\|_{L^2(\G_\varepsilon^d)})^{\beta_pp}\left(\sum_{\vv \in \V_\varepsilon^d}  \abs{\hat{u}(\vv)}^2\right)^{(1-\beta_p)p/2}\\
       &\leq C \varepsilon^{\beta_pp/2}\left(\varepsilon \sum_{\vv \in \V_\varepsilon^d}  \abs{\hat{u}(\vv)}^p\right)^{\beta_pp/2} \left(\sum_{\vv \in \V_\varepsilon^d}  \abs{\hat{u}(\vv)}^p\right)^{(1-\beta_p)p/2}\\
       &=C \varepsilon^{\beta_pp}\left(\sum_{\vv \in \V_\varepsilon^d}  \abs{\hat{u}(\vv)}^p\right)^{p/2},
    \end{aligned}
    $$
    that is,
    $$
    \sum_{\vv \in \V_\varepsilon^d}  \abs{\hat{u}(\vv)}^p \geq  C\varepsilon^{-d}.
    $$
    By \eqref{eqbddpointup} and the Gagliardo--Nirenberg type inequality \eqref{eqgng}, we have that for $\varepsilon>0$ small enough
    \begin{equation}\label{eqlem321}
            \begin{aligned}
            C\varepsilon^{-d} \leq  \sum_{\vv \in \V_\varepsilon^d}  \abs{\hat{u}(\vv)}^p &\leq  C_{d,p}(\varepsilon^{1/2}\|u'\|_{L^2(\G_\varepsilon^d)})^{\beta_pp}\left(\sum_{\vv \in \V_\varepsilon^d}  \abs{\hat{u}(\vv)}^2\right)^{(1-\beta_p)p/2} \\
    &\leq C_{d,p}(\varepsilon^{1/2}\|u'\|_{L^2(\G_\varepsilon^d)})^{\beta_pp}C\varepsilon^{-d(1-\beta_p)p/2}, 
    \end{aligned}
    \end{equation}
    that is,
    $$
    \|u'\|_{L^2(\G_\varepsilon^d)} \geq C\varepsilon^{-1/2}\varepsilon^\frac{d(1-\beta_p)p-2d}{2\beta_pp}=C\varepsilon^{(1-d)/2}.
    $$
    Similarly, it follows from \eqref{eqbddpointup} and \eqref{eqlem321} that for $\varepsilon>0$ small enough
    $$
    \begin{aligned}
            C\varepsilon^{-d} \leq  \sum_{\vv \in \V_\varepsilon^d}  \abs{\hat{u}(\vv)}^p &\leq  C_{d,p}(\varepsilon^{1/2}\|u'\|_{L^2(\G_\varepsilon^d)})^{\beta_pp}\left(\sum_{\vv \in \V_\varepsilon^d}  \abs{\hat{u}(\vv)}^2\right)^{(1-\beta_p)p/2} \\
    &\leq C_{d,p}\varepsilon^{(2-d)\beta_pp/2}\left(\sum_{\vv \in \V_\varepsilon^d}  \abs{\hat{u}(\vv)}^2\right)^{(1-\beta_p)p/2}, 
    \end{aligned}
    $$
     that is,
    $$
   \sum_{\vv \in \V_\varepsilon^d}  \abs{\hat{u}(\vv)}^2 \geq C\varepsilon^\frac{(d-2)\beta_pp/2-d}{(1-\beta_p)p/2}=C\varepsilon^{-d},
    $$
    which completes the proof of \eqref{eqbddactp}.
\end{proof}

Now, we are in a position to prove Theorems \ref{th1}-\ref{th2}.
\begin{proof}[Proof of Theorem \ref{th1}]
Let $d \geq 1$, $p \in (2, 2^*)$ and $\omega > 0$ be fixed.
By \cite[Theorem 1.1]{HX1}, for every $\varepsilon>0$, there exists a ground state $f_\varepsilon$ of $\hat{J}_{\omega, \V_\varepsilon^d}$ in $\hat{\mathcal{N}}_{\omega, \V_\varepsilon^d}$. By \cite[Theorem B.4]{HJ}, we may assume that $f_\varepsilon$ is strictly positive. Set $u_\varepsilon:=\tilde{f_\varepsilon}$ (see the definition in Section \ref{sec2}) and 
$$
t_{u_\varepsilon}:=\left(\frac{\displaystyle \|\nabla \mathcal{A}u_\varepsilon\|_{L^2(\R^d)}^2 + \omega\|\mathcal{A}u_\varepsilon\|_{L^2(\R^d)}^2}{\displaystyle \| \mathcal{A}u_\varepsilon\|_{L^p(\R^d)}^p}\right)^\frac{1}{p-2}.
$$
Then, we have $v_\varepsilon:=t_{u_\varepsilon} \mathcal{A}u_\varepsilon \in \mathcal{N}_{\omega,\R^d}$.
Note that, by Lemmas \ref{leminfa} and \ref{lemactionbdd}, we can see that Lemma \ref{lemlL} applies to $u_\varepsilon$ for  $\varepsilon>0$ small enough.
If $d \in \{1,2\}$ and $p>2$, or $d \geq 3$ and $p \in (2,\frac{2^*}{2}+1]$, by Lemma \ref{lemlL} and  \cite[Lemma 4.4]{Do2}, we obtain that for $\varepsilon>0$ small enough
$$
t_{u_\varepsilon}^p\leq \left(\frac{\displaystyle \varepsilon^{d-1}\|u'_\varepsilon\|_{L^2(\Gd)}^2 + \varepsilon^d\omega\sum_{\vv \in \Vd}\abs{\hat{u}_\varepsilon(\vv)}^2+C\varepsilon}{\displaystyle \varepsilon^d\sum_{\vv \in \Vd}\abs{\hat{u}_\varepsilon(\vv)}^p-C\varepsilon}\right)^\frac{p}{p-2}\leq 1+C\varepsilon,
$$
and thus,
\begin{equation}\label{eqactle}
    \begin{aligned}
    J_{\omega,\R^d}(v_\varepsilon)=(\frac{1}{2}-\frac{1}{p})t_{u_\varepsilon}^p\| \mathcal{A}u_\varepsilon\|_{L^p(\R^d)}^p
    &\leq(\frac{1}{2}-\frac{1}{p})\varepsilon^d t_{u_\varepsilon}^p\sum_{\vv \in \V^d_\varepsilon} |\hat{u}_\varepsilon(\vv)|^p +C\varepsilon \\&= (\frac{1}{2}-\frac{1}{p})\varepsilon^d t_{u_\varepsilon}^p\sum_{\vv \in \V^d_\varepsilon} |f_\varepsilon(\vv)|^p +C\varepsilon\\
    &=\varepsilon^{d-1}t_{u_\varepsilon}^p\tilde{\mathcal{J}}_{\mathcal{G}_\varepsilon^d}(\omega) +C \varepsilon\\
    &\leq \varepsilon^{d-1}\tilde{\mathcal{J}}_{\mathcal{G}_\varepsilon^d}(\omega) +C \varepsilon.
\end{aligned}
\end{equation}
Similarly, if $d \geq 3$ and $p \in (\frac{2^*}{2}+1,2^*)$, we obtain that for $\varepsilon>0$ small enough
\begin{equation}\label{eqactge}
J_{\omega,\R^d}(v_\varepsilon) \leq \varepsilon^{d-1}t_{u_\varepsilon}^p\tilde{\mathcal{J}}_{\mathcal{G}_\varepsilon^d}(\omega) + C\varepsilon^{\frac{d-2}{2}(2^*-p)}\leq \varepsilon^{d-1}\tilde{\mathcal{J}}_{\mathcal{G}_\varepsilon^d}(\omega) + C\varepsilon^{\frac{d-2}{2}(2^*-p)}. 
\end{equation}
Since $\mathcal{J}_{\R^d}(\omega) \leq J_{\omega,\R^d}(v_\varepsilon)$ by the definition of ground states of $J_{\omega,\R^d}$,
coupling \eqref{eqactle}-\eqref{eqactge} with Lemma \ref{lemaction} proves Theorem \ref{th1}(i) and $t_{u_\varepsilon}^p \to 1$ as $\varepsilon\to 0$ (noting also that $\frac{d-2}{2}(2^*-p) \in (0,1)$ for $d \geq 3$ and $p \in (\frac{2^*}{2}+1, 2^*)$).

Since $f_\varepsilon \in H^1(\Vd)$, we know that $f_\varepsilon(\vv) \to 0$ as $\operatorname{dist}(\vv,0)\to\infty$, and thus, by the definition of $\mathcal{A}$, for every $\varepsilon>0$ small enough, we have that $\mathcal{A}u_\varepsilon(x) \to 0$ as $\abs{x}\to+\infty$. Therefore, there exists $x_\varepsilon \in \R^d$ such that
\begin{equation}\label{eqpfth1b1}
    \int_{B_{1}(x_\varepsilon)}\abs{\mathcal{A}u_\varepsilon}^2\,dx=\sup_{y \in \R^d}\int_{B_1(y)}\abs{\mathcal{A}u_\varepsilon}^2\,dx,
\end{equation}
where, as usual, $B_1(y):=\{x \in \R^d: \abs{x-y}<1\}$.

It follows from \eqref{eqactle}, \eqref{eqactge} and Theorem \ref{th1}(i) that $\{v_\varepsilon\}$ is a minimizing sequence for $\mathcal{J}_{\R^d}(\omega)$, hence, by $t_{u_\varepsilon}^p \to 1$ as $\varepsilon\to 0$ and by repeating the arguments in the proof of \cite[Theorem 2.1]{Do2}, we obtain that, up to a subsequence, $\mathcal{A}u_\varepsilon(\cdot+x_\varepsilon)\to \varphi_\omega$ as $\varepsilon\to 0$ in $H^1(\R^d)$, where  $\varphi_\omega \in \mathcal{N}_{\omega, \mathbb{R}^d}$ is a positive ground state of $J_{\omega, \mathbb{R}^d}$. Then, by \eqref{eqpfth1b1}, we know that
\begin{equation}\label{eqpfth1b12}
   \int_{B_{1}(0)}\abs{\varphi_\omega}^2\,dx=\sup_{y \in \R^d}\int_{B_1(y)}\abs{\varphi_\omega}^2\,dx.
\end{equation}
Since $\varphi_\omega(\cdot+y)$ is radially symmetric and strictly decreasing for some $y \in \R^d$, by \eqref{eqpfth1b12} and \cite[Corollary 3.8]{Haj}, $\varphi_\omega$ attains its maximum at the origin, hence, $\varphi_\omega \in \mathcal{N}_{\omega, \mathbb{R}^d}$ is the unique positive ground state of $J_{\omega, \mathbb{R}^d}$ which attains its maximum at the origin. Therefore, we conclude that the convergence of $\mathcal{A}u_\varepsilon(\cdot+x_\varepsilon)$ for the whole sequence, rather than only along subsequences (by the uniqueness of the limit), and is strong in $H^1(\R^d)$.

\end{proof}
\begin{proof}[Proof of Theorem \ref{th2}]
Let $d \ge 3$, $p > 2^*$ and $\omega > 0$ be fixed. By \cite[Theorem 1.1]{HX1}, for every $\varepsilon>0$, there exists a ground state $f_\varepsilon$ of $\hat{J}_{\omega, \V_\varepsilon^d}$ in $\hat{\mathcal{N}}_{\omega, \V_\varepsilon^d}$. By \cite[Theorem B.4]{HJ}, we may assume that $f_\varepsilon$ is strictly positive. Set $u_\varepsilon:=\tilde{f_\varepsilon}$ (see the definition in Section \ref{sec2}). By Lemmas \ref{leminfa}, \ref{lemaction} and the definition of $\tilde{\mathcal{N}}_{\omega,\Gd}$  given in \eqref{eqtilden}, we have
$$
\left(\frac{1}{2}-\frac{1}{p}\right)\left( \varepsilon^{d-1}\|u'_\varepsilon\|_{L^2(\Gd)}^2 +\varepsilon^d\omega\sum_{\vv \in \Vd}\abs{f_\varepsilon(\vv)}^2 \right) =  \varepsilon^{d-1}\tilde{\mathcal{J}}_{\mathcal{G}_\varepsilon^d}(\omega)\to 0, \quad \text{as }\varepsilon \to 0.
$$
Then, by \cite[Lemma 4.4]{Do2}, we obtain
$$
\|\nabla \mathcal{A}u_\varepsilon\|_{L^2(\R^d)}^2\leq \varepsilon^{d-1}\|u'_\varepsilon\|_{L^2(\Gd)}^2\to 0, \quad \text{as }\varepsilon \to 0.
$$
Moreover, by Lemma \ref{lemequiv} and \cite[Lemma 4.4]{Do2}, we conclude that
$$
\| \mathcal{A}u_\varepsilon\|_{L^2(\R^d)}^2 \leq C\varepsilon^{d-1}\left(\|u_\varepsilon\|_{L^2(\Gd)}^2 +\varepsilon\|u'_\varepsilon\|_{L^2(\Gd)}^2\right) \leq C\varepsilon^d\sum_{\vv \in \Vd}\abs{f_\varepsilon(\vv)}^2 +C\varepsilon\to 0, \quad \text{as }\varepsilon \to 0,
$$
which completes the proof of Theorem \ref{th2}.
\end{proof}

\section{Proofs of Theorems \ref{th3}-\ref{thl2sup} and \ref{th4}}\label{sec4}
As in Section \ref{sec3}, by Lemma \ref{leminfe}, to study the asymptotic behaviour of ground states of $\hat{E}_{\V_\varepsilon^d}\left(u\right)$ at mass $\mu/\varepsilon^d$, it suffices to study the asymptotic behaviour of ground states of $\tilde{E}_{\G_\varepsilon^d}\left(u\right)$ at mass $\mu/\varepsilon^d$.
We first give some preliminary estimates on  $\tilde{\mathcal{E}}_{\G_\varepsilon^d}\left(\frac{\mu}{\varepsilon^{d}}\right)$ as $\varepsilon \to 0$.
\begin{lemma}\label{lemerengy}
    For every $d \geq 1$, $p \in \left(2, 2 + \frac{4}{d}\right)$ and $\mu > 0$, there exists a constant $C' > 0$, depending only on $p$, $\mu$ and $d$, such that
    for every $\varepsilon>0$ small enough
\[
    \varepsilon^{d-1}\tilde{\mathcal{E}}_{\mathcal{G}_\varepsilon^d}\left( \frac{\mu}{\varepsilon^{d}} \right) \le \mathcal{E}_{\mathbb{R}^d}(\mu) + C'\varepsilon. 
\]
\end{lemma}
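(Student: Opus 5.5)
We argue as in the first part of the proof of Lemma \ref{lemaction}, but we rescale the mass instead of projecting onto the Nehari manifold. Let $\varphi_\mu\in H^1_\mu(\R^d)$ be a positive ground state of $E_{\R^d}$ at mass $\mu$. It lies in $C^1(\R^d)\cap H^2(\R^d)\cap L^\infty(\R^d)$. Let $u_\varepsilon\in H^1(\Gd)$ be its restriction to $\Gd$.

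By \cite[Lemma 4.1]{Do2}, for $\varepsilon$ small we have
\[
\bigl|\varepsilon^{d-1}\|u_\varepsilon'\|^2_{L^2(\Gd)}-\|\nabla\varphi_\mu\|^2_{L^2(\R^d)}\bigr|\le C\varepsilon
\]
and
\[
\bigl|\tfrac{\varepsilon^{d-1}}{d}\|u_\varepsilon\|^q_{L^q(\Gd)}-\|\varphi_\mu\|^q_{L^q(\R^d)}\bigr|\le C\varepsilon\quad\text{for } q\in\{2,p,2p-2\}.
\]
Feeding these into Lemma \ref{lemlp} gives, exactly as in \eqref{eqsumvr},
\[
\Bigl|\varepsilon^d\sum_{\vv\in\Vd}|\hat u_\varepsilon(\vv)|^q-\|\varphi_\mu\|^q_{L^q(\R^d)}\Bigr|\le C\varepsilon\quad\text{for } q\in\{2,p\}.
\]
In particular $m_\varepsilon:=\varepsilon^d\sum_{\vv}|\hat u_\varepsilon(\vv)|^2=\mu+O(\varepsilon)$, and this is positive for $\varepsilon$ small.

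Set $s_\varepsilon:=(\mu/m_\varepsilon)^{1/2}=1+O(\varepsilon)$ and $w_\varepsilon:=s_\varepsilon u_\varepsilon$. Then
\[
\sum_{\vv}|\hat w_\varepsilon(\vv)|^2=\mu/\varepsilon^d,
\]
so $w_\varepsilon\in H^1_{\mu/\varepsilon^d}(\Gd)$, and therefore
\[
\varepsilon^{d-1}\tilde{\mathcal E}_{\Gd}(\mu/\varepsilon^d)\le \varepsilon^{d-1}\tilde E_{\Gd}(w_\varepsilon)=\frac{s_\varepsilon^2}{2}\varepsilon^{d-1}\|u_\varepsilon'\|^2_{L^2(\Gd)}-\frac{s_\varepsilon^p}{p}\varepsilon^{d}\sum_{\vv}|\hat u_\varepsilon(\vv)|^p .
\]
Substituting the estimates above, and using $s_\varepsilon^2,s_\varepsilon^p=1+O(\varepsilon)$ together with the boundedness of $\|\nabla\varphi_\mu\|_{L^2}$ and $\|\varphi_\mu\|_{L^p}$, the right-hand side is at most
\[
\frac12\|\nabla\varphi_\mu\|^2_{L^2(\R^d)}-\frac1p\|\varphi_\mu\|^p_{L^p(\R^d)}+C'\varepsilon=\mathcal E_{\R^d}(\mu)+C'\varepsilon .
\]

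The only delicate point is obtaining an $O(\varepsilon)$ rate (not merely $o(1)$) for the vertex sums. In Lemma \ref{lemlp} this needs $\varepsilon^{(d-1)/2}\|u_\varepsilon\|^{q-1}_{L^{2q-2}(\Gd)}$ and $\varepsilon^{(d-1)/2}\|u_\varepsilon'\|_{L^2(\Gd)}$ to be bounded. Both follow from \cite[Lemma 4.1]{Do2} applied with $q=2p-2$ and to the gradient, using that $\varphi_\mu\in L^{2p-2}(\R^d)$ because it is bounded and in $L^2$. The existence and regularity of $\varphi_\mu$ use $p<2+\frac4d$, and the constant $C'$ depends only on $\varphi_\mu$, that is on $d$, $p$ and $\mu$.
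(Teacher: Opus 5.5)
Your proposal is correct and takes essentially the same approach as the paper. The paper also restricts a Euclidean ground state to $\Gd$, rescales by $t_\varepsilon=\sqrt{\mu/(\varepsilon^d\sum_{\vv}|\hat u_\varepsilon(\vv)|^2)}$ to land in $H^1_{\mu/\varepsilon^d}(\Gd)$, and concludes via \eqref{eqgrar}, \eqref{eqsumvr} and the argument of Lemma \ref{lemaction}; your write-up simply makes explicit the $s_\varepsilon=1+O(\varepsilon)$ bookkeeping that the paper leaves to the reader.
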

\begin{proof}
Let $d \geq 1$, $p \in \left(2, 2 + \frac{4}{d}\right)$ and $\mu > 0$. Let $u \in H^1_\mu(\R^d)$ be a ground state of $E_{\mathbb{R}^d}$, $u_\varepsilon \in H^1(\mathcal{G}_\varepsilon^d)$ be its restriction to $\mathcal{G}_\varepsilon^d$, and 
$$
t_\varepsilon:=\sqrt{\frac{\mu}{\varepsilon^d\sum_{\vv \in \Vd}\left|\hat{u}_\varepsilon(\vv)\right|^2}}.
$$
Then,  we have $t_\varepsilon u_\varepsilon \in H^1_{\frac{\mu}{\varepsilon^d}}(\Gd)$, hence, for $\varepsilon>0$ sufficiently small, by \eqref{eqgrar}, \eqref{eqsumvr} and by repeating the arguments in Lemma \ref{lemaction}, we obtain
$$
\varepsilon^{d-1}\tilde{\mathcal{E}}_{\mathcal{G}_\varepsilon^d}\left( \frac{\mu}{\varepsilon^{d}} \right) \leq \varepsilon^{d-1}\tilde{E}_{\Gd}(v_\varepsilon) \leq E_{\R^d}(u)+C'\varepsilon = \mathcal{E}_{\R^d}(\mu)+C'\varepsilon,
$$
which completes the proof of the lemma.
\end{proof}
Next, we show that the sequence of ground states of  $\tilde{E}_{\mathcal{G}_\varepsilon^d}$ at mass $\mu>0$ satisfies the condition of Lemma \ref{lemlL} for $\varepsilon>0$ small enough.
\begin{lemma}\label{lemerengy2}
    For every $d \geq 1$, $p \in \left(2,2+\frac{4}{d}\right)$ and $\mu >0$, there exist $\bar\varepsilon>0$ and $C' >0$, depending only on $p,\mu$ and $d$, such that, for every $\varepsilon \in (0,\bar\varepsilon)$, if $u \in H^1_{\frac{\mu}{\varepsilon^d}}(\Gd)$ is a ground state of $\tilde{E}_{\Gd}$ at mass $\frac{\mu}{\varepsilon^d}$, then 
    $$
    \frac{1}{C'} \leq  \varepsilon^{d-1}\|u'\|_{L^2(\mathcal{G}_\varepsilon^d)}^2,\,\, \varepsilon^{d}\sum_{\vv \in \Vd}\abs{\hat{u}(\vv)}^p \leq C'.
    $$
\end{lemma}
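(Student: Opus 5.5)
Let $u$ be a ground state of $\tilde{E}_{\Gd}$ at mass $\mu/\varepsilon^d$, so $\varepsilon^d\sum_{\vv}\abs{\hat u(\vv)}^2=\mu$ is fixed. The plan mirrors Lemma \ref{lemactionbdd}: upper bounds come from the energy upper estimate (Lemma \ref{lemerengy}) together with the Gagliardo--Nirenberg inequality \eqref{eqgng}, and lower bounds from the negativity of the ground state energy.

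\textbf{Upper bounds.} By Lemma \ref{lemerengy}, $\varepsilon^{d-1}\tilde{E}_{\Gd}(u)\le \mathcal{E}_{\R^d}(\mu)+C'\varepsilon<0$ for $\varepsilon$ small, since $\mathcal{E}_{\R^d}(\mu)<0$ for $p<2+\frac4d$. Apply \eqref{eqgng} and set $X:=\varepsilon^{d-1}\|u'\|^2_{L^2(\Gd)}$. Then
\[
\varepsilon^{d}\sum_{\vv}\abs{\hat u(\vv)}^p\le C_{d,p}\,\varepsilon^{d}\,\varepsilon^{\beta_pp/2}\|u'\|^{\beta_pp}\,(\mu\varepsilon^{-d})^{(1-\beta_p)p/2}.
\]
Collecting powers, $\varepsilon^{d+\beta_pp/2-(d-1)\beta_pp/2-d(1-\beta_p)p/2}$ has exponent $d+\beta_pp-dp/2=0$, because $\beta_pp=dp/2-d$. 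Hence $\varepsilon^{d}\sum\abs{\hat u}^p\le C\mu^{(1-\beta_p)p/2}X^{\beta_pp/2}$. Since $\beta_pp/2<1$ exactly when $p<2+\frac4d$, the energy bound
\[
\tfrac12X-\tfrac Cp X^{\beta_pp/2}\le \varepsilon^{d-1}\tilde E_{\Gd}(u)\le C
\]
forces $X\le C$. Inserting this into the same estimate bounds $\varepsilon^d\sum\abs{\hat u}^p$.

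\textbf{Lower bounds.} From $\varepsilon^{d-1}\tilde{E}_{\Gd}(u)\le \mathcal{E}_{\R^d}(\mu)/2<0$ we get
\[
\tfrac{1}{p}\,\varepsilon^{d}\sum\abs{\hat u}^p\ge -\tfrac12\mathcal{E}_{\R^d}(\mu)>0,
\]
which bounds $\varepsilon^d\sum\abs{\hat u}^p$ below. Then the estimate $\varepsilon^{d}\sum\abs{\hat u}^p\le C X^{\beta_pp/2}$, with $\beta_p>0$, yields a lower bound on $X$.

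The main point needing care is the strict negativity $\mathcal{E}_{\R^d}(\mu)<0$, which is needed to absorb the $C'\varepsilon$ error; this is classical in the mass-subcritical range by scaling. The remainder is exponent bookkeeping, which I expect to go through as in Lemma \ref{lemactionbdd}. Finally, note that the mass bound in \eqref{eqbddpoint} holds trivially with value $\mu$, so Lemma \ref{lemlL} then applies.
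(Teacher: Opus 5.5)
Your proof is correct and follows essentially the same route as the paper. Lemma \ref{lemerengy} and $\mathcal{E}_{\R^d}(\mu)<0$ give the lower bound on $\varepsilon^{d}\sum_{\vv}\abs{\hat u(\vv)}^p$, and the Gagliardo--Nirenberg inequality \eqref{eqgng} at the fixed rescaled mass, with $\beta_p p<2$, then gives the two-sided bound on $\varepsilon^{d-1}\|u'\|^2_{L^2(\Gd)}$ and the upper bound on the $p$-sum. The only cosmetic difference is the upper gradient bound: the paper uses negativity, $\|u'\|^2_{L^2(\Gd)}\le \|u'\|^2_{L^2(\Gd)}-2\tilde{\mathcal{E}}_{\Gd}(\mu/\varepsilon^d)=\frac{2\varepsilon}{p}\sum_{\vv}\abs{\hat u(\vv)}^p$, while you use coercivity of $\frac12X-CX^{\beta_pp/2}$, which is the same estimate.
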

\begin{proof}
    Let $d \geq 1$, $p \in \left(2,2+\frac{4}{d}\right)$ and $\mu >0$.
 Let $u \in H^1_{\frac{\mu}{\varepsilon^d}}(\Gd)$ be a ground state of $\tilde{E}_{\mathcal{G}_\varepsilon^d}$ at mass $\frac{\mu}{\varepsilon^d}$. It is clear that $\mathcal{E}_{\R^d}(\mu)<0$. Then, by Lemma \ref{lemerengy}, for $\varepsilon>0$ sufficiently small, we have
 $$
 \tilde{\mathcal{E}}_{\mathcal{G}_\varepsilon^d}\left( \frac{\mu}{\varepsilon^{d}} \right)<-C\varepsilon^{1-d},
 $$
thus,
    $$
    \varepsilon\sum_{\vv \in \Vd}\left|\hat{u}(\vv)\right|^p = \frac{p}{2}\|u'\|_{L^2(\mathcal{G}_\varepsilon^d)}^2 -p\tilde{\mathcal{E}}_{\mathcal{G}_\varepsilon^d}\left( \frac{\mu}{\varepsilon^{d}} \right) \geq C \varepsilon^{1-d}.
    $$
    By the Gagliardo--Nirenberg type inequality \eqref{eqgng} and $\sum_{\vv \in \V_\varepsilon^d}  \abs{\hat{u}(\vv)}^2=\frac{\mu}{\varepsilon^d}$, we obtain that for every $\varepsilon>0$ small enough
    \begin{equation}\label{eqenergy1}
            \begin{aligned}
            C\varepsilon^{-d}\leq \sum_{\vv \in \Vd}\left|\hat{u}(\vv)\right|^p&\leq C_{d,p}(\varepsilon^{1/2}\|u'\|_{L^2(\G_\varepsilon^d)})^{\beta_pp}\left(\sum_{\vv \in \V_\varepsilon^d}  \abs{\hat{u}(\vv)}^2\right)^{(1-\beta_p)p/2}\\
            &\leq C\varepsilon^{((1+d)\beta_pp-pd)/2}\|u'\|_{L^2(\G_\varepsilon^d)}^{\beta_pp},\\
    \end{aligned}
    \end{equation}
    that is 
    $$\|u'\|^{2}_{L^2(\G_\varepsilon^d)} \geq C\varepsilon^{1-d}.$$  Moreover, it follows from \eqref{eqgng} that 
    $$
    \|u'\|_{L^2(\G_\varepsilon^d)}^2\leq \|u'\|_{L^2(\G_\varepsilon^d)}^2-2\tilde{\mathcal{E}}_{\mathcal{G}_\varepsilon^d}\left( \frac{\mu}{\varepsilon^{d}} \right)=\frac{2\varepsilon}{p}\sum_{\vv \in \Vd}\left|\hat{u}(\vv)\right|^p \leq C\varepsilon^{((1+d)\beta_pp-pd+2)/2}\|u'\|_{L^2(\G_\varepsilon^d)}^{\beta_pp}.
    $$
   Since $p \in \left(2,2+\frac{4}{d}\right)$, we have $\beta_pp<2$, it follows that 
     $$\|u'\|^{2}_{L^2(\G_\varepsilon^d)} \leq C\varepsilon^{1-d}.$$
Then, by \eqref{eqenergy1}, we conclude that 
    $$
    \sum_{\vv \in \Vd}\left|\hat{u}(\vv)\right|^p\leq C\varepsilon^{((1+d)\beta_pp-pd)/2}\|u'\|_{L^2(\G_\varepsilon^d)}^{\beta_pp}\leq C\varepsilon^{-d},
    $$
    which completes the proof of the lemma.
    \end{proof}
We are now in a position to prove Theorems \ref{th3}-\ref{thl2sup}.
\begin{proof}[Proof of Theorem \ref{th3}]
    Let $d \geq 1$, $p \in \left(2, 2+\frac{4}{d}\right)$ and $\mu > 0$ be fixed. By adapting the arguments in \cite{HJT, Stefanov, Weinstein}, we know that, for every $\varepsilon>0$, there exists a nonnegative ground state  $f_\varepsilon\in H^1_{\frac{\mu}{\varepsilon^d}}(\Vd)$ of $\hat{E}_{\Vd}$ at mass $\frac{\mu}{\varepsilon^d}$. It is easy to see that $f_\varepsilon$ is positive. Indeed, if $u(x_0)=0$ for some fixed $x_0 \in \Vd$, then equation \eqref{eqenergyl} yields that $u(x) = 0$ for all $x \sim x_0$. Hence, thanks to the connectivity of $\Z^d_\varepsilon$,  we obtain that $u(x) \equiv 0$, which leads to a contradiction.
    Set $u_\varepsilon:=\tilde{f_\varepsilon}$ (see the definition in Section \ref{sec2}) and 
    $$
   t_{u_\varepsilon}:=\sqrt{\frac{\mu}{\norm{\mathcal{A}u_\varepsilon}^2_{L^2(\R^d)}}}.
    $$
    Then, we have that $v_\varepsilon:=t_{u_\varepsilon} \mathcal{A}u_\varepsilon\in H_\mu^1(\R^d)$.
    Now, by repeating the arguments in the proof of Theorem \ref{th1}, we obtain that,
    if $d \in \{1,2,3,4\}$, or if $d \geq 5$ and $p \in \left(2,\frac{2^*}{2}+1\right]$ (which is strictly contained in $(2,2+\frac{4}{d})$), then for $\varepsilon>0$ small enough
    $$
    \mathcal{E}_{\R^d}(\mu) \leq E_{\R^d}(v_\varepsilon)\leq \varepsilon^{d-1}\tilde{\mathcal{E}}_{\Gd}\left(\frac{\mu}{\varepsilon^d}\right)+C\varepsilon =\varepsilon^{d-1}\hat{\mathcal{E}}_{\Vd}\left(\frac{\mu}{\varepsilon^d}\right)+C\varepsilon,
    $$
    whereas if $d \geq 5$ and $p \in \left(\frac{2^*}{2}+1,2+\frac{4}{d}\right)$, then for $\varepsilon>0$ small enough
    $$
  \mathcal{E}_{\R^d}(\mu) \leq  E_{\R^d}(v_\varepsilon)\leq \varepsilon^{d-1}\tilde{\mathcal{E}}_{\Gd}\left(\frac{\mu}{\varepsilon^d}\right)+C\varepsilon^{\frac{d-2}{2}(2^*-p)} =\varepsilon^{d-1}\hat{\mathcal{E}}_{\Vd}\left(\frac{\mu}{\varepsilon^d}\right)+C\varepsilon^{\frac{d-2}{2}(2^*-p)}.
    $$
    Since $\frac{d-2}{2}(2^*-p) \in (0,1)$ for all $d \geq 3$ and $p \in (\frac{2^*}{2}+1, 2^*)$), combining with Lemma \ref{lemerengy} proves Theorem \ref{th3}(i). 

   By Lemmas \ref{lemlL} and \ref{lemerengy2}, we obtain that $t_{u_\varepsilon} \to 1$ as $\varepsilon\to 0$. Noting that $\{v_\varepsilon\}$ is a minimizing sequence for $\mathcal{E}_{\R^d}(\mu)$, by $t_{u_\varepsilon} \to 1$ as $\varepsilon\to0$ and by repeating the arguments in the proof in \cite[Theorem 2.2]{Do2}, up to a subsequence, we obtain that $\mathcal{A}u_\varepsilon (\cdot+x_\varepsilon) \to \varphi_\mu$ in $H^1(\R^d)$ for some $\{x_\varepsilon\} \subset \R^d$, where $\varphi_\mu\in H^1_\mu(\R^d)$ is the unique positive ground state of $E_{\mathbb{R}^d}$ at mass $\mu$ which attains its maximum at the origin. Moreover, by repeating the arguments in the proof of Theorem \ref{th1}, we know that the convergence of $\mathcal{A}u_\varepsilon (\cdot+x_\varepsilon)$ for some $\{x_\varepsilon\} \subset \R^d$ for the whole sequence, rather than only along subsequences (by the uniqueness of the limit), and is strong in $H^1(\R^d)$.
     
     Then, by Lemma \ref{lemlL} and Theorem \ref{th3}(i), we conclude that 
    $$ 
    \begin{aligned}
       \mathcal{L}_{\Vd}(f_\varepsilon)&=\frac{\varepsilon\|f_\varepsilon\|_{\ell^p(\Vd)}^p - \|\nabla^d_\varepsilon f_\varepsilon\|_{\ell^2(\Vd)}^2}{\varepsilon\|f_\varepsilon\|_{L^2(\Vd)}^2}\\
        &=\frac{\displaystyle\varepsilon\sum_{\vv \in \Vd}{\left|f_\varepsilon\right|^p} - \|u_\varepsilon'\|_{L^2(\Gd)}^2}{\displaystyle \varepsilon\sum_{\vv \in \Vd}\left|f_\varepsilon\right|^2}\\
        &=\left(1-\frac{2}{p}\right)\frac{\displaystyle\varepsilon^d\sum_{\vv \in \Vd}\left|f_\varepsilon\right|^p}{\displaystyle \varepsilon^d\sum_{\vv \in \Vd}\left|f_\varepsilon\right|^2}-2\frac{\displaystyle\varepsilon^{d-1}\tilde{\mathcal{E}}_{\Gd}\left(\frac{\mu}{\varepsilon^d}\right)}{\displaystyle \varepsilon^d\sum_{\vv \in \Vd}\left|f_\varepsilon\right|^2} \to \omega_\mu \quad \text{as }\varepsilon \to 0,
    \end{aligned}
    $$
    where $\omega_\mu$ is the value of the parameter $\omega$ for which $\varphi_\mu$ solves \eqref{eqrd}. This completes the proof of Theorem \ref{th3}.
\end{proof}
\begin{proof}[Proof of Theorem \ref{thl2sup}]
    Let $d \geq 2$, $p>2+\frac{4}{d}$ and $\mu>0$ be fixed. Let $w_\varepsilon \in H^1_{\frac{\mu}{\varepsilon^d}}(\V^d_\varepsilon)$ be defined by
$$
w_\varepsilon(x):=     \begin{cases}
       \sqrt{\mu/\varepsilon^d},  &x=0,\\
       0, &x \neq 0.
      \end{cases}
$$
    Then, we have 
\begin{equation}\label{eql2sup1}
    \hat{\mathcal{E}}_{\V^d_\varepsilon}(\frac{\mu}{\varepsilon^d})\leq \hat{E}_{\V^d_\varepsilon}(w_\varepsilon)=d\mu\varepsilon^{-d-1}-\frac{\varepsilon}{p}\left(\mu\varepsilon^{-d}\right)^\frac{p}{2}.
\end{equation}
hence, by $p >2+\frac{4}{d}$, we obtain 
\begin{equation}\label{eql2sup2}
\varepsilon^{d-1}\hat{\mathcal{E}}_{\V^d_\varepsilon}(\frac{\mu}{\varepsilon^d}) \to-\infty, \quad \text{as }\varepsilon\to 0.
\end{equation}
By repeating the arguments as in \cite{HJT}, we know that, for any $\varepsilon>0$ small enough, there exists a positive ground state  $f_\varepsilon\in H^1_{\frac{\mu}{\varepsilon^d}}(\V^d_\varepsilon)$ of  $\hat{E}_{\V^d_\varepsilon}$ at mass $\frac{\mu}{\varepsilon^d}$. Then, for such $\varepsilon>0$, it follows from \eqref{eql2sup1} and the Gagliardo--Nirenberg type inequality \eqref{eqgnv} that

\begin{equation*}
\begin{aligned}
    \varepsilon^{d-1}\hat{\mathcal{E}}_{\V^d_\varepsilon}(\frac{\mu}{\varepsilon^d})=\varepsilon^{d-1}\hat{E}_{\V^d_\varepsilon}(f_\varepsilon)&\geq \frac{\varepsilon^{d-1}}{2}\|\nabla^d_\varepsilon f_{\varepsilon}\|_{\ell^2(\Vd)}^2-\varepsilon^{d-1}\varepsilon C\varepsilon^{\beta_pp/2}\|\nabla^d_\varepsilon f_{\varepsilon}\|_{\ell^2(\Vd)}^{\beta_pp}\| f_{\varepsilon}\|_{\ell^2(\Vd)}^{(1-\beta_p)p}\\
    &\geq 
    \frac{\varepsilon^{d-1}}{2}\|\nabla^d_\varepsilon f_{\varepsilon}\|_{\ell^2(\Vd)}^2-C\varepsilon^{d+\beta_pp/2-(1-\beta_p)pd/2}\|\nabla^d_\varepsilon f_{\varepsilon}\|_{\ell^2(\Vd)}^{\beta_pp}\\
    &= \frac{\varepsilon^{d-1}}{2}\|\nabla^d_\varepsilon f_{\varepsilon}\|_{\ell^2(\Vd)}^2-C\left(\varepsilon^{\frac{d-1}{2}}\|\nabla^d_\varepsilon f_{\varepsilon}\|_{\ell^2(\Vd)}\right)^{\beta_pp}.
    \end{aligned}
\end{equation*}
Since $p>2+\frac{4}{d}$, we have $\beta_pp>2$. \eqref{eql2sup2} implies
$$
\varepsilon^{d-1}\|\nabla^d_\varepsilon f_{\varepsilon}\|_{\ell^2(\Vd)}^2\to+\infty, \quad \text{as }\varepsilon\to 0.
$$
Moreover, by repeating the arguments in the proof of \cite[Lemma 4.4]{Do2}, we conclude 
 \begin{equation}\label{eqth14}
 \norm{\nabla \hat{A}f_\varepsilon}^2_{L^2(\R^d)}=\varepsilon^{d-1}\norm{ \tilde{f}_\varepsilon'}^2_{L^2(\G^d_{\varepsilon})}=\varepsilon^{d-1}\|\nabla^d_\varepsilon f_{\varepsilon}\|_{\ell^2(\Vd)}^2\to+\infty, \quad \text{as }\varepsilon\to 0,
 \end{equation}
 where $\tilde{f}_\varepsilon$ is the extension of $f_\varepsilon$ from $\Vd$ to $\Gd$ as in Section \ref{sec3}. This completes the proof of Theorem \ref{thl2sup}.
\end{proof}
We end this section with the proof of Theorem \ref{th4}.
\begin{proof}[Proof of Theorem \ref{th4}]
Let $d \geq 2$ and $p \in \left(2+\frac{4}{d},2^*\right)$. Let $w_\mu \in H^1_{\mu}(\V^d_1)$ be defined by
$$
w_\mu(x):=     \begin{cases}
       \sqrt{\mu},  &x=0,\\
       0, &x \neq 0.
      \end{cases}
$$
Then, we have 
\begin{equation}\label{eqth41}
    \hat{\mathcal{E}}_{\V^d_1}(\mu) \leq \hat{E}_{\V^d_1}(w_\mu) \leq d\mu- \frac{1}{p}\mu^\frac{p}{2}.
\end{equation}
By \cite[Theorem 1.1]{HJT}, we conclude that, for $\mu>0$ sufficiently large, there exists a positive ground state $f_\mu \in H^1_\mu(\V^d_1)$ of $\hat{E}_{\V^d_1}$ at mass $\mu >0$. Then, by \eqref{eqth41}, we obtain
\begin{equation}\label{eqth42}
    \begin{aligned}
    \mathcal{L}_{\V^d_1}(f_\mu)=\frac{\|f_\mu\|_{\ell^p(\V^d_1)}^p - \|\nabla^d_1 f_\mu\|_{\ell^2(\V^d_1)}^2}{\|f_\mu\|_{\ell^2(\V^d_1)}^2}
    &=\left(1-\frac{2}{p}\right)\frac{\|f_\mu\|_{\ell^p(\V^d_1)}^p}{\|f_\mu\|_{\ell^2(\V^d_1)}^2}-2\frac{\hat{\mathcal{E}}_{\V^d_1}(\mu)}{\|f_\mu\|_{\ell^2(\V^d_1)}^2}\\
    &\geq -2\frac{\hat{\mathcal{E}}_{\V^d_1}(\mu)}{\|f_\mu\|_{\ell^2(\V^d_1)}^2}\\
    &\geq \frac{2}{p}\mu^{\frac{p}{2}-1}-2d \to +\infty \quad \text{as }\mu \to+\infty.
\end{aligned}
\end{equation}
Conversely, by \cite[Theorem 1.1]{HX1} and \cite[Theorem B.4]{HJ}, we know that, for every $\omega>0$, there exists a positive ground state $f_\omega$ of $\hat{J}_{\omega,\V^d_1}$. 
Let $\varepsilon:=\omega^{1/2}$ and $g_\varepsilon \in H^1(\Vd)$ be defined by
$$
g_\varepsilon:=\varepsilon^{2/(2-p)}f_{\omega}\left(\frac{x}{\varepsilon}\right), \quad \forall x \in \Vd.
$$
Then, since $f_\omega \in \hat{\mathcal{N}}_{\omega,\V^d_1}$, we have 
$$
\frac{\|\nabla^d_\varepsilon g_\varepsilon\|_{\ell^2(\Vd)}^2+\varepsilon\|g_\varepsilon\|_{\ell^2(\Vd)}^2}{\varepsilon\|g_\varepsilon\|_{\ell^p(\Vd)}^p}
=\frac{\varepsilon^{(2+p)/(2-p)}\|\nabla^d_1 f_\omega \|_{\ell^2(\V^d_1)}^2+\varepsilon^{(6-p)/(2-p)}\| f_\omega \|_{\ell^2(\V^d_1)}^2}{\varepsilon^{(2+p)/(2-p)}\| f_\omega \|_{\ell^p(\V^d_1)}^p}=1,
$$
that is $g_\varepsilon \in \hat{\mathcal{N}}_{1,\Vd}$, hence
$$
\varepsilon^{(2+p)/(2-p)}\hat{\mathcal{J}}_{\V^d_1}(\omega)=\left(\frac{1}{2}-\frac{1}{p}\right)\varepsilon^{(2+p)/(2-p)}\| f_\omega \|_{\ell^p(\V^d_1)}^p=\left(\frac{1}{2}-\frac{1}{p}\right)\varepsilon\|g_\varepsilon\|_{\ell^p(\Vd)}^p\geq \hat{\mathcal{J}}_{\Vd}(1)
$$
Similarly, for any ground state $g_\varepsilon$ of $\hat{J}_{1,\Vd}$, one can prove that 
$
f_\omega:=\omega^{1/(p-2)}g_\varepsilon(\omega^{1/2} x) \in \hat{\mathcal{N}}_{\omega,\V^d_1}$ 
and 
$$\omega^{(p+2)/(2p-4)}\hat{\mathcal{J}}_{\Vd}(1)=\left(\frac{1}{2}-\frac{1}{p}\right)\omega^{p/(p-2)}\| g_\varepsilon \|_{\ell^p(\Vd)}^p \geq \hat{\mathcal{J}}_{\V^d_1}(\omega)= \left(\frac{1}{2}-\frac{1}{p}\right)\| f_\omega \|_{\ell^p(\V^d_1)}^p.
$$
Thus, we know that $g_\varepsilon$ is a ground state of $\hat{J}_{1,\Vd}$. Then, by  Lemmas \ref{leminfa} and \ref{lemactionbdd}, we have
$$
\| f_\omega \|_{\ell^2(\V^d_1)}^2 = \omega^{2/(p-2)}\| g_\varepsilon \|_{\ell^2(\Vd)}^2 \geq C'\omega^{2/(p-2)-d/2},\quad \text{as }\omega \to 0,
$$
hence, by $p>2+\frac{4}{d}$, we obtain
$$
\| f_\omega \|_{\ell^2(\V^d_1)}^2 \to +\infty \quad \text{as }\omega \to 0.
$$

Therefore, there exist sequences $\{\mu_n\}, \{\omega_n\} \subset \R^+$ and $\{f_n\}\subset H^1(\Vd)$ such that $\mu_n \to +\infty$ and $\omega_n \to 0^+$ as $n \to +\infty$ and, for every $n \in \mathbb{N}^+$, $f_n \in \hat{\mathcal{N}}_{\omega_n,\V^d_1}\cap H^1_{\mu_n}(\V^d_1)$ is a ground state of $\hat{J}_{\omega_n,\V^d_1}$. Thus, $f_n$ is a critical point of $\hat{E}_{\V^d_1}$ in $H^1_{\mu_n}(\V^d_1)$ with
$$
\mathcal{L}_{\V^d_1}(f_n) =\omega_n \to 0, \quad \text{as }n \to +\infty,
$$
and, by $p>2$, for every $n \in \mathbb{N}^+$ large enough, we obtain
$$
\begin{aligned}
\hat{E}_{\V^d_1}(f_n)&=\frac{1}{2}\norm{\nabla^d_1f_n}^2_{\ell^2(\V^d_1)}-\frac{1}{p}\norm{f_n}^p_{\ell^p(\V^d_1)}\\
&=\left(\frac{1}{2}-\frac{1}{p}\right)\norm{\nabla^d_1f_{n}}^2_{\ell^2(\V^d_1)}-\frac{\omega_n}{p}\norm{f_n}^2_{\ell^2(\V^d_1)}\\
&\geq -\frac{\omega_n\mu_n}{p}>d\mu_n- \frac{1}{p}\mu_n^\frac{p}{2}.
\end{aligned}
$$
Comparing with \eqref{eqth41} and \eqref{eqth42}, we complete the proof of Theorem \ref{th4}.
\end{proof}
\section{Proof of Theorem \ref{th5}}\label{sec5}

By \eqref{eqgnv} and \eqref{eqsobov}, for every $d \geq 1$, $q \in (2,2^*]$ and $\varepsilon>0$, there exists a constant $K_{q,\V^d_\varepsilon}>0$, depending only on $d,q$ and $\varepsilon$, such that 
\begin{equation}\label{eqgnnoe}
    \|f\|_{\ell^q(\V_\varepsilon^d)}^q \leq  K_{q,\V^d_\varepsilon}\|\nabla_\varepsilon^d f\|_{\ell^2(\V_\varepsilon^d)}^{\beta_q q}\|f\|^{(1 - \beta_q)q}_{\ell^2(\V_\varepsilon^d)},\quad \forall f \in H^1(\V_\varepsilon^d).
\end{equation}
Let $K_{q,\V^d_\varepsilon}$ be the best constant in \eqref{eqgnnoe}.
For every $f_\varepsilon \in H^1(\V^d_\varepsilon)$, the function defined by $f_1(x):=f_\varepsilon(\varepsilon x)$ belongs to $H^1(\V^d_1)$ and 
$$
 \|\nabla_1^d f_1\|_{\ell^2(\V_1^d)}^2=\varepsilon\|\nabla_\varepsilon^d f_\varepsilon\|_{\ell^2(\V_\varepsilon^d)}^2 \,\,\,  \text{and} \,\,\,\|f_1\|_{\ell^r(\V_1^d)}^r =\|f_\varepsilon\|_{\ell^r(\V_\varepsilon^d)}^r, \quad \forall r \geq 2.
$$
By repeating the arguments in \cite[Lemma 3.1]{Do2}, we obtain that 
\begin{equation}\label{eqgnconst}
    K_{q,\V^d_\varepsilon}=\varepsilon^{\beta_qq/2}K_{q,\V^d_1}.
\end{equation}
We are now in a position to prove Theorem \ref{th5}.
\begin{proof}[Proof of Theorem \ref{th5}]
{\rm (i)} We first prove \eqref{eqth51}. \\
    Let $d\geq 1$ and $q \in (2,2^*)$.
By \cite[Remark 7.1]{Do2}, there exists a function $u \in C^1(\R^d)\cap H^2(\R^d)\cap L^\infty(\R^d)$ such that $Q_{q,\R^d}(u)=K_{q,\R^d}$. Denote by $u_\varepsilon$ the restriction of $u$ to $\Gd$ for every $\varepsilon>0$. Then, by \eqref{eqgnconst} and Lemma \ref{lemequiv}, we have
$$
\begin{aligned}
K_{q,\V^d_1}&=\varepsilon^{-\beta_qq/2}\sup_{f \in H^1(\Vd)}\frac{\norm{f}_{\ell^q(\Vd)}^q}{\|\nabla_\varepsilon^d f\|_{\ell^2(\V_\varepsilon^d)}^{\beta_q q}\|f\|^{(1 - \beta_q)q}_{\ell^2(\V_\varepsilon^d)}}\\
&= \varepsilon^{-\beta_qq/2}\sup_{v \in H^1(\Gd)}\frac{\norm{\hat{v}}_{\ell^q(\Vd)}^q}{\|\nabla_\varepsilon^d \hat{v}\|_{\ell^2(\V_\varepsilon^d)}^{\beta_q q}\|\hat{v}\|^{(1 - \beta_q)q}_{\ell^2(\V_\varepsilon^d)}}\\
 &\geq \varepsilon^{-\beta_qq/2}\sup_{v\in H^1(\Gd)}\frac{\displaystyle\sum_{\vv\in \V}\abs{\hat{v}(\vv)}^q}{\displaystyle\|v'\|_{L^2(\G_\varepsilon^d)}^{\beta_q q}\left({\sum_{\vv\in \V}\abs{\hat{v}(\vv)}^2}\right)^{(1 - \beta_q)q/2}}\\
 &\geq \varepsilon^{-\beta_qq/2}\frac{\displaystyle\sum_{\vv\in \V}\abs{\hat{u}_\varepsilon(\vv)}^q}{\displaystyle\|u_\varepsilon'\|_{L^2(\G_\varepsilon^d)}^{\beta_q q}\left({\sum_{\vv\in \V}\abs{\hat{u}_\varepsilon(\vv)}^2}\right)^{(1 - \beta_q)q/2}},
\end{aligned}
$$
and thus, it follows from \eqref{eqgrar} and \eqref{eqsumvr} that
$$
K_{q,\V^d_1} \geq \frac{\varepsilon^d\displaystyle\sum_{\vv\in \V}\abs{\hat{u}_\varepsilon(\vv)}^q}{\displaystyle\varepsilon^{(d-1)\beta_qq/2}\|u_\varepsilon'\|_{L^2(\G_\varepsilon^d)}^{\beta_q q}\varepsilon^{d(1 - \beta_q)q/2}\left({\sum_{\vv\in \V}\abs{\hat{u}_\varepsilon(\vv)}^2}\right)^{(1 - \beta_q)q/2}} \to Q_{q,\R^d}(u)= K_{q,\R^d},\,\, \text{as } \varepsilon \to 0,
$$
that is \eqref{eqth51}.

$$Q_{q,\V^d_1}=\frac{\norm{f}_{\ell^q(\Vd)}^q}{\|\nabla_1^d f\|_{\ell^2(\V_1^d)}^{\beta_q q}\|f\|^{(1 - \beta_q)q}_{\ell^2(\V_1^d)}}$$ for $f\in H^{1}(\V_1^d)$

Now, let $d \geq 3$. We consider the case that $q =2^*$. By repeating the previous arguments, we have 
$$
K_{2^*,\V^d_1} \geq Q_{2^*,\R^d}(u),\quad  \forall u \in C_c(\R^d)\setminus\{0\}. 
$$
Since $C_c^{\infty}(\R^d)$ is dense in $D^{1,2}(\R^d)$, we conclude that $K_{2^*,\V^d_1} \geq K_{2^*,\R^d}$, which completes the proof of \eqref{eqth51}.
\\
\noindent {\rm (ii)} We now prove that for every  $d\geq 1$ and $q \in (2,2^*)$, if there exists $u \in H^1(\V^d_1)$ such that 
    $
    Q_{q,\V^d_1}(u) \geq K_{q,\R^d},
    $
    then $K_{q,\V^d_1}$ is attained.\\
From  \eqref{eqth51}, we know that for any $d\geq 1$ and $q \in (2,2^*)$, $K_{q,\V^d_1} \geq K_{q,\R^d}$. If $K_{q,\V^d_1}= K_{q,\R^d}$ and there exists
a function $u\in H^1(\V^d_1)$ such that $Q_{q,\V^d_1}(u) = K_{q,\R^d}$, then the conclusion holds immediately.

Now suppose that $K_{q,\V^d_1}>K_{q,\R^d}$ for any $d\geq 1$ and $q \in (2,2^*)$. Choose $\{f_n\}\subset H_{1}^1(\V^d_1)$ such that $Q_{q,\V^d_1}(f_n)=K_{q,\V^d_1}-\frac{1}{n}$ for all sufficiently large $n \in \mathbb{N}^+$. By \cite[Lemma 2.1]{HJ}, we know that $\{f_n\}$ is bounded in $H^1(\V^d_1)$,  and, without loss of generality, we may assume that 
$$
\abs{f_n(0)}=\norm{f_n}_{\ell^\infty(\V^d_1)}.
$$
Hence, up to a subsequence, there exists  $f \in H^1(\V^d_1)$ such that $f_n\rightharpoonup f$ in $H^1(\V^d_1)$ as $n \to +\infty$, and $f_n(\vv) \to f(\vv)$ for every $\vv\in \V^d_1$ as $n \to +\infty$. By Fatou's lemma, we have $m:=\norm{f}^2_{\ell^2(\V^d_1)} \in [0,1]$. On the one hand, if $m \in (0,1)$, then, by arguing as in the proof of \cite[Theorem 2.5]{Do2}, we will obtain $K_{q,\V^d_1}<K_{q,\V^d_1}$, which is a contradiction. On the other hand, if $m=0$, then we have $f\equiv 0$, and thus
\begin{equation}\label{eqprop11}
    \norm{f_n}_{\ell^\infty(\V^d_1)}=\abs{f_n(0)}\to 0, \quad \text{as }n \to +\infty. 
\end{equation}

It follows from $Q_{q,\V^d_1}(f_n)=K_{q,\V^d_1}-\frac{1}{n}$ that
$$
Q_{q,\V^d_1}(f_n)=\frac{\norm{f_n}_{\ell^q(\V^d_1)}^q}{\norm{\nabla^d_1 f_n}_{\ell^2(\V^d_1)}^{(\frac{q}{2}-1)d}}\leq \frac{\norm{f_n}_{\ell^\infty(\V^d_1)}^{q-2}}{\norm{\nabla^d_1 f_n}_{\ell^2(\V^d_1)}^{(\frac{q}{2}-1)d}},
$$
thus, by \eqref{eqprop11}, we obtain that $$\norm{\nabla^d_1 f_n}_{\ell^2(\V^d_1)} \to 0,\quad \text{as }n \to +\infty.$$
Next, for every $n \in \mathbb{N}^+$, set $\varepsilon_n:=\norm{\nabla^d_1 f_n}_{\ell^2(\V^d_1)}$ and define $g_n:\V^d_{\varepsilon_n}\to\mathbb R$ by $g_n(\vv):=f_n(\vv/\varepsilon_n)$ for every $\vv \in \V^d_{\varepsilon_n}$. 
Let $\hat{\mathcal{A}}g_n$ be the extension of $g_n$ to $\R^d$  and $\tilde{g}_n$ be the extension of $g_n$ to $\Gd$ as in Section \ref{sec3}. It is clear that $\tilde{g}_n$ is also the restriction of $\hat{\mathcal{A}}g_n$ to $\Gd$. By Lemma \ref{lemlp} and Lemma \ref{lemh1gv}, we have
\begin{equation}\label{eqsec51}
\begin{aligned}
    \norm{\tilde{g}_n}^2_{L^2(\G^d_{\varepsilon_n})}&\leq d\varepsilon_n\norm{g_n}^2_{\ell^2(\V^d_{\varepsilon_n})} +C\varepsilon_n\norm{g_n}_{\ell^2(\V^d_{\varepsilon_n})}\norm{\nabla^d_{\varepsilon_n} g_n}_{\ell^2(\V^d_{\varepsilon_n})}\\
    &= d\varepsilon_n\norm{f_n}^2_{\ell^2(\V^d_{1})} +C\varepsilon_n\varepsilon_n^{-1/2}\norm{f_n}_{\ell^2(\V^d_{1})}\norm{\nabla^d_1f_n}_{\ell^2(\V^d_1)}\\
    &\leq d\varepsilon_n +C\varepsilon_n^{3/2}\\
    &\leq C\varepsilon_n, \,\, \text{for}\,\, n\in \mathbb{N}^+\,\, \text{large}.
\end{aligned}
\end{equation}
Moreover, by Lemma \ref{lemh1gv} and arguing as in the proof of \cite[Lemma 4.4]{Do2}, we have 
\begin{equation}\label{eqsec5nabla}
    \norm{\nabla \hat{A}g_n}^2_{L^2(\R^d)}=\varepsilon_n^{d-1}\norm{ \tilde{g}_n'}^2_{L^2(\G^d_{\varepsilon_n})}=\varepsilon_n^{d-1}\norm{\nabla^d_{\varepsilon_n}g_n}^2_{\ell^2(\V^d_{\varepsilon_n})}=\varepsilon_n^d,
\end{equation}
and thus, by arguing as \cite[Lemma 4.1]{Do2}, we obtain
$$
\norm{\hat{\mathcal{A}}g_n}^2_{L^2(\R^d)}\leq \frac{\varepsilon^{d-1}_n}{d} \norm{\tilde{g}_n}^2_{L^2(\G^d_{\varepsilon_n})} + \varepsilon_n\norm{\hat{\mathcal{A}}g_n}^2_{L^2(\R^d)} + (\varepsilon_n+\varepsilon_n^2)\norm{\nabla \hat{A}g_n}^2_{L^2(\R^d)},
$$
that is
\begin{equation}\label{eqsec52}
    \norm{\hat{\mathcal{A}}g_n}^2_{L^2(\R^d)}\leq \frac{\varepsilon_n^d}{1-\varepsilon_n}(1+C\varepsilon_n^{1/2})\leq C\varepsilon_n^d, \,\, \text{for}\,\, n\in \mathbb{N}^+\,\, \text{large}.
\end{equation}

Let now $d\in \{1,2\}$ and $q>2$ or $d \geq 3$ and $q \in \left(2,\frac{2^*}{2}+1\right]$. From \eqref{eqsec51}, \eqref{eqsec5nabla}, Lemma \ref{lemlp} and the arguments as in Lemma \ref{lemlL}, we deduce that
$$\begin{aligned}
\norm{\tilde{g}_n}^q_{L^q(\G^d_{\varepsilon_n})}&\geq \varepsilon_n d \norm{g_n}^q_{\ell^q(\V^d_{\varepsilon_n})}-C\varepsilon_n\|\tilde{g}_n\|_{L^{2q-2}(\G^d_{\varepsilon_n})}^{q-1} \|\tilde{g}_n'\|_{L^2(\G^d_{\varepsilon_n})}\\
&\geq \varepsilon_n d \norm{g_n}^q_{\ell^q(\V^d_{\varepsilon_n})}-C\varepsilon_n^{\frac{(q-2)}{2}(d-1)+1}\|\tilde{g}_n\|_{L^{2}(\G^d_{\varepsilon_n})}^{\frac{d+(2-d)(q-1)}{2}} \|\tilde{g}_n'\|_{L^2(\G^d_{\varepsilon_n})}^{\frac{q-2}{2}d+1}\\
&\geq \varepsilon_n d \norm{f_n}^q_{\ell^q(\V^d_{1})}-C\varepsilon_n^{\frac{(q-2)d}{2}+2},
\end{aligned}
$$
thus, by arguing as in \cite[Eq. (24)]{Do2} and using the $d$-dimensional Gagliardo--Nirenberg inequality  \cite[Eq. (4)]{Do2}, \eqref{eqsec5nabla} and \eqref{eqsec52}, we conclude that
\begin{equation}\label{eqsec53}
\begin{aligned}
    \norm{\hat{\mathcal{A}}g_n}^q_{L^q(\R^d)} &\geq \frac{\varepsilon_n^{d-1}}{d}\norm{\tilde{g}_n}^q_{L^q(\G^d_{\varepsilon_n})}-C\varepsilon_n\norm{\hat{\mathcal{A}}g_n}^{q-1}_{L^{2q-2}(\R^d)}\norm{\nabla\hat{\mathcal{A}}g_n}_{L^2(\R^d)}\\
    &\geq \varepsilon_n^d\norm{f_n}^q_{\ell^q(\V^d_{1})}-C\varepsilon_n\norm{\hat{\mathcal{A}}g_n}_{L^{2}(\R^d)}^{\frac{d+(2-d)(q-1)}{2}}\norm{\nabla\hat{\mathcal{A}}g_n}_{L^2(\R^d)}^{\frac{q-2}{2}d+1}\\
    &\geq  \varepsilon_n^d\norm{f_n}^q_{\ell^q(\V^d_{1})}-C\varepsilon_n^{1+qd/2},  \quad \text{for}\,\, n\in \mathbb{N}^+\,\, \text{large}.
\end{aligned}
\end{equation}

Let then $d \geq 3$ and $q \in \left(\frac{2^*}{2}+1,2^*\right)$. Then, again by Lemma \ref{lemlp} and arguing as in \eqref{eqlem2e2p-2}, we have that

$$
\begin{aligned}
  \norm{\tilde{g}_n}^q_{L^q(\G^d_{\varepsilon_n})} &\geq \varepsilon_n d \norm{g_n}^q_{\ell^q(\V^d_{\varepsilon_n})}-C\varepsilon_n\|\tilde{g}_n\|_{L^{2q-2}(\G^d_{\varepsilon_n})}^{q-1} \|\tilde{g}_n'\|_{L^2(\G^d_{\varepsilon_n})}\\
  &\geq \varepsilon_n d \norm{g_n}^q_{\ell^q(\V^d_{\varepsilon_n})}-C\varepsilon_n^{q/2+1}\norm{\tilde{g}_n'}_{L^2(\G^d_{\varepsilon_n})}^{q} \\
  & \geq \varepsilon_n d \norm{g_n}^q_{\ell^q(\V^d_{\varepsilon_n})}- C\varepsilon_n^{q+1}, \quad \text{for}\,\, n\in \mathbb{N}^+\,\, \text{large},
\end{aligned}
$$
thus, by arguing as in \eqref{eqlem2e5}, and using \eqref{eqsec5nabla} and  \cite[Lemma 4.4]{Do2}, we conclude that
\begin{equation}\label{eqsec54}
\begin{aligned}
    \norm{\hat{\mathcal{A}}g_n}^q_{L^q(\R^d)} & \geq \frac{\varepsilon_n^{d-1}}{d}\norm{\tilde{g}_n}^q_{L^q(\G^d_{\varepsilon_n})}-C\varepsilon_n\norm{\hat{\mathcal{A}}g_n}^{q-1}_{L^{2q-2}(\R^d)}\norm{\nabla\hat{\mathcal{A}}g_n}_{L^2(\R^d)}\\
    &\geq \varepsilon_n^d\norm{g_n}^q_{\ell^q(\Vd)}-C\varepsilon_n^{q+d}-C\varepsilon_n^{1+d/2+{d/2}}\left(\varepsilon_n^{q-1}\norm{\tilde{g}_n'}_{L^2(\G^d_{\varepsilon_n})}^{2q-2} \right)^{1/2}\\
    & =\varepsilon_n^d\norm{f_n}^q_{\ell^q(\V^d_{1})}- C\varepsilon_n^{q+d},\quad\text{for}\,\, n\in \mathbb{N}^+\,\, \text{large}.
\end{aligned}
\end{equation}
Since $\{\hat{\mathcal{A}}g_n\}\subset H^1(\R^d)$, it follows from \eqref{eqsec5nabla}-\eqref{eqsec54} that 
$$
\begin{aligned}
    K_{q,\R^d} &\geq \limsup_{n \to +\infty}Q_{q,\R^d}(\hat{\mathcal{A}}g_n)\\
    &= \limsup_{n \to +\infty}\frac{\norm{\hat{\mathcal{A}}g_n}^q_{L^q(\R^d)}}{\norm{\hat{\mathcal{A}}g_n}^{d+(2-d)\frac{q}{2}}_{L^2(\R^d)}\norm{\nabla \hat{\mathcal{A}}g_n}^{(\frac{q}{2}-1)d}_{L^2(\R^d)}}\\
    & \geq \limsup_{n \to +\infty}\frac{\norm{f_n}^q_{\ell^q(\V^d_{1})}}{\varepsilon_n^{(\frac{q}{2}-1)d}}=\limsup_{n \to +\infty}\frac{\norm{f_n}^q_{\ell^q(\V^d_{1})}}{\norm{\nabla^d_1 f_n}^{(\frac{q}{2}-1)d}_{\ell^2(\V^d_{1})}}=K_{q,\V^d_1},
\end{aligned}
$$
which is a contradiction since we assume that $K_{q,\V^d_1}> K_{q,\R^d}$. Therefore, we obtain that $m=1$, which, by \cite[Lemma 2.1]{HJ}, implies that $f_n \to f$ in $H^1(\V^d_1)$. Thus, we have that $Q_{q,\V^d_1}(f)=K_{q,\V^d_1}$, which completes the proof of Theorem \ref{th5}.
\end{proof}
\section{Proofs of Theorems \ref{th6} and \ref{th7}}\label{seccle}
Set
\[
S_{\V^d_1}:=K_{2^*,\V^d_1}^{-\frac{2}{2^*}}=\inf_{u\in D^{1,2}(\V^d_1)\setminus \{0\} }\frac{\norm{\nabla^d_1 u}^{2}_{\ell^2(\V^d_1)}}{\norm{u}^{2}_{\ell^{2^*}(\V^d_1)}}\quad \text{and}\quad S_{\R^d}:=K_{2^*,\R^d}^{-\frac{2}{2^*}}=\inf_{u\in D^{1,2}(\R^d)\setminus \{0\} }\frac{\norm{\nabla u}^{2}_{L^2(\R^d)}}{\norm{u}^{2}_{L^{2^*}(\R^d)}}.
\]

We first assume that \eqref{eqk2*} holds, i.e. 
$$S_{\V^d_1}<S_{\R^d},$$
and prove that the best constant \(S_{\V_1^d}\) is attained by a positive function
\(u\in D^{1,2}(\V_1^d)\) satisfying
\[
\frac{\norm{\nabla^d_1 u}^{2}_{\ell^2(\V^d_1)}}{\norm{u}^{2}_{\ell^{2^*}(\V^d_1)}}=S_{\V^d_1}
\]
and
\[
-\Delta_1^d u=u^{2^*-1}
\quad\text{in }\V_1^d.
\]
\begin{lemma}\label{lem:strict_discrete_sobolev_constant}
If $S_{\V^d_1}<S_{\mathbb R^d}$, then \(S_{\V_1^d}\) is attained by a positive function
\(u\in D^{1,2}(\V_1^d)\) satisfying
\[
\frac{\norm{\nabla^d_1 u}^{2}_{\ell^2(\V^d_1)}}{\norm{u}^{2}_{\ell^{2^*}(\V^d_1)}}=S_{\V^d_1}
\]
and
\[
-\Delta_1^d u=u^{2^*-1}
\quad\text{in }\V_1^d.
\]
\end{lemma}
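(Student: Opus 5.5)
We take a minimizing sequence \(\{u_n\}\subset D^{1,2}(\V_1^d)\) for \(S_{\V_1^d}\), normalized by \(\norm{u_n}_{\ell^{2^*}(\V^d_1)}=1\), so that \(\norm{\nabla^d_1 u_n}^2_{\ell^2(\V^d_1)}\to S_{\V^d_1}\). Replacing \(u_n\) by \(|u_n|\) does not increase the Dirichlet energy, so we may assume \(u_n\geq 0\). Using the translation invariance of \(\V_1^d\), we translate so that \(u_n(0)=\norm{u_n}_{\ell^\infty(\V^d_1)}\). By boundedness in \(D^{1,2}(\V_1^d)\) we extract a subsequence with \(u_n\rightharpoonup u\) weakly in \(D^{1,2}(\V_1^d)\) and \(u_n\to u\) pointwise on \(\V_1^d\).

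\textbf{Ruling out vanishing.} This is the key step and the main obstacle. Suppose \(u\equiv 0\). Then \(\norm{u_n}_{\ell^\infty}=u_n(0)\to 0\). We mimic the proof of Theorem \ref{th5}(ii) and rescale to \(\V^d_{\varepsilon_n}\). Here \(\norm{\nabla u_n}\) does not tend to zero, so instead we choose \(\varepsilon_n\to 0\) so that \(\varepsilon_n^{-1/2}\norm{u_n}_{\ell^\infty}\to 0\), e.g. \(\varepsilon_n:=\norm{u_n}_{\ell^\infty}\). We then set \(g_n(\vv):=\varepsilon_n^{-(d-2)/2}u_n(\vv/\varepsilon_n)\), which keeps \(\norm{\nabla\hat{\mathcal A}g_n}_{L^2(\R^d)}^2=\varepsilon_n^{d-1}\norm{\nabla^d_{\varepsilon_n}g_n}^2=\norm{\nabla^d_1u_n}^2\) by \eqref{eqth14}-type identities. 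We must show \(\norm{\hat{\mathcal A}g_n}_{L^{2^*}(\R^d)}^{2^*}\geq \norm{u_n}^{2^*}_{\ell^{2^*}}-o(1)=1-o(1)\). To do this we compare the lattice sum with the metric-grid integral via Lemma \ref{lemlp} with \(p=2^*\), and then the grid integral with \(\R^d\) as in \eqref{eqlem2e5}. In both comparisons the error is bounded by \(\varepsilon\,\|\cdot\|_{L^{2\cdot2^*-2}}^{2^*-1}\|\nabla\cdot\|_{L^2}\). We control \(\|\cdot\|^{2\cdot 2^*-2}_{L^{2\cdot2^*-2}}\leq \|\cdot\|_\infty^{2^*-2}\|\cdot\|^{2^*}_{L^{2^*}}\) and bound the \(L^\infty\) norm on edges by nodal values, since the piecewise-affine extension \(\tilde g_n\) has \(\norm{\tilde g_n}_{L^\infty(\G)}=\norm{g_n}_\infty\). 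After scaling, all error terms should be \(O(\norm{u_n}_\infty^{2^*-2})\) or \(O(\norm{u_n}_\infty^{(2^*-2)/2})\), hence \(o(1)\). The exponent bookkeeping is the delicate part, since \(D^{1,2}\) gives no \(L^2\) control; if \(\varepsilon_n\)-scaling is awkward, we would instead work directly at \(\varepsilon=1\) and use that the errors depend only on \(\norm{u_n}_\infty\). Given the estimate, \(S_{\R^d}\leq \norm{\nabla\hat{\mathcal A}g_n}^2/\norm{\hat{\mathcal A}g_n}^2_{2^*}\leq \norm{\nabla^d_1u_n}^2/(1-o(1))\to S_{\V^d_1}\), which contradicts \(S_{\V^d_1}<S_{\R^d}\). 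Hence \(u\not\equiv0\).

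\textbf{Compactness.} Set \(m:=\norm{u}^{2^*}_{\ell^{2^*}}\in(0,1]\). By Brezis--Lieb, \(1=m+\lim\norm{u_n-u}^{2^*}_{2^*}\), and by weak convergence the Dirichlet norms split, \(\norm{\nabla u_n}^2=\norm{\nabla u}^2+\norm{\nabla(u_n-u)}^2+o(1)\). Applying the definition of \(S_{\V^d_1}\) to both pieces gives \(S\geq S(m^{2/2^*}+(1-m)^{2/2^*})\). Strict concavity of \(t\mapsto t^{2/2^*}\) then forces \(m=1\), so \(u\) is a minimizer with \(u\geq0\), \(u\ne0\).

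\textbf{Euler--Lagrange equation.} The minimizer satisfies \(-\Delta^d_1u=S_{\V^d_1}u^{2^*-1}\). The maximum principle on the connected graph gives positivity: if \(u(x_0)=0\), then \(\sum_{y\sim x_0}u(y)=0\), so all neighbours vanish, and by connectivity \(u\equiv0\). Finally the rescaling \(S_{\V^d_1}^{1/(2^*-2)}u\) solves \eqref{eqcle}.
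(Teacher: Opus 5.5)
Your proposal is correct and follows essentially the same route as the paper: you rule out vanishing by comparing $\|u_n\|_{\ell^{2^*}(\V^d_1)}$ with $\|\hat{\mathcal A}u_n\|_{L^{2^*}(\R^d)}$ through Lemma \ref{lemlp} and the grid-to-$\R^d$ estimate, with errors of order $\|u_n\|_{\ell^\infty(\V^d_1)}^{\frac{2^*}{2}-1}$. You then conclude, as the paper does, with the Brezis--Lieb splitting, the Lagrange multiplier, positivity from connectivity, and the final rescaling.

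Two small remarks. The rescaling to $\V^d_{\varepsilon_n}$ gains nothing, since the Sobolev quotient and all error terms are scale-invariant; the paper works directly at $\varepsilon=1$, which is your fallback. The paper also takes the minimizing sequence in $C_c(\V^d_1)$, which ensures that Lemma \ref{lemlp} and $\hat{\mathcal A}$, both stated on $H^1$, apply, so you should do the same rather than work in $D^{1,2}(\V^d_1)$.
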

\begin{proof}
     Let $\{u_n\} \subset C_c(\V^d_1)$ be a  minimizing sequence for $S_{\V^d_1}$. We may assume that  $u_n$ is nonnegative and $\|u_n\|_{\ell^{2^*}(\V^d_1)}=1$ for all $n \in \mathbb{N}^{+}$.  Moreover, $u_n$ satisfies
     \begin{equation}\label{eqpf2*1}
         \norm{\nabla^d_1 u_n}_{\ell^2(\V^d_1)}^2\to S_{\V^d_1}>0 \quad \text{as }n \to +\infty.
     \end{equation}
       By repeating the argument in the proof of  \cite[Lemma 4.4]{Do2}, we obtain
\begin{equation}\label{eqgradient}
\|\nabla \hat{\mathcal{A}}u_n\|_{L^2(\R^d)}^2= \|\nabla^d_1 u_n\|_{\ell^2(\V^d_1)}^2,
\end{equation}
where $\hat{\mathcal{A}}$ is the extension operator from $\V^d_1$ to $\R^d$ introduced in Section 3.
    
    We now prove that $\norm{u_n}_{\ell^{\infty}(\V^d_1)}\not \to 0$ as $n \to +\infty$. We argue by contradiction and suppose that $\norm{u_n}_{\ell^{\infty}(\V^d_1)}  \to 0$ as $n \to +\infty$.
    By using Lemmas \ref{lemlp} and \ref{lemsobo} and repeating the argument in the proof of Lemma \ref{lemlL}, we have 
    \begin{equation}\label{equnzg2*}
\abs{\norm{\hat{\mathcal{A}}u_n}_{L^{2^*}(\R^d)}^{2^*}-\norm{u_n}_{\ell^{2^*}(\V^d_1)}^{2^*}}\leq C \|\nabla^d_1 u_n\|_{\ell^2(\V^d_1)}^{\frac{2^*}{2}+1}\|u_n\|_{\ell^\infty(\V^d_1)}^{\frac{2^*}{2}-1}.
\end{equation}
    Indeed, on the one hand, by Lemma \ref{lemlp}, 
\[
                 \left| d\norm{u_n}_{\ell^{2^*}(\V^d_1)}^{2^*} - \norm{\tilde{u}_n}_{L^{2^*}(\G^d_1)}^{2^*} \right| 
    \leq C\|\tilde{u}_n\|_{L^{2\cdot2^*-2}(\G^d_1)}^{2^*-1} \|\tilde{u}_n'\|_{L^2(\G^d_1)}.
\]
    Then, Lemma \ref{lemsobo} and \eqref{eqgradient} imply that 
    \[
           \left| d\norm{u_n}_{\ell^{2^*}(\V^d_1)}^{2^*} - \norm{\tilde{u}_n}_{L^{2^*}(\G^d_1)}^{2^*} \right| 
    \leq C\|\tilde{u}_n\|_{L^{2\cdot2^*-2}(\G^d_1)}^{2^*-1} \|\tilde{u}_n'\|_{L^2(\G^d_1)}\leq  C \|\nabla^d_1 u_n\|_{\ell^2(\V^d_1)}^{\frac{2^*}{2}+1}\|u_n\|_{\ell^\infty(\V^d_1)}^{\frac{2^*}{2}-1}.
    \]
    On the other hand, by repeating the arguments in the proof of \cite[Eq. (59)]{Do2} and \cite[Lemma 4.4]{Do2}, we have
    \[
    \begin{aligned}
    \left|{\|\tilde{u}_n\|_{L^{2^*}(\G^d_1)}^{2^*}}-d\norm{\hat{\mathcal{A}}u_n}^{2^*}_{L^{2^*}(\R^d)}\right|
    &\leq C\norm{\hat{\mathcal{A}}u_n}^{2^*-1}_{L^{2\cdot2^*-2}(\R^d)}\norm{\nabla \hat{\mathcal{A}}u_n}_{L^2(\R^d)}\\
    &\leq C \| u_n\|_{\ell^{2 \cdot 2^*-2}(\V^d_1)}^{2^*-1}\norm{\tilde{u}_n'}_{L^2(\G^d_1)}\\
    &\leq C \| u_n\|_{\ell^{2^*}(\V^d_1)}^{2^*/2}\| u_n\|_{\ell^\infty(\V^d_1)}^{\frac{2^*}{2}-1}\norm{\tilde{u}_n'}_{L^2(\G^d_1)}
    \end{aligned}
    \]
    Hence, \eqref{eqgradient} leads to 
    \[
\left|{\|\tilde{u}_n\|_{L^{2^*}(\G^d_1)}^{2^*}}-d\norm{\hat{\mathcal{A}}u_n}^{2^*}_{L^{2^*}(\R^d)}\right|\leq C \| u_n\|_{\ell^{2^*}(\V^d_1)}^{2^*/2}\| u_n\|_{\ell^\infty(\V^d_1)}^{\frac{2^*}{2}-1}\norm{\tilde{u}_n'}_{L^2(\G^d_1)} \leq C \|\nabla^d_1 u_n\|_{\ell^2(\V^d_1)}^{\frac{2^*}{2}+1}\|u_n\|_{\ell^\infty(\V^d_1)}^{\frac{2^*}{2}-1}.
    \]
    Since $\norm{u_n}_{\ell^{\infty}(\V^d_1)}  \to 0$  as $n \to +\infty$, by \blue{\eqref{eqpf2*1},} \eqref{eqgradient} and \eqref{equnzg2*}, 
    \[\norm{\hat{\mathcal{A}}u_n}_{L^{2^*}(\R^d)}^{2^*} \to 1 \text{ as } n \to +\infty.
    \]
Therefore, by  \eqref{eqpf2*1} and \eqref{eqgradient}, we conclude that
\[
S_{\V^d_1} =\frac{\norm{\nabla^d_1 u_n}^{2}_{\ell^2(\V^d_1)}}{\norm{u_n}^{2}_{\ell^{2^*}(\V^d_1)}}+o(1)=\frac
{\|\nabla \hat{\mathcal{A}}u_n\|_{L^2(\R^d)}^{2}}{\|\hat{\mathcal{A}}u_n\|_{L^{2^*}(\mathbb R^d)}^{2}}+o(1)\geq  S_{\R^d}+o(1),
\]
which contradicts the assumption $S_{\V^d_1}<S_{\mathbb R^d}$. We have proved that
    \begin{equation}\label{eq:nonvanishing-revised}
        \limsup_{n\to+\infty}\|u_n\|_{\ell^\infty(\V^d_1)}>0.
    \end{equation}
Up to a subsequence, choose $x_n\in\V^d_1$ and $\eta>0$ such that
    $u_n(x_n)\geq\eta$, and define $w_n(x):=u_n(x+x_n)$. We have that 
    $$
  \|\nabla^d_1 w_n\|_{\ell^2(\V^d_1)}=\|\nabla^d_1 u_n\|_{\ell^2(\V^d_1)},\quad  \|w_n\|_{\ell^{2^*}(\V^d_1)}=\|u_n\|_{\ell^{2^*}(\V^d_1)}=1.
    $$
Thus, after passing to a further subsequence there exists $w\in D^{1,2}(\V^d_1)$ such that
    $w_n\rightharpoonup w$ in $D^{1,2}(\V^d_1)$ and $w_n(x)\to w(x)$ for every $x\in\V^d_1$. In
    particular, $w(0)\geq\eta$, so $w\not\equiv0$.

    Due to $w_n\rightharpoonup w$ in $D^{1,2}(\V^d_1)$, it is clear that
    \begin{equation}\label{eq:energy-splitting-revised}
        \|\nabla^d_1 w_n\|_{\ell^2(\V^d_1)}^2
        =\|\nabla^d_1(w_n-w)\|_{\ell^2(\V^d_1)}^2
         +\|\nabla^d_1 w\|_{\ell^2(\V^d_1)}^2+o(1).
    \end{equation}
    The Br\'ezis--Lieb lemma yields
    \begin{equation}\label{eq:mass-splitting-revised}
        1=\|w_n-w\|_{\ell^{2^*}(\V^d_1)}^{2^*}+\|w\|_{\ell^{2^*}(\V^d_1)}^{2^*}+o(1).
    \end{equation}
    Set $a:=\|w\|_{\ell^{2^*}(\V^d_1)}^{2^*}\in(0,1]$. Applying the definition of
    $S_{\V^d_1}$ to $w$ and $w_n-w$, and using
    \eqref{eq:energy-splitting-revised}--\eqref{eq:mass-splitting-revised},
    we obtain
    \[
    \begin{aligned}
        S_{\V^d_1}
        &=\lim_{n\to\infty}\|\nabla^d_1 w_n\|_{\ell^2(\V^d_1)}^2\\
        &\geq S_{\V^d_1}
        \left(a^{2/2^*}+(1-a)^{2/2^*}\right).
    \end{aligned}
    \]
    Since $2/2^*=(d-2)/d\in(0,1)$, one has
    $a^{2/2^*}+(1-a)^{2/2^*}>1$ whenever $0<a<1$. Hence $a=1$. It follows from
    weak lower semicontinuity and the discrete Sobolev inequality that
    \[
        S_{\V^d_1}
        \leq \|\nabla^d_1 w\|_{\ell^2(\V^d_1)}^2
        \leq \liminf_{n\to\infty}\|\nabla^d_1 w_n\|_{\ell^2(\V^d_1)}^2
        =S_{\V^d_1}.
    \]
    Thus $w$ attains $S_{\V^d_1}$.
Therefore, by the Lagrange multiplier theorem applied to the constrained minimizer $w$, for some $\lambda>0$, $w$ solves
\[
-\Delta^d_1 w= \lambda w^{2^*-1} \quad \text{in }\V^d_1,
\]
and 
\[
\lambda=\frac{\norm{\nabla^d_1w}^2_{\ell^{2}(\V^d_1)}}{\norm{w}^{2^*}_{\ell^{2^*}(\V^d_1)}}=S_{\V^d_1}>0.
\]
Moreover, the connectivity of $\Z^d_1$ implies that $w(\vv)>0$ for all $\vv \in \V^d_1$.
Let $u:=\lambda^{\frac{1}{2^*-2}}w$. Then we conclude that $u>0$ satisfies \[
\frac{\norm{\nabla^d_1 u}^{2}_{\ell^2(\V^d_1)}}{\norm{u}^{2}_{\ell^{2^*}(\V^d_1)}}=S_{\V^d_1}
\]
and
\[
-\Delta_1^d u=u^{2^*-1}
\quad\text{in }\V_1^d.
\]
\end{proof}
We now prove $S_{\V^d_1}<S_{\mathbb R^d}$.

\begin{lemma}\label{lem:strict_comparison_sobolev_constants}
    For $d \geq 3$, 
    \[
S_{\V^d_1}<S_{\mathbb R^d}.
\]
\end{lemma}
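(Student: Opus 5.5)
The plan is to test the discrete Sobolev quotient with the restriction to $\V^d_1=\Z^d$ of a widely spread Aubin--Talenti bubble. I will show that the lattice Dirichlet energy is strictly smaller than the continuous one by an amount of order $\lambda^{-2}$, while both the lattice sums and the $\ell^{2^*}$ norm differ from their continuous counterparts only by errors that are exponentially small in $\lambda$.

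\textbf{Setup.} Fix the extremal $U(x)=(1+|x|^2)^{-\frac{d-2}{2}}$ of $S_{\R^d}$ and set $U_\lambda(x):=U(x/\lambda)$ with $\lambda\gg1$. The Sobolev quotient on $\R^d$ is scale invariant, so
\[
\frac{\|\nabla U_\lambda\|^2_{L^2(\R^d)}}{\|U_\lambda\|^2_{L^{2^*}(\R^d)}}=S_{\R^d}.
\]
Take $u_\lambda:=U_\lambda|_{\Z^d}$. Since $U_\lambda$ decays like $|x|^{2-d}$ and its difference quotients decay like $|x|^{1-d}$, we have $u_\lambda\in D^{1,2}(\V^d_1)$. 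Moreover
\[
\|\nabla^d_1u_\lambda\|^2_{\ell^2(\V^d_1)}=\sum_{i=1}^d\sum_{x\in\Z^d}g_i(x),\qquad g_i(x):=|U_\lambda(x+e_i)-U_\lambda(x)|^2,
\]
and $\|u_\lambda\|^{2^*}_{\ell^{2^*}}=\sum_{x\in\Z^d}U_\lambda(x)^{2^*}$.

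\textbf{Step 1 (lattice sums versus integrals).} I apply the Poisson summation formula $\sum_{x\in\Z^d}h(x)=\sum_{k\in\Z^d}\widehat h(2\pi k)$ to $h=U_\lambda^{2^*}$ and to $h=g_i$. Both are smooth and integrable, with decay at least $|x|^{-2d+2}$ and integrable derivatives, so the formula is justified. The $k=0$ term is $\int h$. For $k\neq0$, note that $U$ extends holomorphically to the strip $\{|\operatorname{Im} z|<1\}$ with integrable bounds. Hence $U_\lambda^{2^*}$ and $g_i$ extend to strips of width comparable to $\lambda$. Shifting contours then gives $|\widehat h(2\pi k)|\le C\lambda^{d}e^{-c\lambda|k|}$. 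Summing over $k\ne0$ yields errors $O(e^{-c\lambda})$, so
\[
\sum_x U_\lambda^{2^*}=\int U_\lambda^{2^*}+O(e^{-c\lambda}),\qquad \sum_x g_i=\int g_i+O(e^{-c\lambda}).
\]
If the complex-analytic bookkeeping becomes awkward, a weaker fallback suffices: a polynomial bound $O(\lambda^{-N})$ with $N$ large, obtained by repeated integration by parts in $\widehat h(2\pi k)$.

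\textbf{Step 2 (energy deficit).} By Plancherel,
\[
\sum_i\int g_i=(2\pi)^{-d}\int|\widehat U_\lambda(\xi)|^2\sum_i4\sin^2(\xi_i/2)\,d\xi .
\]
Since $4\sin^2(t/2)\le t^2$, with strict inequality for $t\ne0$, and $4\sin^2(t/2)\le t^2-c\,t^4$ for $|t|\le\pi$, we get
\[
\|\nabla U_\lambda\|^2_{L^2}-\sum_i\int g_i\ \ge\ c\int_{|\xi|\le\pi}|\xi|^4|\widehat U_\lambda|^2\,d\xi .
\]
With $\widehat U_\lambda(\xi)=\lambda^d\widehat U(\lambda\xi)$, the right side equals $c\lambda^{d-4}\int_{|\eta|\le\pi\lambda}|\eta|^4|\widehat U|^2\,d\eta$. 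For large $\lambda$ this is at least $c'\lambda^{d-4}$, since $\widehat U\not\equiv0$ and $\int|\eta|^4|\widehat U|^2\in(0,\infty]$. Meanwhile $\|\nabla U_\lambda\|^2_{L^2}=\lambda^{d-2}\|\nabla U\|^2_{L^2}$, so the relative deficit is of order $\lambda^{-2}$.

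\textbf{Step 3 (conclusion).} Combining the two steps,
\[
\frac{\|\nabla^d_1u_\lambda\|^2}{\|u_\lambda\|^2_{\ell^{2^*}}}\le\frac{\lambda^{d-2}\|\nabla U\|^2(1-c\lambda^{-2})+O(e^{-c\lambda})}{\lambda^{d-2}\|U\|^2_{L^{2^*}}(1+O(e^{-c\lambda}))}=S_{\R^d}\bigl(1-c\lambda^{-2}+o(\lambda^{-2})\bigr).
\]
This is strictly less than $S_{\R^d}$ for $\lambda$ large, and therefore $S_{\V^d_1}<S_{\R^d}$.

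The main obstacle is Step 1. The exponentially small (or at least $o(\lambda^{-2})$) Poisson remainder must be uniform and must hold despite the only polynomial decay of $U_\lambda$; in particular the contour shift or integration by parts has to be justified when $d=3,4$, where the tails are heaviest. The Fourier deficit in Step 2 is, by contrast, routine.
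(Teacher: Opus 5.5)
Your argument is correct, and it takes a genuinely different route from the paper's.

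\textbf{The paper's route.} The paper uses explicit, dimension-specific test functions.
\begin{itemize}
\item For $d\ge4$ it tests with the Dirac mass $\delta_0$, which gives $S_{\V^d_1}\le 2d$. It then verifies $S_{\R^d}>2d$ from the closed form $S_{\R^d}=d(d-2)\pi\bigl(\Gamma(d/2)/\Gamma(d)\bigr)^{2/d}$, with separate elementary estimates for even $d$, for odd $d\ge9$, and for $d=5,7$.
\item For $d=3$ the Dirac mass is useless, since $2d=6$ exceeds $S_{\R^3}=3(\pi/2)^{4/3}\approx5.48$. The paper instead uses $a^{|x|_1}$ with $a=1/4$ and computes the quotient $27846/6025$ exactly.
\end{itemize}

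\textbf{Your route.} You argue perturbatively near the continuum limit. Sampling a bubble of width $\lambda$ on $\Z^d$ is spectrally accurate: the aliasing errors are $O(e^{-c\lambda})$, because $U^{2^*}$ and the squared differences are holomorphic in a tube of width comparable to $\lambda$. Meanwhile the forward-difference symbol $4\sin^2(\xi_i/2)$ lies below $\xi_i^2$ with a deficit $\gtrsim\xi_i^4$, which yields a relative energy gain of order $\lambda^{-2}$.

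\textbf{Comparison.} Your version is uniform in $d\ge3$. It never uses the numerical value of $S_{\R^d}$, only that it is attained by a function analytic in a strip. It also explains the mechanism behind the inequality and transfers to other periodic discretizations whose symbol lies below $|\xi|^2$. The paper's version is entirely elementary and produces an explicit gap of order one. Yours gives only a non-explicit gap of relative size $\lambda^{-2}$.

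\textbf{Points to tighten.} These are fixable details, not gaps.
\begin{itemize}
\item State the tube estimate for $g_i$ and $U_\lambda^{2^*}$, not for $U$, which is not integrable when $d=3,4$. For $|\operatorname{Im} z|\le\lambda/2$ one has $|g_i(z)|\le C\lambda^{-2}(1+|\operatorname{Re} z|/\lambda)^{2-2d}$, which is integrable since $d\ge3$. This gives $|\widehat{g_i}(2\pi k)|\le C\lambda^{d-2}e^{-\pi\lambda|k|}$.
\item For $d=3,4$ one also has $U\notin L^2(\R^d)$. So apply Plancherel to $\partial_iU_\lambda$ and to $U_\lambda(\cdot+e_i)-U_\lambda$, using that $\widehat U$ is positive, comparable to $|\eta|^{-2}$ near $0$, and exponentially decaying at infinity.
\item The claim $u_\lambda\in D^{1,2}(\V^d_1)$ needs the usual cutoff argument. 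Truncating at radius $R$ costs $O(R^{2-d})$ in Dirichlet energy and $o(1)$ in $\ell^{2^*}$, so the quotient of $u_\lambda$ is a legitimate upper bound for $S_{\V^d_1}$.
\end{itemize}
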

\begin{proof}
Recall that the sharp Sobolev constant in $\mathbb R^d$ is given by
\begin{equation}\label{eq:sharp-euclidean-sobolev-constant}
S_{\mathbb R^d}
=
d(d-2)\pi
\left(
\frac{\Gamma(d/2)}{\Gamma(d)}
\right)^{2/d}.
\end{equation}

We first consider the case $d\geq4$. Let $\delta_0:\V^d_1\to\mathbb R$
be defined by
\[
\delta_0(x)
=
\begin{cases}
1, & x=0,\\
0, & x\neq0.
\end{cases}
\]
Since the origin has exactly $2d$ nearest neighbors, we have
\[
\|\nabla\delta_0\|_{\ell^2(\V^d_1)}^2=2d
\]
and
\[
\|\delta_0\|_{\ell^{2^*}(\V^d_1)}=1.
\]
Therefore,
\begin{equation}\label{eq:delta-test-lattice-sobolev}
S_{\V^d_1}
\leq
\frac{\|\nabla\delta_0\|_{\ell^2(\V^d_1)}^2}
{\|\delta_0\|_{\ell^{2^*}(\V^d_1)}^2}
=
2d.
\end{equation}

We claim that
\begin{equation}\label{eq:euclidean-sobolev-larger-than-degree}
S_{\mathbb R^d}>2d
\quad\text{for every }d\geq4.
\end{equation}

We first prove \eqref{eq:euclidean-sobolev-larger-than-degree} when
$d=2m$ for some $m\geq2$. By
\eqref{eq:sharp-euclidean-sobolev-constant},
\[
S_{\mathbb R^{2m}}
=
4m(m-1)\pi
\left(
\frac{\Gamma(m)}{\Gamma(2m)}
\right)^{1/m}.
\]
Since
\[
\Gamma(m)=(m-1)!
\quad\text{and}\quad
\Gamma(2m)=(2m-1)!,
\]
we obtain
\[
\begin{aligned}
\frac{S_{\mathbb R^{2m}}}{4m}
&=
(m-1)\pi
\left(
\frac{(m-1)!}{(2m-1)!}
\right)^{1/m}
\\
&=
\frac{(m-1)\pi}
{\bigl(m(m+1)\cdots(2m-1)\bigr)^{1/m}}.
\end{aligned}
\]
For every $m\geq2$, since $\pi>3$, we have
\[
2m-1<3(m-1)<\pi(m-1).
\]
Consequently,
\[
m(m+1)\cdots(2m-1)
<
\bigl(\pi(m-1)\bigr)^m,
\]
and hence
\[
\frac{S_{\mathbb R^{2m}}}{4m}>1.
\]
Therefore,
\[
S_{\mathbb R^{2m}}>4m=2d.
\]

We now prove \eqref{eq:euclidean-sobolev-larger-than-degree} when
$d=2m+1$ for some $m\geq2$. Using
\[
\Gamma\left(m+\frac12\right)
=
\frac{(2m)!\sqrt{\pi}}{4^m m!},
\]
we obtain
\[
\frac{\Gamma(d/2)}{\Gamma(d)}
=
\frac{\Gamma(m+1/2)}{\Gamma(2m+1)}
=
\frac{\sqrt{\pi}}{4^m m!}.
\]
It follows from \eqref{eq:sharp-euclidean-sobolev-constant} that
\[
\frac{S_{\mathbb R^{2m+1}}}{2(2m+1)}
=
\frac{(2m-1)\pi}{2}
\left(
\frac{\sqrt{\pi}}{4^m m!}
\right)^{2/(2m+1)}.
\]
Set
\[
A_m:=\frac{(2m-1)\pi}{2}.
\]
Then
\[
\left(
\frac{S_{\mathbb R^{2m+1}}}{2(2m+1)}
\right)^{(2m+1)/2}
=
\frac{A_m^{m+1/2}\sqrt{\pi}}{4^m m!}.
\]

Suppose first that $m\geq4$. By the arithmetic--geometric mean
inequality,
\[
m!
=
\prod_{j=1}^m j
\leq
\left(\frac{m+1}{2}\right)^m.
\]
Moreover,
\[
A_m>2(m+1).
\]
Indeed, since $m\geq4$,
\[
\pi(2m-1)-4(m+1)
\geq
7\pi-20>0.
\]
Therefore,
\[
\frac{A_m^{m+1/2}\sqrt{\pi}}{4^m m!}
\geq
\frac{A_m^{m+1/2}\sqrt{\pi}}
{\bigl(2(m+1)\bigr)^m}
>
A_m^{1/2}\sqrt{\pi}
>1.
\]
Thus,
\[
S_{\mathbb R^{2m+1}}>2(2m+1)
\quad\text{for every }m\geq4.
\]

It remains to consider $m=2$ and $m=3$. If $m=2$, then
\[
A_2=\frac{3\pi}{2}>4,
\]
and hence
\[
\left(
\frac{S_{\mathbb R^5}}{10}
\right)^{5/2}
=
\frac{A_2^{5/2}\sqrt{\pi}}{32}
>
\frac{4^{5/2}\sqrt{\pi}}{32}
=
\sqrt{\pi}
>1.
\]
Therefore,
\[
S_{\mathbb R^5}>10.
\]

If $m=3$, then
\[
A_3=\frac{5\pi}{2}>7,
\]
and hence
\[
\left(
\frac{S_{\mathbb R^7}}{14}
\right)^{7/2}
=
\frac{A_3^{7/2}\sqrt{\pi}}{384}
>
\frac{7^{7/2}}{384} >1.
\]
Therefore, 
we obtain
\[
S_{\mathbb R^7}>14.
\]
This proves \eqref{eq:euclidean-sobolev-larger-than-degree} for every
$d\geq4$. Combining
\eqref{eq:delta-test-lattice-sobolev} and
\eqref{eq:euclidean-sobolev-larger-than-degree}, we conclude that
\[
S_{\V^d_1}<S_{\mathbb R^d}
\quad\text{for every }d\geq4.
\]

It remains to consider the case $d=3$. In this case,
\[
2^*=6,
\]
and \eqref{eq:sharp-euclidean-sobolev-constant} yields
\begin{equation}\label{eq:sharp-euclidean-sobolev-dimension-three}
S_{\mathbb R^3}
=
3\left(\frac{\pi}{2}\right)^{4/3}.
\end{equation}

For $a\in(0,1)$, define
\[
u_a(x):=a^{|x|_1},
\qquad
|x|_1:=|x_1|+|x_2|+|x_3|,
\qquad
x=(x_1,x_2,x_3)\in\V^3_1.
\]
The exponential decay of $u_a$ implies that
\[
u_a\in D^{1,2}(\V^3_1)\cap\ell^2(\V^3_1)\cap\ell^6(\V^3_1).
\]
Moreover, if
\[
u_{a,R}(x)
:=
\begin{cases}
u_a(x), & |x|_1\leq R,\\
0, & |x|_1>R,
\end{cases}
\]
then
\begin{equation}\label{equard12}
u_{a,R}\to u_a
\quad\text{in }D^{1,2}(\V^3_1)
\end{equation}
and
\begin{equation}\label{equarl6}
u_{a,R}\to u_a
\quad\text{in }\ell^6(\V^3_1)
\end{equation}
as $R\to+\infty$. Indeed, by Tonelli's theorem,
\begin{equation}\label{equal2}
\begin{aligned}
\|u_a\|_{\ell^2(\V^3_1)}^2
&=
\sum_{(x_1,x_2,x_3)\in\V^3_1}
a^{2(|x_1|+|x_2|+|x_3|)}
\\
&=
\prod_{j=1}^3
\left(
\sum_{n\in\mathbb Z}a^{2|n|}
\right)
\\
&=\left(1+
\frac{2a^2}{1-a^2}
\right)^3=
\left(
\frac{1+a^2}{1-a^2}
\right)^3.
\end{aligned}
\end{equation}
By repeating the argument in the proof of \cite[Lemma 2.1]{HJ}, we have
\begin{equation}\label{eqd12l2}
\|\nabla_1^du\|_{\ell^2(\V_1^d)}\leq C\|u\|_{\ell^2(\V_1^d)} \quad \text{for all }u\in C_c(\V_1^d).
\end{equation}
Therefore, by the definition of $D^{1,2}(\V^3_1)$, since $\ell^2(\V^d_1)$ is the completion of $C_c(\V_1^d)$ under the norm $\|u\|_{\ell^2(\V_1^d)}$, we know $u \in D^{1,2}(\V^3_1)$.
Moreover, by  Lebesgue's dominated convergence theorem, 
\[
u_{a,R}\to u_a
\quad\text{in }\ell^{2}(\V^3_1),
\]
which then leads to \eqref{equard12} and \eqref{equarl6} by \eqref{eqd12l2} and the interpolation inequality $\norm{w}_{\ell^{6}(\V^d_1)}\leq \norm{w}_{\ell^{2}(\V^d_1) }$ for all $w \in \ell^2(\V^d_1)$, respectively.

We first compute its $\ell^6$ norm. Arguing as in the computation of \eqref{equal2}, we obtain
\begin{equation}\label{eq:l6-norm-exponential-test}
\begin{aligned}
\|u_a\|_{\ell^6(\V^3_1)}^6
&=
\sum_{(x_1,x_2,x_3)\in\V^3_1}
a^{6(|x_1|+|x_2|+|x_3|)}
\\
&=
\prod_{j=1}^3
\left(
\sum_{n\in\mathbb Z}a^{6|n|}
\right)
\\
&=
\left(
\frac{1+a^6}{1-a^6}
\right)^3.
\end{aligned}
\end{equation}
Consequently,
\begin{equation}\label{eq:l6-norm-squared-exponential-test}
\|u_a\|_{\ell^6(\V^3_1)}^2
=
\frac{1+a^6}{1-a^6}.
\end{equation}

We next compute its discrete Dirichlet energy. For every
$i\in\{1,2,3\}$, the separation of variables gives
\[
\begin{aligned}
&\sum_{x\in\V^3_1}
|u_a(x+e_i)-u_a(x)|^2
\\
&\quad=
\left(
\sum_{n\in\mathbb Z}
|a^{|n+1|}-a^{|n|}|^2
\right)
\left(
\sum_{m\in\mathbb Z}a^{2|m|}
\right)^2,
\end{aligned}
\]
where $e_i$ is the unit vector in the $i$-th coordinate. On the one hand, the first one-dimensional factor satisfies
\[
\begin{aligned}
\sum_{n\in\mathbb Z}
|a^{|n+1|}-a^{|n|}|^2
&=
2\sum_{n=0}^{\infty}|a^{n+1}-a^n|^2
\\
&=
2(1-a)^2\sum_{n=0}^{\infty}a^{2n}
\\
&=
\frac{2(1-a)^2}{1-a^2}
=\frac{2-2a}{1+a}.
\end{aligned}
\]
On the other hand,
\[
\begin{aligned}
\sum_{m\in\mathbb Z}a^{2|m|}
&=
1+2\sum_{m=1}^{\infty}a^{2m}
\\
&=
1+\frac{2a^2}{1-a^2}
=
\frac{1+a^2}{1-a^2}.
\end{aligned}
\]
Summing over $i=1,2,3$, we obtain
\begin{equation}\label{eq:energy-exponential-test}
\|\nabla u_a\|_{\ell^2(\V^3_1)}^2
=\frac{6-6a}{1+a}
\left(
\frac{1+a^2}{1-a^2}
\right)^2.
\end{equation}

We now choose
\[
a=\frac14.
\]
By \eqref{eq:energy-exponential-test},
\begin{equation}\label{eq:energy-exponential-test-one-fourth}
\|\nabla u_{1/4}\|_{\ell^2(\V^3_1)}^2=
\frac{6-\frac32}{1+\frac14}
\left(
\frac{1+\frac1{16}}{1-\frac1{16}}
\right)^2
=
\frac{578}{125}.
\end{equation}
Similarly, by \eqref{eq:l6-norm-squared-exponential-test},
\begin{equation}\label{eq:l6-norm-exponential-test-one-fourth}
\|u_{1/4}\|_{\ell^6(\V^3_1)}^2
=
\frac{1+4^{-6}}{1-4^{-6}}
=
\frac{4097}{4095}.
\end{equation}
Combining
\eqref{eq:energy-exponential-test-one-fourth} and
\eqref{eq:l6-norm-exponential-test-one-fourth}, we obtain
\begin{equation}\label{eq:sobolev-quotient-exponential-test}
\frac{\|\nabla u_{1/4}\|_{\ell^2(\V^3_1)}^2}
{\|u_{1/4}\|_{\ell^6(\V^3_1)}^2}=
\frac{27846}{6025}.
\end{equation}
It follows that
\begin{equation}\label{eq:lattice-sobolev-upper-bound-dimension-three}
S_{\V^3_1}
\leq
\frac{27846}{6025}.
\end{equation}
Since $\pi>3$, it follows that
\begin{equation}\label{eq:strict-comparison-dimension-three}
\frac{27846}{6025}
<
\frac{24}{5}
=
3\cdot\frac85
<
3\left(\frac32\right)^{4/3}
<
3\left(\frac{\pi}{2}\right)^{4/3}
=
S_{\mathbb R^3}.
\end{equation}
Combining
\eqref{eq:lattice-sobolev-upper-bound-dimension-three} and
\eqref{eq:strict-comparison-dimension-three}, we conclude that
\[
S_{\V^3_1}<S_{\mathbb R^3}.
\]
The proof is complete.
\end{proof}
We are in a position to prove Theorem \ref{th6}.
\begin{proof}[Proof of Theorem \ref{th6}]
    By Lemma \ref{lem:strict_comparison_sobolev_constants}, one has $S_{\V^d_1} < S_{\mathbb R^d}$. The conclusion, including strict positivity of a minimizer, then follows from Lemma \ref{lem:strict_discrete_sobolev_constant}.
\end{proof}
\begin{remark}\label{reclesl}
    We now explain why it is not possible to investigate the singular limit of action ground states for critical Lane--Emden equations \eqref{eqcle} on lattice graphs. For every $d \geq 3$ and every $\varepsilon > 0$, we introduce the action functional $\hat{J}_{\V_\varepsilon^d} : D^{1,2}(\V_\varepsilon^d) \to \mathbb{R}$
\begin{equation*}
    \hat{J}_{\V_\varepsilon^d}(u) := \frac{1}{2}\|\nabla_\varepsilon^d u\|_{\ell^2(\V_\varepsilon^d)}^2  -\frac{\varepsilon}{{2^*}}\|u\|_{\ell^{2^*}(\V_\varepsilon^d)}^{2^*}
\end{equation*}
and the associated Nehari manifold
\begin{equation*}
    \hat{\mathcal{N}}_{ \V_\varepsilon^d} := \left\{ u \in D^{1,2}(\V_\varepsilon^d)\backslash \{0\}  : \|\nabla_\varepsilon^d u\|_{\ell^2(\V_\varepsilon^d)}^2 = \varepsilon\|u\|_{\ell^{2^*}(\V_\varepsilon^d)}^{2^*} \right\}.
\end{equation*}
For $d\geq 3$, define the action functional $J_{\mathbb{R}^d}:D^{1,2}(\R^d) \to \mathbb{R}$
\[
    J_{\mathbb{R}^d}(v) := \frac{1}{2}\|\nabla v\|_{L^2(\mathbb{R}^d)}^2- \frac{1}{2^*}\|v\|_{L^{2^*}(\mathbb{R}^d)}^{2^*}
\]
and the associated Nehari manifold
\begin{align*}
    \mathcal{N}_{\mathbb{R}^d} := \left\{ v \in D^{1,2}(\R^d) \backslash \{0\} : \|\nabla v \|_{L^2(\mathbb{R}^d)}^2  = \|v\|_{L^{2^*}(\mathbb{R}^d)}^{2^*} \right\}.
\end{align*}
A standard computation using the Sobolev inequality and the Nehari identity gives
\[
\inf_{v\in\mathcal N_{\mathbb R^d}}J_{\mathbb R^d}(v)
=\frac{1}{d}S_{\mathbb R^d}^{d/2}.
\]

Let $u$ be the minimizer of $S_{\V_1^d}$ obtained by Theorem \ref{th6}, and let $u_\varepsilon$ be defined by
\[
u_\varepsilon:=\varepsilon^{2/(2-2^*)}u\left(x/\varepsilon\right), \quad \forall x \in \Vd.
\] 
Recall that, in the proof of Lemma \ref{lem:strict_discrete_sobolev_constant}, $w=\lambda^{-\frac{1}{2^*-2}}u$ and $\|\nabla_1^d w \|_{\ell^2(\V_1^d)}^2=S_{\V^d_1}$.
Then, $u_\varepsilon \in  \hat{\mathcal{N}}_{ \V_\varepsilon^d}$ and 
\[
\begin{aligned}
 \hat{J}_{\V_\varepsilon^d}(u_\varepsilon)&=\left(\frac{1}{2}-\frac{1}{2^*}\right)\|\nabla_\varepsilon^d u_\varepsilon\|_{\ell^2(\V_\varepsilon^d)}^2\\
 &=\left(\frac{1}{2}-\frac{1}{2^*}\right)\varepsilon^{(2^*+2)/(2-2^*)}\|\nabla_1^d u \|_{\ell^2(\V_1^d)}^2 \\
 &= \frac{\varepsilon^{1-d}}{d} S_{\V^d_1}^{\frac{2}{2^*-2}}\|\nabla_1^d w \|_{\ell^2(\V_1^d)}^2=\frac{\varepsilon^{1-d}}{d} S_{\V^d_1}^{\frac{2^*}{2^*-2}}= \frac{\varepsilon^{1-d}}{d} S_{\V^d_1}^{\frac{2}{2^*-2}}\|\nabla_1^d w \|_{\ell^2(\V_1^d)}^2=\frac{\varepsilon^{1-d}}{d} S_{\V^d_1}^{d/2}.
\end{aligned}
\]
Thus, by Lemma \ref{lem:strict_comparison_sobolev_constants}, we conclude
\[
\varepsilon^{d-1}\inf_{u \in \hat{\mathcal{N}}_{ \V_\varepsilon^d}}\hat{J}_{\V_\varepsilon^d}(u)=\frac{1}{d} S_{\V^d_1}^{d/2}<\frac{1}{d}S_{\mathbb R^d}^{d/2}= \inf_{v\in\mathcal N_{\mathbb R^d}}J_{\mathbb R^d}(v),
\]
which implies that
\[
\liminf_{\varepsilon\to 0^+}\left|\varepsilon^{d-1}\inf_{u \in \hat{\mathcal{N}}_{ \V_\varepsilon^d}}\hat{J}_{\V_\varepsilon^d}(u)-\inf_{v\in\mathcal N_{\mathbb R^d}}J_{\mathbb R^d}(v)\right|>0.
\]
Therefore, it is impossible to investigate the singular limit of action ground states for critical Lane--Emden equations \eqref{eqcle} on lattice graphs.

Since there is a gap between $S_{\V^d_1}$ and $S_{\R^d}$, we conjecture that \eqref{eqcle} admits higher-energy solutions and that these solutions exhibit singular limit behavior.
\end{remark}
We now provide the proof of Theorem \ref{th7}.
\begin{proof}[Proof of Theorem \ref{th7}]
We first prove that, for every \(d\geq 3\), there exists
\(\delta=\delta(d)>0\) such that
\[
K_{q,\V_1^d}>K_{q,\mathbb R^d}
\quad\text{for every }q\in(2^*-\delta,2^*).
\]
By Lemma \ref{lem:strict_comparison_sobolev_constants} and the definition of $K_{2^*,\V^d_1}$, there exist $u\in C_c(\V^d_1)$ and $\nu>0$ such that
\[
K_{2^*,\R^d}+3\nu<Q_{2^*,\V^d_1}(u).
\]
We first justify the continuity of the Euclidean best constants at $2^*$. For $q\in(2,2^*)$, let $\beta_q=\frac{d(q-2)}{2q}$. By interpolation and the critical Sobolev inequality,
\[
\|v\|_{L^q(\R^d)}^q
\leq \|v\|_{L^2(\R^d)}^{(1-\beta_q)q}
       \|v\|_{L^{2^*}(\R^d)}^{\beta_q q}
\leq K_{2^*,\R^d}^{\frac{\beta_q q}{2^*}}
       \|v\|_{L^2(\R^d)}^{(1-\beta_q)q}
       \|\nabla v\|_{L^2(\R^d)}^{\beta_q q},
\]
so that
\[
K_{q,\R^d}\leq K_{2^*,\R^d}^{\frac{\beta_q q}{2^*}}.
\]
Conversely, for every $\eta>0$, choose $\phi\in C_c^\infty(\R^d)\setminus\{0\}$ such that
$Q_{2^*,\R^d}(\phi)>K_{2^*,\R^d}-\eta$. Since
$Q_{q,\R^d}(\phi)\to Q_{2^*,\R^d}(\phi)$ as $q\nearrow2^*$, we obtain
\[
\lim_{q\nearrow2^*}K_{q,\R^d}=K_{2^*,\R^d}.
\]
Since $u$ has finite support, we also have
\[
\lim_{q\nearrow2^*}Q_{q,\V^d_1}(u)=Q_{2^*,\V^d_1}(u).
\]
Hence there exists $\delta:=\delta(d)>0$ such that, for every $q\in(2^*-\delta,2^*)$,
\[
K_{q,\R^d}<K_{2^*,\R^d}+\nu
<Q_{2^*,\V^d_1}(u)-2\nu
<Q_{q,\V^d_1}(u).
\]
By Theorem \ref{th5}, we know that,  for every \(d\geq 3\) and every  $q \in (2^*-\delta,2^*)$, $K_{q,\V^d_1}$ is attained by some $w_q \in H^1_1(\V^d_1)$. Replacing $w_q$ by $\abs{w_q}$, we know that $\abs{w_q}$ is still a maximizer. Without loss of generality, we assume that $w_q$ is nonnegative. By the Lagrange multiplier theorem applied to the constrained maximizer $w_q$, for some $\lambda_q>0$, $w_q$ solves
\[
-\Delta_1^d w_q
+\lambda_q w_q
=
\frac{2q}{d(q-2)}
\frac{\|\nabla_1^d w_q\|_{\ell^2(\V^d_1)}^2}{\|w_q\|_{\ell^q(\V^d_1)}^q}
w_q^{q-1}\quad \text{in }\V^d_1,
\]
and
\[
\lambda_q=\left(\frac{2q-d(q-2)}{d(q-2)}\right)\|\nabla_1^d w_q\|_{\ell^2(\V^d_1)}^2.
\]
Moreover, the connectivity of $\Z^d_1$ leads to $w_q(\vv)>0$ for all $\vv \in \V^d_1$. Let 
\[u_q:=\left(\frac{2q}{d(q-2)}
\frac{\|\nabla_1^d w_q\|_{\ell^2(\V^d_1)}^2}{\|w_q\|_{\ell^q(\V^d_1)}^q}\right)^{\frac{1}{q-2}}w_q.\]
Then we conclude that $u_q>0$ satisfies 
\[
Q_{q,\V^d_1}(u_q)=K_{q,\V^d_1}
\]
and
\[
-\Delta_1^d u_q+ \lambda_q u_q=u_q^{q-1}
\quad\text{in }\V_1^d.
\]
\end{proof}
\begin{remark}
    The arguments in Sections \ref{sec5} and \ref{seccle} can be adapted directly to 
    $$
    \tilde{K}_{q,\G^d_1}:=\sup_{u\in H^1(\G^d_1)\backslash\{0\}}\frac{\sum_{\vv\in\V^d_1}\abs{\hat{u}(v)}^q}{\norm{u'}^{(\frac{q}{2}-1)d}_{L^2(\G^d_1)}\left(\sum_{\vv\in\V^d_1}\abs{\hat{u}(v)}^2\right)^{\frac{d}{2}+(2-d)\frac{q}{4}}}.
    $$
    Indeed, by Lemma \ref{lemh1gv}, if $u$ is a maximizer of $\tilde{K}_{q,\G^d_1}$, then $\hat{u}$ is a maximizer of $\hat{K}_{q,\V^d_1}$ and $\tilde{\hat{u}}$ is a maximizer of $\tilde{K}_{q,\G^d_1}$, and the same conclusion holds for maximization sequences.
\end{remark}

\section{Further remarks and open problems}\label{sec6}
In this section, we pose several interesting open problems concerning (NLS) equation on lattice graphs with Sobolev critical exponent. We present a possibility to improve Theorem \ref{th4} for the Sobolev critical case and we propose some interesting questions concerning singular limit of solutions to the (NLS) on lattice graphs with Sobolev critical exponent.
\subsection{A possibility to improve Theorem \ref{th4}}

The following lemma is the analogue of Lemma \ref{lemactionbdd} for the case $d\geq 3$ and $p\geq 2^*$.
\begin{lemma}\label{lem43}
    For every $d \geq 3$, $p \geq 2^*$ and $\omega > 0$,  there exist $\bar{\varepsilon} > 0$ and $C' > 0$, depending only on $p$, $\omega$ and $d$, such that, for every $\varepsilon \in (0, \bar{\varepsilon})$, if $f \in \hat{\mathcal{N}}_{\omega, \V_\varepsilon^d}$ is a ground state of $\hat{J}_{\omega, \V_\varepsilon^d}$, then
\begin{equation}\label{eqlem43}
\begin{aligned}
    \varepsilon^{d-1}\|\nabla^d_\varepsilon f\|_{\ell^2(\V_\varepsilon^d)}^2,\,\, \varepsilon^{d}\| f\|_{\ell^2(\V_\varepsilon^d)}^2,\,\,  \varepsilon^{d}\|f\|_{\ell^p(\V_\varepsilon^d)}^p \leq C',
\end{aligned}
 \end{equation}
 and
 \begin{equation}\label{eqlem431}
\begin{aligned}
    \varepsilon^\frac{p+2}{p-2}\|\nabla^d_\varepsilon f\|_{\ell^2(\V_\varepsilon^d)}^2,\, \,\varepsilon^\frac{4}{p-2}\| f\|_{\ell^2(\V_\varepsilon^d)}^2,\, \, \varepsilon^\frac{2p}{p-2}\|f\|_{\ell^p(\V_\varepsilon^d)}^p  \geq \frac{1}{C'}.
\end{aligned}
 \end{equation}
\end{lemma}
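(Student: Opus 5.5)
The upper bounds \eqref{eqlem43} come from an upper estimate on the ground state level, and the lower bounds \eqref{eqlem431} from a rescaling argument plus the Nehari identity. Let $f$ be a ground state. On $\hat{\mathcal{N}}_{\omega,\V_\varepsilon^d}$ we have
\[
\hat{J}_{\omega,\V_\varepsilon^d}(f)=\Big(\tfrac12-\tfrac1p\Big)\varepsilon\|f\|_{\ell^p(\V_\varepsilon^d)}^p=\Big(\tfrac12-\tfrac1p\Big)\Big(\|\nabla^d_\varepsilon f\|_{\ell^2(\V_\varepsilon^d)}^2+\varepsilon\omega\|f\|_{\ell^2(\V_\varepsilon^d)}^2\Big).
\]
Hence each of the three quantities in \eqref{eqlem43} is controlled by $\varepsilon^{d-1}\hat{\mathcal{J}}_{\V_\varepsilon^d}(\omega)$.

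For the upper bounds, I would show that $\varepsilon^{d-1}\hat{\mathcal{J}}_{\V_\varepsilon^d}(\omega)$ is bounded for small $\varepsilon$. For $p>2^*$ this is exactly Lemma \ref{lemaction} combined with Lemma \ref{leminfa}: the quantity even tends to $0$. For $p=2^*$, I would repeat the first part of the proof of Lemma \ref{lemaction} with a fixed $v\in C_c^\infty(\R^d)\setminus\{0\}$ in place of the ground state. Its restriction to $\Gd$, rescaled by $\tilde t$ onto $\tilde{\mathcal N}_{\omega,\Gd}$, gives
\[
\varepsilon^{d-1}\tilde{\mathcal J}_{\Gd}(\omega)\le J(t_v v)+C\varepsilon.
\]
This only uses the estimates of \cite[Lemma 4.1]{Do2} and Lemma \ref{lemlp} for smooth compactly supported $v$, so it is valid for any $p$.

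For the lower bounds, I would use the scaling already exploited in the proof of Theorem \ref{th4}. For a ground state $f$ of $\hat J_{\omega,\V_\varepsilon^d}$, set $g(x):=\varepsilon^{2/(p-2)}f(\varepsilon x)$ on $\V^d_1$. A direct computation shows $g\in\hat{\mathcal N}_{\omega\varepsilon^2,\V^d_1}$, and the map $f\mapsto g$ is a bijection between the Nehari manifolds. Under it, $\|f\|_{\ell^p}^p=\varepsilon^{-2p/(p-2)}\|g\|_{\ell^p}^p$, and similarly for the other norms. Multiplied by the powers of $\varepsilon$ appearing in \eqref{eqlem431}, the three quantities become exactly $\|\nabla_1^d g\|^2$, $\omega$-independent multiples of $\|g\|_{\ell^2}^2$, and $\|g\|_{\ell^p}^p$ on $\V_1^d$, for a ground state $g$ at frequency $\omega\varepsilon^2\to0$.

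It then suffices to bound these from below uniformly as the frequency tends to $0$. On the unit lattice, the Nehari identity together with $\|g\|_{\ell^p}^p\le\|g\|_{\ell^2}^{p-2}\|g\|_{\ell^2}^2$ or the Sobolev inequality \eqref{eqsobov} gives
\[
\|\nabla_1^d g\|^2\le\|g\|_{\ell^p}^p\le\|g\|_{\ell^{2^*}}^p\le C\|\nabla_1^d g\|^p,
\]
using $\ell^{2^*}\hookrightarrow\ell^p$ for $p\ge2^*$. Since $p>2$, this forces $\|\nabla_1^d g\|\ge c>0$ independently of $\omega$. Then $\|g\|_{\ell^p}^p\ge\|\nabla^d_1 g\|^2\ge c^2$, and $\|g\|_{\ell^2}^2\ge\|g\|_{\ell^p}^2$ is bounded below as well. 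Tracking the exponents then yields \eqref{eqlem431}. The $\ell^2$ bound must be checked carefully, since the $\varepsilon^{4/(p-2)}$ power comes out right only after using the $\ell^p\subset\ell^2$ comparison.

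The main obstacle is this uniformity in the vanishing frequency. That is precisely why the Sobolev inequality, available since $d\ge3$ and $p\ge2^*$, replaces the Gagliardo--Nirenberg argument of Lemma \ref{lemactionbdd}, which gave two-sided bounds only for subcritical $p$.
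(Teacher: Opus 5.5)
Your proof is correct and follows the paper's route. The upper bounds come from the boundedness of $\varepsilon^{d-1}\hat{\mathcal{J}}_{\V_\varepsilon^d}(\omega)$, obtained via Lemmas \ref{leminfa} and \ref{lemaction}, combined with the Nehari identity. Your lower-bound chain $\|\nabla_1^d g\|^2\le\|g\|_{\ell^p}^p\le\|g\|_{\ell^{2^*}}^p\le C\|\nabla_1^d g\|^p$ for the rescaled $g$ is exactly the paper's estimate $\|\nabla^d_\varepsilon f\|^2_{\ell^2(\Vd)}\le\varepsilon\|f\|^p_{\ell^p(\Vd)}\le C\varepsilon^{1+p/2}\|\nabla^d_\varepsilon f\|^p_{\ell^2(\Vd)}$ (derived there from \eqref{eqsobo} and \eqref{eqlinfty} directly on $\Vd$), rewritten in unit-lattice variables.

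The only other difference is cosmetic: for the mass bound you use $\|g\|_{\ell^2}\ge\|g\|_{\ell^p}$, whereas the paper uses $\|f\|^2_{\ell^2(\Vd)}\ge C\varepsilon\|\nabla^d_\varepsilon f\|^2_{\ell^2(\Vd)}$. Also, your scaling gives exactly $\varepsilon^{4/(p-2)}\|f\|^2_{\ell^2(\Vd)}=\|g\|^2_{\ell^2(\V^d_1)}$, so that exponent needs no extra care.
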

\begin{proof}
    Let $d \geq 3$, $p \geq 2^*$ and $\omega > 0$.  Let $f \in \hat{\mathcal{N}}_{\omega, \V_\varepsilon^d}$ be a ground state of $\hat{J}_{\omega, \V_\varepsilon^d}$. By Lemma \ref{leminfa} and by repeating the arguments in the proof of Lemma \ref{lemaction}, we know that, for every $\varepsilon>0$ small enough,
$$
\varepsilon^{d-1}\hat{\mathcal{J}}_{\V_\varepsilon^d}(\omega)=\varepsilon^{d-1}\tilde{\mathcal{J}}_{\mathcal{G}_\varepsilon^d}(\omega) \leq C.
$$
Then, by the definition of $\hat{\mathcal{N}}_{\omega,\Vd}$ given in \eqref{eqdefnv}, we have that for $\varepsilon>0$ small enough
    \begin{equation*}
            \varepsilon^{d-1}\|\nabla^d_\varepsilon f\|_{\ell^2(\V_\varepsilon^d)}^2,\, \varepsilon^{d}\| f\|_{\ell^2(\V_\varepsilon^d)}^2,\,  \varepsilon^{d}\|f\|_{\ell^p(\V_\varepsilon^d)}^p \leq C.
    \end{equation*}
    
By Lemma \ref{lemh1gv}, the Gagliardo--Nirenberg type inequalities \eqref{eqsobo} and \eqref{eqlinfty}, and the definition of $\hat{\mathcal{N}}_{\omega,\Vd}$  given in \eqref{eqdefnv}, one has
$$
\begin{aligned}
   \|\nabla^d_\varepsilon f\|_{\ell^2(\V_\varepsilon^d)}^2\leq \|\nabla^d_\varepsilon f\|_{\ell^2(\V_\varepsilon^d)}^2+\varepsilon\omega\|f\|_{\ell^2(\V_\varepsilon^d)}^2&=\varepsilon\|f\|_{\ell^p(\V_\varepsilon^d)}^p\\
    &\leq C\varepsilon\|f\|_{\ell^\infty(\V_\varepsilon^d)}^{p-2^*}\|f\|_{\ell^{2^*}(\V_\varepsilon^d)}^{2^*}\\
    &\leq C\varepsilon(\varepsilon^{1/2}\|\nabla^d_\varepsilon f\|_{\ell^2(\V_\varepsilon^d)})^{2^*}(\varepsilon^{1/2}\|\nabla^d_\varepsilon f\|_{\ell^2(\V_\varepsilon^d)})^{p-2^*}\\
    &\leq C\varepsilon^{1+p/2}\|\nabla^d_\varepsilon f\|_{\ell^2(\V_\varepsilon^d)}^p,
\end{aligned}
$$
that is,
$$
\varepsilon^\frac{p+2}{p-2}\|\nabla^d_\varepsilon f\|_{\ell^2(\V_\varepsilon^d)}^2\geq C.
$$
Thus, by repeating the arguments in the proof of \cite[Lemma 2.1]{HJ}, we have
$$
\|f\|_{\ell^2(\V_\varepsilon^d)}^2\geq C \varepsilon\|\nabla^d_\varepsilon f\|_{\ell^2(\V_\varepsilon^d)}^2 \geq C\varepsilon^\frac{4}{2-p}.
$$
Moreover, by the definition of $\hat{\mathcal{N}}_{\omega,\Vd}$  given in \eqref{eqdefnv}, we obtain that 
$$
\|f\|_{\ell^p(\V_\varepsilon^d)}^p\geq \frac{1}{\varepsilon}\|\nabla^d_\varepsilon f\|_{\ell^2(\V_\varepsilon^d)}^2 \geq C\varepsilon^\frac{2p}{2-p}.
$$
\end{proof}
By Lemma \ref{lem43}, we know that $\varepsilon^\frac{4}{p-2}\|f_\varepsilon\|_{\ell^2(\V_\varepsilon^d)}^2\geq \frac{1}{C'}$, where $f \in \hat{\mathcal{N}}_{1, \V_\varepsilon^d}$ is a ground state of $\hat{J}_{1, \V_\varepsilon^d}$.
If,  for $d\geq 3$ and $p =2^*$, we can prove that $\varepsilon^\frac{4}{2^*-2}\|f_\varepsilon\|_{\ell^2(\V_\varepsilon^d)}^2\to +\infty$ as $\varepsilon\to 0$,  then, by repeating the arguments in the proof of Theorem \ref{th4}, we can improve the range of $p$ in Theorem \ref{th4} from $(2+\frac{4}{d},2^*)$ to $(2+\frac{4}{d},2^*]$.

\subsection{Singular limit problems in the Sobolev critical case}
Let $d \geq 3$. Even though it is impossible to investigate the singular limit of action ground states for critical Lane--Emden equations \eqref{eqcle} on lattice graphs  as stated in Remark \ref{reclesl}, we can introduce singular limit problems in the Sobolev critical case as follows. We introduce the action functional involving a critical Sobolev exponent $J_{\R^d}: D^{1,2}(\R^d) \to \R$
$$
J_{\R^d}(u):=\frac{1}{2}\norm{\nabla u }^2_{L^2(\R^d)}-\frac{1}{2^*}\norm{u}^{2^*}_{L^{2^*}(\R^d)}
$$
and the associated Nehari manifold
$$
\mathcal{N}_{\R^d}:=\left\{u \in D^{1,2}(\R^d):\norm{\nabla u }^2_{L^2(\R^d)}= \norm{u}^{2^*}_{L^{2^*}(\R^d)}\right\}.
$$
Denote by 
$$
\mathcal{J}_{\R^d}:=\inf_{u \in \mathcal{N}_{\R^d}}J_{\R^d}(u)
$$
the corresponding ground state problem.

Let $d \geq 3$ and $\alpha>0$ be fixed. For lattice graphs, we introduce the following functional involving a critical Sobolev exponent $\hat{J}_{\Vd}: H^1(\Vd) \to \R$
$$
\hat{J}_{\Vd}(u):=\frac{1}{2}\norm{\nabla^d_\varepsilon u }^2_{L^2(\Vd)}+\frac{\varepsilon^{\alpha+1}}{2}\norm{ u }^2_{L^2(\Vd)}-\frac{\varepsilon}{2^*}\norm{u}^{2^*}_{L^{2^*}(\Vd)}
$$
and the associated Nehari manifold
$$
\hat{\mathcal{N}}_{\Vd}:=\left\{u \in H^1(\Vd):\norm{\nabla^d_\varepsilon u }^2_{L^2(\Vd)}+\varepsilon^{\alpha+1}\norm{ u }^2_{L^2(\Vd)}=\varepsilon \norm{u}^{2^*}_{L^{2^*}(\Vd)}\right\}.
$$
Denote by 
$$
\hat{\mathcal{J}}_{\Vd}:=\inf_{u \in \hat{\mathcal{N}}_{\Vd}}\hat{J}_{\Vd}(u)
$$
the corresponding ground state problem.

By cutting off the ground state of $J_{\R^d}$, it is easy to see that 
$$
\varepsilon^{d-1}\hat{\mathcal{J}}_{\Vd} \leq \mathcal{J}_{\R^d}+o(1)\quad \text{as }\varepsilon \to 0,
$$
and, for the ground state $f_\varepsilon$  of $\hat{J}_{\Vd}$, for $\varepsilon>0$ sufficiently small,
$$
\frac{1}{C'}\leq \varepsilon^{d-1}\norm{\nabla^d_\varepsilon f_\varepsilon }^2_{L^2(\Vd)},\,\,  \varepsilon^d\norm{f_\varepsilon}^{2^*}_{L^{2^*}(\Vd)}\leq C'\quad \text{and }\varepsilon^{d+\alpha}\norm{f_\varepsilon}^{2}_{L^{2}(\Vd)}\leq C'.
$$
However,  since Lemma \ref{lemlL} does not hold for $p=2^*$, for $t_\varepsilon>0$ such that $v_\varepsilon:=t_\varepsilon\hat{\mathcal{A}}f_\varepsilon \in \mathcal{N}_{\R^d}$,  it appears to be quite challenging to show
$$
J_{\R^d}(v_\varepsilon) \leq \varepsilon^{d-1}\tilde{\mathcal{J}}_{\mathcal{G}_\varepsilon^d}(\omega)+o(1)\quad \text{as }\varepsilon \to 0.
$$
A similar problem could be stated as follows:\\
Let $d \geq 3$. For $p >2^*$, we introduce the following functional $\hat{J}_{p,\Vd}: D^{1,2}(\Vd) \to \R$
$$
\hat{J}_{p,\Vd}(u):=\frac{1}{2}\norm{\nabla^d_\varepsilon u }^2_{L^2(\Vd)}-\frac{\varepsilon}{p}\norm{u}^{p}_{L^{p}(\Vd)}
$$
and the associated Nehari manifold
$$
\hat{\mathcal{N}}_{p,\Vd}:=\left\{u \in D^{1,2}(\Vd):\norm{\nabla^d_\varepsilon u }^2_{L^2(\Vd)}=\varepsilon \norm{u}^{p}_{L^{p}(\Vd)}\right\}.
$$
Denote by 
$$
\hat{\mathcal{J}}_{p,\Vd}:=\inf_{u \in \hat{\mathcal{N}}_{p,\Vd}}\hat{J}_{p,\Vd}(u)
$$
the corresponding ground state problem. It is an interesting problem to determine whether
$$
\varepsilon^{d-1}\hat{\mathcal{J}}_{2^*+\varepsilon,\Vd} \to \mathcal{J}_{\R^d}, \quad \text{as }\varepsilon \to 0^{+}.
$$
\appendix
\section{Critical Lane--Emden equations on periodic metric grids}\label{app}
For $d \geq 3$, let $D^{1,2}(\Gd)$ denote the completion of $H^1(\Gd)$ with respect to the norm 
$\norm{u'}_{L^2(\Gd)}$. By the Gagliardo--Nirenberg type inequality \eqref{eqlinfty}, we know that $D^{1,2}(\Gd)\hookrightarrow C(\Gd)$. Therefore, by  \cite[Lemma 2.4]{DHJ} or the Gagliardo--Nirenberg type inequality for $p=+\infty$ (see \cite[Section 2]{AST2}), we have
\begin{equation}\label{eqappd12vanish}
    \lim_{\abs{x}\to +\infty}\abs{u(x)}\to 0, \quad \forall u \in D^{1,2}(\Gd),
\end{equation}
where, for $x \in \Gd$, $\abs{x}:=\operatorname{dist}(x,0)$.

For $d \geq 3$ and $q\geq2^*$, define
$$
S_{q,\G^d_1}:=\inf_{u \in D^{1,2}(\Gd)\setminus \{0\}}\frac{\norm{u'}_{L^2(\G^d_1)}^2}{\norm{u}_{L^q(\G^d_1)}^2}.
$$
Then, by repeating the arguments in the proof of \cite[Theorem 1]{hua2021existence} and using the Gagliardo--Nirenberg inequality on $\G_1^d$ (see, e.g., \cite[Lemma 3.1]{Do2}), we know that, for every $d \geq 3$ and $q>2^*$, there exists a positive minimizer for $S_{q,\G^d_1}$.

Set 
\[
Q_{2^*,\G_1^d}(u):=\frac{\norm{u}^{2^*}_{L^{2^*}(\G^d_1)}}{\norm{u'}^{2^*}_{L^{2}(\G^d_1)}},\quad u \in D^{1,2}(\G^d_1)\setminus\{0\}.
\]
We now extend Theorems \ref{th6} and \ref{th7} to the metric grid $\G^d_1$. 
\begin{theorem}\label{thmg1}
If \(d\geq 3\), then 
\begin{equation*}
K_{2^*,\G_1^d}:=S_{2^*,\G^d_1}^{- {2^*}/{2}}>d  K_{2^*,\mathbb R^d}:=S_{\R^d}^{-{2^*}/{2}},
\end{equation*}
and the best constant \(K_{2^*,\G_1^d}\) is attained by a positive function \(u\in D^{1,2}(\G_1^d)\), which satisfies 
\[
Q_{2^*,\G_1^d}(u)=K_{2^*,\G_1^d}
\]
and solves
\[
\begin{cases}
-u''=u^{2^*-1}
& \text{on every edge of }\mathcal{G}^d_1,\\
\displaystyle\sum_{\e\succ \mathrm{v}}\frac{du}{dx_\e}(\mathrm{v})=0
& \text{for every vertex }\mathrm{v}\text{ of }\mathcal{G}^d_1,
\end{cases}
\]
Here, $\e \succ \vv$ means that the edge $\e$ is incident at the vertex $\vv$, whereas
$\frac{du}{dx_{\e}}(\vv)$ denotes the outward derivative of $u$ at $\vv$ along $\e$.
\end{theorem}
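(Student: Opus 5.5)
The plan follows the scheme of Lemmas \ref{lem:strict_discrete_sobolev_constant} and \ref{lem:strict_comparison_sobolev_constants}, transplanted to $\G^d_1$. The factor $d$ enters because, for slowly varying functions, the one-dimensional $L^q$ mass carried by the grid is $d$ times the $\R^d$ mass of the extension. This is the content of \cite[Lemma 4.1 and Eq. (59)]{Do2} and of Lemma \ref{lemlp}. There are three steps: (i) a nonvanishing criterion, (ii) the strict inequality via an explicit test function, and (iii) compactness and the Euler--Lagrange system.

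\emph{Step (i): nonvanishing.} Let $\{u_n\}\subset H^1(\G^d_1)$ be a maximizing sequence for $K_{2^*,\G^d_1}$. I normalize $\norm{u_n}_{L^{2^*}(\G^d_1)}=1$ and take $u_n\ge0$, replacing $u_n$ by $\abs{u_n}$ if needed, so that $\norm{u_n'}^2_{L^2}\to S_{2^*,\G^d_1}$. Suppose $\norm{u_n}_{L^\infty(\G^d_1)}\to0$. I use the extension $\mathcal A$ of \cite{Do2}, which satisfies $\norm{\nabla\mathcal Au_n}^2_{L^2(\R^d)}\le\norm{u_n'}^2_{L^2(\G^d_1)}$ by \cite[Eq. (39)]{Do2}. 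Arguing as in \eqref{equnzg2*}, I combine Lemma \ref{lemlp}, the estimate \cite[Eq. (59)]{Do2}, Lemma \ref{lemsobo} and the interpolation $\norm{u}_{L^{2\cdot2^*-2}}^{2\cdot2^*-2}\le\norm{u}_\infty^{2^*-2}\norm{u}^{2^*}_{L^{2^*}}$ to obtain
\[
\Bigl|\norm{u_n}^{2^*}_{L^{2^*}(\G^d_1)}-d\norm{\mathcal Au_n}^{2^*}_{L^{2^*}(\R^d)}\Bigr|\le C\norm{u_n'}^{\frac{2^*}{2}+1}_{L^2}\norm{u_n}_{L^\infty}^{\frac{2^*}{2}-1}\to0 .
\]
It follows that $Q_{2^*,\G^d_1}(u_n)\le d\,Q_{2^*,\R^d}(\mathcal Au_n)+o(1)\le dK_{2^*,\R^d}+o(1)$. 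So once the strict inequality is known, vanishing is impossible. Repeating the proof of Theorem \ref{th5}(i) with restrictions of Euclidean functions gives $K_{2^*,\G^d_1}\ge dK_{2^*,\R^d}$, so only strictness is at stake.

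\emph{Step (ii): strict inequality.} I test with the piecewise-affine extension $\tilde f$ of a lattice function $f$. In that case $\norm{\tilde f'}^2_{L^2}=\norm{\nabla^d_1f}^2_{\ell^2}$ exactly, and on an edge with endpoint values $a,b$ one has
\[
\int_{\e}\tilde f^{\,q}\,dx=\frac{a^{q+1}-b^{q+1}}{(q+1)(a-b)}.
\]
For $d\ge4$ I take $f=\delta_0$, which gives
\[
Q_{2^*,\G^d_1}(\tilde\delta_0)=\frac{2d}{2^*+1}(2d)^{-2^*/2}.
\]
The required inequality then reads $(S_{\R^d}/2d)^{2^*/2}>(2^*+1)/2$, to be checked from \eqref{eq:sharp-euclidean-sobolev-constant} with elementary Gamma-function bounds as in Lemma \ref{lem:strict_comparison_sobolev_constants}. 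Asymptotically $S_{\R^d}/(2d)\to\pi e/4>3/2$, so it should hold for large $d$; small $d$ need case checks. If $\delta_0$ is too crude in some dimension, in particular $d=3$, I switch to $f=a^{\abs{x}_1}$. Its gradient term is \eqref{eq:energy-exponential-test}, and its $L^{2^*}$ norm on $\G^d_1$ separates per edge into geometric sums via the edge formula above; I then optimize $a$ numerically and certify with rational bounds. This numerical verification, especially for $d=3,4,5$, is the step I expect to be the main obstacle.

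\emph{Step (iii): compactness and Euler--Lagrange.} By Step (i) there exist $x_n\in\G^d_1$ with $u_n(x_n)\ge\eta>0$. I translate by lattice vectors $\vv_n\in\Z^d$ so that the points $x_n-\vv_n$ stay in a fixed compact set. Bounded sequences in $D^{1,2}(\G^d_1)$ are locally uniformly Hölder, by \eqref{eqlinfty} and edgewise $H^1$ bounds, so a weak limit $w$ exists, converges locally uniformly, and satisfies $w\not\equiv0$. The splitting argument with Br\'ezis--Lieb and the strict concavity of $t\mapsto t^{2/2^*}$, as in the proof of Lemma \ref{lem:strict_discrete_sobolev_constant}, forces $\norm{w}_{L^{2^*}}=1$, and weak lower semicontinuity then shows that $w$ is a minimizer. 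The Lagrange multiplier rule gives $-w''=\lambda w^{2^*-1}$ on each edge together with the Kirchhoff conditions at vertices, where $\lambda=S_{2^*,\G^d_1}$. Positivity follows from $w\ge0$, $w\not\equiv0$, edgewise ODE uniqueness (a zero of $w$ in an edge interior is a minimum, so $w'=0$ there and $w\equiv0$ on that edge), and the Kirchhoff condition at vertices, which propagates zeros through the connected grid. Finally $u:=\lambda^{1/(2^*-2)}w$ is the required solution.
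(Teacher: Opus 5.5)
Your Steps (i) and (iii) follow the paper's argument and are fine: nonvanishing via $\mathcal A$ and the factor $d$ in the $L^{2^*}$ mass, lattice translations, Br\'ezis--Lieb splitting, the Kirchhoff Euler--Lagrange system, and positivity. The gap is Step (ii) in dimension $d=3$. There the target is $S_{2^*,\G^3_1}<3^{-1/3}S_{\R^3}=3^{2/3}(\pi/2)^{4/3}\approx 3.80$, and neither of your test functions reaches it.

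The spike $\tilde\delta_0$ has quotient $6(7/6)^{1/3}\approx 6.32$. For $f=a^{\abs{x}_1}$, every edge has endpoint values $a^k,a^{k+1}$, so separation of variables gives exactly
\[
\norm{\tilde f'}^2_{L^2(\G^3_1)}=\frac{6(1-a)}{1+a}\Bigl(\frac{1+a^2}{1-a^2}\Bigr)^2,
\qquad
\norm{\tilde f}^6_{L^6(\G^3_1)}=\frac{1-a^7}{7(1-a)}\cdot\frac{6}{1-a^6}\Bigl(\frac{1+a^6}{1-a^6}\Bigr)^2 .
\]
The resulting quotient behaves as follows:
\begin{itemize}
\item at $a=0$ it equals $6.32$;
\item as $a\to1$ it tends to $3^{-1/3}\cdot 9\approx 6.24$, since the Euclidean quotient of $e^{-\abs{x}_1}$ is $9$;
\item its minimum, near $a\approx 0.35$, is about $4.30$.
\end{itemize}
So no choice of $a$ works, and "optimize numerically and certify" cannot succeed in $d=3$. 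The obstruction is structural. A spike does not gain the factor $d$ in $L^{2^*}$ mass on $\G^d_1$: $\tilde\delta_0$ carries $2d/(2^*+1)$ instead of $d$. Slowly varying profiles, on the other hand, only approach $d^{-2/2^*}$ times their Euclidean Sobolev quotient, and for exponential profiles that quotient is far from $S_{\R^3}$.

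What is missing in $d=3$ is a test function with the polynomial $\abs{\vv}^{-1}$ tail of the Aubin--Talenti profile. The paper takes $f(\vv)=(1+6\abs{\vv}^2)^{-1/2}$ and proceeds in three steps:
\begin{itemize}
\item since $\tilde f\notin L^2(\G^3_1)$, it first justifies a cutoff approximation in $L^6$ and in the Dirichlet norm;
\item it bounds the energy line by line via $\abs{f_{m,n}(k+1)-f_{m,n}(k)}^2\le\int_k^{k+1}\abs{f_{m,n}'}^2$, with a sharper treatment of the central line;
\item it bounds the $L^6$ mass from below using only $36$ edges near the origin.
\end{itemize}
This gives a quotient $<15/4$. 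The thin margin against $3.80$ shows how delicate this case is.

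For $d\ge4$ your plan is sound. Your criterion $(S_{\R^d}/2d)^{2^*/2}>(2^*+1)/2$ fails for $d=4,5$, but numerically it holds for every $d\ge6$ (narrowly at $d=6$, about $2.03>2$). Your exponential fallback rescues $d=4,5$:
\begin{itemize}
\item for $d=4$, about $4.77$ at $a=1/5$, versus $4^{-1/2}S_{\R^4}\approx5.13$;
\item for $d=5$, about $4.81$ at $a\approx0.15$, versus $5^{-3/5}S_{\R^5}\approx5.64$.
\end{itemize}
The paper instead uses the two-layer profile equal to $1$ at the origin, $a$ on the first shell and $0$ beyond, with $a=1/6$, $1/8$ and $1/(2d)$ respectively.
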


\begin{theorem}\label{thmg2}
If \(d\geq 3\), then there exists $\delta=\delta(d)>0$ such that, for every $q\in (2^*-\delta,2^*)$,
\[
K_{q,\G_1^d}>d^{\frac{(d-2)(q-2)}{4}}K_{q,\mathbb R^d},
\]
where $Q_{q,\G_1^d}$ and $K_{q,\G_1^d}$ are defined as in \cite[Section~2]{Do2}, and \(K_{q,\G_1^d}\) is attained by a positive function \(u\in H^{1}(\G_1^d)\), which satisfies 
\[
Q_{q,\G_1^d}(u)=K_{q,\G_1^d}
\]
and, for some $\lambda>0$, solves
\[
\begin{cases}
-u''+\lambda u =u^{q-1}
& \text{on every edge of }\mathcal{G}^d_1,\\
\displaystyle\sum_{\e\succ \mathrm{v}}\frac{du}{dx_\e}(\mathrm{v})=0
& \text{for every vertex }\mathrm{v}\text{ of }\mathcal{G}^d_1,
\end{cases}
\]
\end{theorem}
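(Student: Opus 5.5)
The plan is to transpose the proof of Theorem~\ref{th7} to $\G^d_1$, with Theorem~\ref{thmg1} playing the role of Theorem~\ref{th6}. A metric-grid analogue of Theorem~\ref{th5} replaces Theorem~\ref{th5}. The constant $d^{\frac{(d-2)(q-2)}{4}}$ is exactly what makes the comparison continuous at $2^*$: at $q=2^*$ we have $(q-2)=\frac{4}{d-2}$, so the factor equals $d$, matching the strict inequality $K_{2^*,\G^d_1}>dK_{2^*,\R^d}$ of Theorem~\ref{thmg1}.

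\textbf{Step 1 (a compactly supported competitor).} By Theorem~\ref{thmg1} there is $\nu>0$ with $dK_{2^*,\R^d}+3\nu<K_{2^*,\G^d_1}$. Since $D^{1,2}(\G^d_1)$ is the completion of $H^1(\G^d_1)$, compactly supported $H^1$ functions are dense in it. The critical quotient $Q_{2^*,\G^d_1}$ is continuous in $D^{1,2}(\G^d_1)$, because $\norm{u}_{L^{2^*}(\G^d_1)}\le C\norm{u'}_{L^2(\G^d_1)}$ by \eqref{eqlinfty} together with the vertex Sobolev bound \eqref{eqsobo} and Lemma~\ref{lemlp}. Hence we can pick $u\in H^1(\G^d_1)$ with compact support and
\[
Q_{2^*,\G^d_1}(u)>dK_{2^*,\R^d}+3\nu .
\]

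\textbf{Step 2 (continuity as $q\nearrow 2^*$).} In the proof of Theorem~\ref{th7} it is shown that $K_{q,\R^d}\to K_{2^*,\R^d}$, so $d^{\frac{(d-2)(q-2)}{4}}K_{q,\R^d}\to dK_{2^*,\R^d}$. Since $u$ has compact support, all its $L^r$ norms are finite and continuous in $r$. The exponent of $\norm{u}_{L^2}$ in $Q_{q,\G^d_1}$, namely $d+(2-d)\frac q2$, tends to $0$, and the gradient exponent $(\frac q2-1)d$ tends to $2^*$. Therefore $Q_{q,\G^d_1}(u)\to Q_{2^*,\G^d_1}(u)$. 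This yields $\delta=\delta(d)>0$ such that for every $q\in(2^*-\delta,2^*)$,
\[
d^{\frac{(d-2)(q-2)}{4}}K_{q,\R^d}<Q_{q,\G^d_1}(u)\le K_{q,\G^d_1},
\]
which is the claimed strict inequality.

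\textbf{Step 3 (attainment), the main obstacle.} I would first invoke Dovetta's criterion \cite[Theorem~2.5 and Proposition~2.6]{Do2}: a strict inequality $K_{q,\G^d_1}>d^{\frac{(d-2)(q-2)}{4}}K_{q,\R^d}$ forces maximizing sequences to be relatively compact up to translations. If that result is not formulated in exactly this form, I would redo the proof of Theorem~\ref{th5}(ii) on $\G^d_1$, as follows.
\begin{itemize}
\item Normalize a maximizing sequence $\{u_n\}$ in $L^2(\G^d_1)$ and translate it so that its sup-norm is attained in a fixed cell.
\item If the weak limit has mass in $(0,1)$, strict subadditivity gives a contradiction, as in \cite{Do2}.
\item If the weak limit vanishes, then $\norm{u_n}_{L^\infty}\to0$, which forces $\norm{u_n'}_{L^2}\to0$. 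Rescale to $\G^d_{\varepsilon_n}$ with $\varepsilon_n=\norm{u_n'}_{L^2}$ and extend by $\mathcal{A}$.
\item The estimates of \cite[Lemmas~4.1, 4.4, Eq.~(59)]{Do2}, as used in \eqref{eqsec53}--\eqref{eqsec54}, show that $\limsup Q_{q,\G^d_1}(u_n)\le d^{\frac{(d-2)(q-2)}{4}}K_{q,\R^d}$. This contradicts Step~2.
\end{itemize}
Thus the mass of the weak limit is $1$ and the convergence is strong, giving a maximizer $w_q$. The delicate part is tracking the factor $d^{\frac{(d-2)(q-2)}{4}}$ through the $L^2$, $L^q$ and gradient comparisons between $\G^d_\varepsilon$ and $\R^d$.

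\textbf{Step 4 (equation and positivity).} Replace $w_q$ by $|w_q|$, which is still a maximizer since $Q_{q,\G^d_1}(|w_q|)=Q_{q,\G^d_1}(w_q)$. The Euler--Lagrange equation of the quotient then gives, on each edge,
\[
-w_q''+\lambda_q w_q=c_q\,w_q^{q-1},
\]
with $\lambda_q,c_q>0$ explicit in terms of the norms of $w_q$, as in the proof of Theorem~\ref{th7}, together with the Kirchhoff conditions at every vertex. Strict positivity follows from ODE uniqueness on each edge combined with the Kirchhoff conditions and the connectedness of $\G^d_1$. Finally, setting $u:=c_q^{1/(q-2)}w_q$ normalizes the nonlinearity coefficient to $1$, so $u$ solves the stated system and satisfies $Q_{q,\G^d_1}(u)=K_{q,\G^d_1}$.
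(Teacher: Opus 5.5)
Your proposal is correct and follows the paper's own route. The strict critical comparison $K_{2^*,\G^d_1}>dK_{2^*,\R^d}$ from Theorem~\ref{thmg1} is pushed to $q\in(2^*-\delta,2^*)$ by the continuity argument of Theorem~\ref{th7}; attainment then comes from rerunning the proof of \cite[Proposition~2.6]{Do2} (equivalently, of Theorem~\ref{th5}(ii)) for every $q\in(2,2^*)$; and the equation and positivity follow from the Lagrange multiplier rule together with the Kirchhoff conditions.

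One caution: do not cite \cite[Theorem~2.5]{Do2} for the compactness criterion. The equality it asserts for $q\in[2+\frac4d,2^*)$ is exactly what your Step~2 contradicts near $2^*$, and the paper traces this to a gap in \cite[Lemma~7.2]{Do2]}. Only your fallback, the rerun of the proof of \cite[Proposition~2.6]{Do2}, should be used.
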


Our estimate
\[
K_{2^*,\mathcal G_1^d}>dK_{2^*,\mathbb R^d},
\quad d\geq 3,
\]
shows that the equality established in \cite[Theorem~2.5]{Do2} for
$q\in[2+\frac4d,2^*)$ does not extend to the Sobolev critical case
$q=2^*$. This failure is related to a gap in the proof of
\cite[Lemma~7.2]{Do2}: in deriving \cite[Eq.~(72)]{Do2}, one obtains
\[
\|v'\|_{L^2(\mathcal G_1^d)}^{2-\frac d2(q-2)}
\geq
K_{q,\mathcal G_1^d}
\|v\|_{L^2(\mathcal G_1^d)}^{d+\frac{2-d}{2}q}.
\]
Since the exponent $2-\frac d2(q-2)$ is negative for
$q>2+\frac4d$, taking its reciprocal power must reverse the direction
of the inequality. Hence, \cite[Eq.~(72)]{Do2} should have the opposite
inequality sign, and the claimed contradiction does not follow.

\begin{proof}[Proofs of Theorems \ref{thmg1} and \ref{thmg2}]
    For simplicity, we write $S_{\G^d_1}:=S_{2^*,\G^d_1}$. We first assume that
\begin{equation}\label{eq:strict-metric-sobolev-comparison}
S_{\mathcal G_1^d}
<
d^{-\frac{d-2}{d}}S_{\mathbb R^d}
\quad\text{for every }d\geq 3.
\end{equation}
Under this assumption, we prove Theorems \ref{thmg1} and \ref{thmg2}.

Let $\{u_n\} \subset H^1(\G^d_1)$ be a  minimizing sequence for $S_{\G^d_1}$. We may assume that  $u_n$ is nonnegative and $\|u_n\|_{L^{2^*}(\G^d_1)}=1$ for all $n \in \mathbb{N}^{+}$.  Moreover, $u_n$ satisfies
     \begin{equation}\label{eqapppf2*1}
         \norm{u_n'}_{L^2(\G^d_1)}^2\to S_{\G^d_1}>0 \quad \text{as }n \to +\infty.
     \end{equation}
    We now prove that $\norm{u_n}_{L^{\infty}(\G^d_1)}\not \to 0$ as $n \to +\infty$. We argue by contradiction and suppose that $\norm{u_n}_{L^{\infty}(\G^d_1)}  \to 0$ as $n \to +\infty$.
       
       By  \cite[Lemma 4.4]{Do2}, we obtain
\begin{equation}\label{eqappgradient}
\|\nabla \mathcal{A}u_n\|_{L^2(\R^d)}^2\leq \norm{u_n'}_{L^2(\G^d_1)}^2,
\end{equation}
where $\mathcal{A}$ is the extension operator from $H^1 (\Gd)$ to $H^1(\R^d)$ defined as in \cite[Section 2]{Do2}. Then, by Lemma \ref{lemlp}, we have
\[
\left|\|u_n\|_{L^{2^*}(\G^d_1)}^{2^*}-d\sum_{\vv \in \V^d_1}\abs{\hat{u}_n}^{2^*}\right|\leq C\|u_n\|_{L^{2\cdot2^*-2}(\G^d_1)}^{2^*-1} \|u_n'\|_{L^2(\G^d_1)},
\]
\[
\left|\|\tilde{\hat{u}}_n\|_{L^{2^*}(\G^d_1)}^{2^*}-d\sum_{\vv \in \V^d_1}\abs{\hat{u}_n}^{2^*}\right|\leq C\|\tilde{\hat{u}}_n\|_{L^{2\cdot2^*-2}(\G^d_1)}^{2^*-1} \|\tilde{u}_n'\|_{L^2(\G^d_1)}.
\]    
Then, by   \cite[Eq. (26)]{Do2} and the Sobolev inequality (see \cite[Eq. (16)]{Do2}),
\begin{equation}\label{eqappthun}
\begin{aligned}
    \left|\|\tilde{\hat{u}}_n\|_{L^{2^*}(\G^d_1)}^{2^*}-\|u_n\|_{L^{2^*}(\G^d_1)}^{2^*}\right|
    &\leq C\|u_n\|_{L^{2\cdot2^*-2}(\G^d_1)}^{2^*-1} \|u_n'\|_{L^2(\G^d_1)}+ C\|\tilde{\hat{u}}_n\|_{L^{2\cdot2^*-2}(\G^d_1)}^{2^*-1} \|\tilde{\hat{u}}_n'\|_{L^2(\G^d_1)}\\
    &\leq C\norm{u_n}^{\frac{2^*}{2}-1}_{L^{\infty}(\G^d_1)}\left( \norm{u_n}^{\frac{2^*}{2}}_{L^{2^*}(\G^d_1)}\|u_n'\|_{L^{2}(\G^d_1)}+\norm{\tilde{\hat{u}}_n}^{\frac{2^*}{2}}_{L^{2^*}(\G^d_1)}\|\tilde{\hat{u}}_n'\|_{L^2(\G^d_1)}\right)\\
    &\leq C\norm{u_n}^{\frac{2^*}{2}-1}_{L^{\infty}(\G^d_1)}\left( \|u_n'\|_{L^{2}(\G^d_1)}^{\frac{2^*}{2}+1}+\|\tilde{\hat{u}}_n'\|_{L^2(\G^d_1)}^{\frac{2^*}{2}+1}\right)\\
    &\leq C\norm{u_n}^{\frac{2^*}{2}-1}_{L^{\infty}(\G^d_1)}\|u_n'\|_{L^2(\G^d_1)}^{\frac{2^*}{2}+1}.
\end{aligned}
\end{equation}
By repeating the arguments in the proof of \cite[Eq. (59)]{Do2} and using  \eqref{eqappgradient}, \cite[Eq. (26)]{Do2} and \eqref{eqsobo}, we have
    \[
    \begin{aligned}
    \left|{\|\tilde{\hat{u}}_n\|_{L^{2^*}(\G^d_1)}^{2^*}}-d\norm{\mathcal{A}u_n}^{2^*}_{L^{2^*}(\R^d)}\right|
    &\leq C\norm{\mathcal{A}u_n}^{2^*-1}_{L^{2\cdot2^*-2}(\R^d)}\norm{\nabla \mathcal{A}u_n}_{L^2(\R^d)}\\
    &\leq C \| \hat{u}_n\|_{\ell^{2 \cdot 2^*-2}(\V^d_1)}^{2^*-1}\norm{\tilde{\hat{u}}_n'}_{L^2(\G^d_1)}\\
       &\leq C \| \hat{u}_n\|_{\ell^{2 \cdot 2^*-2}(\V^d_1)}^{2^*-1}\norm{u_n'}_{L^2(\G^d_1)}\\
       &\leq C
       \norm{u_n}^{\frac{2^*}{2}-1}_{L^{\infty}(\G^d_1)}  \| u_n\|_{\ell^{2^*}(\V^d_1)}^{\frac{2^*}{2}}\|u_n'\|_{L^{2}(\G^d_1)}\\
       &\leq C\norm{u_n}^{\frac{2^*}{2}-1}_{L^{\infty}(\G^d_1)} \|u_n'\|_{L^{2}(\G^d_1)}^{\frac{2^*}{2}+1}.
    \end{aligned}
    \]
Therefore, by \eqref{eqapppf2*1}, \eqref{eqappgradient}, \eqref{eqappthun} and the assumption that $\norm{u_n}_{L^{\infty}(\G^d_1)}  \to 0$ as $n \to +\infty$, we obtain
\begin{equation}\label{eqappnormaun2star}
   \norm{\mathcal{A}u_n}^{2^*}_{L^{2^*}(\R^d)}\to\frac{1}{d}\quad \text{as }n\to +\infty. 
\end{equation}
Hence, by \eqref{eqapppf2*1}, \eqref{eqappgradient}, and the Sobolev inequality on $\R^d$, we obtain
\[
S_{\G^d_1}
=\norm{u_n'}^{2}_{L^2(\G^d_1)}+o(1)\geq \|\nabla \mathcal{A}u_n\|_{L^2(\R^d)}^{2}+o(1)\geq S_{\R^d}\|\mathcal{A}u_n\|_{L^{2^*}(\R^d)}^{2}+o(1).
\]
Thus, it follows from \eqref{eqappnormaun2star} that
\[
S_{\G^d_1}\geq d^{-2/2^*}S_{\R^d}
=d^{-\frac{d-2}{d}}S_{\R^d},
\]
which contradicts \eqref{eq:strict-metric-sobolev-comparison}. We have proved 
\[
\limsup_{n \to +\infty}\norm{u_n}_{L^\infty(\G^d_1)}>0.
\]
By  \eqref{eqappd12vanish}, up to a subsequence,  we can choose $\vv_n\in\V^d_1$ and $\eta>0$ such that
    $\max_{\operatorname{dist}(x,\vv_n)\leq 1}u_n(x)\geq\eta$, and define $w_n(x):=u_n(x+\vv_n)$. We have that 
    \[
  \|w_n'\|_{L^2(\G^d_1)}=\|u'_n\|_{L^2(\G^d_1)},\quad  \|w_n\|_{L^{2^*}(\G^d_1)}=\|u_n\|_{L^{2^*}(\G^d_1)}=1, \quad  \max_{\operatorname{dist}(x,0)\leq1}w_n(x)\geq\eta..
    \]
   For every $R>0$, H\"older's inequality and
$\|w_n\|_{L^{2^*}(\G_1^d)}=1$ imply that $\{w_n\}$ is bounded
in $H^1(B_R(0))$. Hence, by the compact embedding
\[
H^1(B_R(0))\hookrightarrow C(\overline{B_R(0)}),
\]
and a diagonal argument, after passing to a further subsequence,
there exists $w\in D^{1,2}(\G_1^d)$ such that
\[
w_n\rightharpoonup w
\quad\text{in }D^{1,2}(\G_1^d)
\]
and
\[
w_n\to w
\quad\text{locally uniformly on }\G_1^d.
\]
In particular, $w\not\equiv0$. By repeating the argument in the proof of Lemma \ref{lem:strict_discrete_sobolev_constant}, we complete the proof of Theorem \ref{thmg1}. 
    
    Noting that the argument in the proof of \cite[Proposition 2.6]{Do2} holds for every $d \geq 3$ and every $q \in (2,2^*)$, by \eqref{eq:strict-metric-sobolev-comparison} and repeating the argument in the proof of Theorem \ref{th7}, we complete the proof of Theorem \ref{thmg2}.

We then prove \eqref{eq:strict-metric-sobolev-comparison}. We first consider $d=3$. In this case, $2^*=6$. Define $f$ on
$\V_1^3$ by
\begin{equation}\label{eq:test-function-d3}
f(\vv)
:=
\frac{1}{\sqrt{1+6|\vv|^2}},
\quad
\vv\in\V_1^3,
\end{equation}
where $|\cdot|$ denotes the Euclidean norm, and let
$u:=\widetilde f$ be the affine extension of $f$ to
$\mathcal G_1^3$.

Since
\[
f(\vv)=O(|\vv|^{-1})
\]
and
\[
|f(\vv+e_i)-f(\vv)|
=
O(|\vv|^{-2}),
\quad i=1,2,3,
\]
we have
\[
u\in L^6(\mathcal G_1^3),
\qquad
u'\in L^2(\mathcal G_1^3).
\]

Although $u\notin L^2(\mathcal G_1^3)$, it can be approximated in
$L^6(\mathcal G_1^3)$ and in the Dirichlet norm by compactly
supported functions. Indeed, let $\eta_R$ be a Lipschitz cutoff
satisfying
\[
0\leq\eta_R\leq1,
\qquad
\eta_R=1\ \text{on }B_R,
\qquad
\eta_R=0\ \text{on }\mathcal G_1^3\setminus B_{2R},
\qquad
|\eta_R'|\leq\frac{C}{R},
\]
where $B_R$ denotes the ball centered at the origin, and set
\[
u_R:=\eta_Ru.
\]
Then $u_R\in H^1(\mathcal G_1^3)$ and
\[
\|u_R-u\|_{L^6(\mathcal G_1^3)}
+
\|u_R'-u'\|_{L^2(\mathcal G_1^3)}
\longrightarrow0.
\]
Indeed,
\[
\|u_R'-u'\|_2^2
\leq
2\int_{\mathcal G_1^3\setminus B_R}|u'|^2
+
\frac{C}{R^2}
\int_{B_{2R}\setminus B_R}|u|^2.
\]
The first term tends to zero since $u'\in L^2(\mathcal G_1^3)$.
Moreover, the number of edges in a shell of radius $k$ is of order
$k^2$, whereas $u^2=O(k^{-2})$, and hence
\[
\int_{B_{2R}\setminus B_R}|u|^2=O(R).
\]
It is therefore enough to estimate the homogeneous Sobolev quotient
of $u$.

By symmetry,
\begin{equation}\label{eqappdiri}
\|u'\|_{L^2(\mathcal G_1^3)}^2
=
3\sum_{(m,n)\in\mathbb Z^2}
\sum_{k\in\mathbb Z}
\left[
\frac{1}
{\sqrt{1+6((k+1)^2+m^2+n^2)}}
-
\frac{1}
{\sqrt{1+6(k^2+m^2+n^2)}}
\right]^2.
\end{equation}
For $(m,n)\in\mathbb Z^2$, set
\[
f_{m,n}(t)
:=
\frac{1}{\sqrt{1+6(t^2+m^2+n^2)}}.
\]
By the Cauchy--Schwarz inequality,
\[
|f_{m,n}(k+1)-f_{m,n}(k)|^2
\leq
\int_k^{k+1}|f_{m,n}'(t)|^2\,dt.
\]
Consequently,
\[
\sum_{k\in\mathbb Z}
|f_{m,n}(k+1)-f_{m,n}(k)|^2
\leq
\int_{\mathbb R}|f_{m,n}'(t)|^2\,dt
=
\frac{\pi\sqrt6}
{8\bigl(1+6(m^2+n^2)\bigr)^{3/2}}.
\]

For the central line $(m,n)=(0,0)$, we use a sharper estimate. Set
\begin{equation}\label{eqappe0def}  
a_k:=\frac{1}{\sqrt{1+6k^2}},
\qquad
e_0:=2\sum_{k=0}^{\infty}(a_k-a_{k+1})^2.
\end{equation}
For $k\geq3$,
\[
(a_k-a_{k+1})^2
=\left(\int_k^{k+1}f_{0,0}'(t)\,dt\right)^2\leq \int_k^{k+1}\abs{f_{0,0}'(t)}^2\,dt=
\int_k^{k+1}
\frac{36t^2}{(1+6t^2)^3}\,dt
<
\int_k^{k+1}\frac{dt}{6t^4},
\]
and hence
\[
2\sum_{k=3}^{\infty}(a_k-a_{k+1})^2
<
2\int_3^\infty\frac{dt}{6t^4}
=
\frac1{243}.
\]
Using
\[
\frac{17}{45}<\frac1{\sqrt7}<\frac8{21},
\qquad
\frac2{15}<\frac1{\sqrt{55}},
\]
together with
\[
a_0=1,
\qquad
a_1=\frac1{\sqrt7},
\qquad
a_2=\frac15,
\qquad
a_3=\frac1{\sqrt{55}},
\]
we obtain
\begin{equation}\label{eqappe0}
\begin{aligned}
e_0
<
2\left[
\left(\frac{28}{45}\right)^2
+
\left(\frac{19}{105}\right)^2
+
\left(\frac1{15}\right)^2
\right]
+\frac1{243}=
\frac{253861}{297675}
<
\frac{171}{200}.
\end{aligned}
\end{equation}

We next estimate
\begin{equation}\label{eqappsigmadef}
\Sigma
:=
\sum_{(m,n)\in\mathbb Z^2}
\frac1{\bigl(1+6(m^2+n^2)\bigr)^{3/2}}.
\end{equation}
For $k\geq1$, set
\[
H_k
:=
\sum_{\max\{|m|,|n|\}=k}
\frac1{\bigl(1+6(m^2+n^2)\bigr)^{3/2}}.
\]
Then
\begin{equation}\label{eqappsigma}
\Sigma-1=\sum_{k=1}^{\infty}H_k
\end{equation}
and
\[
H_k
=
\frac4{(1+6k^2)^{3/2}}
+
8\sum_{j=1}^{k-1}
\frac1{(1+6(k^2+j^2))^{3/2}}
+
\frac4{(1+12k^2)^{3/2}}.
\]

For the first four shells, direct rational estimates give
\[
\begin{aligned}
H_1
&=
\frac4{7^{3/2}}+\frac4{13^{3/2}}<
\frac{4\cdot53995+4\cdot21335}{10^6}
<
\frac{3014}{10^4},
\\[1mm]
H_2
&=
\frac4{25^{3/2}}
+\frac8{31^{3/2}}
+\frac4{49^{3/2}}\\
&<
\frac{4\cdot8000+8\cdot5794+4\cdot2916}{10^6}
<
\frac{901}{10^4},
\\[1mm]
H_3
&=
\frac4{55^{3/2}}
+\frac8{61^{3/2}}
+\frac8{79^{3/2}}
+\frac4{109^{3/2}}\\
&<
\frac{
4\cdot2452+8\cdot2099+8\cdot1425+4\cdot879
}{10^6}
<
\frac{416}{10^4},
\\[1mm]
H_4
&=
\frac4{97^{3/2}}
+\frac8{103^{3/2}}
+\frac8{121^{3/2}}
+\frac8{151^{3/2}}
+\frac4{193^{3/2}}\\
&<
\frac{
4\cdot1047+8\cdot957+8\cdot752
+8\cdot539+4\cdot373
}{10^6}
<
\frac{237}{10^4}.
\end{aligned}
\]
In particular,
\[
\sum_{k=1}^4H_k<\frac{571}{1250}.
\]

For $k\geq5$, the map
\[
t\longmapsto
\frac1{\bigl(6(k^2+t^2)\bigr)^{3/2}}
\]
is decreasing on $[0,+\infty)$. Therefore,
\[
\begin{aligned}
H_k
&<
\frac4{(6k^2)^{3/2}}
+
8\int_0^k
\frac{dt}{\bigl(6(k^2+t^2)\bigr)^{3/2}}
+
\frac4{(12k^2)^{3/2}}\\
&=
\frac{2}{3\sqrt3}\frac1{k^2}
+
\left(
\frac{2}{3\sqrt6}
+
\frac1{6\sqrt3}
\right)\frac1{k^3}\\
&<
\frac25\frac1{k^2}
+
\frac{17}{45}\frac1{k^3},
\end{aligned}
\]
where we used
\[
\sqrt3>\frac53,
\qquad
\sqrt6>\frac{12}{5}.
\]
Moreover,
\[
\sum_{k=5}^{\infty}\frac1{k^2}
\leq
\frac1{25}+\int_5^\infty\frac{dt}{t^2}
=
\frac6{25}
\]
and
\[
\sum_{k=5}^{\infty}\frac1{k^3}
\leq
\frac1{125}+\int_5^\infty\frac{dt}{t^3}
=
\frac7{250}.
\]
It follows that
\[
\sum_{k=5}^{\infty}H_k
<
\frac25\frac6{25}
+
\frac{17}{45}\frac7{250}
=
\frac{1199}{11250}.
\]
Consequently, by \eqref{eqappsigma},
\[
\Sigma-1
<
\frac{571}{1250}
+
\frac{1199}{11250}
=
\frac{3169}{5625}
<
\frac{141}{250}.
\]

Since
\[
\frac{\pi\sqrt6}{8}
<
\frac18\frac{22}{7}\frac{49}{20}
=
\frac{539}{560},
\]
by \eqref{eqappdiri}, \eqref{eqappe0def}, \eqref{eqappe0} and \eqref{eqappsigmadef}, we obtain
\begin{equation}\label{eq:energy-upper-d3}
\|u'\|_{L^2(\mathcal G_1^3)}^2
\leq
3\left(
e_0+\frac{\pi\sqrt6}{8}(\Sigma-1)
\right)<
3\left(
\frac{171}{200}
+
\frac{539}{560}\frac{141}{250}
\right)
=
\frac{83871}{20000}.
\end{equation}

We now estimate the $L^6$ norm from below. For $a,b>0$, set
\[
I(\alpha,\beta)
:=
\int_0^1\bigl((1-t)\alpha+t\beta\bigr)^6\,dt
=
{\frac{\alpha^7-\beta^7}{7(\alpha-\beta)}}.
\]
We retain only the six edges joining the origin to vertices
satisfying $|\vv|^2=1$, the twenty-four edges joining vertices
satisfying $|\vv|^2=1$ to vertices satisfying $|\vv|^2=2$, and
the six edges joining vertices satisfying $|\vv|^2=1$ to vertices
satisfying $|\vv|^2=4$ along the coordinate axes.

Since the corresponding endpoint values are
\[
1,
\qquad
\frac1{\sqrt7},
\qquad
\frac1{\sqrt{13}},
\qquad
\frac15,
\]
the monotonicity of $I$ in each variable gives
\begin{equation}\label{eq:mass-lower-d3}
\begin{aligned}
\|u\|_{L^6(\mathcal G_1^3)}^6
&\geq
6I\left(1,\frac1{\sqrt7}\right)
+
24I\left(\frac1{\sqrt7},\frac1{\sqrt{13}}\right)
+
6I\left(\frac1{\sqrt7},\frac15\right)\\
&>
6I\left(1,\frac38\right)
+
24I\left(\frac38,\frac{11}{40}\right)
+
6I\left(\frac38,\frac15\right)\\
&=
\frac{10083370077}{7168000000}
>
\frac75.
\end{aligned}
\end{equation}

It follows from \eqref{eq:energy-upper-d3} and
\eqref{eq:mass-lower-d3} that
\[
\frac{\|u'\|_{L^2(\mathcal G_1^3)}^2}
{\|u\|_{L^6(\mathcal G_1^3)}^2}
<
\frac{83871}{20000}
\left(\frac57\right)^{1/3}
<
\frac{15}{4}.
\]
Indeed, the last inequality follows by cubing, since
\[
\left(\frac{15}{4}\right)^3
-
\left(\frac{83871}{20000}\right)^3\frac57
=
\frac{647480614689}{11200000000000}
>0.
\]

Finally, by \eqref{eq:sharp-euclidean-sobolev-constant},
\[
3^{-1/3}S_{\mathbb R^3}
=
3^{2/3}\left(\frac{\pi}{2}\right)^{4/3}.
\]
Using $\pi>157/50$, we have
\[
\left(3^{-1/3}S_{\mathbb R^3}\right)^3
=
\frac{9\pi^4}{16}
>
\frac9{16}\left(\frac{157}{50}\right)^4
>
\left(\frac{15}{4}\right)^3,
\]
where the last inequality follows from
\[
\frac9{16}\left(\frac{157}{50}\right)^4
-
\left(\frac{15}{4}\right)^3
=
\frac{194721309}{100000000}
>0.
\]
Therefore,
\[
\frac{\|u'\|_{L^2(\mathcal G_1^3)}^2}
{\|u\|_{L^6(\mathcal G_1^3)}^2}
<
\frac{15}{4}
<
3^{-1/3}S_{\mathbb R^3}.
\]
The cutoff approximation above then gives
\[
S_{\mathcal G_1^3}
<
3^{-1/3}S_{\mathbb R^3}.
\]

We next consider $d \geq 4$.
For $a\in[0,1)$, define $u_a$ at the vertices of
$\mathcal G_1^d$ by
\[
f_a(\vv)
:=
\begin{cases}
1, & \vv=0,\\
a, & |\vv|_1=1,\\
0, & |\vv|_1\geq2,
\end{cases}
\]
and define $u_a:=\tilde{f}_a$, which extends $f_a$ linearly on every edge of $\mathcal G_1^d$.
Clearly,
\[
u_a\in H^1(\mathcal G_1^d).
\]

There are $2d$ edges joining the origin to the vertices satisfying
$|\vv|_1=1$. Moreover, every such vertex is incident to $2d-1$
edges whose other endpoint belongs to the set
$\{\vv\in \V^d_1:|\vv|_1=2\}$. Therefore,
\begin{equation}\label{eq:two-layer-test-energy}
\begin{aligned}
\|u_a'\|_{L^2(\mathcal G_1^d)}^2=
2d(1-a)^2+2d(2d-1)a^2=2d\bigl(1-2a+2da^2\bigr).
\end{aligned}
\end{equation}
 On an edge whose endpoint values are $\alpha,\beta\geq0$,
the linear interpolation satisfies
\[
\int_0^1\bigl((1-t)\alpha+t\beta\bigr)^{2^*}\,dt
=
\frac{\alpha^{2^*+1}-\beta^{2^*+1}}
{(2^*+1)(\alpha-\beta)}.
\]
Consequently,
\begin{equation}\label{eq:two-layer-test-mass}
\|u_a\|_{L^{2^*}(\mathcal G_1^d)}^{2^*}
=
\frac{2d}{{2^*}+1}
\frac{1-a^{{2^*}+1}}{1-a}
+
\frac{2d(2d-1)}{{2^*}+1}a^{2^*}
=
\frac{2d}{{2^*}+1}
\left(
\frac{1-a^{{2^*}+1}}{1-a}
+(2d-1)a^{2^*}
\right).
\end{equation}

We now consider $d=4$. In this case $2^*=4$. Taking
\[
a=\frac16,
\]
we obtain from \eqref{eq:two-layer-test-energy}
\[
\begin{aligned}
\|u_{1/6}'\|_{L^2(\mathcal G_1^4)}^2=
8\left[
\left(\frac56\right)^2
+7\left(\frac16\right)^2
\right]
=
\frac{64}{9}.
\end{aligned}
\]
Moreover, by \eqref{eq:two-layer-test-mass},
\[
\|u_{1/6}\|_{L^4(\mathcal G_1^4)}^4
=
\frac85
\left(
1+\frac16+\frac1{6^2}
+\frac1{6^3}
+\frac8{6^4}
\right)
=
\frac{781}{405}.
\]
Therefore,
\begin{equation}\label{eq:two-layer-test-d4}
\left(
\frac{
\|u_{1/6}'\|_{L^2(\mathcal G_1^4)}^2
}{
\|u_{1/6}\|_{L^4(\mathcal G_1^4)}^2
}
\right)^2
=
\frac{20480}{781}.
\end{equation}
On the other hand, \eqref{eq:sharp-euclidean-sobolev-constant}
gives
\[
S_{\mathbb R^4}
=
\frac{8\pi}{\sqrt6},
\]
and thus
\[
\left(
4^{-1/2}S_{\mathbb R^4}
\right)^2
=
\frac{8\pi^2}{3}.
\]
Since $\pi>157/50$, we have
\[
\frac{8\pi^2}{3}
>
\frac{8}{3}\left(\frac{157}{50}\right)^2
>
\frac{20480}{781}.
\]
Combining this inequality with \eqref{eq:two-layer-test-d4},
we obtain
\[
S_{\mathcal G_1^4}
<
4^{-1/2}S_{\mathbb R^4}.
\]

We next consider $d=5$. In this case,
\(2^*=\frac{10}{3}\).
Taking
\[
a=\frac18,
\]
we obtain
\[
\|u_{1/8}'\|_{L^2(\mathcal G_1^5)}^2
=
10\left[
\left(\frac78\right)^2
+9\left(\frac18\right)^2
\right]
=
\frac{145}{16}.
\]
Then,  \eqref{eq:two-layer-test-mass} yields
\[
\|u_{1/8}\|_{L^{10/3}(\mathcal G_1^5)}^{10/3}
=
\frac{30}{13}
\left(
\frac{1-2^{-13}}{1-\frac18}
+9\cdot2^{-10}
\right)
=
\frac{61905}{23296}.
\]
Therefore, 
\begin{equation}\label{eq:two-layer-test-d5}
S_{\mathcal G_1^5}\leq \frac{\|u_{1/8}'\|_{L^2(\mathcal G_1^5)}^2}{\|u_{1/8}\|_{L^{10/3}(\mathcal G_1^5)}^2}
\leq
\frac{145}{16}
\left(
\frac{23296}{61905}
\right)^{3/5}
<
\frac{145}{16}
\left(\frac25\right)^{3/5}
<
\frac{145}{16}\frac7{12}
=
\frac{1015}{192}
<
\frac{16}{3}.
\end{equation}

By \eqref{eq:sharp-euclidean-sobolev-constant},
\[
S_{\mathbb R^5}
=
\frac{15}{4}\pi^{6/5}.
\]
Since $\pi>3$, we have
\[
5^{-3/5}S_{\mathbb R^5}
>
\frac{15}{4}3^{6/5}5^{-3/5}>\frac{16}{3}.
\]
Together with \eqref{eq:two-layer-test-d5}, this gives
\[
S_{\mathcal G_1^5}
<
5^{-3/5}S_{\mathbb R^5}.
\]

Finally, let $d\geq6$ and choose
\[
a=\frac1{2d}.
\]
Then
\[
2d\bigl(1-2a+2da^2\bigr)=2d-1.
\]
Moreover, since $0<a<1$, we have
\[
\frac{1-a^{2^*+1}}{1-a}>1.
\]
It follows from
\eqref{eq:two-layer-test-energy} and
\eqref{eq:two-layer-test-mass} that
\begin{equation*}
S_{\mathcal G_1^d}
\leq
\frac{
2d\bigl(1-2a+2da^2\bigr)
}{
\left\{
\dfrac{2d}{2^*+1}
\left(
\dfrac{1-a^{2^*+1}}{1-a}
+(2d-1)a^{2^*}
\right)
\right\}^{2/2^*}
}.
\end{equation*}
Hence, 
\begin{equation}\label{eq:two-layer-upper-dge6}
S_{\mathcal G_1^d}
<
(2d-1)
\left(\frac{2^*+1}{2d}\right)^{2/2^*}
=(2d-1)
\left(
\frac{3d-2}{2d(d-2)}
\right)^{\frac{d-2}{d}}.
\end{equation}
Set
\[
A_d:=
(2d-1)
\left(
\frac{3d-2}{2d(d-2)}
\right)^{\frac{d-2}{d}}.
\]
We claim that
\[
A_d
<
d^{-\frac{d-2}{d}}S_{\mathbb R^d}
\quad\text{for every }d\geq6.
\]

Recall that
\[
S_{\mathbb R^d}
=
d(d-2)\pi
\left(
\frac{\Gamma(d/2)}{\Gamma(d)}
\right)^{2/d}.
\]
Hence
\[
\frac{A_d}
{d^{-\frac{d-2}{d}}S_{\mathbb R^d}}
=
\frac{2d-1}{\pi d^{2/d}(d-2)}
\left(
\frac{3d-2}{2d(d-2)}
\right)^{1-\frac2d}
\left(
\frac{\Gamma(d)}{\Gamma(d/2)}
\right)^{2/d}.
\]
By the duplication formula,
\[
\frac{\Gamma(d)}{\Gamma(d/2)}
=
\frac{2^{d-1}}{\sqrt{\pi}}
\Gamma\left(\frac{d+1}{2}\right).
\]
Recall that $y!=\Gamma(y+1)=y\Gamma(y)$ for $y>0$, see, e.g.,
\cite[Eq. (6.1.5)]{AbramowitzStegun}. Hence, by Stirling estimate \cite[Eq. (6.1.38)]{AbramowitzStegun},
\[
\Gamma(y)
<
\sqrt{2\pi}\,y^{y-\frac12}
\exp\left(-y+\frac1{12y}\right),
\quad y>0.
\]
applied with $y=(d+1)/2$, yields
\[
\Gamma\left(\frac{d+1}{2}\right)^{2/d}
<
(2\pi)^{1/d}\frac{d+1}{2}
\exp\left(
-\frac{d+1}{d}
+\frac1{3d(d+1)}
\right).
\]
Consequently,
\[
\frac{A_d}
{d^{-\frac{d-2}{d}}S_{\mathbb R^d}}
<
R(d),
\]
where
\[
R(x):=
\frac{
2^{1/x}(2x-1)(x+1)(3x-2)^{1-2/x}
}{
\pi x(x-2)^{2-2/x}
}
\exp\left(
-\frac{x+1}{x}
+\frac1{3x(x+1)}
\right),
\quad x\geq6.
\]

We now show that $R$ is strictly decreasing on $[6,+\infty)$.
A direct differentiation gives
\[
\begin{aligned}
\frac{d}{dx}\log R(x)
&=
\frac{
2\log\left(\dfrac{3x-2}{x-2}\right)-\log2
}{x^2}\\
&\quad-
\frac{
51x^5+18x^4-74x^3-8x^2-4x+8
}{
3x^2(x-2)(x+1)^2(2x-1)(3x-2)
}.
\end{aligned}
\]
For $x\geq6$,
\[
\frac{3x-2}{x-2}
=
3+\frac4{x-2}
\leq4,
\]
and hence
\[
2\log\left(\frac{3x-2}{x-2}\right)-\log2
\leq\log8<\frac52.
\]
On the other hand, writing
\[
P(x):=
51x^5+18x^4-74x^3-8x^2-4x+8,
\]
we have
\[
\begin{aligned}
&6P(x)
-
51(x-2)(x+1)^2(2x-1)(3x-2)\\
&\qquad=
3\left(
155x^4+124x^3-169x^2-144x+84
\right)>0
\end{aligned}
\]
for every $x\geq6$. Thus,
\[
\frac{P(x)}
{3(x-2)(x+1)^2(2x-1)(3x-2)}
>
\frac{17}{6}.
\]
It follows that
\[
x^2\frac{d}{dx}\log R(x)
<
\frac52-\frac{17}{6}
=
-\frac13,
\]
so that $R$ is strictly decreasing on $[6,+\infty)$.

At $x=6$, we have
\[
R(6)
=
\frac{77}{6\pi\sqrt2}
\exp\left(-\frac{73}{63}\right).
\]
Since $\pi>3$ and $18\sqrt2>25$,
\[
\frac{77}{6\pi\sqrt2}<\frac{77}{25}.
\]
Moreover,
\[
\exp\left(\frac{73}{63}\right)
>
1+\frac{73}{63}
+\frac12\left(\frac{73}{63}\right)^2
+\frac16\left(\frac{73}{63}\right)^3
>
\frac{77}{25}.
\]
Therefore,
\[
R(6)<1.
\]
Since $R$ is decreasing,
\[
R(d)\leq R(6)<1
\quad\text{for every }d\geq6.
\]
We conclude that
\[
A_d
<
d^{-\frac{d-2}{d}}S_{\mathbb R^d}.
\]
Combining this with \eqref{eq:two-layer-upper-dge6}, we obtain
\[
S_{\mathcal G_1^d}
<
d^{-\frac{d-2}{d}}S_{\mathbb R^d}
\quad\text{for every }d\geq6.
\]
\end{proof}
\subsection*{Conflict of interest}

The authors declare no conflict of interest.

\subsection*{Ethics approval}
 Not applicable.

\subsection*{Data Availability Statements}
Data sharing not applicable to this article as no datasets were generated or analysed during the current study.

\subsection*{Acknowledgements}
 C. Ji is partially supported by National Natural Science Foundation of China (No. 12571117).

  \end{document}